\newcommand{\diam}{\operatorname{diam}}
\newcommand{\BB}{\mathbb{B}}
\newcommand{\Q}{\mathbb{Q}}
\newcommand{\N}{\mathbb{N}}
\newcommand{\R}{\mathbb{R}}
\newcommand{\Z}{\mathbb{Z}}
\newcommand{\Haus}[1]{\mathcal{H}^{#1}}     
\newcommand{\Leb}[1]{\mathcal{L}^{#1}}      
\newcommand{\T}{\mathbb T}
\newcommand{\F}{\mathcal F}
\newcommand{\GG}{\mathcal G}
\newcommand{\Aub}{\mathcal{A}}
\newcommand{\Aubry}{\mathcal{A}}
\newcommand{\torus}{\mathbb{T}}
\renewcommand{\a}{\alpha}
\renewcommand{\d}{\delta}
\newcommand{\e}{\varepsilon}
\newcommand{\g}{\gamma}
\renewcommand{\O}{\Omega}
\renewcommand{\i}{\infty}
\newcommand{\ov}{\overline}
\newcommand\INT[1]{\mathaccent'27{#1}}
\title{On the Hausdorff Dimension of the Mather Quotient}
\author{ Albert Fathi
         \thanks{UMPA, ENS Lyon,
         46 All\'ee d'Italie,
         69007 Lyon, France.
	 \textsf{e-mail: albert.fathi@umpa.ens-lyon.fr}},\ \ \
         Alessio Figalli
         \thanks{Universit\'e de Nice-Sophia
    Antipolis, Parc
    Valrose, 06100 Nice, France.
         \textsf{e-mail: figalli@unice.fr}},\ \ \
	 Ludovic Rifford
	 \thanks{Universit\'e de Nice-Sophia
    Antipolis, Parc
    Valrose, 06100 Nice, France.
	 \textsf{e-mail: rifford@unice.fr} } }
\date{8 November, 2007}
\newtheorem{theorem}{Theorem}[section]
\newtheorem{corollary}[theorem]{Corollary}
\newtheorem{proposition}[theorem]{Proposition}
\newtheorem{lemma}[theorem]{Lemma}
\theoremstyle{remark}
\newtheorem{remark}[theorem]{\bf Remark}
\theoremstyle{definition}
\newtheorem{definition}[theorem]{Definition}
\theoremstyle{definition}
\theoremstyle{remark}
\begin{document}

\maketitle

\begin{abstract}
Under appropriate assumptions on the dimension of the ambient manifold and the regularity of the Hamiltonian, we show that the Mather quotient  is small in term of Hausdorff dimension. Then, we present applications in dynamics.
\end{abstract}

\section{Introduction}

Let $M$ be a smooth manifold without boundary. We denote by $TM$ the tangent bundle and by $\pi:TM \rightarrow M$ the canonical projection. A point in $TM$ will be denoted by $(x,v)$ with $x\in M$ and $v\in T_xM=\pi^{-1}(x)$. In the same way a point of the cotangent bundle  $T^*M$ will be denoted by $(x,p)$ with $x\in M$ and $p\in T_x^*M$ a linear form on the vector space $T_xM$. We will suppose that $g$ is a complete Riemannian metric on $M$. For $v \in T_xM$, the norm $\|v \|_x$ is $g_x(v,v)^{1/2}$. We will denote by $\|\cdot \|_x$ the dual norm on $T^*M$. Moreover, for every pair $x,y \in M$, $d(x,y)$ will denote the Riemannian distance from $x$ to $y$.

We will assume in the whole paper that $H:T^*M \rightarrow \R$ is an Hamiltonian of class $C^{k,{\alpha}}$, with $ k\geq 2, \alpha \in [0,1]$, which satisfies the three following conditions:
\begin{enumerate}
\item[(H1)]
 \textbf{{\rm C}$^2$-strict convexity:}
$\forall (x,p) \in T^*M$, the second derivative along the fibers ${\partial^2 H/{\partial p^2}(x,p)}$ is strictly positive definite;
\item[(H2)]
\textbf{uniform superlinearity:}
for every $K \geq 0$ there exists a finite constant $C(K)$  such that
$$
\forall (x,p) \in T^*M, \quad H(x,p) \geq K \|p\|_x + C(K);
$$
\item[(H3)]
\textbf{uniform boundedness in the fibers:}
for every $R \geq 0$, we have
$$
\sup_{x \in M} \{H(x,p) \mid \|p\|_x  \leq R\} <+\i.
$$
\end{enumerate}

By the Weak KAM Theorem we know that, under the above conditions, there is $c(H)\in \R$ such that the Hamilton-Jacobi equation 
\begin{equation*}
H(x,d_xu) =c \tag{HJ$_c$}
\end{equation*}
admits a global viscosity solution $u:M \rightarrow \R$ for $c=c(H)$ and does not admit such solution for $c<c(H)$, see \cite{lpv87,fathi97,contreras01,fathibook,fm}. In fact, for $c<c(H)$, the Hamilton-Jacobi equation 
does not admit any viscosity subsolution (for the theory of viscosity solutions, we refer the reader to the monographs \cite{bc,bar,fathibook}). Moreover, if $M$ is assumed 
to be compact, then $c(H)$ is the only value of $c$ for which the 
Hamilton-Jacobi equation above admits a viscosity solution. 
The constant $c(H)$ is called the \textit{critical value}, or 
the \textit{Ma\~n\'e critical value} of $H$. In the sequel, a viscosity solution $u:M\rightarrow \R$ of $H(x,d_xu)=c(H)$ will be called a
 \textit{critical viscosity solution} or a \textit{weak KAM solution}, 
 while a viscosity subsolution $u$ of $H(x,d_xu)=c(H)$ will be called 
 a \textit{critical viscosity subsolution} (or \textit{critical subsolution} 
 if $u$ is at least {\rm C}$^1$). 

The Lagrangian $L:TM \rightarrow \R$ associated to the Hamiltonian $H$ is defined by 
$$
\forall (x,v) \in TM, \quad L(x,v) = \max _{p\in T_x^*M} \left\{  p(v) -H(x,p) \right\}.
$$
Since $H$ is of class {\rm C}$^k$, with $k\geq 2$, and satisfies the three conditions (H1)-(H3), it is well-known
(see for instance \cite{fathibook} or \cite[Lemma 2.1]{fm})) that $L$ is finite everywhere of class {\rm C}$^k$, and is a Tonelli Lagrangian, i.e. satisfies the analogous of conditions (H1)-(H3). Moreover, the Hamiltonian $H$ can be recovered from $L$ by
$$
\forall (x,p) \in T_x^*M, \quad H(x,p) = \max_{v\in T_xM} \left\{ p(v) -L(x,v) \right\}.
$$
Therefore the following inequality is always satisfied 
$$
p(v) \leq L(x,v) + H(x,p).
$$
This inequality is called the Fenchel inequality. Moreover, due to the strict convexity of $L$, we have equality in the Fenchel inequality if and only if 
$$
(x,p) =\mathcal{L}(x,v),
$$
where $\mathcal{L}:TM \rightarrow T^*M$ denotes the Legendre transform defined as
$$
\mathcal{L}(x,v) = \left( x,\frac{\partial L}{\partial v} (x,v)\right).
$$
Under our assumption $\mathcal L$ is a diffeomorphism of class at least $C^{1}$.
We will denote by $\phi_t^L$ the Euler-Lagrange flow of $L$, and by $X_L$ 
the vector field on $TM$ that generates the flow $\phi_t^L$. If we denote by 
$\phi_t^H$ the Hamiltonian flow of $H$ on $T^*M$, then as is well-known, 
see for example 
\cite{fathibook}, this flow $\phi^H_t$ is conjugate to $\phi^L_t$ by the Legendre transform $\mathcal{L}$.

As done by Mather in \cite{mather93}, it is convenient to introduce for $t>0$ fixed, the function
$h_t:M \times M \rightarrow \R$ defined by 
$$
\forall x,y \in M, \quad h_t(x,y) = \inf \int_0^t L(\gamma(s), \dot{\gamma}(s))ds,
$$
where the infimum is taken over all the  absolutely continuous paths $\gamma :[0,t] \rightarrow M$ with $\gamma(0)=x$ and
$\gamma(t)=y$. The \textit{Peierls barrier} is the function $h:M \times M \rightarrow \R$ defined by 
$$
h(x,y) = \liminf_{t\rightarrow \infty} \left\{ h_t(x,y) + c(H) t\right\}.
$$
It is clear that this function satisfies 
\begin{align*}
\forall x,y,z \in M, \quad & h(x,z) \leq h(x,y) + h_t(y,z) + c(H)t \\
& h(x,z) \leq h_t(x,y) + c(H)t + h(y,z),
\end{align*}
and therefore it also satisfies the triangle inequality
$$
\forall x,y,z \in M, \quad h(x,z) \leq h(x,y) + h(y,z).
$$
Moreover, given a weak KAM solution $u$, we have
$$
\forall x,y \in M,\quad u(y)-u(x) \leq h(x,y).
$$
In particular, we have $h>-\infty$ everywhere. It follows, from the triangle inequality, that the function $h$ is either identically $+\infty$ or it is finite everywhere.
If $M$ is compact, $h$ is finite everywhere. In addition,  if $h$ is finite, then for each $x\in M$ the function $h_x(\cdot)=h(x,\cdot)$ is a critical viscosity solution (see \cite{fathibook} or \cite{fs04}). The \textit{projected Aubry set} $\mathcal{A}$ is defined by 
$$
\mathcal{A} =\{ x \in M \mid \ h(x,x) =0\}.
$$
Following Mather, see   \cite[page 1370]{mather93}, we symmetrize $h$ to define the function $\delta_M :M \times M\rightarrow \R$
by 
$$
\forall x,y \in M, \quad  \delta_M (x,y) = h(x,y) + h(y,x).
$$

Since $h$ satisfies the triangle inequality and $h(x,x)\geq 0$ everywhere, the function $\delta_M$  is symmetric, everywhere nonnegative and satisfies the triangle inequality.
The restriction $\delta_M :\mathcal{A} \times \mathcal{A}\rightarrow \R$ is
a genuine semi-distance on the projected Aubry set. We will call this function $\delta_M$ the \textit{Mather semi-distance} (even when we consider it on $M$ rather than on  $\mathcal{A}$). We define the \textit{Mather quotient} $(\mathcal{A}_M,\delta_M)$ to be the metric space obtained by identifying two points  $x, y\in \mathcal{A}$ 
if their semi-distance $\delta_M(x,y)$ vanishes. When we consider $\delta_M$ on the quotient space $\mathcal{A}_M$ we will call it the \textit{Mather distance}. 

In \cite{mather04}, Mather formulated the following problem:\\

\noindent \textbf{Mather's Problem}. If $L$ is {\rm C}$^\i$, is the set $\Aub_M$ totally disconnected for the topology of $\delta_M$, i.e. is each connected component of $\Aub_M$ reduced to a single  point?\\

In \cite{mather03}, Mather brought a positive answer to that problem in low dimension. More precisely, he proved that  if $M$ has dimension two, or if the Lagrangian is the kinetic energy associated to a Riemannian metric on $M$ in  dimension $\leq 3$, then the quotient Aubry set is totally disconnected. Notice that one can easily show that for a dense set of Hamiltonians, the set $(\mathcal{A}_M,\delta_M)$ is reduced to one point. Mather mentioned in \cite[page 1668]{mather04} that it would be even more interesting to be able to prove that the quotient Aubry set has vanishing one-dimensional Hausdorff measure, because this implies the upper semi-continuity of the mapping $H \mapsto \mathcal{A}$. He also stated that for Arnold's diffusion a result generic in the Lagrangian but true for every cohomology class was more relevant. This was obtained recently by Bernard and Contreras \cite{bercon}. 

The aim of the present paper is to show that the vanishing of the one-dimensional Hausdorff measure of the Mather quotient   is satisfied under various assumptions. Let us state our results. 

\begin{theorem}
\label{THM1}
If $\dim M =1,2$ and $H$ of class {\rm C}$^2$ or  $\dim M=3$ and $H$ of class {\rm C}$^{k,1}$ with $k\geq 3$, then the Mather quotient $(\Aub_M,\delta_M)$  has vanishing one-dimensional Hausdorff measure. 
\end{theorem}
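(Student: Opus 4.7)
The plan is to combine a representation of the Mather semi-distance as a supremum of differences of critical subsolutions, the existence of $C^{1,1}$ critical subsolutions due to Bernard, and a Morse--Sard-type argument whose strength is calibrated to $\dim M$ and the regularity of $H$. The starting point is the duality identity
$$
\delta_M(x,y)=\sup_{u_1,u_2}\bigl[(u_1-u_2)(y)-(u_1-u_2)(x)\bigr],
$$
where the supremum ranges over pairs of critical subsolutions; this is an immediate consequence of $h(x,y)=\sup_u\{u(y)-u(x)\}$ together with $\delta_M=h(x,y)+h(y,x)$. A classical fact in weak KAM theory is that on $\Aub$ the differential of any $C^1$ critical subsolution is uniquely determined (by the Legendre lift of the Aubry set in $T^*M$), so for any two $C^1$ subsolutions $u_1,u_2$ the function $v=u_1-u_2$ satisfies $dv\equiv 0$ on $\Aub$.

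Invoking Bernard's theorem to restrict the supremum above to pairs of $C^{1,1}$ subsolutions---with a uniform bound on the Lipschitz constant of $dv$ coming from the $C^2$-strict convexity of $H$ and the boundedness of the critical sublevel set $\{H\le c(H)\}$ in the fibres---I would integrate $dv$ along short Riemannian geodesics between points $x,y\in\Aub$ to obtain the key quadratic comparison
$$
\delta_M(x,y)\;\le\;C\,d(x,y)^2,\qquad x,y\in\Aub.
$$
In the case $\dim M=1$ this already suffices: the quotient map $(\Aub,d)\to(\Aub_M,\delta_M)$ is then H\"older of exponent $2$, so $\dim_H(\Aub_M,\delta_M)\le 1/2$ and in particular $\mathcal H^1(\Aub_M,\delta_M)=0$.

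For $\dim M=2,3$ the quadratic comparison on its own gives only $\dim_H(\Aub_M)\le \dim M/2$, which does not kill $\mathcal H^1$. The finer regularity in the hypotheses must therefore be exploited through a Morse--Sard theorem applied to the function $v=u_1-u_2$, for example in the sharp form of Bates for $C^{n-1,1}$ maps on $\mathbb R^n$: since $dv\equiv 0$ on $\Aub$, the image $v(\Aub)\subset\mathbb R$ has one-dimensional Lebesgue measure zero, provided $v$ is of class $C^{1,1}$ in dimension $2$ and of class $C^{2,1}$ in dimension $3$. These regularities are precisely what Bernard's theorem (together with the regularity thresholds $C^2$ in dimension $2$ and $C^{k,1}$, $k\ge 3$ in dimension $3$) provides. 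A countable family of pairs $(u_1,u_2)$ whose differences recover $\delta_M$ then lets one cover $(\Aub_M,\delta_M)$ efficiently, combining Morse--Sard nullity in $\mathbb R$ with the quadratic comparison in $M$.

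The main obstacle is this last transfer: Morse--Sard individually controls only the image of $\Aub$ under one real-valued function at a time, whereas the desired conclusion is an intrinsic statement about $\mathcal H^1$ on the Mather quotient equipped with $\delta_M$. The quadratic comparison is what makes the transfer feasible, since it gives covers on the $d$-scale that are small enough on the $\delta_M$-scale for the one-dimensional nullity on $\mathbb R$ to propagate to $\mathcal H^1(\Aub_M,\delta_M)=0$; a supporting technical point is the uniformity of the $C^{1,1}$-seminorm of $v$ across all pairs of subsolutions used in recovering $\delta_M$.
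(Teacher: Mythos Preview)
Your proposal has genuine gaps at the crucial points.

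For $\dim M=2$: you assert the quadratic comparison $\delta_M(x,y)\le C\,d(x,y)^2$ gives only $\dim_H(\Aub_M)\le 1$, which would not kill $\mathcal{H}^1$. That is true if you invoke only the elementary H\"older image bound, but the paper's key tool here is Ferry's Lemma (Lemma~A.1/A.2): if $\Psi:E\to X$ with $E\subset\R^n$ satisfies $d_X(\Psi(x),\Psi(y))\le M\|x-y\|^p$ with $p>1$, then $\mathcal{H}^{n/p}(\Psi(E))=0$, strictly stronger than $\dim_H\le n/p$. With $n=2$, $p=2$ this gives $\mathcal{H}^1(\Aub_M,\delta_M)=0$ directly from the quadratic comparison---no Morse--Sard needed. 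So for $\dim M\le 2$ the quadratic comparison alone suffices once Ferry's Lemma is in hand; your detour through Morse--Sard is unnecessary and its ``transfer'' step is never made precise.

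For $\dim M=3$: your route fails for two independent reasons. First, Bernard's theorem produces $C^{1,1}$ critical subsolutions, and there is no mechanism to upgrade them to $C^{2,1}$ regardless of how smooth $H$ is (the graph of $du$ over $\Aub$ is in general only Lipschitz). So the input to Bates' $C^{n-1,1}$ Morse--Sard theorem with $n=3$ is simply unavailable, and your sentence ``these regularities are precisely what Bernard's theorem \ldots\ provides'' is false. Second, even granting $\mathcal{L}^1(v(\Aub))=0$ for countably many $v=u_1-u_2$, your ``transfer'' to $\mathcal{H}^1(\Aub_M,\delta_M)=0$ is not an argument: null one-dimensional images of individual Lipschitz functionals do not imply vanishing $\mathcal{H}^1$ of the source (it yields only the weaker Mather disconnectedness condition of Section~5).

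The paper's actual mechanism in dimension $3$ is what you are missing. Away from the stationary part $\Aub^0$, the Aubry set is laminated by projected Euler--Lagrange orbits $t\mapsto\pi\phi_t^L(x,v_x)$, and along each orbit $\delta_M$ is constant (Theorem~\ref{ThMather1}(3)). One chooses countably many $2$-dimensional $C^\infty$ transversals to this lamination; on each transversal the quadratic comparison plus Ferry's Lemma gives $\mathcal{H}^1=0$ exactly as in the surface case. The stationary piece $\Aub^0$ is handled separately by Theorem~\ref{THM2}, and it is only there that the $C^{k,1}$, $k\ge 3$ hypothesis enters---through the regularity of $x\mapsto L(x,0)$ and Norton's generalized Morse Vanishing Lemma, not through higher regularity of subsolutions.
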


Above the projected Aubry $\mathcal A$, there is a compact subset  $\tilde{\mathcal{A}} \subset TM$ called the Aubry set (see Section \ref{DefAubry}). The projection $\pi: TM\to M$ induces a homeomorphism $\pi_{|\tilde{\mathcal{A}}}$ from $\tilde{\mathcal{A}}$ onto ${\mathcal{A}}$ (whose inverse is Lipschitz by a theorem due to Mather). 
The Aubry set can be defined as the set of $(x,v) \in TM$ such that $x\in \mathcal{A}$ 
and $v$ is the unique element in $T_xM$ such that $d_xu ={\partial L}/{\partial v}(x,v)$ for any 
critical viscosity subsolution $u$. The Aubry set is invariant under the Euler-Lagrange
flow $\phi_t^L:TM\to TM$. Therefore, for each $x\in \mathcal{A}$, 
 there is only one orbit of $\phi_t^L$ in $\tilde{\mathcal{A}}$ whose projection passes through $x$.  We define the \textit{stationary Aubry set} $\tilde{\mathcal{A}}^0\subset \tilde{\mathcal{A}}$ as the set of points in $\tilde{\mathcal{A}}$ which are  fixed points of the Euler-Lagrange flow $\phi_t(x,v)$, i.e.
 $$
\tilde{\mathcal{A}}^0=\{(x,v) \in \tilde{\mathcal{A}} \mid \forall t\in \R, \phi_t^L(x,v)=(x,v)\}.
$$
 In fact, see Proposition \ref{caracAub0}, it can be shown, that $\tilde{\mathcal{A}}^0$ is the intersection of $\tilde{\mathcal{A}}$ with the zero section of $TM$
 $$\tilde{\mathcal{A}}^0=\{(x,0) \mid (x,0)\in\tilde{\mathcal{A}}\}.$$

We define the  \textit{projected stationary Aubry set} $ \mathcal{A}^0$ as the projection on $M$ of  $\tilde{\mathcal{A}}^0$
$$ \mathcal{A}^0=\{x\mid (x,0)\in\tilde{\mathcal{A}}\}.$$
At the very end of his paper \cite{mather03}, Mather noticed that the argument he used in the case where $L$ is a kinetic energy in dimension 3 proves the total disconnectedness of the quotient Aubry set  in dimension 3 as long as $\mathcal{A}_M^0$ is empty. In fact, if we consider the restriction of $\delta_M$ to $ \mathcal{A}^0$, we have the following result on the quotient metric space  $(\mathcal{A}_M^0,\delta_M)$.

\begin{theorem}
\label{THM2}
Suppose that $L$ is at least {\rm C}$^2$, and that the restriction $x\mapsto L(x,0)$ of $L$ to the zero section of $TM$ is of class {\rm C}$^{k,1}$. Then $(\mathcal{A}_M^0,\delta_M)$ has vanishing Hausdorff measure in dimension  $2\dim M/(k+3)$. In particular, if $k \geq 2\dim M-3$ then $\Haus1(\mathcal{A}_M^0,\delta_M)=0$, and if $x\mapsto L(x,0)$ is {\rm C}$^{\infty}$ then $(\mathcal{A}_M^0,\delta_M)$ has zero Hausdorff dimension.
\end{theorem}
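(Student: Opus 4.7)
The plan is to reduce the theorem to a sharp Glaeser-type estimate for the nonnegative $C^{k,1}$ function
\[
F(x):=L(x,0)+c(H),
\]
combined with a symmetric action inequality. As a preliminary I would check three structural facts: $F\geq 0$ on $M$, and $F\equiv 0$ together with $\nabla F\equiv 0$ on $\mathcal{A}^0$. Nonnegativity follows from the Fenchel duality $L(x,0)=-\min_p H(x,p)$ and the bound $\min_p H(x,p)\leq c(H)$ (which holds because any viscosity subsolution $u$ of (HJ$_{c(H)}$) satisfies $\min_p H(x,p)\leq H(x,d_xu)\leq c(H)$). For the two vanishing statements, if $(x,0)\in\tilde{\mathcal{A}}$ then $(x,p_x)$ with $p_x:=\partial_v L(x,0)$ is a fixed point of the Hamiltonian flow on the level set $\{H=c(H)\}$, so $\partial_p H(x,p_x)=\partial_x H(x,p_x)=0$ and $H(x,p_x)=c(H)$; the envelope identity $\partial_x L(x,0)=-\partial_x H(x,p_x)$ then gives $F(x)=0=\nabla F(x)$.

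The next step is the symmetric action bound. For $x,y\in\mathcal{A}^0$ and any $C^1$ curve $\gamma:[0,t]\to M$ with $\gamma(0)=x$, $\gamma(t)=y$, using $\gamma$ to estimate $h(x,y)$ and the time-reversed curve $s\mapsto\gamma(t-s)$ to estimate $h(y,x)$ yields
\[
\delta_M(x,y)\leq\int_0^t\bigl[L(\gamma,\dot\gamma)+L(\gamma,-\dot\gamma)+2c(H)\bigr]\,ds.
\]
The odd-in-$v$ contributions to the Taylor expansion cancel, so by the $C^2$ expansion of $L$ in the fiber the integrand equals $2F(\gamma)+O(|\dot\gamma|^2)$. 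Choosing $\gamma$ to be the constant-speed Riemannian geodesic from $x$ to $y$ in a fixed normal chart and optimizing in $t>0$ produces the reduction
\[
\delta_M(x,y)\leq C\,d(x,y)\,\sqrt{\,I(x,y)\,},\qquad I(x,y):=\int_0^1 F\!\left(x+s(y-x)\right)\,ds.
\]

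The technical heart of the argument, and its main obstacle, is the bound
\[
I(x,y)\leq C\,d(x,y)^{k+1}\qquad\text{for }x,y\in\mathcal{A}^0\text{ close.}
\]
Setting $g(s):=F(x+s(y-x))$, the function $g$ is $C^{k,1}$ and nonnegative on $[0,1]$, satisfies $g(0)=g(1)=0$ and $g'(0)=g'(1)=0$, and has $\mathrm{Lip}(g^{(k)})\leq K:=\|F\|_{C^{k,1}}\,d(x,y)^{k+1}$. The plan is to couple the Taylor expansions of $g$ at $0$ and at $1$: the four boundary conditions produce two linear relations modulo $O(K)$ on the $k-1$ interior Taylor coefficients $g^{(2)}(0),\dots,g^{(k)}(0)$. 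For $k\leq 3$ the resulting linear system is non-degenerate, forcing each coefficient to be $O(K)$ and hence $\sup_{[0,1]}|g|=O(K)=O(d^{k+1})$, from which the estimate on $I$ is immediate. For $k\geq 4$ the system is underdetermined along an explicit kernel direction, and pinning down the residual coefficient will require an iterated Glaeser-type inequality -- exploiting both the nonnegativity of $g$ and the sharp $C^{k,1}$ remainder, with Malgrange's prototype $|g'|^2\leq C\,g$ for nonnegative $C^{1,1}$ functions as the base case. Establishing the correct higher-order Glaeser estimate for nonnegative $C^{k,1}$ functions vanishing to first order at two prescribed points is where the bulk of the technical work will lie.

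Once $\delta_M(x,y)\leq C\,d(x,y)^{(k+3)/2}$ is obtained on small scales, a covering argument passes this H\"older control to the Hausdorff measures: since $\mathcal{A}^0\subset M$ has $\Haus{s}(\mathcal{A}^0)=0$ for every $s>\dim M$, one deduces $\Haus{t}(\mathcal{A}_M^0,\delta_M)=0$ for every $t>2\dim M/(k+3)$, and the vanishing at the critical exponent $t=2\dim M/(k+3)$ follows by a slight sharpening of the same covering estimate. The two stated corollaries (the integer threshold $k\geq 2\dim M-3$ giving $\Haus{1}=0$, and zero Hausdorff dimension when $x\mapsto L(x,0)$ is $C^{\infty}$) are then immediate.
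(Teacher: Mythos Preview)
Your reduction $\delta_M(x,y)\le C\,d(x,y)\sqrt{I(x,y)}$ via the symmetric action and geodesic optimization is correct, but the pointwise estimate $I(x,y)\le C\,d(x,y)^{k+1}$ you aim for is \emph{false} for $k\ge 4$, and no Glaeser-type argument using only nonnegativity of $g$ can repair it. Take $F(z_1,z_2)=(z_2-z_1^2)^2$ on $\R^2$ (which is $C^\infty$ and nonnegative, with zero set the parabola $z_2=z_1^2$) and $x=(-a,a^2)$, $y=(a,a^2)$. Then $g(s)=F(x+s(y-x))=16a^4s^2(1-s)^2$, so $I(x,y)=8a^4/15$ while $d(x,y)=2a$; hence $I/d^{\,k+1}\sim a^{3-k}\to\infty$ for $k\ge4$. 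Notice $g\ge0$, $g(0)=g(1)=g'(0)=g'(1)=0$, and yet $g^{(4)}$ is \emph{constant}, so $\operatorname{Lip}(g^{(4)})=0$: your proposed control of the residual Taylor coefficient through nonnegativity and the $C^{k,1}$ remainder simply has no leverage here. The problem is structural: information at the two endpoints cannot see curvature of the zero set, which is exactly what makes $F$ large on the chord.

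The paper avoids this by never seeking a uniform chord estimate. It uses two different ingredients. First, the representation $\delta_M(x,y)=\max_{u_1,u_2}\bigl[(u_1-u_2)(y)-(u_1-u_2)(x)\bigr]$ over $C^1$ critical subsolutions, together with the convexity bound $\lVert d_zu_1-d_zu_2\rVert\le C\sqrt{F(z)}$ valid at \emph{every} point $z$. Second, Norton's Generalized Morse Vanishing Lemma, which produces a countable decomposition $\mathcal{A}^0=\bigcup_iA_i$ with $A_i\subset B_i$ ($B_i$ a $C^1$ disk) such that $F(z)\le M_i\,d(x,z)^{k+1}$ for $x\in A_i$ and \emph{every} $z\in B_i$, with piece-dependent constants $M_i$. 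One then integrates the gradient bound along the straight segment in $B_i$ to get $\delta_M(x,y)\le \tilde C_i\,d(x,y)^{(k+3)/2}$ on each $A_i$, and Ferry's Lemma (not a naive covering count) yields $\mathcal{H}^{2\dim M/(k+3)}(A_i,\delta_M)=0$ at the critical exponent. In the parabola example the Morse decomposition would place nearby points of $\{z_2=z_1^2\}$ into a disk following the curve, so the relevant segment hugs the zero set and $F$ is genuinely $O(d^{k+1})$ along it; your geodesic chord cuts across and does not.

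Your scheme is salvageable only up to $k\le3$; to reach the full statement you will need either the Morse Vanishing decomposition or an equivalent stratification of $\mathcal{A}^0$ adapted to the jet of $F$, and Ferry's Lemma for the critical-exponent conclusion.
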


As a corollary, we have the following result which was more or less already mentioned by Mather in \cite[\S 19 page 1722]{mather04}, and proved by  
Sorrentino  \cite{sorrentino06}.

\begin{corollary}\label{COR}
Assume that $H$ is of class {\rm C}$^2$ and that its associated Lagrangian $L$ satisfies the following conditions: 
\begin{enumerate}
\item
$\forall x \in M,  \quad \min_{v \in T_xM} L(x,v)=L(x,0)$;
\item the mapping $x \in M \mapsto L(x,0)$ is of class  {\rm C}$^{l,1}(M)$ with $l\geq 1$.
\end{enumerate}
If   $\dim M=1,2$, or $\dim M \geq 3$ and  $l \geq 2\dim M-3$, then $(\mathcal{A}_M,\delta_M)$ is totally disconnected. In particular, if  $L(x,v)=\frac{1}{2}\lVert v \rVert_x^2 - V(x)$, with $V \in {\rm C}^{l,1}(M)$ and $l \geq 2\dim M-3$ {\rm (}$V \in {\rm C}^2(M)$ if $\dim M =1,2${\rm )},
then $(\mathcal{A}_M,\delta_M)$ is totally disconnected.
\end{corollary}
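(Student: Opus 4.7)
The plan is to reduce the corollary to Theorems \ref{THM1}--\ref{THM2} by proving that under assumption (1) the projected Aubry set coincides with its stationary part, so that $\mathcal{A} = \mathcal{A}^0$. Once this identification is in place, the dimension count of Theorem \ref{THM2} applies to the whole Mather quotient, and the hypothesis $l \geq 2\dim M - 3$ pushes the target Hausdorff dimension down to $\leq 1$, which kills every nontrivial connected component.

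First I would pin down the critical value. Set $V(x) := -L(x,0)$. Testing a constant curve $\gamma \equiv x_0$ gives $h_t(x_0,x_0) + c(H)t \leq t\bigl(L(x_0,0)+c(H)\bigr)$; since $h(x_0,x_0)$ is finite this forces $c(H) \geq V(x_0)$ for every $x_0$, hence $c(H) \geq \max V$. For the opposite inequality, assumption (1) together with Fenchel duality give $H(x,0) = -\min_v L(x,v) = V(x) \leq \max V$, so the constant function $u \equiv 0$ is a $C^1$ critical subsolution for the level $\max V$; this yields $c(H) \leq \max V$. Thus $c(H) = \max V = -\min_x L(x,0)$.

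Next, pick $x \in \mathcal{A}$ and let $\gamma(s) = \pi\phi_s^L(x,v)$ be the projection of the unique orbit in $\tilde{\mathcal{A}}$ through $(x,v)$. Since $u \equiv 0$ is a critical subsolution, it is calibrated along every Aubry orbit (a standard fact from weak KAM theory), so that
\[
0 \;=\; u(\gamma(t))-u(\gamma(0)) \;=\; \int_0^t L(\gamma,\dot\gamma)\,ds + c(H)\,t.
\]
Decompose the integrand as $\bigl(L(\gamma,\dot\gamma)-L(\gamma,0)\bigr)+\bigl(L(\gamma,0)+c(H)\bigr)$. The first bracket is nonnegative by (1), the second because $c(H) = -\min L(\cdot,0)$; both must therefore vanish identically along the orbit. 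Strict C$^2$-convexity of $L$ in the fiber combined with (1) makes $v=0$ the unique minimizer of $L(x,\cdot)$, so the first vanishing forces $\dot\gamma \equiv 0$; hence $\gamma \equiv x$, $v=0$, and $(x,0)\in \tilde{\mathcal{A}}^0$. The second vanishing at $s=0$ gives $L(x,0) = \min L(\cdot,0)$. The converse inclusion $\{x : L(x,0) = \min L(\cdot,0)\} \subseteq \mathcal{A}$ follows by testing constant curves and using $h(x,x)\geq 0$. Therefore $\mathcal{A} = \mathcal{A}^0$ and $\delta_M$ coincides on both, so $\mathcal{A}_M = \mathcal{A}_M^0$ as metric spaces.

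To conclude I split on dimension. For $\dim M \in \{1,2\}$, Theorem \ref{THM1} applies directly since $H$ is $C^2$. For $\dim M \geq 3$, Theorem \ref{THM2} applied with $k = l$ (valid since $L(\cdot,0)$ is $C^{l,1}$) shows that $(\mathcal{A}_M,\delta_M) = (\mathcal{A}_M^0,\delta_M)$ has vanishing Hausdorff measure in dimension $2\dim M/(l+3) \leq 1$ under $l \geq 2\dim M - 3$; a metric space of vanishing one-dimensional Hausdorff measure is totally disconnected. The mechanical case $L(x,v) = \tfrac12 \|v\|_x^2 - V(x)$ is an immediate specialization, since (1) is automatic and $L(\cdot,0) = -V$ inherits $V$'s regularity. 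The main subtlety in the argument is the calibration identity used in the identification step, which relies on the standard but nontrivial weak KAM fact that every critical subsolution is calibrated along orbits in $\tilde{\mathcal{A}}$; everything else is essentially bookkeeping.
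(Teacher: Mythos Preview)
Your proof is correct and follows exactly the route the paper intends: the paper does not spell out a proof of the corollary, but its placement right after Theorem~\ref{THM2} (together with Proposition~\ref{caracAub0} and Theorem~\ref{DescriptionAubry}) makes clear that the point is precisely your identification $\mathcal{A}=\mathcal{A}^0$ under hypothesis~(1), after which Theorems~\ref{THM1} and~\ref{THM2} do the work. Your calibration argument with the subsolution $u\equiv 0$ is the natural way to carry this out (equivalently, one can use Theorem~\ref{propertiesI} directly: $(x,v)\in\tilde{\mathcal A}\subset\tilde{\mathcal I}(0)$ forces $\partial L/\partial v(x,v)=d_x0=0$, hence $v=0$ by strict convexity and~(1)).
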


Since $\mathcal{A}^0$ is the projection of the subset $\tilde{\mathcal{A}}^0\subset \tilde{\mathcal{A}}$  consisting of points in $\tilde{\mathcal{A}}$ which are fixed under the the Euler-Lagrange flow $\phi_t^L$, it is  natural to consider $\mathcal{A}^p$ the set of $x\in \mathcal{A}$ which are projection of a point $(x,v)\in \tilde{\mathcal{A}}$ whose orbit  under the the Euler-Lagrange flow $\phi_t^L$ is periodic with strictly positive period. We call this set the \textit{projected periodic Aubry set}. We have the following result:

\begin{theorem}
\label{THM3}
If $\dim M\geq 2$ and  $H$ of class {\rm C}$^{k,1}$ with $k\geq 2$, then $(\mathcal{A}_M^p,\delta_M)$ has vanishing Hausdorff measure in dimension $8\dim M/(k+8)$.  In particular, if $k\geq 8\dim M -8$ then $\Haus1(\mathcal{A}_M^p,\delta_M)=0$, and if $H$ is {\rm C}$^{\infty}$ then $(\mathcal{A}_M^p,\delta_M)$ has zero Hausdorff dimension.
\end{theorem}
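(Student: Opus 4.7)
The approach I would pursue adapts the strategy developed for Theorem \ref{THM2} to orbits with strictly positive period. The aim is to establish, on each member of a countable covering of $\mathcal{A}^p$, a H\"older estimate of the form
$$
\delta_M(x,y) \leq C\, d(x,y)^{(k+8)/8},
$$
from which the bound $\dim_H(\mathcal{A}_M^p,\delta_M) \leq 8\dim M/(k+8)$, and hence the vanishing of Hausdorff measure in that dimension, will follow from the standard fact that an $\alpha$-H\"older map scales Hausdorff dimension by $1/\alpha$. As a preliminary reduction, one decomposes $\mathcal{A}^p$ as a countable union of pieces on which the associated periods satisfy $0<T_-\leq T\leq T_+<\infty$ and on which all orbits remain in a fixed symplectic tubular neighborhood of a reference periodic orbit $\gamma_0$, where the Euler--Lagrange flow can be straightened and a transverse Poincar\'e section $\Sigma$ is available with a $C^{k-1,1}$ symplectic return map $P:\Sigma\to\Sigma$.

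To estimate $\delta_M(x,y)$ for two points on nearby periodic orbits $\gamma_x,\gamma_y$, I would construct a closed test loop based at $x$ that runs along $\gamma_x$ for an integer number of periods, jumps to $\gamma_y$ through a short interpolating arc of duration $\tau$, runs along $\gamma_y$ for an integer number of periods, and closes back through a second short interpolation. Since the portions along $\gamma_x$ and $\gamma_y$ are honest $L$-minimizers, their contribution to $\int L\,dt+c(H)T$ vanishes, so the entire action deficit of the loop is concentrated in the two interpolating arcs. Using a local $C^1$ critical subsolution $u$ on the tube to apply the Fenchel trick, one cancels the linear terms when summing the bounds for $h(x,y)$ and $h(y,x)$, leaving a quadratic-type error whose size depends on the $C^{k,1}$ regularity of the flow (equivalently, the $C^{k-1,1}$ regularity of $P$) and on the transverse displacement of $\gamma_y$ relative to $\gamma_x$ on $\Sigma$. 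A multi-parameter optimization balancing the interpolation time $\tau$, the number of turns around each orbit, and the transverse displacement, is what produces the exponent $(k+8)/8$.

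The main obstacle, and the reason for the worse exponent compared with Theorem \ref{THM2}, is precisely the accounting of regularity across this reduction: one pays a derivative loss in passing from the ambient Hamiltonian flow to the Poincar\'e return map, and one cannot use a straight-line interpolation centered at $v=0$ as in the stationary case because $\dot\gamma_x$ does not vanish along a strictly positive-period orbit. The resulting estimate must be obtained by carefully tuning the interpolation to simultaneously control the transverse and longitudinal components of the motion, and this is what changes the denominator from $k+3$ to $k+8$. Once the local H\"older estimate is in place on each piece of the countable cover, the H\"older--Hausdorff transfer yields the stated vanishing of Hausdorff measure in dimension $8\dim M/(k+8)$; in particular $\Haus{1}(\mathcal{A}_M^p,\delta_M)=0$ whenever $8\dim M/(k+8)\leq 1$, i.e.\ $k\geq 8\dim M-8$, and letting $k\to\infty$ handles the $C^\infty$ case.
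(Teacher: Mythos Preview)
The overall architecture of your proposal matches the paper's: a countable reduction to pieces with controlled period, a local Poincar\'e section, a H\"older estimate for $\delta_M$ of exponent $(k+8)/8$, and Ferry's lemma to conclude. However, the analytic core---how the exponent $(k+8)/8$ actually arises---is quite different in the paper, and your version of it is not substantiated.

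The paper does \emph{not} estimate $\delta_M(x,y)=h(x,y)+h(y,x)$ by building a test loop between $\gamma_x$ and $\gamma_y$ and optimizing over interpolation times and numbers of turns. Instead it works through the representation formula of Lemma~\ref{Fund},
\[
\delta_M(x,y)=\max_{u_1,u_2\in\mathcal{SS}^{1,1}}\bigl[(u_1-u_2)(y)-(u_1-u_2)(x)\bigr],
\]
and bounds $\lVert d_xu_1-d_xu_2\rVert$ pointwise. The exponent factorizes as $8=2\times 4$. On the section one builds a single scalar function
\[
\Psi(x,v)=\bigl[a(x,v)+c(H)\tau(x,v)\bigr]^2+d^2\bigl(\pi\theta(x,v),x\bigr),
\]
which is $C^{k-1,1}$ (this squaring is what keeps $\Psi$ smooth) and vanishes on $\tilde F$; the Generalized Morse Vanishing Lemma then gives $\Psi\lesssim \tilde d^{\,k}$ on the pieces $A_i$. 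One has $h_{\rho}(x,x)+c(H)\rho\lesssim\sqrt{\Psi}$ (the first factor of~$2$). Separately, Proposition~\ref{propositionderivativeaction} shows, via strict convexity of $H$ and a Lipschitz-modulus argument, that for $C^{1,1}$ critical subsolutions with uniformly Lipschitz graphs,
\[
\lVert d_xu_1-d_xu_2\rVert_x^{4}\le C'\bigl[h_t(x,x)+c(H)t\bigr]
\]
(the factor of~$4$). Combining gives $\lVert d_xu_1-d_xu_2\rVert\lesssim \tilde d^{\,k/8}$, and integrating along a segment in the disk $B_i$ adds the~$+1$, yielding the H\"older exponent $1+k/8=(k+8)/8$ for $(u_1-u_2)$ and hence for $\delta_M$.

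Your loop construction, by contrast, does not visibly produce $(k+8)/8$. Running extra full periods around $\gamma_x$ or $\gamma_y$ contributes zero action, so the ``number of turns'' parameter is inert; all the deficit lives in the two interpolating arcs, and a Fenchel estimate on those arcs gives at best a quadratic bound in the displacement---essentially Proposition~\ref{HolderOrdre2}, exponent~$2$, not $(k+8)/8$. You have not identified any $C^{k-1,1}$ scalar function on which to invoke the Morse Vanishing Lemma, nor any analogue of Proposition~\ref{propositionderivativeaction} linking $\lVert d_xu_1-d_xu_2\rVert$ to the loop action. Without these two ingredients the ``multi-parameter optimization'' is a placeholder rather than a mechanism, and the claimed exponent is unjustified.
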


In the case of compact surfaces, using the finiteness of exceptional minimal sets of flows, we have: 

\begin{theorem}
 \label{THMSURF}
If $M$ is a compact surface of class {\rm C}$^\infty$ and $H$ is of class {\rm C}$^{\infty}$, then $(\Aub_M,\delta_M)$ has zero Hausdorff dimension.
\end{theorem}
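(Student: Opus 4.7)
The plan is to split $\mathcal{A}$ into three parts --- $\mathcal{A}^0$, $\mathcal{A}^p$, and a recurrent remainder --- and show that each contributes zero to the Hausdorff dimension of the Mather quotient. The first two parts are already handled by Theorems \ref{THM2} and \ref{THM3}: since $L$ and $H$ are $C^\infty$, taking $k$ arbitrarily large in the bounds $2\dim M/(k+3)$ and $8\dim M/(k+8)$ yields $\Haus{d}(\mathcal{A}^0_M,\delta_M)=\Haus{d}(\mathcal{A}^p_M,\delta_M)=0$ for every $d>0$. So the task reduces to analyzing $\mathcal{A}\setminus(\mathcal{A}^0\cup\mathcal{A}^p)$, which corresponds to orbits of $\phi_t^L$ in $\tilde{\mathcal{A}}$ that are neither fixed points nor periodic.

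Via the Mather homeomorphism $\pi_{|\tilde{\mathcal{A}}}$ (with Lipschitz inverse by Mather's graph theorem), these orbits transfer to non-trivial orbits of a continuous flow on the closed subset $\mathcal{A}\subset M$ of the compact $C^\infty$ surface. Two inputs are essential. First, $M$ being compact, the Peierls barrier $h$ is continuous on $M\times M$, so $\delta_M$ is continuous. Second, $\delta_M$ vanishes along every orbit of $\phi_t^L$ in $\tilde{\mathcal{A}}$: fixing a critical subsolution $u$, the orbit calibrates $u$, giving $h(x,\pi\phi_t^L(x,v))=u(\pi\phi_t^L(x,v))-u(x)$, while the general inequality $h\ge u(\cdot)-u(\cdot)$ for subsolutions applied in the reverse direction forces $h(\pi\phi_t^L(x,v),x)=-h(x,\pi\phi_t^L(x,v))$, so $\delta_M=0$ along orbits. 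By continuity, this vanishing extends to every orbit closure in $\mathcal{A}$; in particular, every minimal and every quasi-minimal set of the induced flow collapses to a single point of $\mathcal{A}_M$.

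Every point of $\tilde{\mathcal{A}}$ is Birkhoff recurrent, since the defining condition $h(x,x)=0$ directly produces return times. Therefore each $x\in\mathcal{A}\setminus(\mathcal{A}^0\cup\mathcal{A}^p)$ has an orbit whose closure is a non-trivial quasi-minimal set of the induced flow on $M$. I then invoke the classical Cherry--Maier theorem on continuous flows on compact surfaces, which bounds the number of quasi-minimal sets in terms of the genus. This forces $\mathcal{A}_M\setminus(\mathcal{A}^0_M\cup\mathcal{A}^p_M)$ to be finite, and combining with the vanishing from Theorems \ref{THM2}--\ref{THM3} proves the theorem.

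The main obstacle is the last step: the Cherry--Maier theorem is classically stated for a flow on the whole surface $M$, while our flow is defined only on the invariant subset $\mathcal{A}$. I would overcome this either by extending the Lipschitz Mather section $x\mapsto v(x)$ to a Lipschitz vector field on $M$ (via a McShane--Whitney extension), so that the resulting flow on $M$ leaves $\mathcal{A}$ invariant and the finiteness applies directly, or by running the underlying Poincar\'e--Bendixson type topological argument --- which is what drives the finiteness --- directly on the compact invariant set $\mathcal{A}\subset M$ using the embedding into a surface. The other delicate point is the vanishing of $\delta_M$ along orbits, which required combining the calibration property of Aubry orbits, the identity $h(y,y)=0$ on $\mathcal{A}$, and the continuity of $h$; this is where $M$ being compact is crucially used.
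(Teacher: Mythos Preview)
Your overall decomposition and strategy are exactly the paper's: reduce via Theorems~\ref{THM2} and~\ref{THM3} to the non-fixed, non-periodic part, use that $\delta_M$ vanishes on orbit closures, and then invoke a finiteness result for ``exceptional'' minimal sets of flows on surfaces. The extension of the flow to all of $M$ is also handled as you suggest --- the paper simply takes a $C^{1,1}$ critical subsolution $u$ (Bernard) and sets $X(x)=\mathcal{L}^{-1}(x,d_xu)$, which is a Lipschitz vector field on $M$ agreeing with the Aubry dynamics on $\mathcal{A}$; this is slightly cleaner than McShane--Whitney but amounts to the same idea.

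The one genuine gap is your claim that ``every point of $\tilde{\mathcal{A}}$ is Birkhoff recurrent, since $h(x,x)=0$ directly produces return times.'' The condition $h(x,x)=0$ produces \emph{minimizing loops} $\gamma_n:[0,t_n]\to M$ at $x$ with small corrected action; by Mather's theorem their endpoint velocities converge to $v_x$, but these are orbits through $(x,\dot\gamma_n(0))$, not through $(x,v_x)$. Since $t_n\to\infty$, continuous dependence on initial data does not let you transfer the return to the Aubry orbit itself. In fact only chain-recurrence of $\tilde{\mathcal{A}}$ is known in general (Ma\~n\'e), not Birkhoff recurrence, so the step ``orbit closure of $x$ is a quasi-minimal set'' is unjustified. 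The paper avoids this entirely: it does not ask the orbit of $x$ to be recurrent. Instead it observes that the compact invariant set $\overline{\mathcal{O}(x)}$ always contains a minimal set; if that minimal set meets $\mathcal{A}^0\cup\mathcal{A}^p$ one is already done, and otherwise it is a \emph{non-trivial} minimal set for the Lipschitz flow on the surface, of which there are only finitely many (Markley). Since $\delta_M$ vanishes between $x$ and any point of $\overline{\mathcal{O}(x)}$, this suffices. Replacing your recurrence claim by this ``minimal-set-in-the-closure'' argument fixes the proof and brings it in line with the paper's.
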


In the last section, we present applications in dynamic whose Theorem \ref{TheoremePrincipal2} below is a corollary. If $X$ is a {\rm C}$^k$ vector field on $M$, with $k\geq 2$, the Ma\~n\'e Lagrangian $L_X:TM\to\R$ associated to $X$ is defined by
$$
L_X(x,v)=\frac12\lVert v-X(x)\rVert_x^2, \quad \forall (x,v)\in TM.
$$
We will denote by $\mathcal{A}_X$ the projected Aubry set of the Lagrangian $L_X$.

The first author has raised the following problem, compare with the list of questions\\
{\tt http://www.aimath.org/WWN/dynpde/articles/html/20a/}.\\

\noindent \textbf{Problem}. Let $L_X:TM\to\R$ be the Ma\~n\'e Lagrangian associated to the {\rm C}$^k$ vector field $X$ ($k \geq 2$) on the compact connected manifold $M$. 
\begin{itemize} 
\item[(1)] Is the set of chain-recurrent points of the flow of $X$ on $M$ equal to the projected Aubry set $\mathcal{A}_X$?
\item[(2)] Give a condition on the dynamics of $X$ that insures that the only weak KAM solutions are the constants.\\
\end{itemize}

The theorems obtained in the first part of the paper together with the applications in dynamics developed in Section 6 give an answer to this question when $\dim M\leq 3$.

\begin{theorem}
\label{TheoremePrincipal2}
Let $X$ be a {\rm C}$^k$ vector field, with $k\geq 2$, on the compact connected {\rm C}$^\infty$ manifold $M$. Assume that one of the conditions hold:
\begin{itemize}
\item[(1)] The dimension of $M$ is $1$ or $2$.
\item[(2)] The dimension of $M$ is $3$, and the vector field $X$ never vanishes.
\item[(3)] The dimension of $M$ is $3$, and $X$ is of class {\rm C}$^{3,1}$.
\end{itemize}
Then the projected Aubry set $\mathcal{A}_X$ of the  Ma\~n\'e Lagrangian $L_X:TM\to\R$ associated to $X$ is the set of chain-recurrent points of the flow of $X$ on $M$.
Moreover, the constants are the only weak KAM solutions for $L_X$ if and only if
every point of $M$ is chain-recurrent under the flow of  $X$.
\end{theorem}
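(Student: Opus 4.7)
The theorem asserts (a) $\mathcal{A}_X=\mathcal{R}(X)$, where $\mathcal{R}(X)$ denotes the chain-recurrent set of the flow of $X$ on $M$, and (b) that the only weak KAM solutions of $L_X$ are constants iff $\mathcal{R}(X)=M$. I would deduce both from $\Haus1((\mathcal{A}_X)_M,\delta_M)=0$, where $(\mathcal{A}_X)_M$ denotes the Mather quotient for the Ma\~n\'e Lagrangian $L_X$. The dual Hamiltonian $H_X(x,p)=p(X(x))+\tfrac12\|p\|_x^2$ has the same regularity as $X$ and satisfies (H1)-(H3); since $u\equiv 0$ is a subsolution one has $c(H_X)=0$, and $L_X(y,X(y))=0$ makes every $X$-orbit an action-zero minimizer, which in turn forces $h\ge 0$ throughout $M\times M$. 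Under hypothesis (1) ($\dim M\le 2$, $H_X\in C^2$) and hypothesis (3) ($\dim M=3$, $H_X\in C^{3,1}$), Theorem \ref{THM1} applies directly and yields $\Haus1((\mathcal{A}_X)_M)=0$. Under hypothesis (2), the non-vanishing of $X$ forbids any fixed point of the Euler-Lagrange flow inside $\tilde{\mathcal{A}}_X$, so the Aubry flow is a genuine non-singular flow and Poincar\'e cross-sections transverse to $X$ reduce $(\mathcal{A}_X)_M$ to a two-dimensional problem governed by the $\dim=2$ case of Theorem \ref{THM1}. Since any connected metric space of positive diameter has positive one-dimensional Hausdorff measure, the $\Haus1$-null space $(\mathcal{A}_X)_M$ is therefore totally disconnected.

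\textbf{Both inclusions.} For $\mathcal R(X)\subseteq\mathcal A_X$, given $x\in\mathcal R(X)$ and $\varepsilon>0$, I would smooth a closed $\varepsilon$-pseudo-orbit from $x$ to $x$ by interpolating each jump of size $<\varepsilon$ with a short smooth arc traversed in time $\varepsilon$; each interpolation carries $L_X$-action $O(\varepsilon)$ while the long $X$-arcs in between are action-free, so a careful choice of pseudo-orbits gives $h(x,x)=0$. For the converse, let $x_0\in\mathcal A_X$ and consider the continuous curve $t\mapsto\phi_t^Xx_0$, which lies entirely in $\mathcal A_X$; its image in the totally disconnected space $(\mathcal A_X)_M$ under the (continuous) quotient map is connected, hence reduced to a single point, so $\delta_M(x_0,\phi_t^Xx_0)=0$ for every $t$. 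Since the $X$-orbit itself shows $h(x_0,\phi_t^Xx_0)\le 0$ and since $h\ge 0$ everywhere, we obtain $h(x_0,\phi_t^Xx_0)=h(\phi_t^Xx_0,x_0)=0$. The vanishing of the second term supplies, for every $\eta>0$, a curve from $\phi_t^Xx_0$ back to $x_0$ of $L_X$-action $<\eta$; concatenating with the $X$-orbit from $x_0$ to $\phi_t^Xx_0$ gives closed curves at $x_0$ of arbitrarily small action, which by standard action-vs-deviation estimates for the Ma\~n\'e Lagrangian translate into closed $\varepsilon$-pseudo-orbits at $x_0$ for every $\varepsilon>0$. Hence $x_0\in\mathcal R(X)$.

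\textbf{Constancy of weak KAM solutions and main obstacle.} If $\mathcal R(X)=M$, then $\mathcal A_X=M$, so any weak KAM solution $u$ is Lipschitz and (being constant on $\delta_M$-classes, because $h\ge 0$) descends to a Lipschitz $\bar u:(\mathcal A_X)_M\to\R$; its image $u(M)=\bar u((\mathcal A_X)_M)$ is a connected subset of $\R$ of zero one-dimensional Hausdorff measure, hence a single point, and $u$ is constant. Conversely, if $\mathcal R(X)\neq M$, pick $y_0\notin\mathcal R(X)=\mathcal A_X$, so $h(y_0,y_0)>0$; then for any $x_0\in\omega(y_0)\subseteq\mathcal A_X$ the Peierls-barrier $y\mapsto h(x_0,y)$ is a weak KAM solution that vanishes at $x_0$ while remaining strictly positive at $y_0$ (by the triangle inequality $h(y_0,y_0)\le h(y_0,x_0)+h(x_0,y_0)$), hence non-constant. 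The most delicate step is case (2) of the first paragraph: when $X$ is merely $C^2$ and non-vanishing on a 3-manifold, Theorem \ref{THM1} does not apply directly, and one must exploit the absence of Aubry-flow equilibria to carry out the Poincar\'e-section reduction carefully; the overarching payoff, however, is the clean observation that $\Haus1((\mathcal A_X)_M)=0$ already forces total disconnectedness, so that the connectedness of $X$-orbits in $\mathcal A_X$ collapses each orbit to a $\delta_M$-class and thereby delivers $\mathcal A_X=\mathcal R(X)$ in a single stroke.
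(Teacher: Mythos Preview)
Your overall architecture is right --- reduce everything to $\Haus1((\mathcal{A}_X)_M,\delta_M)=0$ via Theorem~\ref{THM1}, then argue dynamically --- and your handling of hypotheses (1)--(3), of the ``constants are the only weak KAM solutions'' direction, and of the converse via $h(x_0,\cdot)$ are all sound. The serious problem is the inclusion $\mathcal R(X)\subseteq\mathcal A_X$.

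Your argument for that inclusion is: take an $\varepsilon$-pseudo-orbit from $x$ to $x$, bridge each jump by a short arc of action $O(\varepsilon)$, and conclude $h(x,x)=0$. But the total bridged action is $N(\varepsilon)\cdot O(\varepsilon)$, where $N(\varepsilon)$ is the number of jumps, and nothing bounds $N(\varepsilon)\varepsilon$. More to the point, your argument nowhere uses the hypothesis that $\Haus1((\mathcal{A}_X)_M,\delta_M)=0$, so if it worked it would prove $\mathcal R(X)\subseteq\mathcal A_X$ for \emph{every} $C^2$ Ma\~n\'e Lagrangian. That is false: the paper itself exhibits (following Hurley) a $C^{n-1}$ gradient-like vector field on $\T^n$ whose Aubry set is the zero set $Z(X)$ while the chain-recurrent set is strictly larger. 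So this step cannot be repaired by ``a careful choice of pseudo-orbits''; the hypothesis must enter.

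The paper's mechanism is different and worth internalizing. One first formalizes the role of $\Haus1=0$ as the \emph{Mather disconnectedness condition}: for every pair $u_1,u_2$ of weak KAM solutions, $(u_1-u_2)(\mathcal A)\subset\R$ is totally disconnected (Proposition~\ref{propHausdisc}). Then Lemma~\ref{lemmefond} does the real work: take a $C^1$ critical subsolution $u_1$ that is \emph{strict} outside $\mathcal A$ (Fathi--Siconolfi), and use $\theta=(u_1-0)\circ\pi$ as a Lyapunov function on $\tilde{\mathcal I}(0)=\operatorname{Graph}(X)$. One has $X_L\cdot\theta<0$ off $\tilde{\mathcal A}$, so along any orbit starting outside $\tilde{\mathcal A}$ the value of $\theta$ strictly decreases; the disconnectedness of $\theta(\tilde{\mathcal A})$ lets you pick a level $c$ missed by $\theta(\tilde{\mathcal A})$ and build a trapping region that no $\varepsilon$-pseudo-orbit can escape. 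This is where the Hausdorff-measure hypothesis is actually consumed, and it gives precisely $\mathcal R(X)\subseteq\mathcal A_X$. Your argument for the reverse inclusion $\mathcal A_X\subseteq\mathcal R(X)$ is also more elaborate than needed: this direction is general (no hypothesis) and follows directly from Ma\~n\'e's Theorem~\ref{mane} together with the conjugacy $\phi^L_t|_{\operatorname{Graph}(X)}\cong\phi^X_t$ of Proposition~\ref{propLagMane}; the total disconnectedness of $(\mathcal A_X)_M$ plays no role there, since $\delta_M$ already vanishes along Aubry orbits by Theorem~\ref{ThMather1}(3).
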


The outline of the paper is the following: Sections 2 and 3 are devoted to preparatory results. Section 4 is devoted to the proofs of Theorems \ref{THM1}, \ref{THM2} and \ref{THM3}. Sections 5 and 6 present applications in dynamics.

\section{Preliminary results}
Throughout this section, $M$ is assumed to be a complete Riemannian manifold. As before, $H:T^*M \rightarrow \R$ is an Hamiltonian of class at least {\rm C}$^2$ satisfying the three usual conditions (H1)-(H3), and $L$ is the Tonelli Lagrangian which is associated to it by Fenchel's duality. 

\subsection{Some facts about the Aubry set}\label{DefAubry}
We recall the results of Mather on the Aubry set, and also an important complement due to Dias Carneiro.

The following results are due to Mather, see \cite{mather91, mather93} for the proof in the compact case.
\begin{theorem}[Mather]\label{ThMather1}{There exists a closed subset $\tilde \Aub\subset TM$ such that:
\begin{itemize}
\item[(1)] The set $\tilde \Aub$ is invariant under the Euler-Lagrange flow.
\item[(2)] The projection $\pi:TM\to M$ is injective on $\tilde \Aub$. Moreover,
we have $\pi(\tilde \Aub)=\Aub$, and the inverse map $(\pi|\tilde \Aub)^{-1}:\Aub\to
\tilde \Aub$ is locally  Lipschitz.
\item[(3)] Let $(x,v)$ be in $\tilde \Aub$, and call $\gamma_{(x,v)}$ the curve which is the projection   of the  orbit $\phi^L_t(x,v)$ of the Euler-Lagrange flow through $(x,v)$
$$\gamma_{(x,v)}(t)=\pi\phi_t(x,v).$$
This curve is entirely contained in $\Aub$, and it is an $L$-minimizer. Moreover, we have
$$\forall t,t'\in \R, \quad\delta_M(\gamma(t),\gamma(t'))=0,$$
therefore the whole curve $\gamma_{(x,v)}$ projects to the same point as $x$ in the Mather quotient.
\item[(4)] If $x\in  \Aub$ and $\gamma_n:[0,t_n]\to M$ is a sequence of $L$-minimizer such that $t_n\to+\infty, \gamma_n(0)=\gamma_n(t_n)=x$, and 
$\int_0^{t_n}L(\gamma_n(s),\dot\gamma_n(s))\,ds+c(H)t_n\to 0$, then both sequences 
$\dot\gamma_n(0),\dot\gamma_n(t_n)$ converge in $T_xM$ to the unique $v\in T_xM$ such that $(x,v)\in \tilde\Aub$.
\end{itemize}}
\end{theorem}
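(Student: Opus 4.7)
The natural candidate is
$$\tilde{\Aub}=\Bigl\{(x,v)\in TM \,\Big|\, x\in\Aub \text{ and } \forall\text{ critical subsolution }u,\ d_xu=\frac{\partial L}{\partial v}(x,v)\Bigr\},$$
matching the informal description already given in the excerpt. My plan is to verify the four conclusions from properties of \emph{calibrated} curves and the Peierls barrier~$h$.

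First I would establish differentiability of every critical subsolution at every $x\in\Aub$, together with the existence of the vector~$v_x$. Since $h(x,x)=0$, there exist $L$-minimizers $\gamma_n\colon[0,t_n]\to M$ with $\gamma_n(0)=\gamma_n(t_n)=x$, $t_n\to\infty$, and $\int_0^{t_n}L(\gamma_n,\dot\gamma_n)\,ds+c(H)t_n\to 0$. Uniform superlinearity (H2) makes these curves equi-Lipschitz, so Arzel\`a--Ascoli plus diagonal extraction on suitable translates over $[-T,T]$ produces a doubly infinite $L$-minimizer $\gamma\colon\R\to M$ through $x$ that is calibrated by every critical subsolution $u$, i.e.\ $u(\gamma(b))-u(\gamma(a))=\int_a^b L(\gamma,\dot\gamma)\,ds+c(H)(b-a)$ for all $a<b$. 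Calibration combined with the equality case of the Fenchel inequality forces $u$ to be differentiable at each interior point of $\gamma$ with $d_{\gamma(t)}u=(\partial L/\partial v)(\gamma(t),\dot\gamma(t))$, a value thus independent of~$u$; strict convexity (H1) makes $\mathcal{L}$ a fibrewise diffeomorphism, yielding a unique $v_x\in T_xM$ with $(x,v_x)\in\tilde\Aub$. Closedness of $\Aub$ is lower semicontinuity of $h$, and $\tilde\Aub$ inherits closedness once the graph property below is proved. Invariance of $\tilde\Aub$ under $\phi_t^L$ is immediate since $\gamma$ is the orbit of $(x,v_x)$ and calibration persists for all~$t$.

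The main obstacle is Mather's Lipschitz graph estimate in~(2): for $x,y\in\Aub$ close one must show $\|v_x-v_y\|\le C\,d(x,y)$. I would compare two short concatenations, namely a calibrated orbit segment starting at $(x,v_x)$ spliced with a short geodesic to $y$, versus the symmetric construction starting at~$(y,v_y)$. Because both velocities are realized by calibrated orbits, each concatenation has action bounded by the increment of a single critical subsolution, and the two bounds nearly cancel. A second-order Taylor expansion of $L$ in the velocity variable, using the $C^2$-strict convexity (H1) together with uniform bounds on compact fibre sets from (H3), produces a positive-definite quadratic penalty whenever the velocities mismatch, yielding the required $\|v_x-v_y\|^2=O(d(x,y)^2)$. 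This is the delicate core of the argument.

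For~(3), once $\gamma_{(x,v)}$ is known to be calibrated (Step~1), splicing segments of $\gamma$ together with the recurrent loops at $x$ produces, for any $t'<t$, paths from $\gamma(t')$ to $\gamma(t)$ and from $\gamma(t)$ back to $\gamma(t')$ whose $c$-actions sum to something arbitrarily small, so $h(\gamma(t'),\gamma(t))+h(\gamma(t),\gamma(t'))\le 0$; combined with $\delta_M\ge 0$ this gives $\delta_M(\gamma(t'),\gamma(t))=0$, and then $0\le h(\gamma(t),\gamma(t))\le\delta_M(\gamma(t'),\gamma(t))=0$ via the triangle inequality shows $\gamma(t)\in\Aub$. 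For~(4), the sequence $\dot\gamma_n(0)$ is bounded by (H2), so any subsequential limit $w$ extends, via the same Arzel\`a--Ascoli extraction applied to the sequence, to a doubly infinite calibrated curve through $x$ with initial velocity~$w$; the uniqueness of $v_x$ established in Step~1 then forces $w=v_x$, and the symmetric argument at the right endpoint yields $\dot\gamma_n(t_n)\to v_x$.
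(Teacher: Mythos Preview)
The paper does not give its own proof of this theorem: it is stated as a result of Mather with references to \cite{mather91,mather93} for the compact case. What the paper does provide is an alternative route to part~(2), noted just after Theorem~\ref{DescriptionAubry}: one shows that $\tilde\Aub\subset\tilde{\cal I}(u)$ for every critical subsolution $u$, and then invokes Theorem~\ref{propertiesI}, which asserts that $x\mapsto d_xu$ is locally Lipschitz on ${\cal I}(u)$ (a fact whose proof, in \cite{fathibook}, rests on semiconcavity of weak KAM solutions rather than on Mather's action-comparison). This weak KAM route bypasses the delicate splicing/Taylor argument you outline.

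Your sketch follows Mather's original strategy more directly, and the ingredients are correct: extraction of a doubly-infinite calibrated curve via Arzel\`a--Ascoli, differentiability of every critical subsolution along it via the Fenchel equality case, and the uniqueness argument for~(4). Your description of the Lipschitz graph estimate in~(2) identifies the right mechanism (action of concatenations plus second-order expansion in the velocity), but this is the genuinely hard step and your account leaves the key inequality implicit; you would need to make precise which two competing curves you compare and how the quadratic penalty emerges from the minimizing property. For~(3), the loop-splicing idea is right, but remember that $h$ is a $\liminf$ over times tending to infinity, so the ``short'' direct segment from $\gamma(t')$ to $\gamma(t)$ does not by itself bound $h(\gamma(t'),\gamma(t))$; you must thread in the long loops $\gamma_n$ on both sides to push the travel times to infinity, and control the small geodesic bridges between $\gamma$ and $\gamma_n$.

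In short: your outline is a faithful sketch of Mather's direct argument; the paper instead cites Mather and points to the weak KAM alternative for part~(2), which trades the crossing-lemma estimate for regularity properties of critical subsolutions.
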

The following theorem of Dias Carneiro \cite{carneiro} is a nice complement to the Theorem above:
\begin{theorem}\label{ThCarneiro}{For every $(x,v)\in\tilde \Aub$, we have
$$H\left(x,\frac{\partial L}{\partial v}(x,v)\right)=c(H).$$
}
\end{theorem}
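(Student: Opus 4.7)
The plan is to combine the conservation of energy along the Euler--Lagrange flow with the standard fact from weak KAM theory that every Aubry orbit is calibrated by every critical viscosity subsolution. Fix $(x_0,v_0)\in\tilde{\Aub}$, set $\gamma(t):=\pi\phi_t^L(x_0,v_0)$, and recall from Theorem \ref{ThMather1}(3) that $\gamma$ is $L$-minimizing with image entirely contained in $\Aub$. Pick any critical viscosity subsolution $u$ (existence is guaranteed by the weak KAM theorem). The first and main step is to prove that $\gamma$ is $u$-calibrated:
\begin{equation*}
u(\gamma(t))-u(\gamma(s)) \;=\; \int_s^t L(\gamma(r),\dot\gamma(r))\,dr + c(H)(t-s),\qquad s\le t.
\end{equation*}
The direction $\le$ is the generic domination $u(y)-u(x)\le h_T(x,y)+c(H)T$ valid for any critical subsolution. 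The reverse direction crucially uses $(x_0,v_0)\in\tilde{\Aub}$, and is the main technical obstacle: it exploits $h(\gamma(s),\gamma(s))=0$, the $L$-minimality of $\gamma$, and closure of almost-minimizing loops at points of $\Aub$ via the Peierls barrier.

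Once the calibration is in hand, the rest is essentially mechanical. Since $u$ is Lipschitz on $M$ it is differentiable almost everywhere, so $u\circ\gamma$ is differentiable at almost every $t\in\R$. Differentiating the calibration identity gives
\begin{equation*}
d_{\gamma(t)}u\cdot\dot\gamma(t) \;=\; L(\gamma(t),\dot\gamma(t))+c(H)\qquad\text{for a.e. }t.
\end{equation*}
This quantity is sandwiched between the Fenchel inequality $d_{\gamma(t)}u\cdot\dot\gamma(t)\le L(\gamma(t),\dot\gamma(t))+H(\gamma(t),d_{\gamma(t)}u)$ and the subsolution bound $H(\gamma(t),d_{\gamma(t)}u)\le c(H)$, so both inequalities must be equalities. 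Hence, at a.e.\ $t$, we both have $H(\gamma(t),d_{\gamma(t)}u)=c(H)$, and (by the equality case of Fenchel) $d_{\gamma(t)}u=\frac{\partial L}{\partial v}(\gamma(t),\dot\gamma(t))$.

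To upgrade this identity from almost every $t$ to every $t$, I invoke conservation of energy: the Euler--Lagrange flow preserves
\begin{equation*}
E(x,v)\;:=\;\frac{\partial L}{\partial v}(x,v)\cdot v-L(x,v)\;=\;H\Bigl(x,\frac{\partial L}{\partial v}(x,v)\Bigr),
\end{equation*}
so $t\mapsto H(\gamma(t),\frac{\partial L}{\partial v}(\gamma(t),\dot\gamma(t)))$ is constant on $\R$. By the previous step it equals $c(H)$ on a set of positive measure, hence it equals $c(H)$ identically. Evaluating at $t=0$ yields $H(x_0,\frac{\partial L}{\partial v}(x_0,v_0))=c(H)$, which is the desired identity. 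The only serious obstacle is the calibration identity; once that is granted, Fenchel duality together with one conserved quantity closes the argument.
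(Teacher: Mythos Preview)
Your approach is essentially the one the paper takes. The paper does not give a standalone proof of Theorem \ref{ThCarneiro}; instead it remarks, after Theorem \ref{DescriptionAubry}, that the result follows from Theorems \ref{propertiesI} and \ref{DescriptionAubry}: by the latter $\tilde{\Aub}\subset\tilde{\cal I}(u)$ for every critical subsolution $u$, which is exactly your calibration step, and Theorem \ref{propertiesI} packages precisely your Fenchel argument to conclude $d_xu=\partial L/\partial v(x,v)$ and $H(x,d_xu)=c(H)$ at every point of $\tilde{\cal I}(u)$. So your proposal is an unpacking of the same argument.

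One technical wrinkle to clean up. The sentence ``Since $u$ is Lipschitz on $M$ it is differentiable almost everywhere, so $u\circ\gamma$ is differentiable at almost every $t$'' does not give what you need next: even if $(u\circ\gamma)'(t)$ exists, you use the full differential $d_{\gamma(t)}u$ in the Fenchel step, and Rademacher on $M$ does not guarantee that $d_{\gamma(t)}u$ exists for a.e.\ $t$ (the image of $\gamma$ may lie in the non-differentiability set). Two easy fixes: either take $u$ to be a $C^1$ critical subsolution, whose existence the paper recalls from \cite{fs04}, so that $d_{\gamma(t)}u$ exists for all $t$ and the conservation-of-energy step becomes unnecessary; or use directly the standard fact behind Theorem \ref{propertiesI}, namely that calibration forces $u$ to be differentiable at \emph{every} $\gamma(t)$ (the function $y\mapsto h_{t-s}(\gamma(s),y)$ touches $u$ from above at $\gamma(t)$, and $y\mapsto -h_{s'-t}(y,\gamma(s'))$ from below, giving coinciding super- and sub-differentials equal to $\partial L/\partial v(\gamma(t),\dot\gamma(t))$). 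With either fix your argument goes through.
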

We end this subsection by the following important estimation of the Mather semi-distance  (due to  Mather),
see  \cite[page 1375]{mather93}.
\begin{proposition}\label{HolderOrdre2}{ For every compact subset $K\subset M$, we can find a finite constant $C_K$, such that
$$\forall x\in \mathcal{A}\cap K,\, \forall y\in K, \quad\delta_M(x,y)\leq C_Kd(x,y)^2,$$
where $d$ is the Riemannian distance on $M$.}
\end{proposition}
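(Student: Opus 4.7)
My plan is to derive the quadratic bound from a pointwise one-jet comparison at $x$. The two natural functions attached to $x$ through the Peierls barrier, namely $y\mapsto h(x,y)$ and $y\mapsto h(y,x)$, are both locally semi-concave, both vanish at $y=x$ when $x\in\mathcal A$, and have \emph{opposite} derivatives there. Adding their one-sided semi-concavity inequalities therefore cancels the linear part and leaves exactly a quadratic remainder.

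First, I would record two standard facts from weak KAM theory. (a) For every $x\in M$, $h(x,\cdot)$ is a critical viscosity solution on $M$, hence locally semi-concave; moreover, by the classical Lax--Oleinik representation of viscosity solutions of convex coercive Hamilton--Jacobi equations, the semi-concavity constant on a compact set $K$ can be chosen depending only on $K$ (and on $H$), uniformly in $x$. (b) The same holds for $y\mapsto h(y,x)$, since this function equals $\check h(x,\cdot)$, the Peierls barrier of the time-reversed Lagrangian $\check L(x,v)=L(x,-v)$, and the assumptions (H1)--(H3) are stable under $v\mapsto -v$. Thus there exists a single constant $C_K$ such that, for every $x\in M$, every $y$ in a small neighbourhood of $x$ inside $K$, every $p\in\partial^+ h(x,\cdot)(x)$, and every $q\in\partial^+ h(\cdot,x)(x)$,
\[
h(x,y) \le h(x,x)+\langle p,\exp_x^{-1}(y)\rangle+C_K\,d(x,y)^2,
\]
\[
h(y,x) \le h(x,x)+\langle q,\exp_x^{-1}(y)\rangle+C_K\,d(x,y)^2.
\]
(For $y$ bounded away from $x$ in $K$, the estimate is trivial since $\delta_M$ is bounded on compact sets, and the finite supremum may be absorbed into $C_K$.)

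Next, fix $x\in\mathcal A\cap K$ and let $(x,v_x)\in\tilde{\mathcal A}$ be its unique lift given by Theorem \ref{ThMather1}(2). Set $p_x=(\partial L/\partial v)(x,v_x)\in T_x^*M$. A classical consequence of the calibration of the orbit $\gamma_{(x,v_x)}$ on both sides of time is that every critical subsolution of $H(y,d_yw)=c(H)$ is differentiable at $x$ with differential $p_x$; applied to $h(x,\cdot)$ this gives $d_y h(x,\cdot)|_{y=x}=p_x$. Running the same argument for the reversed Lagrangian $\check L$, whose unique $\tilde{\mathcal A}$-lift of $x$ is $(x,-v_x)$ with associated momentum $-p_x$, the critical subsolutions for $\check H$ are differentiable at $x$ with differential $-p_x$; applied to $\check h(x,\cdot)=h(\cdot,x)$ this gives $d_y h(\cdot,x)|_{y=x}=-p_x$. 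Since also $h(x,x)=0$ because $x\in\mathcal A$, taking $p=p_x$ and $q=-p_x$ in the two displayed inequalities and adding them, the linear terms $\langle p_x,\exp_x^{-1}(y)\rangle$ cancel exactly, and we obtain
\[
\delta_M(x,y)=h(x,y)+h(y,x)\le 2C_K\,d(x,y)^2.
\]

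I expect the only substantive technical point to be the \emph{uniform} local semi-concavity of $h(x,\cdot)$ as $x$ ranges over $M$, together with the identification of $\pm p_x$ as a supergradient of the two functions at any point of $\mathcal A$; both are well-documented parts of weak KAM theory (see Fathi's book and Fathi--Siconolfi). Given those inputs, the proposition is just the short cancellation above, and no direct construction of competing curves from $x$ to $y$ is needed.
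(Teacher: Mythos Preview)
Your argument is correct. Both approaches rest on the local semi-concavity of the Peierls barrier, but they exploit it differently. The paper's one-line sketch uses the \emph{joint} semi-concavity of $h$ on $M\times M$: then $\delta_M(x,y)=h(x,y)+h(y,x)$ is itself locally semi-concave on $M\times M$; since $\delta_M\geq 0$ everywhere and $\delta_M(x,x)=0$ for $x\in\mathcal A$, the diagonal point $(x,x)$ is a global minimum, so the semi-concave upper expansion there has vanishing linear term automatically, and the quadratic bound drops out with no need to identify any supergradient. Your route instead works with the two partial functions $h(x,\cdot)$ and $h(\cdot,x)$ separately, identifies their derivatives at $x$ as $p_x$ and $-p_x$ via $\tilde{\mathcal I}(u)\supset\tilde{\mathcal A}$ (Theorem~\ref{propertiesI}) together with time reversal, and then lets the linear terms cancel by hand. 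This is a perfectly valid alternative and gives more explicit information (the actual value $p_x$), at the cost of invoking the reversed Lagrangian $\check L$ and the structure of the Aubry set. Note also that your ``uniform in $x$'' semi-concavity constant is most cleanly obtained precisely from the joint local semi-concavity of $h$ on $M\times M$ that the paper quotes, so the two arguments share the same analytic input; the paper's version simply avoids the extra identification step by using the minimum property of $\delta_M$.
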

Note that one can prove directly this proposition from the fact that $h$ is locally semi-concave on $M\times M$, using that $\delta_M\geq 0$, together with the fact that $\delta_M(x,x)=0$ for every $x\in \mathcal{A}$.

\subsection{Aubry set and Hamilton-Jacobi equation}
In this section we recast the above results in terms of viscosity solutions of the Hamilton-Jacobi as is done in \cite{fathibook,fs04,fm}.

We first  recall the notion of domination. If $c\in\R$, a function $u:M\to\R$ is said to 
be dominated by $L+c$ (which we denote by $u\prec L+c$), if for every continuous 
piecewise {\rm C}$^1$ curve $\gamma:[a,b]\to M,a<b$, we have
\begin{equation*}
u(\gamma(b))-u(\gamma(a))\leq 
\int_a^bL(\gamma(s),\dot\gamma(s))\,ds+c(b-a).\tag{DOM}
\end{equation*}

In fact this is simply a different way to define the notion of viscosity solution for $H$.
More precisely we have, see \cite{fathibook} or \cite[Proposition 5.1, page 12]{fm}:
\begin{theorem}{A $u:M\to \R$ is dominated by $L+c$ if and only if
it is a viscosity subsolution of the Hamilton-Jacobi equation $H(x,d_xu)=c$.
Moreover, we have $u\prec L+c$ if and only if $u$ is Lipschitz and $H(x,d_xu)\leq c$ almost everywhere.}
\end{theorem}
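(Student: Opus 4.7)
The bridge between the two formulations is the Fenchel inequality $p(v)\leq L(x,v)+H(x,p)$, with equality iff $p=\partial L/\partial v(x,v)$. I would prove the two equivalences via a cycle of three implications: (a) $u\prec L+c$ implies $u$ is Lipschitz with $H(x,d_xu)\leq c$ almost everywhere; (b) Lipschitz together with the a.e.\ Hamilton--Jacobi inequality implies $u$ is a viscosity subsolution; (c) a viscosity subsolution is dominated by $L+c$.

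For (a), Lipschitz regularity is extracted from (DOM) applied along a unit-speed minimizing geodesic from $x_0$ to $y$: since (H3) provides a uniform bound $C_0$ for $L$ over the unit disk bundle on a compact neighborhood, (DOM) yields $u(y)-u(x_0)\leq (C_0+c)\,d(x_0,y)$, and switching the roles of $x_0$ and $y$ gives the Lipschitz estimate. By Rademacher, $u$ is differentiable almost everywhere. At such a point $x$, for any $v\in T_xM$, applying (DOM) on $[0,t]$ to a $C^1$ curve $\gamma$ with $\gamma(0)=x,\,\dot\gamma(0)=v$, dividing by $t$ and letting $t\to 0^+$, yields $d_xu(v)\leq L(x,v)+c$; taking the supremum over $v$ gives $H(x,d_xu)\leq c$.

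Implication (b) is the classical fact that, for $H$ continuous and convex in $p$, a Lipschitz function satisfying the Hamilton--Jacobi inequality a.e.\ is a viscosity subsolution; here the convexity (H1) of $H(x,\cdot)$ is essential. The argument proceeds by mollifying $u$ in local charts to obtain smooth $u_\varepsilon=u*\rho_\varepsilon$; convexity of $H(x,\cdot)$ and Jensen's inequality give $H(x,d_xu_\varepsilon)\leq c+\omega(\varepsilon)$ for a modulus of continuity $\omega$, so each $u_\varepsilon$ is a classical subsolution. Passing to a limit at a local maximum of $u-\phi$ then yields $H(x_0,d_{x_0}\phi)\leq c$ for every $C^1$ test function $\phi$.

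For (c), I would first observe that any viscosity subsolution is locally Lipschitz: at any differentiability point one has $H(x,d_xu)\leq c$, and the superlinearity (H2) forces $\|d_xu\|_x\leq(c-C(K))/K$ for every $K$, giving a Lipschitz constant. Given a $C^1$ curve $\gamma:[a,b]\to M$, the composition $u\circ\gamma$ is absolutely continuous, so
$$u(\gamma(b))-u(\gamma(a))=\int_a^b (u\circ\gamma)'(s)\,ds.$$
The main obstacle is that $(u\circ\gamma)'(s)=d_{\gamma(s)}u(\dot\gamma(s))$ only when $u$ is differentiable at $\gamma(s)$, and a priori $\gamma$ could thread through the (measure-zero but possibly topologically large) non-differentiability set of $u$; this is precisely why one cannot conclude by a direct appeal to Fenchel and Rademacher. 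I would bypass the difficulty with the same mollification as in (b): for the smooth $u_\varepsilon$ the Fenchel inequality gives
$$(u_\varepsilon\circ\gamma)'(s)=d_{\gamma(s)}u_\varepsilon(\dot\gamma(s))\leq L(\gamma(s),\dot\gamma(s))+H(\gamma(s),d_{\gamma(s)}u_\varepsilon)\leq L(\gamma(s),\dot\gamma(s))+c+\omega(\varepsilon),$$
and integrating and then letting $\varepsilon\to 0$ (using $u_\varepsilon\to u$ uniformly on compact sets) produces (DOM), closing the cycle.
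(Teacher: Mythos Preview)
The paper does not prove this theorem; it is quoted from \cite{fathibook} and \cite[Proposition 5.1]{fm}, with only the Lipschitz property of dominated functions established separately in Lemma~\ref{globalLip} (by the same geodesic argument you use in (a)). Your cycle (a)$\Rightarrow$(b)$\Rightarrow$(c) is the standard argument found in those references, and your treatment of (a) and (b) is correct.

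There is one soft spot in (c). You argue that a viscosity subsolution is locally Lipschitz because at any differentiability point $H(x,d_xu)\leq c$, and then (H2) bounds $\lVert d_xu\rVert_x$. But for a merely continuous viscosity subsolution you do not yet know there are any differentiability points, so invoking Rademacher here is circular: ``bounded derivative at the points of differentiability $\Rightarrow$ Lipschitz'' already presupposes a.e.\ differentiability. The correct order is: the viscosity condition bounds every element of the super-differential $D^+u(x)$ by a constant $R$ depending only on $c$ and (H2); a short barrier argument (compare $u$ with $y\mapsto u(x_0)+(R+\epsilon)d(x_0,y)$, suitably smoothed, on a small ball, and observe that a local maximum of the difference at some $y_0\neq x_0$ would force $H(y_0,d_{y_0}\phi)\leq c$ with $\lVert d_{y_0}\phi\rVert\geq R+\epsilon$, contradicting coercivity) then yields the Lipschitz estimate directly. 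Once Lipschitz regularity is in hand, your mollification route to (DOM) goes through exactly as written.
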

Note that Rademacher's Theorem states that every Lipschitz function is differentiable almost everywhere. 
For the proof that dominated functions are Lipschitz see \ref{globalLip}.
It is not difficult to see that a function $u:M\to\R$ is dominated by $L+c$ if and only if
$$\forall t>0,\,\forall x,y\in M,\quad u(y)-u(x)\leq h_t(x,y)+ct.$$
With these notations, we observe that a function $u$ is a critical subsolution if and only 
$u\prec L+c(H)$.

We now give the definition of calibrated curves. If $u:M\to\R$ and $c\in \R$,
we say that the curve $\gamma:[a,b]\to M$ is $(u,L,c)$-calibrated 
if  we have the equality
$$u(\gamma(b))-u(\gamma(a))=
\int_a^bL(\gamma(s),\dot\gamma(s))\,ds+c(b-a).$$
If $\gamma$ is a curve defined on the not necessarily compact interval $I$, we will say that  $\gamma$ is $(u,L,c)$-calibrated if its restriction to any compact subinterval of $I$ is $(u,L,c)$-calibrated.

In fact, this condition of calibration is useful only when $u\prec L+c$.
In this case $\gamma$ is an $L$-minimizer. Moreover, if $[a',b']$ is a subinterval of $[a,b]$, then the restriction $\gamma|[a',b']$ is also $(u,L,c)$-calibrated. 

Like in \cite{fathibook}, if $u:M\rightarrow  \R$ is a critical  subsolution, we denote by
$\tilde{\cal I}(u)$ the subset of $TM$ defined as
$$
\tilde{\cal I}(u)= \{(x,v)\in TM \mid \text{$\gamma_{(x,v)}$ is $(u,L,c(H))$-calibrated}\},$$
where $\gamma_{(x,v)}$ is the curve (already introduced in Theorem \ref{ThMather1}) defined on  $\R$ by
$$\gamma_{(x,v)}(t)=\pi \phi^L_t(x,v).$$

The following properties of $\tilde{\cal I}(u)$ are shown in \cite{fathibook}:
\begin{theorem}\label{propertiesI}
{The set $\tilde{\cal I}(u) $ is invariant under the Euler-Lagrange flow $\phi_t^L$. If $(x,v)\in \tilde{\cal I}(u)$, then $d_xu$ exists, and we have
$$
d_xu=\frac{\partial L}{\partial v}(x,v) \quad \text{ and } \quad H(x,d_xu)=c(H).
$$
It follows that the restriction $\pi|_{\tilde{\cal I}(u)}$ of the projection is injective; therefore, if we set ${\cal I}(u)=\pi(\tilde{\cal I}(u))$, then $\tilde{\cal I}(u)$ is a continuous graph over ${\cal I}(u)$.

Moreover, the map $x\mapsto d_xu$ is locally Lipschitz on ${\cal I}(u)$.

Since the inverse of the restriction
$\pi|\tilde{\cal I}(u)$ is given by $x\mapsto \mathcal{L}^{-1}(x,d_xu)$, and the Legendre transform  $\mathcal{L}$ is {\rm C}\/$^1$, it follows that the inverse of $\pi|\tilde{\cal I}(u)$
is also locally Lipschitz on ${\cal I}$.}
\end{theorem}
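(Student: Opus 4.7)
The plan is to derive each assertion from the defining calibration identity
$$u(\gamma_{(x,v)}(t))-u(x)=\int_0^t L(\gamma_{(x,v)}(s),\dot\gamma_{(x,v)}(s))\,ds+c(H)t,$$
combined with the domination $u\prec L+c(H)$, Fenchel's equality, strict convexity of $L$ in the fiber, and the local $C^1$-regularity of $h_t$ near its minimizers (a consequence of the $C^2$-smoothness and Tonelli character of $L$).

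Invariance of $\tilde{\mathcal I}(u)$ under $\phi_t^L$ is immediate, since the curve associated with $\phi_{s_0}^L(x,v)$ is the time-shift $s\mapsto\gamma_{(x,v)}(s+s_0)$, which remains calibrated on $\R$. For the differentiability of $u$ at $x$, fix $(x,v)\in\tilde{\mathcal I}(u)$, write $\gamma=\gamma_{(x,v)}$, and pick $a<0<b$ small. Applying the domination inequality to arbitrary curves from $y$ to $\gamma(b)$ and from $\gamma(a)$ to $y$, and using the calibration to force equality at $y=x$, I obtain the sandwich
$$u(\gamma(b))-h_b(y,\gamma(b))-c(H)b\le u(y)\le u(\gamma(a))+h_{-a}(\gamma(a),y)+c(H)(-a),$$
with both inequalities becoming equalities at $y=x$. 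For $|a|,|b|$ small the relevant minimizing extremals are unique, so both bounding functions are $C^1$ at $y=x$, and the classical formula for $\partial_y h_t$ gives their common differential $\partial L/\partial v(x,v)$. Hence $\partial L/\partial v(x,v)$ lies in both $D^+u(x)$ and $D^-u(x)$, which forces the Lipschitz function $u$ to be differentiable at $x$ with $d_xu=\partial L/\partial v(x,v)$; Fenchel's equality then yields $H(x,d_xu)=c(H)$.

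Injectivity of $\pi|_{\tilde{\mathcal I}(u)}$ follows at once from strict convexity (H1): two velocities $v_1,v_2\in T_xM$ with $(x,v_i)\in\tilde{\mathcal I}(u)$ would share the Legendre image $d_xu$ and therefore coincide. Consequently $\tilde{\mathcal I}(u)$ is the graph of $x\mapsto\mathcal L^{-1}(x,d_xu)$ over $\mathcal I(u)$. For the local Lipschitz regularity of $x\mapsto d_xu$ on $\mathcal I(u)$, I would fix a compact $K\subset\mathcal I(u)$ and a uniform calibration time $t_0>0$ and exploit the two representations
$$d_xu=\partial_y h_{t_0}(\gamma_{(x,v(x))}(-t_0),x)=-\partial_x h_{t_0}(x,\gamma_{(x,v(x))}(t_0)),$$
together with the joint local semiconcavity of $(x,y)\mapsto h_{t_0}(x,y)$ with constants uniform on $K$. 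A standard Mather-type comparison of the upper and lower variational bounds at two nearby points of $K$ then yields $\|d_xu-d_{x'}u\|\le Cd(x,x')$ in local coordinates. Since the Legendre transform is a $C^1$-diffeomorphism with $C^1$-inverse, the section $x\mapsto\mathcal L^{-1}(x,d_xu)=(\pi|_{\tilde{\mathcal I}(u)})^{-1}(x)$ inherits the local Lipschitz property.

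The main obstacle I expect is this last Lipschitz step: closedness of $\tilde{\mathcal I}(u)$ under uniform limits of calibrated curves gives for free only the continuity of the section $x\mapsto v(x)$, and the upgrade to a linear rate requires carefully combining the two-sided semiconcavity of $h_{t_0}$ with a locally uniform calibration time, very much in the spirit of the proof of Proposition \ref{HolderOrdre2}.
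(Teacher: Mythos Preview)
The paper does not actually prove this theorem: it states the result and refers the reader to \cite{fathibook} for the proof. So there is no in-paper argument to compare against, only the standard weak KAM proof that the citation points to.

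Your outline matches that standard argument closely. The invariance via time-shift of calibrated curves, the sandwich
\[
u(\gamma(b))-h_b(y,\gamma(b))-c(H)b\;\le\;u(y)\;\le\;u(\gamma(a))+h_{-a}(\gamma(a),y)-c(H)a
\]
with equality at $y=x$, the identification of the common differential via $\partial_y h_t$ at a non-conjugate endpoint, and the injectivity from strict convexity are exactly the ingredients used in \cite{fathibook}. One small remark: for the differentiability step you do not actually need uniqueness of minimizers or $C^1$-smoothness of $h_t$; it is enough that the upper bound is locally semiconcave (as a marginal of $h_{-a}$) and the lower bound locally semiconvex, which already forces $D^+u(x)\cap D^-u(x)\neq\emptyset$ and hence differentiability. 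This avoids having to control the size of $|a|,|b|$.

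Your assessment of the Lipschitz step is accurate: it is the substantive part, and the mechanism you describe---combining the semiconcave upper barrier coming from forward calibration with the semiconvex lower barrier coming from backward calibration, both with locally uniform constants on a compact piece of $\mathcal I(u)$---is precisely Mather's graph-theorem argument and is the one carried out in \cite{fathibook}. So your proposal is correct and aligned with the intended proof; there is nothing to flag.
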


Using the sets $\tilde{\cal I}(u)$, one can give the following characterization of the Aubry set  and its projection:
\begin{theorem}\label{DescriptionAubry}
{The Aubry set $\tilde {\cal A}$ is given by 
$$
\tilde {\cal A}=\bigcap_{u\in {\cal {SS}}}\tilde{\cal I}(u),
$$
where $\mathcal{SS}$ is the set of critical viscosity subsolutions.
The projected Aubry set $\Aub$, which is simply the image $\pi(\tilde {\cal A})$, is also 
$$ 
{\cal A}=\bigcap_{u\in {\cal {SS}}}{\cal I}(u).
$$}
\end{theorem}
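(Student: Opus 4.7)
The plan is to prove the two tangent inclusions $\tilde{\mathcal{A}} \subseteq \bigcap_{u \in \mathcal{SS}} \tilde{\mathcal{I}}(u)$ and $\bigcap_{u \in \mathcal{SS}} \tilde{\mathcal{I}}(u) \subseteq \tilde{\mathcal{A}}$ separately, and then to deduce the projected identity $\mathcal{A} = \bigcap_u \mathcal{I}(u)$ using injectivity of $\pi$ on both sides.

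For the forward inclusion, I would start from $(x, v) \in \tilde{\mathcal{A}}$ and an arbitrary $u \in \mathcal{SS}$. Since $h(x, x) = 0$, Tonelli's theorem supplies $\tau_n \to \infty$ and $L$-minimizers $\alpha_n : [0, \tau_n] \to M$ with $\alpha_n(0) = \alpha_n(\tau_n) = x$ and $\int_0^{\tau_n} L(\alpha_n, \dot\alpha_n)\, ds + c(H)\tau_n \to 0$. Theorem \ref{ThMather1}(4) forces $\dot\alpha_n(0), \dot\alpha_n(\tau_n) \to v$, so continuous dependence of the Euler-Lagrange flow on initial data gives $\alpha_n(t) \to \gamma_{(x,v)}(t)$ and $\alpha_n(\tau_n - s) \to \gamma_{(x,v)}(-s)$ in $C^1$ uniformly on compact time intervals. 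For fixed $t > 0$, applying domination $u \prec L + c(H)$ on $[0, t]$ and on $[t, \tau_n]$ yields two inequalities whose sum is $0 = u(\alpha_n(\tau_n)) - u(\alpha_n(0)) \leq \int_0^{\tau_n} L + c(H)\tau_n \to 0$; non-negativity of the individual slacks forces each to vanish, and taking $n \to \infty$ in the $[0,t]$ inequality yields $u(\gamma_{(x,v)}(t)) - u(x) = \int_0^t L(\gamma_{(x,v)}, \dot\gamma_{(x,v)})\, ds + c(H) t$. The symmetric argument on $[\tau_n - s, \tau_n]$ handles negative times, giving the full calibration of $\gamma_{(x,v)}$.

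For the reverse inclusion, I would take $(x, v) \in \bigcap_u \tilde{\mathcal{I}}(u)$ and specialise to the critical subsolution $u_0(\cdot) = h(x, \cdot)$. Calibration of $\gamma := \gamma_{(x, v)}$ by $u_0$ gives $h(x, \gamma(t)) - h(x, x) = \int_0^t L\, ds + c(H) t = h_t(x, \gamma(t)) + c(H) t$, the last step using that calibrated orbits are globally $L$-minimising. Picking $t_n \to \infty$ with $\gamma(t_n) \to y^* \in M$ (possible because the orbit has energy $c(H)$ and by superlinearity stays in a compact subset of $TM$) and interpolating from $\gamma(t_n)$ to $y^*$ by a curve $\sigma_n$ of time $s_n = d(\gamma(t_n), y^*)^{1/2} \to 0$ whose action also tends to $0$, one obtains curves from $x$ to $y^*$ satisfying $h_{t_n + s_n}(x, y^*) + c(H)(t_n + s_n) \leq h(x, y^*) - h(x, x) + o(1)$. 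Taking the liminf and using the definition of $h$ yields $h(x, y^*) \leq h(x, y^*) - h(x, x)$, whence $h(x, x) \leq 0$; combined with $h(x, x) \geq 0$, this forces $x \in \mathcal{A}$. Uniqueness of $v$ comes from Theorem \ref{propertiesI}: $d_x u_0 = (\partial L/\partial v)(x, v)$ holds both for our $v$ and for the Aubry lift $v'$ (which lies in $\tilde{\mathcal{I}}(u_0)$ by the forward inclusion), so the Legendre transform forces $v = v'$ and $(x, v) \in \tilde{\mathcal{A}}$.

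The projected identity $\mathcal{A} = \bigcap_u \mathcal{I}(u)$ now follows: $\mathcal{A} \subseteq \bigcap_u \mathcal{I}(u)$ is immediate by projecting the tangent version, while for the reverse I would pick any $x \in \bigcap_u \mathcal{I}(u)$, extract some lift $(x, v_{u_0}) \in \tilde{\mathcal{I}}(u_0)$ for $u_0 = h(x, \cdot)$, and rerun the reverse-inclusion argument on this lift to conclude $x \in \mathcal{A}$. I expect the main obstacle to be the reverse tangent inclusion — specifically the interpolation to the limit point $y^*$ with vanishing action cost, which rests on Lipschitz regularity of $h(x, \cdot)$ as a critical viscosity solution, on compactness of the energy shell, and on the identity $\int_0^t L(\gamma, \dot\gamma)\, ds = h_t(x, \gamma(t))$ valid for calibrated orbits.
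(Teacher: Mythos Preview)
The paper does not prove this theorem; it is quoted as a known characterisation with the reader referred to \cite{fathibook,fs04,fm}, so there is no in-paper argument to compare against.

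Your proposal is essentially correct when $M$ is compact (or, more generally, whenever the Peierls barrier $h$ is finite so that $h(x,\cdot)$ is a legitimate critical solution). The forward inclusion via minimising loops, Theorem~\ref{ThMather1}(4), and slack-splitting is the standard route and is carried out correctly. For the reverse inclusion, your use of the single subsolution $u_0=h(x,\cdot)$ together with passage to an $\omega$-limit point $y^*$ is one of the classical arguments for $h(x,x)\le 0$; note that, as you implicitly observe in the last paragraph, this argument only uses $(x,v)\in\tilde{\mathcal I}(u_0)$ for this one $u_0$, not the full intersection, which is precisely why your deduction of the projected identity does not require producing a lift lying in every $\tilde{\mathcal I}(u)$.

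One genuine gap: your justification that $\gamma(t_n)$ has a convergent subsequence appeals to ``energy $c(H)$ and superlinearity'', but the energy constraint $H(x,\partial L/\partial v(x,v))=c(H)$ together with superlinearity only bounds $\lVert v\rVert_x$ uniformly, not the base point. So you are tacitly assuming $M$ compact at this step. In the non-compact setting one must argue differently---for instance by bounding $h(x,\gamma(t))$ between two globally Lipschitz critical subsolutions to force $\gamma$ to remain in a compact set, or by appealing instead to the Fathi--Siconolfi strict $C^1$ subsolution (strict outside $\mathcal A$), for which Theorem~\ref{propertiesI} immediately gives $H(x,d_xu_1)=c(H)$ and hence $x\in\mathcal A$.
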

Note that the fact that the Aubry set is a locally Lipschitz graph (i.e. part (2) of Theorem \ref{ThMather1}) follows from the above results, since $\tilde \Aub\subset \tilde{\cal I}(u)$, for any critical subsolution $u$. Moreover,  Theorem \ref{ThCarneiro} also follows from the results above.

\subsection{Mather semi-distance and critical subsolutions}\label{MatherDistAndSubsol}
As it was observed by the first author to generalize Mather's examples \cite{mather04}, see the announcement \cite{fathiOberwolfach}, a representation formula for $\delta_M$ in term of {\rm C}$^1$ critical subsolutions is extremely useful. This has also been used more recently  by Sorrentino \cite{sorrentino06}.

To explain this representation formula, like in Theorem \ref{DescriptionAubry}, we call $\mathcal{SS}$ the set of critical viscosity subsolutions and by ${\cal S}_-$ the set of critical viscosity (or weak KAM) solutions. Hence ${\cal S}_- \subset \mathcal{SS}$. If $u:M \rightarrow \R$ is a critical viscosity subsolution, we recall that 
$$
\forall x,y \in M,\quad
u(y) -u(x) \leq h(x,y).
$$ 
In \cite{fs04}, Fathi and Siconolfi proved that for every critical viscosity subsolution $u:M \rightarrow \R$, there exists a {\rm C}$^1$ critical subsolution whose restriction to the projected Aubry set is equal to $u$. Recently Patrick Bernard \cite{bernard} has even shown that $u$ can be assumed {\rm C}$^{1,1}$, i.e. differentiable everywhere with (locally) Lipschitz derivative, see also Appendix \ref{BernardNonCompact} below. In the sequel, we denote by $\mathcal{SS}^1$ 
(resp.\ $\mathcal{SS}^{1,1}$) the set of {\rm C}$^1$ (resp.\ {\rm C}$^{1,1}$) critical subsolutions. 
The representation formula is given by the following lemma:
\begin{lemma}\label{Fund}{\sl
For every $x,y\in \Aub$,
\begin{align*}
\delta_M (x,y) & = \max_{u_1, u_2\in \mathcal{S}_-} \left\{ (u_1-u_2)(y) - (u_1-u_2)(x) \right\} \\
& = \max_{u_1, u_2\in \mathcal{SS}} \left\{ (u_1-u_2)(y) - (u_1-u_2)(x) \right\} \\
& = \max_{u_1, u_2\in \mathcal{SS}^1} \left\{ (u_1-u_2)(y) - (u_1-u_2)(x) \right\} \\
& = \max_{u_1, u_2\in \mathcal{SS}^{1,1}} \left\{ (u_1-u_2)(y) - (u_1-u_2)(x) \right\}. 
\end{align*}}
\end{lemma}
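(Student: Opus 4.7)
The plan is to sandwich each of the four maxima between $\delta_M(x,y)$ on both sides. The upper bound for all four comes from the definition of critical subsolutions, while the lower bound is realized by an explicit choice and then transported from $\mathcal{S}_-$ to $\mathcal{SS}^{1,1}$ via the Fathi--Siconolfi/Bernard regularization.

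\textbf{Upper bounds.} Since $\mathcal{S}_-\subset \mathcal{SS}$ and $\mathcal{SS}^{1,1}\subset \mathcal{SS}^{1}\subset \mathcal{SS}$, it suffices to bound the $\mathcal{SS}$ supremum. I would use the fact, already recorded in the introduction, that $u\prec L+c(H)$ implies $u(y)-u(x)\le h(x,y)$ for all $x,y\in M$. Applied to $u_1,u_2\in\mathcal{SS}$:
$$(u_1-u_2)(y)-(u_1-u_2)(x)=\bigl[u_1(y)-u_1(x)\bigr]+\bigl[u_2(x)-u_2(y)\bigr]\le h(x,y)+h(y,x)=\delta_M(x,y).$$
Taking the supremum, $\max_{\mathcal{SS}}\le \delta_M(x,y)$, hence also for the three other (smaller) classes.

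\textbf{Attainment in $\mathcal{S}_-$.} For $x,y\in\mathcal{A}$ I would take the distinguished pair $u_1=h(x,\cdot)$ and $u_2=h(y,\cdot)$. By the remark quoted in the introduction (``if $h$ is finite, then for each $x\in M$ the function $h_x(\cdot)=h(x,\cdot)$ is a critical viscosity solution''), both $u_1,u_2\in\mathcal{S}_-$. Using $h(x,x)=h(y,y)=0$ for points of $\mathcal{A}$,
$$(u_1-u_2)(y)-(u_1-u_2)(x)=\bigl[h(x,y)-h(y,y)\bigr]-\bigl[h(x,x)-h(y,x)\bigr]=h(x,y)+h(y,x)=\delta_M(x,y).$$
Combined with the upper bound, this proves the first two equalities of the lemma.

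\textbf{Upgrading the regularity of the maximizers.} To recover the same value in $\mathcal{SS}^{1,1}$, I would invoke the Fathi--Siconolfi theorem in Bernard's $C^{1,1}$ refinement, as cited in the paragraph preceding the lemma: for any $u\in\mathcal{SS}$ there is $\tilde u\in\mathcal{SS}^{1,1}$ that agrees with $u$ on the whole projected Aubry set $\mathcal{A}$. Applying this to the two weak KAM solutions $u_1=h(x,\cdot)$ and $u_2=h(y,\cdot)$ from the previous step yields $\tilde u_1,\tilde u_2\in\mathcal{SS}^{1,1}\subset\mathcal{SS}^{1}$ with $\tilde u_i(x)=u_i(x)$ and $\tilde u_i(y)=u_i(y)$, because $x,y\in\mathcal{A}$. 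Hence $\tilde u_1,\tilde u_2$ realize exactly the value $\delta_M(x,y)$, and combined with the already proved upper bound $\max_{\mathcal{SS}^{1,1}}\le \max_{\mathcal{SS}^{1}}\le\max_{\mathcal{SS}}\le\delta_M(x,y)$ this closes the remaining two equalities.

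The only non-elementary ingredient is the existence of a $C^{1,1}$ critical subsolution prescribed on $\mathcal{A}$; the rest is the explicit computation with $h(x,\cdot)$ and $h(y,\cdot)$ together with the trivial inclusions between the four classes. The main subtlety, and the reason the lemma matters, is that one wants maximizers of high regularity (so their differentials are defined everywhere, not merely a.e.), which is precisely what Bernard's refinement supplies.
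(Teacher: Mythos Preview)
Your proof is correct and follows essentially the same approach as the paper: the upper bound via $u(y)-u(x)\le h(x,y)$ for any critical subsolution, attainment in $\mathcal{S}_-$ by the explicit pair $u_1=h(x,\cdot)$, $u_2=h(y,\cdot)$ using $h(x,x)=h(y,y)=0$ on $\mathcal{A}$, and the passage to $\mathcal{SS}^{1}$ and $\mathcal{SS}^{1,1}$ via Fathi--Siconolfi and Bernard's regularization.
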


\begin{proof}
Let $x,y \in \mathcal{A}$ be fixed. First, we notice that if $u_1, u_2$ are two critical viscosity subsolutions, then we have 
\begin{align*}
 (u_1-u_2)(y) - (u_1-u_2)(x) & = \left(u_1(y) - u_1(x)\right) + \left( u_2(x) -u_2(y) \right) \\
& \leq h(x,y) + h(y,x) =\delta_M(x,y).
\end{align*}
On the other hand, if we define $u_1,u_2:M \rightarrow \R$ by $u_1(z)=h(x,z)$ and $u_2(z)=h(y,z)$ for any $z\in M$, by the properties of $h$ the functions $u_1, u_2$ are both critical viscosity solutions. Moreover
\begin{align*}
(u_1-u_2)(y) - (u_1-u_2)(x) & = \left( h(x,y) - h(y,y) \right) - \left( h(x,x) -h(y,x) \right) \\
& = h(x,y) + h(y,x) =\delta_M (x,y),
\end{align*}
since $h(x,x)=h(y,y)=0$. Thus we obtain easily the first and the second equality. The last inequalities is an immediate consequence of the work of Fathi and Siconolfi and that of Bernard recalled above.
\end{proof}

\subsection{Norton's generalization of Morse Vanishing Lemma}
We will need in a crucial way Norton's elegant generalization of Morse Vanishing Lemma, see  \cite{morse39,norton86}. This result, like Ferry's Lemma (see Lemma \ref{lemNEW2}) are the two basic pieces that allow to prove generalizations of the Morse-Sard Theorem (see for example the work of Bates).

\begin{lemma}[The Generalized Morse Vanishing Lemma]
\label{lemmaMorseVanish} Suppose $M$ is an $n$-dimensional (separable) manifold endowed with a distance $d$  coming from a Riemannian metric.
Let $k \in \N$ and $\a \in [0,1]$. Then for any subset $A \subset M$, we can find a countable family  $B_i,i\in \N$ of {\rm C}$^1$-embedded compact disks in $M$ of dimension $\leq n$ and a countable decomposition of $A=\cup_{i \in \N} A_i$, with $A_i \subset B_i$,  for every $i\in \N$,  such that
every $f \in {\rm C}^{k,\a}(M,\R)$ vanishing on $A$ satisfies, for each $i \in \N$,
\begin{equation}\label{eqmorselemma}
\forall y \in A_i, \, x \in B_i, \quad 
|f(x)-f(y)| \leq M_i d(x,y)^{k+\a} 
\end{equation}
for a certain constant $M_i$ (depending on $f$).
\end{lemma}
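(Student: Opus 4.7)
The plan is to reduce to a Euclidean statement and then stratify $A$ by the ``smallest dimension'' in which it can be locally captured by a $C^1$ graph. First, since $M$ is a separable $n$-manifold with a Riemannian distance, cover $M$ by countably many precompact coordinate charts $(U_j,\varphi_j)$ with $\varphi_j$ bi-Lipschitz onto an open subset of $\R^n$. Any countable decomposition produced in each $\varphi_j(A\cap U_j) \subset \R^n$ pulls back to a countable decomposition of $A\cap U_j$, so assembling over $j$ one reduces to the case $M=\R^n$ with the Euclidean distance, $f\in C^{k,\alpha}(\R^n,\R)$.

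Next, I would induct on $n$. The case $n=0$ is trivial (take singleton disks). For the inductive step, stratify $A$ by setting, for each $y\in A$,
\[
d(y) \;=\; \min\bigl\{\,d\in\{0,\dots,n\}\,:\,\exists\text{ nbhd }V\ni y\text{ and a }C^1\text{-embedded }d\text{-disk }B\subset V\text{ with }A\cap V\subset B\bigr\},
\]
and decompose $A=\bigsqcup_{d=0}^n A^{(d)}$ with $A^{(d)}=\{y\in A:d(y)=d\}$. For each $d<n$, by Lindel\"of one covers $A^{(d)}$ by countably many $d$-dimensional compact $C^1$-embedded disks $B_i$ with associated $A_i\subset B_i$; restricting $f$ to $B_i$ gives a $C^{k,\alpha}$ function on a $d$-manifold which vanishes on $A_i$, and the inductive hypothesis applied intrinsically inside $B_i$ yields the estimate \eqref{eqmorselemma} (with ambient distance comparable to the intrinsic distance by compactness of $B_i$).

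The remaining stratum $A^{(n)}$ is the heart of the argument. By definition, no neighborhood of a point of $A^{(n)}$ can trap $A$ inside a proper $C^1$-submanifold, so at every scale near each $y\in A^{(n)}$ the set $A$ contains $N=\binom{n+k}{k}$ points in ``quantitatively general position'' (i.e.\ making the Vandermonde matrix of the degree-$\leq k$ polynomial evaluations uniformly invertible after rescaling). For such a configuration the unique interpolating polynomial of degree $\leq k$ of $f$ vanishes identically, since $f$ vanishes at the nodes; combined with the $C^{k,\alpha}$ hypothesis one deduces that the full $k$-jet of $f$ vanishes at every $y\in A^{(n)}$, so that Taylor's theorem with remainder yields
\[
|f(x)-f(y)| \;=\; |f(x)| \;\leq\; M\,|x-y|^{k+\alpha}
\]
for all $y\in A^{(n)}$ and $x$ in a fixed compact ball around $y$. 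Covering $A^{(n)}$ by countably many such balls, taken as the top-dimensional disks $B_i$, completes the decomposition.

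\textbf{Main obstacle.} The delicate step is the quantitative general-position claim on $A^{(n)}$: one must show that the failure of $A$ to lie on any $(n{-}1)$-dimensional $C^1$-graph near $y$ can be quantified at \emph{every} dyadic scale $2^{-m}$ by the existence of $N$ points of $A$ whose Vandermonde determinant is bounded below after rescaling by $2^{-m}$. This forces a further countable subdivision of $A^{(n)}$ according to the lower bound on the Vandermonde determinant and to the scale at which it becomes available, which is precisely where the dependence of the constant $M_i$ on $f$ enters. Establishing this uniform-at-scale nondegeneracy is the substantive content of Norton's refinement of Morse's vanishing lemma and is what makes the exponent $k+\alpha$ sharp.
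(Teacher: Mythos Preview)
The paper does not actually prove this lemma; it cites Norton's paper for the case $M=\R^n$ and observes that the passage to manifolds via a countable atlas is routine. Your reduction to $\R^n$ is fine and matches that remark. The substantive content is all in the Euclidean statement, so the comparison is really between your outline and Norton's argument.

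Your stratification, however, has a genuine gap. You put a point $y$ into $A^{(n)}$ when no neighborhood of $y$ traps $A$ inside an $(n{-}1)$-dimensional $C^1$ disk, and then assert that this forces, at every scale, the presence of $\binom{n+k}{k}$ points of $A$ in general position for degree-$k$ interpolation. That implication is false: failure to lie on a $C^1$ hypersurface is a first-order condition, whereas general position for degree-$k$ interpolation is $k$-th order. Take $n=2$, $k=2$, $\alpha>0$, and $A=\{x=0\}\cup\{y=0\}\subset\R^2$. Near the origin $A$ lies on no $C^1$ curve, so $0\in A^{(2)}$ by your definition; yet every finite subset of $A$ lies on the conic $\{xy=0\}$, so the rescaled Vandermonde determinant you need is identically zero at every scale. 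Correspondingly, $f(x,y)=xy$ is $C^\infty$, vanishes on $A$, has nonzero $2$-jet at the origin, and satisfies no bound of the form $|f(x,y)|\leq M|(x,y)|^{2+\alpha}$. The origin must therefore be split off as a $0$-dimensional $B_i$, which your scheme does not do. A secondary issue is that in your inductive step for $d<n$ you restrict $f$ to a $C^1$-embedded $d$-disk and treat the restriction as $C^{k,\alpha}$; a $C^1$ embedding only preserves $C^1$ regularity, so the induction as written does not close (this is repairable by using affine or $C^\infty$ disks, but it is another symptom). The actual Morse--Norton decomposition is not based on the minimal dimension of a containing $C^1$ disk; it proceeds instead by iterated one-variable divided-difference (Rolle-type) arguments along coordinate directions, stratifying $A$ according to which mixed partials of $f$ can be forced to vanish by such chains.
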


Let us make some comments. In his statement of the Lemma above (see \cite{norton86}), Norton  distinguishes a countable $A_0$ in his decomposition. In fact, in the 
statement we give this corresponds to the (countable numbers of) 
disks in the family $B_i$ 
where the dimension of the disk $B_i$ is $0$, in which case $A_i$ is also a point.
Therefore there is no need to distinguish this countable subset when formulating 
the Generalized Morse Vanishing Lemma. The second comment is that we have 
stated this Generalized Morse Vanishing Lemma \ref{lemmaMorseVanish} directly for (separable) manifolds. This is a routine generalization of the case $M=\R^n$ which is done by Norton in \cite{norton86} (see for example the way we deduce Lemma \ref{lemNEW2} from  Lemma \ref{lemNEW1}).

\section{Proofs of Theorems \ref{THM1}, \ref{THM2}, \ref{THM3}, and \ref{THMSURF}} 

\subsection{Proof of Theorem \ref{THM1}}

Let us first assume that $\dim M =1,2$. The proof is the same as Mather's proof of total
disconnectedness given in \cite{mather03}. It also uses Proposition \ref{HolderOrdre2}, 
but instead of using the results of Mather contained in \cite{mather02}, it uses the stronger Lemma \ref{lemNEW2} due to Ferry and proved in Appendix A below.

We cover $M$ by an increasing countable union $K_n$ of compact subsets. For a given $n$, by Proposition \ref{HolderOrdre2} we can find  a finite constant $C_n$ such that
$$\forall x,y\in\mathcal{A}\cap K_n, \quad\delta_M(x,y)\leq C_nd(x,y)^2.$$
Since $\dim M\leq 2$ by  Lemma \ref{lemNEW2} we obtain that  $(\Aub\cap K_n,\delta_M)$  has vanishing one-dimensional Hausdorff measure.  Since $\Aub$ is the countable union of the  $\Aub\cap K_n$, we also conclude that $(\Aub,\delta_M)$  has vanishing one-dimensional Hausdorff measure.

Let us now assume that $\dim M = 3$. The fact that $(\mathcal{A}^0_M,\delta_M)$ has vanishing one-dimensional Hausdorff measure will follow from Theorem \ref{THM2}. So, it suffices to prove that the semi-metric space $(\mathcal{A}\setminus \mathcal{A}^0,\delta_M)$ has vanishing one-dimensional Hausdorff measure.  

Consider for every $x\in \Aub$ the unique vector $v_x\in T_xM$ such that 
$(x,v_x)\in \tilde\Aub$. Call $\gamma_x$ the curve defined by 
$\gamma_x(t)=\pi\phi^L_t(x,v_x)$. 
Since $\tilde\Aub$ is invariant by $\phi^L_t$, the projected Aubry set is 
laminated by the curves $\gamma_x,x\in\Aub$. Let us define 
$\mathcal{A}'= \mathcal{A}\setminus \mathcal{A}^0$. Since, by  Proposition \ref{caracAub0},
any point of the form $(z,0)\in \tilde\Aub$ is fixed under $\phi^L_t$, and 
$\dot\gamma_x(0)=v_x$, we have 
$\gamma_x(t)\in \Aub'$ for all $x\in \Aub'$ and all $t\in \R$. Moreover, 
the family $\gamma_x,x\in \Aub'$, is a genuine 1-dimensional Lipschitz 
lamination on  
$\mathcal{A}'= \mathcal{A}\setminus \mathcal{A}^0$. 
For each $x\in \mathcal{A}'$, we can find a small {\rm C}$^\infty$ 2-dimensional  
submanifold $S_x$ of $M$ such that $S_x$ is transversal to $\gamma_x$.
By transversality and continuity, the union $U_x$ of the curves $\gamma_y,y\in \mathcal{A}'$ such that $\gamma_y\cap S_x\neq\emptyset$ is a neighborhood of $x$ in $\Aub'$ (for the topology induced by the manifold topology).
Therefore since $M$ is metric separable, we can find a countable subfamily $(S_{x_i})_{i\in \N}$ such that $\gamma_y\cap(\cup_{i\in \N}S_{x_i})\neq \emptyset$ for every $y\in \Aub'$. 
By part (3) of Theorem \ref{ThMather1} above, for every $z\in \Aub$, and every $ t,t'\in\R$, we have  
$$\delta_M(\gamma_z(t),\gamma_z(t'))=0.$$ 
It follows that the countable union of the images of $S_{x_i}\cap\Aub$ in $\Aub_M$
covers the image of $\Aub'$ in $\Aub_M$. Therefore by the countable additivity of 
the Hausdorff measure, we have to show that $(S_{x_i}\cap \Aub,\delta_M)$ has 
 1-dimensional Hausdorff measure equal to $0$. Since $S_{x_i}$ is 2-dimensional, this follows from Proposition \ref{HolderOrdre2} and Lemma \ref{lemNEW2} like above.

\subsection{Proof of Theorem \ref{THM2}}
Before giving the proof we need a better understanding of the sets $\tilde\Aub^0$ and $\Aub^0$. 
\begin{lemma}\label{propTildeH}{The function $\tilde H:M\to M$ defined by 
$$\tilde H(x)=\inf\{H(x,p)\mid p\in T_x^*M\}$$
satisfies the following properties:
\begin{itemize}
\item[{\rm(i)}] For every $x\in M$, we have $\tilde H(x)\leq c(H)$.
\item[{\rm(ii)}] We have $H(x,p)=\tilde H(x)$ if and only if $p=\partial L/\partial v(x,0)$. 
\item[{\rm(iii)}] For every $x\in M$, we have 
$$\tilde H(x)=H\left(x,\frac{\partial L}{\partial v}(x,0)\right)=-L(x,0).$$
Therefore $\tilde H$ is as smooth as $x\mapsto L(x,0)$.
\item[{\rm(iv)}] The point $x$ is a critical point of $\tilde H$ (or of $x\mapsto L(x,0)$) if and only
the point $(x, \partial L/\partial v(x,0))$ is a critical point of $H$.
In particular, the point $(x, \partial L/\partial v(x,0))$ is a critical point of $H$ for every
$x$ such that $\tilde H(x)=c(H)$.
\end{itemize}}
\end{lemma}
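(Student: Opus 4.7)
The plan is to extract all four items from Fenchel duality applied at the zero velocity, together with one appeal to the existence of critical subsolutions.

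First I would prove (iii), because it gives (ii) as a byproduct and sets up everything else. Evaluating the Fenchel identity
\[
L(x,0)=\sup_{p\in T_x^*M}\bigl\{p(0)-H(x,p)\bigr\}=-\inf_{p\in T_x^*M}H(x,p)=-\tilde H(x),
\]
immediately yields $\tilde H(x)=-L(x,0)$, and therefore $\tilde H$ is exactly as smooth as $x\mapsto L(x,0)$. By (H1) the supremum in Fenchel's formula is attained at a unique $p$, namely the one characterized by equality in the Fenchel inequality, i.e.\ $(x,p)=\mathcal L(x,0)$, which means $p=\partial L/\partial v(x,0)$. This proves (ii) and also identifies the value of $\tilde H(x)$ as $H(x,\partial L/\partial v(x,0))$, completing (iii).

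For (i) I would invoke the existence of $C^1$ critical subsolutions (Fathi--Siconolfi, recalled in Section~\ref{MatherDistAndSubsol}). Pick any $u\in\mathcal{SS}^1$; then $H(x,d_xu)\leq c(H)$ for every $x\in M$, so
\[
\tilde H(x)=\inf_{p} H(x,p)\leq H(x,d_xu)\leq c(H).
\]

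Finally, for (iv) I would use the envelope-type identity. Set $p(x)=\partial L/\partial v(x,0)$, so $\tilde H(x)=H(x,p(x))$, and note $p(\cdot)$ is $C^{k-1}$. Differentiating and using that $p(x)$ is the minimizer in $p$, i.e.\ $\partial H/\partial p(x,p(x))=0$, gives
\[
d_x\tilde H=\frac{\partial H}{\partial x}(x,p(x))+\frac{\partial H}{\partial p}(x,p(x))\,dp(x)=\frac{\partial H}{\partial x}(x,p(x)).
\]
Hence $x$ is critical for $\tilde H$ iff $\partial H/\partial x(x,p(x))=0$, and combined with $\partial H/\partial p(x,p(x))=0$ this is precisely the statement that $(x,p(x))$ is a critical point of $H$. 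The same identity shows the equivalence with $x$ being critical for $L(\cdot,0)$ via $\tilde H=-L(\cdot,0)$. The last assertion of (iv) is then automatic: by (i) the function $\tilde H$ attains its global maximum $c(H)$ at any $x$ with $\tilde H(x)=c(H)$, so $x$ is critical for $\tilde H$, and by what we just proved $(x,\partial L/\partial v(x,0))$ is critical for $H$.

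No serious obstacle is anticipated; the only point that deserves care is the envelope-style computation of $d_x\tilde H$, which is where we quietly use that $p(x)$ is a $C^{k-1}$ selection (automatic from the implicit function theorem applied to $\partial H/\partial p=0$, thanks to (H1)) so that the chain rule is applicable pointwise.
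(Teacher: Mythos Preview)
Your proof is correct and follows essentially the same route as the paper: both use the existence of a $C^1$ critical subsolution for (i), Fenchel duality and strict convexity to identify the unique minimizer $p=\partial L/\partial v(x,0)$ and the value $\tilde H(x)=-L(x,0)$ for (ii)--(iii), and the envelope computation $d_x\tilde H=\partial H/\partial x(x,p(x))$ (using $\partial H/\partial p(x,p(x))=0$) for (iv). The only cosmetic difference is the order: you read off $\tilde H(x)=-L(x,0)$ directly from $L(x,0)=\sup_p\{-H(x,p)\}$ and then identify the maximizer, whereas the paper first solves the first-order condition for the minimizer and then computes the value via the Fenchel equality.
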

\begin{proof}
Since there exists a  {\rm C}$^1$ critical subsolution  $u:M\to\R$ which satisfies
$$\forall x\in M,\quad H(x,d_xu)\leq c(H),$$
we must have
$$\tilde H(x)=\inf\{H(x,p)\mid p\in T_x^*M\}\leq c(H).$$
By strict convexity the infimum $\tilde H(x)$ is attained at the unique point $\tilde p(x)\in T_x^*M$. which satisfies
\begin{equation*}
\frac{\partial H}{\partial p}(x,\tilde p(x))=0. \tag{$*$}
\end{equation*}
Since $(x,p)\mapsto (x, \partial H/\partial p(x,p))$ is the inverse of the Legendre transform
$(x,v)\mapsto\partial L/\partial v(x,v)$, we obtain
$$\tilde p(x)=\frac{\partial L}{\partial v}(x,0),$$
and therefore by the Fenchel equality
$$\tilde H(x)=H(x,\tilde p(x))=\frac{\partial L}{\partial v}(x,0)-L(x,0)=-L(x,0).$$
To prove the last part (iv), we first observe that 
$$\frac{\partial H}{\partial p}\left(x,  \frac{\partial L}{\partial v} (x,0)\right)=0.$$
Then we differentiate (in a coordinate chart) the equality obtained in (ii) to obtain
\begin{align*}
d_x\tilde H &=\frac{\partial H}{\partial x}\left(x,  \frac{\partial L}{\partial v}(x,0)\right) +
\frac{\partial H}{\partial p}\left(x, \frac{\partial L}{\partial v}(x,0)\right)\circ \frac{\partial^2 L}{\partial v^2}(x,0)\\
&=\frac{\partial H}{\partial x}\left(x,  \frac{\partial L}{\partial v}(x,0)\right).
\end{align*}
Therefore, by the two previous equations, the first  part of (iv) follows.
The last part of (iv) is a consequence (i), which implies that each $x$ satisfying $\tilde H(x)=c(H)$ is a global maximum of $\tilde H$.
\end{proof}
We can now give a characterization of the stationary Aubry set $\tilde \Aub^0$.
\begin{proposition}\label{caracAub0}{The set $\tilde\Aub^0$ of points in $\tilde\Aub$ which are fixed for the Euler-Lagrange flow $\phi_t^L$ is exactly the intersection of $\tilde\Aub$ with the zero section in $TM$, i.e.
$$\tilde\Aub^0=\tilde\Aub\cap\{(x,0)\mid x\in M\}.$$
Its projection $\Aub^0=\pi(\tilde\Aub^0)$ on $M$ is precisely the set of point $x$ in $M$
at which $\tilde H$ takes the value $c(H)$, i.e.
$$\Aub^0=\{x\in M\mid \tilde H(x)=0\}.$$}
\end{proposition}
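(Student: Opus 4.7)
The plan is to deduce both equalities from three ingredients already at our disposal: Dias Carneiro's identity $H(x,\partial L/\partial v(x,v)) = c(H)$ on $\tilde{\mathcal{A}}$ (Theorem \ref{ThCarneiro}), the analysis of $\tilde H$ in Lemma \ref{propTildeH}, and the representation $\tilde{\mathcal{A}} = \bigcap_{u\in\mathcal{SS}} \tilde{\mathcal{I}}(u)$ (Theorem \ref{DescriptionAubry}).

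For the first equality, the inclusion $\tilde{\mathcal{A}}^0 \subset \tilde{\mathcal{A}}\cap \{(x,0)\mid x\in M\}$ is immediate: if $(x,v)$ is fixed under $\phi_t^L$, the curve $\gamma_{(x,v)}$ is constantly equal to $x$, so $v=\dot\gamma_{(x,v)}(0)=0$. For the reverse inclusion I would take $(x,0)\in \tilde{\mathcal{A}}$ and combine Theorem \ref{ThCarneiro} with Lemma \ref{propTildeH}(iii) to obtain
$$\tilde H(x) \,=\, H\!\left(x,\tfrac{\partial L}{\partial v}(x,0)\right) \,=\, c(H).$$
Since $\tilde H \leq c(H)$ everywhere by Lemma \ref{propTildeH}(i), $x$ is a global maximum of $\tilde H$ and in particular a critical point; by Lemma \ref{propTildeH}(iv), this forces $(x,\partial L/\partial v(x,0))$ to be a critical point of $H$, hence a fixed point of $\phi_t^H$. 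Since the Legendre transform $\mathcal{L}$ conjugates $\phi_t^H$ with $\phi_t^L$, the point $(x,0)=\mathcal{L}^{-1}(x,\partial L/\partial v(x,0))$ is itself fixed under $\phi_t^L$, proving $(x,0)\in \tilde{\mathcal{A}}^0$.

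The inclusion $\mathcal{A}^0 \subset \{x\mid \tilde H(x)=c(H)\}$ in the second equality is an immediate consequence of the displayed identity above. For the converse, suppose $\tilde H(x) = c(H)$. Running the previous argument with the roles reversed, parts (iii)--(iv) of Lemma \ref{propTildeH} show that $(x,0)$ is fixed under $\phi_t^L$, so $\gamma_{(x,0)}(t)\equiv x$. I then plan to verify that this constant curve is $(u,L,c(H))$-calibrated for every critical subsolution $u\in\mathcal{SS}$: the left-hand side $u(\gamma(t))-u(\gamma(0))$ is zero, and by Lemma \ref{propTildeH}(iii),
$$\int_0^t L(x,0)\,ds + c(H)\,t \,=\, \bigl(-\tilde H(x)+c(H)\bigr)\,t \,=\, 0$$
as well. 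Consequently $(x,0)\in \tilde{\mathcal{I}}(u)$ for every $u\in\mathcal{SS}$, and Theorem \ref{DescriptionAubry} places $(x,0)$ inside $\tilde{\mathcal{A}}$, whence $x\in \mathcal{A}^0$.

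The only genuinely non-formal step is the converse direction of the first equality, where membership in $\tilde{\mathcal{A}}$ at a point of the zero section must be upgraded to actual flow-invariance. The bridge is Lemma \ref{propTildeH}(iv): it converts the value identity $\tilde H(x)=c(H)$, delivered by Dias Carneiro, into the simultaneous vanishing of \emph{both} partial derivatives of $H$ at $(x,\partial L/\partial v(x,0))$, which is exactly what is needed for the Hamiltonian flow to fix that point. No further estimation is required beyond this algebraic identification.
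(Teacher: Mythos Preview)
Your proof is correct, and in two places it takes a route different from the paper's.

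For the inclusion $\tilde{\mathcal{A}}^0\subset \tilde{\mathcal{A}}\cap\{(x,0)\}$, you simply observe that a point fixed by $\phi_t^L$ has constant projected orbit, whence $v=\dot\gamma_{(x,v)}(0)=0$. The paper instead transfers the fixed point to $T^*M$ via the Legendre transform, deduces that $\partial H/\partial p$ vanishes there, and inverts to get $v=0$. Your argument is more elementary and avoids the Hamiltonian side altogether.

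For the inclusion $\{\tilde H=c(H)\}\subset\mathcal{A}^0$, you place $(x,0)$ directly into $\tilde{\mathcal{A}}$ by checking that the constant curve is $(u,L,c(H))$-calibrated for every critical subsolution $u$ and invoking the characterization $\tilde{\mathcal{A}}=\bigcap_{u\in\mathcal{SS}}\tilde{\mathcal{I}}(u)$ from Theorem~\ref{DescriptionAubry}. The paper instead uses the same action computation to conclude $h_t(x,x)+c(H)t=0$, hence $x\in\mathcal{A}$ via the Peierls barrier, and then appeals a second time to Dias Carneiro together with the uniqueness of the fiberwise minimizer of $H(x,\cdot)$ to pin down that the point of $\tilde{\mathcal{A}}$ above $x$ has $v=0$. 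Your route is slightly more economical, reaching $(x,0)\in\tilde{\mathcal{A}}$ in one step; the paper's route has the minor advantage of not relying on the intersection formula for $\tilde{\mathcal{A}}$.
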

\begin{proof}Let $(x,v)$ be in $\tilde\Aub^0$. Since the Euler-Lagrange flow $\phi_t^L$ is conjugated to the Hamiltonian flow $\phi^H_t$ of $H$ by the Legendre transform $\mathcal{L}$, we obtain that $(x,\partial L/\partial v(x,v))$ is fixed under $\phi^H_t$, and therefore $(x,\partial L/\partial v(x,v))$ is a critical point of $H$. In particular, we have
$$\frac{\partial H}{\partial p}\left(x, \frac{\partial L}{\partial v}(x,v)\right)=0.$$
Since $(x,p)\mapsto (x,\partial H/\partial p(x,p))$ is the inverse of the Legendre transform, we conclude that $v=0$, yielding the proof of the inclusion
$\tilde\Aub^0\subset\tilde\Aub\cap\{(x,0)\mid x\in M\}$.

Suppose now that that $(x,0)$ is in $\tilde\Aub$. 
By Theorem \ref{ThCarneiro},
the Legendre transform of the Aubry set is contained in the set where $H$ is equal to $c(H)$, i.e.
$$H\left(x,\frac{\partial L}{\partial v}(v,0)\right)=c(H).$$
We obtain by Lemma \ref{propTildeH} that $x$ is a critical point of $\tilde H$ and therefore we get that 
$(x,{\partial L}/{\partial v}(v,0))$ is a critical point of $H$. This implies that this points
is invariant under $\phi^H_t$, hence $(x,0)$ is fixed under the Euler-Lagrange flow $\phi^L_t$.
By this we get the equality $\tilde\Aub^0=\tilde\Aub\cap\{(x,0)\mid x\in M\}$.

Note that we have proved that if $(x,0)$ in $\tilde\Aub$ then $\tilde H(x)=c(H)$.
Therefore $\Aub^0$ is contained in the set $\tilde H^{-1}(c(H))$.

It remains to show that any $x$ such that $\tilde H(x)=c(H)$ is in $\Aub^0$.
Suppose that $x$ is such that $\tilde H(x)=c(H)$. Since $\tilde H(x)=-L(x,0)$, we get  
$L(x,0)+c(H)=0$. If we consider now the constant curve $\gamma:]-\infty,+\infty[\to\{x\}$, we see that
$$\int_0^tL(\gamma(s),\dot\gamma(s))\,ds+c(H)t=\int_0^tL(x,0)\,ds+c(H)t=0.$$
Therefore $h_t(x,x)+c(H)t=0$ for every $t\geq 0$. This implies that $x\in \Aub$.
It remains to show that the point $(x,v)\in \tilde\Aub$ above $x$ is necessarily $(x,0)$,
which will imply that $x\in\Aub^0$. 
Note that again by Theorem \ref{ThCarneiro}
we have 
$$H\left(x,\frac{\partial L}{\partial v}(x,v)\right)=c(H).$$
But $\inf_{p\in T^*_xM}H(x,p)=\tilde H(x)=c(H)$, and that this infimum is only attained at $p={\partial L}/{\partial v}(x,0)$. This implies that ${\partial L}/{\partial v}(x,v)={\partial L}/{\partial v}(x,0)$. The invertibility of the Legendre transform yields $v=0$.
\end{proof}

We now start the proof of Theorem \ref{THM2}. Replacing $L$ by $L+c(H)$, we can assume, without loss of generality, that $c(H)=0$.
Notice now that, for every compact subset  $K\subset M$, there exists
$\alpha_K \geq 0$ such that 
\begin{equation*}
x\in K, \, H(x,p) \leq 0 \Longrightarrow \lVert p-\tilde{p}(x)\rVert_x \leq \alpha \sqrt{-\tilde{H}(x)}.\tag{$**$}
\end{equation*}
In fact, since $\partial ^2H/\partial p^2(x,p)$ is positive definite everywhere, and $S(K)=\{(x,p)\in T^*M\mid x\in K,\, H(x,p)\leq 0\}$ is compact, by Taylor's formula (in integral form), we can find $\beta_K >0$ such that 
$$\forall (x,p),(x,p')\in S(K),\quad H(x,p)\geq H(x,p')+\frac{\partial H}{\partial p}(x,p')(p-p')+\beta_K
\lVert p-p'\rVert^2_x.$$
Using the equalities ($*$) and $\tilde H(x)=H(x,\tilde p(x))$, and that $H(x,p)\leq 0$ on $S(K)$,
the inequality above yields
$$\forall (x,p)\in S(K),\quad 0\geq \tilde H(x)+\beta_K\lVert p-\tilde p(x)\rVert^2_x.$$
This yields ($**$) with $\alpha_K=\sqrt{\beta_K}$.
If $u:M\to \R$ is a {\rm C}$^1$ critical subsolution, we know that $H(x,d_xu)\leq 0$ for every $x\in M$, therefore we obtain
$$\forall x\in M,\quad \lVert d_xu-\tilde p(x)\rVert_x\leq \alpha_K\sqrt{-\tilde H(x)}.$$
It follows that for every pair $u_1,u_2$ of critical subsolutions,  we have
\begin{equation*}
\forall x\in M,\quad \lVert d_x(u_2-u_1)\rVert_x\leq 2\alpha_K\sqrt{-\tilde H(x)}. \tag{$***$}
\end{equation*}

We now use Lemma \ref{lemmaMorseVanish} for {\rm C}$^{k,1}$ functions to decompose $\mathcal{A}^0$ as
$$
\mathcal{A}^0 =\cup_{i \in \N} A_i,
$$
with each $A_i\subset B_i$, where $B_i\subset M$ is a {\rm C}$^1$ embedded compact disk of dimension $\leq \dim M$. Since $\tilde{H}$ is a {\rm C}$^{k,1}$ function vanishing on $\mathcal{A}^0$,
by (\ref{eqmorselemma}) we know that we can find for each $i\in \N$ a finite constant $M_i$ such that 
$$
\forall x \in A_i, \,\forall y \in B_i,\quad -\tilde{H}(y) = \left| \tilde{H}(x)-\tilde{H}(y) \right| \leq M_i d(x,y)^{k+1}.
$$
Since $B_i$ is compact, we can combine this last inequality with ($***$) above to obtain
for every pair of critical subsolutions $u_1,u_2$, and every $i\in\N$
$$
 \forall x \in A_i, \,\forall y \in B_i, \quad \lVert d_y(u_2-u_1)\rVert_y\leq 2\alpha_{B_i}\sqrt{M_i} d(x,y)^{(k+1)/2}.
$$
We know that $B_i$ is {\rm C}$^1$ diffeomorphic to the unit ball $\BB^{n_i}$, with $n_i\in \{0,\dots,\dim M\}$. To avoid heavy notation we will identify in the sequel of the proof $B_i$ with  $\BB^{n_i}$. Since this identification is {\rm C}$^1$, we can replace in the inequality above the Riemannian norm by the Euclidean norm $\lVert \cdot\rVert_{\rm{euc}}$ on $\R^{n_i}$, to obtain the following inequality
$$
 \forall x \in A_i, \,\forall y \in B_i\approx\BB^{n_i}, \quad \lVert d_y(u_2-u_1)\rVert_{\rm euc} 
 \leq  C_i\lVert y-x\rVert_{\rm euc}^{(k+1)/2}.
$$
for some suitable finite constant depending on $i$. If we integrate this inequality along the segment from $x$ to $y$ in  $\BB^{n_i}\approx B_i$, we obtain
$$
 \forall x \in A_i, \,\forall y \in B_i\approx\BB^{n_i},\quad |(u_1-u_2)(y)-(u_1-u_2)(x)| \leq  
 C_i \lVert y-x\rVert_{\rm euc}^{\frac{k+1}{2}+1}.
$$
By Lemma \ref{Fund} we deduce that 
$$
\forall x,y \in A_i,\quad \delta_M(x,y) \leq \tilde C_i \lVert y-x\rVert_{\rm euc}^{\frac{k+1}{2}+1}.
$$
Since $A_i\subset B_i\approx\BB^{n_i}\subset \R^{n_i}$, and obviously $1+\frac{k+1}{2}>1$, we conclude from Lemma \ref{lemNEW1}
that the Hausdorff measure $\Haus {n_i/(1+\frac{k+1}{2})}(A_i,\delta_M)$ 
is equal to $0$. 
Therefore, since $n_i\leq \dim M$ and ${\Aub}^0$ is the countable union of the $A_i$, 
we conclude that 
$$\Haus {2\dim M/(k+3)}(\Aub^0,\delta_M)=0.$$
In particular, if $k+3\geq 2\dim M$, that is  $k \geq 2n-3$, the one-dimensional Hausdorff dimension of $(\mathcal{A}_M^0,\delta_M)$ vanishes.

\subsection{Proof of Theorem \ref{THM3}}
We will give a proof of  Theorem \ref{THM3} that does not use conservation of energy (complicating a little bit some of the steps). It will use instead the completeness of the Euler-Lagrange flow, which is automatic for Tonelli Lagrangians independent of time, 
see \cite[Corollary 2.2, page 6]{fm}.
It can therefore be readily adapted to the case where $L$ depends on time, is $1$-periodic in time and has a complete Euler-Lagrange flow like in the work of Mather
\cite{mather91, mather93}.

In a flow the period function on the periodic non-fixed point is not necessarily continuous.
Therefore when we pick a local Poincar\'e section for a closed orbit the nearby periodic points of the flow do not give rise to fixed point of the Poincar\'e return map. This will cause us some minor difficulties in the proof of Theorem \ref{THM3}. We will use the following general lemma to easily get around  these problems.
\begin{proposition}\label{lemmepointsperiodiques}{ Let $X$ be a metric space and
 $(\phi_t)_{t\in\R}$ be a continuous flow on $X$. Call $\operatorname{Fix}(\phi_t)$
the set of fixed points of the flow $(\phi_t)_{t\in\R}$, and $\operatorname{Per}(\phi_t)$
the set of periodic non-fixed points of $(\phi_t)_{t\in\R}$. We define the function $T:\operatorname{Per}(\phi_t)\to]0,\infty[$ by $T(x)$ is the smallest period $>0$ of the point $x\in\operatorname{Per}(\phi_t)$.

We can write $\operatorname{Per}(\phi_t)$ as a countable union $\operatorname{Per}(\phi_t)=\cup_{n\in \N}C_n$ where each $C_n$ is a closed subset on which the period map $T$ is continuous.}
\end{proposition}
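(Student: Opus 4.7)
My plan is to construct a countable family of closed subsets of $X$ covering $\operatorname{Per}(\phi_t)$, parametrized by rationals, on each of which $T$ is \emph{constant} and so trivially continuous. The decomposition isolates points whose minimal period lies in a rational window $[p,q]$ with $p>q/2$, and excludes the short-period alternative by a metric-type diameter condition that is crucially a closed condition.

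For rationals $0<\alpha<p<q$ with $p>q/2$ and a positive integer $m$ satisfying the two inequalities $p>2\alpha+1/m$ and $p-1/m>q/2$, I set
\begin{equation*}
C_{\alpha,p,q,m}:=\bigl\{x\in X:\exists\,t\in[p,q],\ \phi_t(x)=x\bigr\}\cap\bigl\{x\in X:d(\phi_s(x),x)\ge 1/m\ \forall s\in[\alpha,p-1/m]\bigr\}.
\end{equation*}
The first factor is closed as the projection along the compact factor $[p,q]$ of a closed subset of $X\times[p,q]$, and the second is closed as an intersection of continuous-in-$x$ conditions indexed over the compact set $[\alpha,p-1/m]$ (using joint continuity of $(x,s)\mapsto d(\phi_s(x),x)$).

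On $C_{\alpha,p,q,m}\cap\operatorname{Per}(\phi_t)$, fix a witness $t_x\in[p,q]$ with $\phi_{t_x}(x)=x$; then $T(x)=t_x/k$ for some $k\in\N$. If $T(x)<\alpha$, the multiple $\lceil\alpha/T(x)\rceil\,T(x)$ lies in $[\alpha,\alpha+T(x))\subset[\alpha,2\alpha)\subset[\alpha,p-1/m]$ by the first inequality, and the vanishing of $d(\phi_{\cdot}(x),x)$ at that multiple violates the diameter condition. Hence $T(x)\ge\alpha$; the diameter condition directly excludes $T(x)\in[\alpha,p-1/m]$, so $T(x)>p-1/m$. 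If $k\ge 2$, then $T(x)\le t_x/2\le q/2<p-1/m$ by the second inequality, contradicting $T(x)>p-1/m$. Therefore $k=1$ and $T(x)=t_x\in[p,q]$. Continuity of $T$ on $C_{\alpha,p,q,m}$ follows by applying the same dichotomy to any sequence $x_n\to x$ in $C_{\alpha,p,q,m}$: every accumulation point $L$ of $T(x_n)$ lies in $[p,q]$ and satisfies $\phi_L(x)=x$, and the same arithmetic forces $L=T(x)$.

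To cover $\operatorname{Per}(\phi_t)$, given $x_0$ with $T(x_0)=t_0$ pick rationals $p<t_0<q$ with $q/2<p$ and $\alpha\in(0,p/2)\cap\Q$, then choose $m$ large enough that both arithmetic inequalities hold and $1/m\le\min_{s\in[\alpha,p-1/m]}d(\phi_s(x_0),x_0)$; the latter minimum is strictly positive since $s\mapsto d(\phi_s(x_0),x_0)$ is continuous and does not vanish on the compact set $[\alpha,p-1/m]\subset(0,t_0)$. Then $x_0\in C_{\alpha,p,q,m}$, so the countable family $\{C_{\alpha,p,q,m}\}$ exhausts $\operatorname{Per}(\phi_t)$. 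The main technical delicacy is the short-period exclusion in the third paragraph; the key design choice is to use the \emph{closed} condition $d(\phi_s(x),x)\ge 1/m$ rather than the open non-vanishing condition $\phi_s(x)\ne x$, which is what keeps each $C_{\alpha,p,q,m}$ closed in $X$.
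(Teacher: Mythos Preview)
Your argument is correct, but it differs from the paper's and your opening sentence misstates what you actually prove: $T$ is \emph{not} constant on $C_{\alpha,p,q,m}$, only confined to $[p,q]$. Fortunately you do not use constancy anywhere; your sequential argument for continuity (any accumulation point $L$ of $T(x_n)$ lies in $[p,q]$, is a period of $x$, and the ratio $L/T(x)\in\N$ is forced to be $1$ since $p>q/2$) is valid on its own.

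The paper's route is shorter and more structural. It sets $F^n=\{x:\exists\,t\in[2^n,2^{n+1}],\ \phi_t(x)=x\}$, observes $F^{n-1}\subset F^n$ by period-doubling, and checks that on $F^n\setminus F^{n-1}$ one has $2^n<T(x)\le 2^{n+1}$ and $T$ is continuous (same accumulation-point argument as yours). The remaining step, writing the locally closed set $F^n\setminus F^{n-1}$ as a countable union of closed sets, is disposed of by the general fact that open sets in a metric space are $F_\sigma$. Your construction trades this abstract $F_\sigma$ step for an explicit metric gap condition $d(\phi_s(x),x)\ge 1/m$ on a compact $s$-interval; this is exactly what makes your pieces closed from the outset and simultaneously excludes short periods. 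So the paper leans on a soft topological fact at the end, while you build the closedness in by hand using the metric; both exploit the same dichotomy $p>q/2$ (dyadic in the paper, rational in yours) to pin down $T$ uniquely.
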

\begin{proof} For $t\in \R$, call $F_t$ the set of fixed points of the map $\phi_t$. Using the continuity of $(\phi_t)_{t\in\R}$ on the product on $\R\times X$, it is not difficult to see
that  $\cup_{t\in [a,b]}F_t$ is a closed subset of $X$ for every compact subinterval $[a,b]$ contains in $\R$. For $n\in \Z$ we set 
$$F^n=\cup_{2^n\leq t\leq 2^{n+1}}F_t=\{x\in X\mid \exists t\in [2^n,2^{n+1}]\text{\ with\ }
\phi_t(x)=x.$$
Note that $F^m$ is closed. Moreover 
since $\phi_t(x)=x$ with $2^{m-1}\leq t\leq 2^{m}$ implies 
$\phi_{2t}(x)=\phi_t\circ\phi_t(x)=\phi_t(x)=x$ and $2^{m}\leq 2t\leq 2^{m+1}$, we get
$F^{m-1}\subset F^m$ for every $m\in \Z$,.
Therefore we have $F^n\setminus F^{n-1}=F^n\setminus \cup_{i\leq n-1}F^i$ 
is the set of periodic non-fixed points with $2^{n}<T(x)\leq 2^{n+1}$. 
In particular $\operatorname{Per}(\phi_t)=\cup_{n\in\Z}F^n\setminus F^{n-1}$. 
Note also that if $x\in F^n\setminus F^{n-1}$, and $t\in ]0,2^{n+1}[$ are such that
$\phi_t(x)=x$ then necessarily $t=T(x)$. In fact, we have $t/T(x)\in \N^*$, but 
$t/T(x)\leq2^{n+1} /T(x)<2$, hence $t/T(x)=1$.

We now show that the period map $T$ is continuous on $F^n\setminus F^{n-1}$.
For this we have to show that for a sequence $x_\ell\in F^n\setminus F^{n-1}$ 
which converges to $x_\infty\in F^n\setminus F^{n-1}$, we necessarily have 
$T(x_\ell)\to T(x_\infty)$, when $\ell\to \infty$. Since $T(x_\ell)\in[2^n,2^{n+1}]$ 
which is compact, it suffices to show that any accumulation point $T$ of $T(x_\ell)$
satisfies $T= T(x_\infty)$. Pick up an increasing subsequence $\ell_k\nearrow\infty$
such that $T(x_{ \ell_k})\to T$ when $k\to \infty$. By continuity $T\in [2^n,2^{n+1}]$ and
$\phi_T(x_\infty)=x_\infty$. Since $x_\infty\in F^n\setminus F^{n-1}$, by what we have shown above we have $T=T(x_\infty)$.

Since $\operatorname{Per}(\phi_t)$ is the countable union $\cup_{n\in\Z}F^n\setminus F^{n-1}$, to finish the proof of the lemma it remains to show that each $F^n\setminus F^{n-1}$ is itself a countable union of closed subsets of $X$. This is obvious because
$F^n\setminus F^{n-1}=F^n\cap (X\setminus F^{n-1})$ is the intersection of a closed and an open subset in the metric $X$, but an open subset in a metric space is itself a countable union of closed sets.
\end{proof}

We will also need the following proposition which relates the size of the derivative of a {\rm C}$^{1,1}$ critical sub-solution at a point to minimal actions of loops at that point.
We will need to use Lipschitz functions from a compact subset of $M$ to a compact subset of $TM$. We therefore need distances on $M$ and $TM$. On $M$ we have already a distance coming from the Riemannian metric. Since all distances, obtained from Riemannian metrics, are Lipschitz equivalent on compact subsets, the precise distance we use on $TM$ is not important. We therefore just assume that we have chosen some Riemannian metric on $TM$ (not necessarily related to the one on $M$),
and we will use the distance on $TM$ coming from this Riemannian metric.

\begin{proposition}\label{propositionderivativeaction}{ Suppose that $K$ is a given compact set, and $t_0,t'_0\in \R$ satisfy
$0<t_0\leq t'_0$. We can find a compact set $K'$ such that, for any finite number $\ell $,
we can find a finite number $C$ such that any critical {\rm C}$^1$ subsolution
$u:M\to \R$,  such that $x\mapsto (x,d_xu)$ is Lipschitz on $K'$ with Lipschitz constant $\leq \ell$, satisfies
$$\forall x\in K,\,\forall t\in [t_0,t'_0],\quad [c(H)-H(x,d_xu)]^2\leq C[h_{t}(x,x)+c(H)t].$$
Moreover, for every such $\ell$, we can find a constant $C'$ such that any pair of critical {\rm C}$^1$ subsolutions
$u_1,u_2:M\to \R$, such that both maps $x\mapsto (x,d_xu_i), i=1,2,$ are Lipschitz on $K'$ with Lipschitz constant $\leq \ell$, satisfies
$$\forall x\in K,\,\forall t\in [t_0,t'_0], \quad\lVert d_xu_2-d_xu_1\rVert_x^4
\leq C' [h_{t}(x,x)+c(H)t].$$ }
\end{proposition}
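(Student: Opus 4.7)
The plan is to prove the first inequality via a quadratic refinement of the Fenchel inequality integrated on loops, and deduce the second from the first by a convex-combination trick. Using the $C^2$-strict convexity (H1), a Taylor expansion of $L(y,\cdot)$ around $v^*=\partial H/\partial p(y,q)$ (where the Fenchel inequality is an equality, and $\partial_v L(y,v^*)=q$) yields, on the compact region relevant to our loops, the quadratic defect inequality
\[
L(y,v)+H(y,q)-q(v)\ \ge\ \alpha\bigl\|v-\partial H/\partial p(y,q)\bigr\|_y^2
\]
with $\alpha>0$ depending only on $\ell$, $K$ and $H$. The compact $K'$ is chosen so that any loop $\gamma:[0,t]\to M$ based at $x\in K$, $t\in[t_0,t'_0]$, with action not much larger than $h_t(x,x)$ stays in $K'$; this follows from the superlinearity (H2) combined with the a priori bound $h_t(x,x)+c(H)t\le(L(x,0)+c(H))t'_0$ given by the constant loop, which is uniformly bounded on $K$.

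\textbf{Integration on loops and lower bound.} Set $\eta(y):=c(H)-H(y,d_yu)\ge 0$ and $V(y):=\partial H/\partial p(y,d_yu)$. Taking $q=d_yu$ in the quadratic Fenchel inequality and integrating along a loop $\gamma:[0,t]\to K'$ based at $x$, using $\oint d_\gamma u(\dot\gamma)\,ds=0$, gives
\[
\int_0^tL(\gamma,\dot\gamma)\,ds+c(H)t\ \ge\ I(\gamma):=\int_0^t\eta(\gamma(s))\,ds+\alpha\int_0^t\|\dot\gamma-V(\gamma)\|^2\,ds,
\]
so $h_t(x,x)+c(H)t\ge\inf_\gamma I(\gamma)$. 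To bound $I(\gamma)$ from below by a constant multiple of $\eta(x)^2$ I would split on $r:=\eta(x)/(2L_\eta)$, where $L_\eta$ is the Lipschitz constant of $\eta$ on $K'$. If $\sup_s d(\gamma(s),x)\le r$, the Lipschitz bound on $\eta$ yields $\int\eta\ge t_0\eta(x)/2$, which combined with the a priori bound $\eta\le E_0$ on $K$ gives $I(\gamma)\ge C_1\eta(x)^2$. If $\gamma$ exits $B(x,r)$, then $\int\|\dot\gamma\|\ge 2r$, and using $\int\|V(\gamma)\|\le M_Vt'_0$ together with Cauchy--Schwarz controls $\alpha\int\|\dot\gamma-V\|^2$ from below by $C_2\eta(x)^2$ in the regime $\eta(x)\gtrsim L_\eta M_Vt'_0$; the remaining small-$\eta(x)$ regime is handled by combining this with the Gronwall comparison $d(\gamma(s),z(s))\le e^{L_Vt'_0}\sqrt{t'_0}\,\|\dot\gamma-V(\gamma)\|_{L^2}$ to the $V$-trajectory $z$ issuing from $x$, which lets one trade a small $\int\|\dot\gamma-V\|^2$ for a lower bound on $\int\eta$.

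\textbf{Second inequality and main obstacle.} For the second inequality, observe that $u:=(u_1+u_2)/2$ is again a $C^1$ critical subsolution whose derivative is Lipschitz on $K'$ with constant $\le\ell$, and the strict convexity of $H$ in $p$ (with $\partial^2 H/\partial p^2\ge 2\beta I$ on the relevant compact set) gives
\[
H(x,d_xu)\ \le\ \tfrac12H(x,d_xu_1)+\tfrac12H(x,d_xu_2)-\tfrac{\beta}{4}\|d_xu_1-d_xu_2\|_x^2\ \le\ c(H)-\tfrac{\beta}{4}\|d_xu_1-d_xu_2\|_x^2.
\]
Hence $\eta_u(x)\ge(\beta/4)\|d_xu_1-d_xu_2\|_x^2$, and applying the first inequality to $u$ yields $\|d_xu_2-d_xu_1\|_x^4\le(16C/\beta^2)[h_t(x,x)+c(H)t]$, which is the required bound. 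The main obstacle is the case analysis establishing $I(\gamma)\ge C\eta(x)^2$: in the regime where $\gamma$ deviates moderately from $x$ and $\eta(x)$ is small, neither term of $I(\gamma)$ individually controls $\eta(x)^2$, and one has to balance the Lipschitz bound on $\eta$ against the Gronwall comparison with the characteristic flow of $u$ to avoid losing a factor depending on $\eta(x)$ itself.
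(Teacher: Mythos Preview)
Your treatment of the second inequality (the convex-combination trick using $(u_1+u_2)/2$ and strict convexity of $H$ in $p$) is exactly what the paper does.

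For the first inequality, however, you take a different and harder route, and the gap you yourself flag is real. The paper's argument avoids your case analysis entirely and never introduces the quadratic Fenchel defect $\alpha\lVert\dot\gamma-V(\gamma)\rVert^2$.

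The paper works with an $L$-minimizing loop $\gamma:[0,t]\to M$ at $x$. The compact set $K'$ is chosen (via a compactness lemma for minimizers) so that any such minimizer with $t\in[t_0,t'_0]$ stays in $K'$, and on $K'$ these minimizers have uniformly bounded speed $\lVert\dot\gamma\rVert\le C_0$. Integrating the \emph{ordinary} Fenchel inequality along $\gamma$ and dropping the nonnegative integrand on $[t',t]$ gives, for every $t'\in[0,t]$,
\[
h_t(x,x)+c(H)t\ \ge\ \int_0^{t'}\bigl[c(H)-H(\gamma(s),d_{\gamma(s)}u)\bigr]\,ds.
\]
Since $d(\gamma(s),x)\le C_0 s$, the Lipschitz hypothesis on $x\mapsto(x,d_xu)$ together with a Lipschitz constant $C_1$ for $H$ on the relevant compact sublevel yields $\eta(\gamma(s))\ge\eta(x)-C_1\ell C_0 s$. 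Taking $t'=\eta(x)/(2C_0C_1\ell)$ (which can be arranged to satisfy $t'\le t_0\le t$, since $\eta$ is bounded on $K$ and $C_0$ may be enlarged) gives $\eta(\gamma(s))\ge\eta(x)/2$ on $[0,t']$, hence
\[
h_t(x,x)+c(H)t\ \ge\ \frac{t'\,\eta(x)}{2}\ =\ \frac{\eta(x)^2}{4C_0C_1\ell}.
\]
That is the whole argument: the speed bound on the minimizer lets one restrict attention to a short initial segment, where the Lipschitz control of $\eta$ is immediate.

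Your approach, by contrast, tries to lower-bound $I(\gamma)=\int\eta+\alpha\int\lVert\dot\gamma-V\rVert^2$ for a loop that may wander far from $x$, and the Gronwall comparison you propose does not close: it introduces an error of order $\sqrt{\epsilon}$ in the $\int\eta$ term (with $\epsilon=\int\lVert\dot\gamma-V\rVert^2$), and balancing $-c\sqrt{\epsilon}+\alpha\epsilon$ yields a negative constant independent of $\eta(x)$, not something absorbable into $C\eta(x)^2$. The paper's truncation-to-$[0,t']$ idea, exploiting the a priori speed bound on the minimizer, is the missing ingredient; once you have it, the quadratic term is unnecessary.
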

When $M$ is compact, we can take $t'_0=+\infty$, and the above Proposition becomes:
\begin{proposition}\label{propositioncompactderivativeaction}{ Suppose the manifold $M$ is compact, and that $t_0>0$ is given.
For any finite number $\ell $,
we can find a finite number $C$ such that any critical {\rm C}$^1$ subsolution
$u:M\to \R$,  such that $x\mapsto (x,d_xu)$ is Lipschitz on $M$ with Lipschitz constant $\leq \ell$, satisfies
$$\forall x\in M,\,\forall t\geq t_0, \quad[c(H)-H(x,d_xu)]^2\leq C[h_{t}(x,x)+c(H)t].$$
Moreover, for every such $\ell$, we can find a constant $C'$ such that any pair of critical {\rm C}$^1$ subsolutions
$u_1,u_2:M\to \R$, such that both maps $x\mapsto (x,d_xu_i), i=1,2,$ are Lipschitz on $M$ with Lipschitz constant $\leq \ell$, satisfies
$$\forall x\in M,\,\forall t\geq t_0, \quad\lVert d_xu_2-d_xu_1\rVert_x^4
\leq C' [h_{t}(x,x)+c(H)t].$$ }
\end{proposition}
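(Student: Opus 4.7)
The plan is to prove the single-subsolution estimate first and then deduce the two-subsolution estimate by applying the first to the midpoint $u_m:=(u_1+u_2)/2$, exploiting the uniform strong convexity of $H$ in $p$. After replacing $L$ by $L+c(H)$, I may assume $c(H)=0$, and I write $\Delta(x,u):=-H(x,d_xu)\geq 0$ for the slack of a critical subsolution at $x$. Note that $\Delta$ is bounded above by some $\Delta_{\max}$ uniformly in $u$ and $x\in M$, since $\{H\leq 0\}\cap T^*M$ is compact by superlinearity and compactness of $M$.

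For the first inequality, let $\gamma\colon[0,t]\to M$ realize $h_t(x,x)$. Since $\gamma$ is a loop and $u\in C^1$, $\int_0^t d_{\gamma(s)}u(\dot\gamma(s))\,ds=0$, and Fenchel's inequality $L(y,v)\geq d_yu(v)-H(y,d_yu)$ integrated along $\gamma$ gives
$$h_t(x,x)=\int_0^t L(\gamma,\dot\gamma)\,ds\geq\int_0^t\Delta(\gamma(s),u)\,ds.$$
The Lipschitz hypothesis on $x\mapsto(x,d_xu)$ combined with the local Lipschitz continuity of $H$ on $\{H\leq 0\}$ supplies a constant $L_H$ such that $|\Delta(\gamma(s),u)-\Delta(x,u)|\leq L_H\ell\, d(\gamma(s),x)$. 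The technical heart is a uniform speed bound: for every $t\geq t_0$ and every minimizer $\gamma$ of $h_t(x,x)$ one has $\|\dot\gamma\|\leq V$ with $V=V(t_0,M)$. I will obtain $V$ from the standard Tonelli regularity result (minimizers joining two points at bounded distance over a bounded time have bounded speed) applied to every sub-arc $\gamma|_{[s,s+t_0]}$, which is itself a minimizer between points at distance $\leq\diam(M)$; here the completeness of the Euler--Lagrange flow is precisely what allows this argument to bypass energy conservation. Once $V$ is available, the loop condition forces $d(\gamma(s),x)\leq V\min(s,t-s)$. Setting $\epsilon:=\Delta(x,u)/(2L_H\ell)$, on $[0,\epsilon/V]\cup[t-\epsilon/V,t]$ we get $\Delta(\gamma(s),u)\geq\Delta(x,u)/2$, whence
$$h_t(x,x)\geq\frac{\Delta(x,u)\epsilon}{V}=\frac{\Delta(x,u)^2}{2L_H\ell V}$$
whenever $\epsilon\leq Vt/2$. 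In the opposite case $\epsilon>Vt/2$, the inequality $\Delta(\gamma(s),u)\geq\Delta(x,u)/2$ holds on the whole loop, so $h_t(x,x)\geq t_0\Delta(x,u)/2$, and multiplying by $\Delta(x,u)\leq\Delta_{\max}$ gives $\Delta(x,u)^2\leq(2\Delta_{\max}/t_0)h_t(x,x)$. Taking $C:=\max(2L_H\ell V,\,2\Delta_{\max}/t_0)$ finishes the first inequality.

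For the second inequality, $u_m:=\tfrac12(u_1+u_2)$ is still a $C^1$ critical subsolution (by convexity of $H$ in $p$), and its differential is Lipschitz with constant $\leq\ell$. From (H1) and compactness of the image of $(x,d_xu)$ I get a uniform strong-convexity constant $\beta>0$ for $p\mapsto H(x,p)$, which yields the midpoint inequality
$$H(x,d_xu_m)\leq\tfrac12\bigl[H(x,d_xu_1)+H(x,d_xu_2)\bigr]-\tfrac{\beta}{4}\|d_xu_2-d_xu_1\|_x^2\leq-\tfrac{\beta}{4}\|d_xu_2-d_xu_1\|_x^2.$$
Hence $\Delta(x,u_m)\geq(\beta/4)\|d_xu_2-d_xu_1\|_x^2$, and applying the first inequality to $u_m$ gives $(\beta/4)^2\|d_xu_2-d_xu_1\|_x^4\leq C\,h_t(x,x)$, i.e.\ the second estimate with $C':=16C/\beta^2$.

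The principal obstacle is the uniform speed bound $V$. In the autonomous case one could shortcut it via energy conservation, but to preserve the generalization to $1$-periodic-in-time Lagrangians the argument must rest on Tonelli's sub-arc bound combined with completeness of the Euler--Lagrange flow; all other steps are then linear-algebraic consequences of the convexity of $H$ and the Lipschitz hypothesis on the graph of $du$.
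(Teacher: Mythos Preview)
Your argument is correct and follows the same route as the paper: obtain a uniform speed bound on minimizing loops via Tonelli regularity on sub-arcs of length $t_0$, integrate Fenchel's inequality along the loop and use the Lipschitz control on $x\mapsto H(x,d_xu)$ to show the defect $\Delta(x,u)$ persists for a time proportional to itself, then for the second estimate pass to the midpoint $u_m=(u_1+u_2)/2$ via uniform strong convexity of $H$ in $p$. One small point to tighten: the claim that $x\mapsto(x,d_xu_m)$ has Lipschitz constant $\leq\ell$ is not justified as stated, since the fiberwise averaging map $T^*M\oplus T^*M\to T^*M$ need not be $1$-Lipschitz for an arbitrary Riemannian metric on $T^*M$; the paper handles this by noting the averaging map is smooth and hence Lipschitz with some constant $B$ on the relevant compact set, then applies the first estimate to $u_m$ with Lipschitz bound $B\ell$, which only alters the final constant $C'$.
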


To prove these propositions, we first need to prove some lemmas.

\begin{lemma}\label{lemmecompaciteminimizer}{Suppose $K$ is a compact subset of $M$ and that $t_0,t'_0\in \R$ satisfy
$0<t_0\leq t'_0$. We can find a compact subset $K'\subset M$ containing $K$ (and depending on $K, t_0,t'_0$) such that any $L$-minimizer $\gamma:[a,b]\to M$ with
$t_0\leq b-a\leq t'_0$, and $\gamma(a),\gamma(b)\in K$ is contained in $K'$.}
\end{lemma}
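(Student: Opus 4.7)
The plan is to use the classical a priori compactness argument for Tonelli minimizers: bound the action via a comparison curve, then convert that action bound into a length bound via superlinearity, and finally enclose everything in a metric neighborhood of $K$ that is compact by Hopf--Rinow.

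First, I would produce a comparison curve. Since $K$ is compact and $M$ is a complete Riemannian manifold, by Hopf--Rinow any two points $x,y\in K$ can be joined by a minimizing geodesic of length $d(x,y)\leq \diam(K)<\infty$. Given $\gamma:[a,b]\to M$ as in the statement, reparametrize such a geodesic from $\gamma(a)$ to $\gamma(b)$ to run on $[a,b]$ with constant speed $d(\gamma(a),\gamma(b))/(b-a)\leq \diam(K)/t_0$. Its velocity lies in the compact set $\{(x,v)\in TM\mid x\in K,\,\|v\|_x\leq \diam(K)/t_0\}$, on which $L$ is bounded by some constant depending only on $K$ and $t_0$. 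Hence the action of this comparison curve is at most $C_1(K,t_0)\cdot(b-a)\leq C_1 t'_0$. Because $\gamma$ is an $L$-minimizer with the same endpoints and duration, its action is bounded by the same constant $C_2=C_2(K,t_0,t'_0)$.

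Next, I would invoke the superlinearity of $L$ (which holds because $H$ is superlinear and $L$ is its Fenchel dual, as recalled in the introduction): for $K_0=1$ there exists a constant $C(1)$ with
\[
L(x,v)\geq \|v\|_x + C(1) \qquad \forall (x,v)\in TM.
\]
Integrating along $\gamma$ on $[a,b]$ gives
\[
\int_a^b \|\dot\gamma(s)\|_{\gamma(s)}\,ds \;\leq\; \int_a^b L(\gamma,\dot\gamma)\,ds - C(1)(b-a) \;\leq\; C_2 - C(1)\, t'_0 \;=:\; C_3,
\]
so the Riemannian length of $\gamma$ is bounded by a constant $C_3=C_3(K,t_0,t'_0)$. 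Therefore every point of $\gamma$ lies within Riemannian distance $C_3$ of $\gamma(a)\in K$.

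Finally, set
\[
K' \;=\; \{y\in M\mid d(y,K)\leq C_3\}.
\]
This set contains $K$ and every admissible minimizer $\gamma$ by the previous estimate. Since $(M,d)$ is complete, closed and bounded subsets of $M$ are compact (Hopf--Rinow), so $K'$ is compact. This gives the desired $K'$, depending only on $K$, $t_0$, $t'_0$. I do not foresee a serious obstacle; the only slightly delicate point is ensuring that the comparison geodesic exists and has controlled action, and that step rests cleanly on completeness of $M$ and condition (H3) transported to $L$.
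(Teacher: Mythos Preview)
Your approach is essentially the same as the paper's: compare with a constant-speed geodesic, bound the minimizer's action, convert to a length bound via superlinearity, and take a closed metric neighborhood of $K$. One slip to fix: you claim the comparison geodesic's velocity lies in $\{(x,v)\in TM\mid x\in K,\ \|v\|_x\leq \diam(K)/t_0\}$, but the geodesic need not stay in $K$. The paper instead bounds $L$ over $\{(x,v)\in TM\mid \|v\|_x\leq \diam(K)/t_0\}$ for \emph{all} $x\in M$, using the uniform boundedness of $L$ in the fibers (the Tonelli analogue of (H3)); with that correction your argument goes through. A second, minor point: your inequality $-C(1)(b-a)\leq -C(1)t'_0$ is only an upper bound when $C(1)\leq 0$; to be safe write $\operatorname{length}(\gamma)\leq C_2+|C(1)|\,t'_0$.
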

Of course when $M$ is compact we could take $K'=M$ and the lemma is trivial.
\begin{proof}[Proof of  Lemma \ref{lemmecompaciteminimizer}.] Since $M$ is a complete Riemannian manifold, we can find $g:[a,b]\to M$ a geodesic with $g(a)=\gamma(a)$, $g(b)=\gamma(b)$, and whose length is $d(\gamma(a),\gamma(b))$. Since $g$ is a geodesic, the norm 
$\lVert \dot g(s)\rVert_{g(s)}$ of its speed is a constant that we denote by $C$. Therefore we have
$$d(\gamma(a),\gamma(b))=\operatorname{length}(g)=\int_a^b  \lVert \dot g(s)\rVert_{g(s)}\,ds=C(b-a).$$
This yields that the norm of speed $\lVert \dot g(s)\rVert_{g(s)}=C=d(\gamma(a),\gamma(b))/(b-a)$ is bounded by $\diam (K)/t_0$. 
If we set 
$$A=\sup\{L(x,v)\mid (x,v)\in TM, \lVert v\rVert_x\leq \diam(K)/t_0\},$$
we know that $A$ is finite by the uniform boundedness of $L$ in the fiber.
It follows that we can estimate the action of $g$ by
$$\int_a^bL(g(s),\dot g(s))\,ds\leq A(b-a).$$
Since $\gamma$ is a minimizer with the same endpoints as $g$, we also
get
$$\int_a^bL(\gamma(s),\dot \gamma(s))\,ds\leq A(b-a).$$
By the uniform superlinearity of $L$ in the fibers, we can find a constant $C>-\infty$ such that
$$\forall (x,v)\in TM,\quad C+\lVert v\rVert_x\leq L(x,v).$$
Applying this to $(\gamma(s),\dot \gamma(s))$ and integrating we get
$$C(b-a)+\operatorname{length}(\gamma)\leq \int_a^bL(\gamma(s),\dot \gamma(s))\,ds\leq A(b-a).$$
Therefore
$$\operatorname{length}(\gamma) \leq (A-C)(b-a).$$
Therefore $\gamma$ is contained in the set $K$ defined by
$$K'=\bar V_{(A-C)(b-a)}(K)=\{y\mid \exists x\in K, d(x,y)\leq (A-C)(b-a)\}.$$
Notice that $K'$ is contained in a ball of radius $\bigl(\diam K+(A-C)(b-a)\bigr)$ which is finite,
and balls of finite radius are compact in a complete Riemannian manifold.
Therefore $K'$ is compact.
\end{proof}
\begin{lemma}\label{boundedspeed}{For every compact subset $K'$ of $M$,
and every $t_0>0$, we can find a constant $C=C(t_0,K')$ such that every $L$-minimizer
$\gamma:[a,b]\to M$, with  $b-a\geq t_0$ and $\gamma([a,b])\subset K'$, satisfies
$$\forall s\in [a,b],\quad \lVert \dot\gamma(s)\rVert_{\gamma(s)}\leq C.$$}
\end{lemma}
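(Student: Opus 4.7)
I would prove this in two stages: first, use minimality plus superlinearity to bound the \emph{average} speed on any subinterval of length $t_0$; second, use the completeness and continuity of the Euler--Lagrange flow to convert this into a pointwise bound.

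\emph{Stage 1 (average speed bound).} Set $\tau := t_0$ and fix any subinterval $[s_0,s_0+\tau]\subset[a,b]$. Both $\gamma(s_0)$ and $\gamma(s_0+\tau)$ lie in the compact set $K'\subset M$, so by Hopf--Rinow (applicable since $M$ is a complete Riemannian manifold) there is a minimizing geodesic $g:[s_0,s_0+\tau]\to M$ joining them with constant speed $\lVert\dot g\rVert\leq \diam(K')/\tau$. By the uniform boundedness of $L$ in the fibers (the Lagrangian analogue of (H3)), there is a constant $A=A(K',\tau)$ with $L(g(s),\dot g(s))\leq A$ for all $s$, hence the action of $g$ is at most $A\tau$. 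Minimality of $\gamma$ yields
$$\int_{s_0}^{s_0+\tau} L(\gamma(s),\dot\gamma(s))\,ds \;\leq\; A\tau.$$
Applying uniform superlinearity (H2) with $K=1$ gives a constant $C_0$ such that $L(x,v)\geq \lVert v\rVert_x-C_0$ everywhere, and integrating this lower bound yields $\int_{s_0}^{s_0+\tau}\lVert\dot\gamma(s)\rVert_{\gamma(s)}\,ds\leq (A+C_0)\tau$. By the mean value theorem there exists $s^*\in[s_0,s_0+\tau]$ with $\lVert\dot\gamma(s^*)\rVert_{\gamma(s^*)}\leq A+C_0=:R$, and $R$ depends only on $K'$ and $t_0$.

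\emph{Stage 2 (from average to pointwise).} Given any $s\in[a,b]$, since $b-a\geq\tau$ I can choose a subinterval $[s_0,s_0+\tau]\subset[a,b]$ containing $s$; then the point $s^*$ provided by Stage 1 satisfies $|s-s^*|\leq\tau$. By the Tonelli regularity theorem any $L$-minimizer is a $C^1$ (in fact $C^k$) solution of the Euler--Lagrange equation, so
$$(\gamma(s),\dot\gamma(s)) \;=\; \phi_{s-s^*}^L\bigl(\gamma(s^*),\dot\gamma(s^*)\bigr).$$
The set $\mathcal{K}:=\{(x,v)\in TM:\ x\in K',\ \lVert v\rVert_x\leq R\}$ is compact in $TM$. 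Since the Euler--Lagrange flow of a Tonelli Lagrangian is complete (cited from the paper) and continuous, the union $\bigcup_{|t|\leq\tau}\phi_t^L(\mathcal{K})$ is again compact. Setting
$$C \;:=\; \sup\Bigl\{\lVert v\rVert_x\;:\;(x,v)\in\bigcup\nolimits_{|t|\leq\tau}\phi_t^L(\mathcal{K})\Bigr\}\;<\;\infty,$$
we get $\lVert\dot\gamma(s)\rVert_{\gamma(s)}\leq C$, with $C$ depending only on $K'$ and $t_0$.

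\emph{Expected obstacles.} The argument is essentially routine modulo two standard inputs that I would cite rather than reprove: (i) Tonelli regularity, so that the E--L equation makes sense along $\gamma$ and the flow identity $(\gamma(s),\dot\gamma(s))=\phi^L_{s-s^*}(\gamma(s^*),\dot\gamma(s^*))$ holds; (ii) completeness of the Euler--Lagrange flow, already invoked at the start of Section~3.3 with a reference to \cite{fm}. The only mild technical point is ensuring that for every $s\in[a,b]$ one can find an enclosing subinterval of length exactly $\tau=t_0$, which is immediate from $b-a\geq t_0$.
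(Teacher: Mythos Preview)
Your proof is correct and follows essentially the same route as the paper's: reduce to a subinterval of length $t_0$, compare with a geodesic to bound the action, extract a single time where the speed curve lies in a fixed compact subset of $TM$, then propagate via the Euler--Lagrange flow over a time window of width at most $t_0$. The only cosmetic difference is that the paper picks the intermediate point $s_0$ by the condition $L(\gamma(s_0),\dot\gamma(s_0))\leq A$ and takes $\mathcal{K}=\{(x,v):x\in K',\,L(x,v)\leq A\}$, whereas you first convert the action bound into a speed bound via superlinearity and take $\mathcal{K}=\{(x,v):x\in K',\,\lVert v\rVert_x\leq R\}$; both yield the same conclusion.
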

\begin{proof} Since any $s\in [a,b]$ with $b-a\geq t_0$ is contained in an subinterval
of length exactly $t_0$, and any sub-curve of a minimizer is a minimizer, 
it suffices to prove the lemma under the condition $b-a=t_0$. 
Using the action of a geodesic from $\gamma(a)$ to $\gamma(b)$, and the uniform boundedness of $L$ in he fibers like in the 
proof of the previous Lemma \ref{lemmecompaciteminimizer}, we can find 
a constant $A$ (depending on $\diam (K)$ and $t_0$ but not on $\gamma$) such that
$$\int_a^bL(\gamma(s),\dot \gamma(s))\,ds\leq A(b-a).$$
Therefore, we can find $s_0\in [a,b]$ such that 
$L(\gamma(s_0),\dot \gamma(s_0))\leq A$. By the uniform superlinearity of $L$, the subset 
$${\cal K}=\{(x,v)\in TM \mid x\in K', L(x,v)\leq A\}$$
is compact (and does not depend on $\gamma$). Since $\gamma$ is a minimizer, we have $(\gamma(s),\dot \gamma(s))=\phi_{s-s_0}(\gamma(s_0),\dot \gamma(s_0))$, and $\lvert s-s_0\rvert\leq b-a=t_0$, we conclude that the speed curve of the minimizer $\gamma$ is contained in the  set (independent of $\gamma$)
$${\cal K}'=\bigcup_{\lvert t\rvert\leq t_0}\phi_t^L({\cal K}),$$
which is compact by the continuity of the Euler-Lagrange flow.
\end{proof}
\begin{lemma}\label{lemmemajorationdudefaut}{For every $K$ compact subset of $M$,
every  $t_0>0$ and every $t'_0\in [t_0,+\infty[$ 
(resp.  $t'_0=+\infty$, when $M$ is compact), 
we can find $K'\supset K$ a compact subset of $M$ 
(resp. $K'=M$ when $M$ is compact) and finite constants
$C_0,C_1$ such that:\\
for every {\rm C}$^1$ critical subsolution $u: M\to \R$, if $\omega_{u,K'}:[0,\infty[\to\infty$ is a continuous non-decreasing modulus of continuity of $x\mapsto (x,d_xu)$ on $K'$, then  for every $ x, y\in K$, and every $t\in\R$ with $t_0\leq t\leq t'_0$, we have
\begin{multline*}
\omega_{u,K'}^{-1}\left(\frac{c(H)-H(x,d_xu)}{2C_1}\right)\frac{c(H)-H(x,d_xu)}{2C_0}
\leq h_t(x,y)+c(H)t+u(x)-u(y),
\end{multline*}
where 
$$
\omega_{u,K'}^{-1}(t)
=\left\{
\begin{array}{ll}
\inf \{t' \mid \omega_{u,K'}(t')=t\} & \text{if }t\in \omega_{u,K'}([0,+\infty[),\\
+\infty & \text{otherwise.}
\end{array}
\right.
$$

In particular, if $\omega_{u,K'}$ is the linear function $t\mapsto Ct$, with $C>0$,
then for every $ x, y\in K$, and every $t\in\R$ with $t_0\leq t\leq t'_0$, we have
\begin{equation*}
\frac{[c(H)-H(x,d_xu)]^2}{4CC_0C_1}\leq h_t(x,y)+c(H)t+u(x)-u(y).
\end{equation*}}
\end{lemma}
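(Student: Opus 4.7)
The plan is to combine the Fenchel inequality with control of the minimizer's speed so that the ``action defect'' $h_t(x,y) + c(H)t + u(x) - u(y)$ is bounded below by an integral of $c(H) - H(\gamma(s), d_{\gamma(s)}u)$ along a minimizing $\gamma$ from $x$ to $y$, and then to control that integrand in terms of $\delta := c(H) - H(x, d_xu)$ using the modulus of continuity $\omega_{u,K'}$.

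First I would apply Lemma \ref{lemmecompaciteminimizer} to produce a compact set $K' \supset K$ containing every $L$-minimizer between points of $K$ in time $t \in [t_0, t'_0]$, and then Lemma \ref{boundedspeed} to get a uniform speed bound $\bar C$ for such minimizers. Since $H$ is $C^2$ and the sublevel set $S = \{(x, p) \in T^*M : x \in K',\, H(x, p) \leq c(H)\}$ is compact (by superlinearity in the fibers), the restriction of $H$ to $S$ has a Lipschitz constant $C_H$; and $(x, d_xu) \in S$ for every $x \in K'$ and every $C^1$ critical subsolution $u$, so this constant does not depend on $u$.

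Next, for an $L$-minimizer $\gamma:[0,t]\to M$ with $\gamma(0)=x$, $\gamma(t)=y$ both in $K$, the Fenchel inequality $d_{\gamma(s)}u(\dot\gamma(s)) \leq L(\gamma(s), \dot\gamma(s)) + H(\gamma(s), d_{\gamma(s)}u)$ integrates (using that $\gamma$ is a minimizer and $u$ is Lipschitz, hence a.e.\ differentiable along $\gamma$) to
\begin{equation*}
h_t(x, y) + c(H) t + u(x) - u(y) \;\geq\; \int_0^t \bigl[c(H) - H(\gamma(s), d_{\gamma(s)}u)\bigr]\,ds.
\end{equation*}
Since $\gamma$ stays in $K'$ with speed at most $\bar C$, the modulus-of-continuity hypothesis combined with the Lipschitz bound for $H$ on $S$ yields
\begin{equation*}
|H(\gamma(s), d_{\gamma(s)}u) - H(x, d_xu)| \;\leq\; C_H\, \omega_{u,K'}(\bar C s),
\end{equation*}
so on the subinterval $[0, s^*]$ with $s^* := \min\bigl(t,\, \omega_{u,K'}^{-1}(\delta/(2C_H))/\bar C\bigr)$ the integrand is at least $\delta/2$, giving the lower bound $s^*\delta/2$. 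Taking $C_0 = \bar C$ and $C_1 = C_H$ (with appropriate enlargement) gives the stated inequality, and the ``in particular'' linear case follows by substituting $\omega^{-1}(s) = s/C$, producing $\delta^2/(4CC_0C_1)$ on the left.

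The main obstacle is accommodating the regime $t < \omega_{u,K'}^{-1}(\delta/(2C_H))/\bar C$, where the direct integration produces only $t\delta/2$ rather than $\omega_{u,K'}^{-1}(\delta/(2C_1))\delta/(2C_0)$. One exploits that $\delta = c(H) - H(x, d_xu) \leq c(H) - \min_K \tilde H$ is bounded uniformly on $K$ by a constant depending only on $K$, so the constants $C_0, C_1$ (which depend only on $K, K', t_0, t'_0$, not on $u$) can be enlarged enough that $\omega_{u,K'}^{-1}(\delta/(2C_1))/C_0 \leq t_0 \leq t$ throughout the admissible range of $\delta$, reducing the analysis to the first regime. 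The careful bookkeeping of these constants, while not conceptually difficult, is where most of the technical work lies.
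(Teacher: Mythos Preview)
Your proposal is correct and follows essentially the same route as the paper's proof: choose $K'$ via Lemma~\ref{lemmecompaciteminimizer}, obtain the speed bound $C_0$ via Lemma~\ref{boundedspeed}, take $C_1$ to be a Lipschitz constant for $H$ on the compact sublevel $\{(x,p): x\in K',\, H(x,p)\le c(H)\}$, integrate the Fenchel inequality along a minimizer, and restrict to the subinterval $[0,t']$ with $t'=\frac1{C_0}\omega_{u,K'}^{-1}\bigl(\frac{\delta}{2C_1}\bigr)$. Your final paragraph addressing the regime $t<t'$ by enlarging $C_0$ (using the uniform bound on $\delta$ over $K$) is exactly the paper's ``up to choose $C_0$ big enough'' step, and you have in fact been slightly more explicit about why this enlargement can be done independently of $u$.
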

\begin{proof} We first choose $K'$. If $M$ is compact, we set $K'=M$, and we allow $t'_0=+\infty$. If $M$ is not compact we assume $t'_0<+\infty$. By Lemma
 \ref{lemmecompaciteminimizer}, we can find  a compact subset $K'\supset K$ of $M$ 
 such that every $L$-minimizer $\gamma:[a,b]\to M$ with
$t_0\leq b-a\leq t'_0$, and $\gamma(a),\gamma(b)\in K$ is contained in $K'$.
With this choice of $K'$, we apply Lemma \ref{boundedspeed} to find a finite constant a  finite constant
$C_0$  such that every $L$-minimizer $\gamma:[a,b]\to M$ contained in $K'$, with $b-a\geq 0$, has a speed
bounded in norm by $C_0$.

Therefore we conclude that for every $L$-minimizer $\gamma:[0,t]\to M$, with $t_0\leq  t'_0$ and $\gamma(0),\gamma(t)\in K$, we have $\gamma([0,t])\subset K'$, and $\lVert\dot\gamma(s)\rVert_{\gamma(t)}\leq C_0$ (this is valid both in the compact and non-compact case). In particular, for such a minimizer $\gamma$, we have
$$\forall s,s'\in [0,t], \quad d(\gamma(s),\gamma(s'))\leq C_0\lvert s-s'\rvert.$$
We call $C_1$ a Lipschitz constant of $H$ on the compact subset $\{(x,p)\in T^*M\mid x\in K', H(x,p)\leq c(H)\}$.

Suppose now $u$ is a critical subsolution. Given $x,y\in K$, and $t\in [t_0,t'_0]$, we pick $\gamma:[0,t]\to M$ a minimizer with $\gamma(0)=x$ and $\gamma(t)=y$. 
Therefore we have
\begin{equation*}
h_t(x,y)=\int_0^tL(\gamma(s),\dot\gamma(s))\,ds.
\end{equation*}
Since $\gamma([0,t])\subset K'$ and $H(\gamma(s),d_{\gamma(s)}u)\leq c(H)$, we have 
\begin{multline*}
\forall s,s'\in [0,t], \quad \lvert H(\gamma(s'),d_{\gamma(s')}u)-H(\gamma(s),d_{\gamma(s)}u)\rvert\\
\leq C_1d[(\gamma(s'),d_{\gamma(s')}u),(\gamma(s),d_{\gamma(s)}u)]\\
\leq C_1\omega_{u,K'}(d(\gamma(s'),\gamma(s))\leq C_1\omega_{u,K'}( C_0\lvert s-s'\rvert).  
\tag{$*$}
\end{multline*}
Integrating the Fenchel inequality
$$d_{\gamma(s)}u(\dot\gamma(s))\leq L(\gamma(s),\dot\gamma(s))+H(\gamma(s),d_{\gamma(s)}u),$$
we get
$$u(y)-u(x)\leq h_t(x,y)+\int_0^tH(\gamma(s),d_{\gamma(s)}u)\,ds.$$
Since $H(\gamma(s),d_{\gamma(s)}u)\leq c(H)$, for every $t'\in [0,t]$, we can write 
\begin{align*}
\int_0^tH(\gamma(s),d_{\gamma(s)}u)\,ds&=\int_0^tc(H)+[H(\gamma(s),d_{\gamma(s)}u)-c(H)]\,ds\\
&\leq c(H)t+\int_0^{t'}H(\gamma(s),d_{\gamma(s)}u)-c(H)\,ds\\
&\leq  c(H)t+\int_0^{t'}H(\gamma(0),d_{\gamma(0)}u)-c(H) +
C_1\omega_{u,K'}( C_0 s)\,ds\\
&=  c(H)t+\int_0^{t'}H(x,d_xu)-c(H) +
C_1\omega_{u,K'}( C_0 s)\,ds.
\end{align*}
Therefore from ($*$) above we obtain
$$\forall t'\in [0,t],\quad u(y)-u(x)\leq h_t(x,y)+c(H)t+\int_0^{t'}H(x,d_xu)-c(H) +
C_1\omega_{u,K'}( C_0 s)\,ds,$$
which yields
\begin{equation*}\forall t'\in [0,t], \quad\int_0^{t'}c(H)-H(x,d_xu) -
C_1\omega_{u,K'}( C_0 s)\,ds\leq h_t(x,y)+c(H)t+u(x)-u(y).\tag{$**$}
\end{equation*}
Since $t \geq t_0$ and $c(H)-H(x,d_xu) \leq c(H)-\inf\{ H(x,p) \mid (x,p)\in T^*M\}<+\infty$,
up to choose $C_0$ big enough we can assume $t> \frac1{C_0}\omega_{u,K'}^{-1}(\frac{c(H)-H(x,d_xu)}{2C_1})$.
Then, if we set $t'=\frac1{C_0}\omega_{u,K'}^{-1}(\frac{c(H)-H(x,d_xu)}{2C_1})$,
since $\omega_{u,K'}$ is non decreasing we obtain
$$\forall s\in [0,t'], \quad C_1\omega_{u,K'}(C_0s)\leq \frac{c(H)-H(x,d_xu)}2.$$
Hence
$$\forall s\in [0,t'],\quad c(H)-H(x,d_xu) -C_1\omega_{u,K'}(C_0s)\geq \frac{c(H)-H(x,d_xu)}2.$$
Combining this with ($**$) we obtain
$$\omega_{u,K'}^{-1}\left(\frac{c(H)-H(x,d_xu)}{2C_1}\right)\frac{c(H)-H(x,d_xu)}{2C_0}\leq
h_t(x,y)+c(H)t+u(x)-u(y).$$
This finishes the proof.
\end{proof}
\begin{proof}[Proof of Proposition \ref{propositionderivativeaction}]
We apply Lemma \ref{lemmemajorationdudefaut} above to obtain the compact set $K'$.
This lemma also gives for every $\ell\geq 0$ a constant $A=A(\ell)$ such that 
any {\rm C}$^{1,1}$ critical subsolution $u:M\to \R$ which is $\ell$-Lipschitz on $K'$ satisfies
$$\forall x\in K, \,\forall t\in [t_0,t'_0],\quad \frac{[c(H)-H(x,d_xu)]^2}A\leq h_t(x,x)+c(H)t.$$

To prove the second part, we will use the strict {\rm C}$^2$ convexity of $H$. Since the set $\{(x,p)\mid x\in K, H(x,p)\leq c(H)\}$ is compact the {\rm C}$^2$ strict convexity allows us 
to find $\beta>0$ such that for all $x\in K$, and $p_1,p_2\in T^*_xM$, with $H(x,p_i)\leq c(H)$, we have
$$H(x,p_2)-H(x, p_1)\geq \frac{\partial H(x,p_1)}{\partial p}(p_2-p_1)+\beta \lVert p_2-p_1\rVert_x^2.$$
Since $H$ is convex in $p$, for all $x\in K$, and $p_1,p_2\in T^*_xM$, with $H(x,p_i)\leq c(H)$, we also have $H(x,(p_1+p_2)/2)\leq c(H)$. therefore we can apply the above inequality to the pairs $((p_1+p_2)/2,p_1)$ and $((p_1+p_2)/2,p_2)$ to obtain
\begin{align*}
H(x,p_1)-H\left(x,\frac{p_1+p_2}2\right)&\geq \frac{\partial H(x,\frac{p_1+p_2}2)}{\partial p}(\frac{p_1-p_2}2)+\beta \lVert \frac{p_1-p_2}2\rVert_x^2\\
H(x,p_2)-H\left(x,\frac{p_1+p_2}2\right)&\geq \frac{\partial H(x,\frac{p_1+p_2}2)}{\partial p}(\frac{p_2-p_1}2)+\beta \lVert \frac{p_2-p_1}2\rVert_x^2.
\end{align*}
If we if we add these 2 inequalities, using $H(x,p_i)\leq c(H)$, and dividing by 2, we obtain
$$c(H)-H\left(x,\frac{p_1+p_2}2\right)\geq \beta \lVert \frac{p_2-p_1}2\rVert_x^2.$$
Therefore if $u_1,u_2:M\to \R$ are two {\rm C}$^1$ critical subsolutions  such that ($x\mapsto (x,d_xu_i), i=1,2$ have a Lipschitz constant  $\leq \ell$ on
$K'$, we get
\begin{equation*}
c(H)-H\left(x,\frac{d_xu_1+d_xu_2}2\right)\geq \beta \lVert \frac{d_xu_2-d_xu_1}2\rVert_x^2. \tag{$*$}
\end{equation*}
We denote by $T^*M\oplus T^*M$ the Whitney sum of $T^*M$ with itself (i.e. we consider the vector bundle over $M$ whose fiber at $x\in M$ is 
$T_x^*M\times T_x^*M$).  The maps
$$T^*M\to T^*M\oplus T^*M, \quad (x,p)\mapsto (x,p,0),$$
$$T^*M\to T^*M\oplus T^*M, \quad (x,p)\mapsto (x,0,p)$$
and
$$T^*M\oplus T^*M\to T^*M,
\quad (x,p_1,p_2)\mapsto \left(x,\frac{p_1+p_2}2\right)$$ are all {\rm C}$^\infty$. Therefore they are 
Lipschitz on any compact subset. Since for a
critical subsolution $u:M\to \R$ the values $(x,d_xu)$, for $ x\in K'$, 
are all in the compact subset $\{(x,p)\mid x\in K', H(x,p)\leq c(H)\}$, 
we can find a constant 
$B<\infty$ such that for  any two {\rm C}$^1$ critical subsolutions $u_1,u_2:M\to \R$ such that 
$x\mapsto (x,d_xu_i), i=1,2$ has a Lipschitz constant  $\leq \ell$ on $K'$, the map 
$x\mapsto(x, (d_xu_1+d_xu_2)/2)$ has Lipschitz constant $\leq B\ell$.
Since $(u_1+u_2)/2$ is also a critical subsolution, applying the first part of the proposition 
proved above with Lipschitz constant $\ell_1=B\ell$, we can find a constant $C_1$ such that
$$\forall x\in K,\,\forall t\in [t_0,t'_0], \quad c(H)-H\left(x,\frac{d_xu_1+d_xu_2}2\right)^2\leq C_1(h_t(x,x)+c(H)t).$$
Combining this inequality with ($*$) above we get
$$\forall x\in K,\,\forall t\in [t_0,t'_0],\quad \beta^2\lVert \frac{d_xu_2-d_xu_1}2\rVert_x^4\leq C_1^2(h_t(x,x)+c(H)t)^2.$$
This yields the second part of the Proposition with $C'=\beta^{-2}C_1^2$.
\end{proof}
We now can start the proof of Theorem \ref{THM2}. Let $\tilde{\mathcal A}^p$ be the set of points in the Aubry set $\tilde{\mathcal{A}}$ which are periodic but not fixed under the Euler-Lagrange flow $\phi^L_t$. This set projects on $\mathcal A^p$. Denote by $T:\tilde{\mathcal A}^p\to ]0,+\infty[$ the period map of Euler-Lagrange flow $\phi^L_t$, i.e. if $(x,v)\in \tilde{\mathcal A}^p$, the number $T(x)$ is the smallest positive number $t$ such that $\phi^L_t(x,v)=(x,v)$.
Using Proposition \ref{lemmepointsperiodiques} above, we can write $\tilde{\mathcal A}^p=\cup_{n\in \N}\tilde F_n$, with each $\tilde F_n$ compact and such that the restriction $T| \tilde F_n$ is continuous.
We denote by $F_n$ the projection of $\tilde F_n\subset TM$ on the base $M$.
We have ${\mathcal A}^p=\cup_{n\in \N} F_n$. If we want to show that 
$\Haus d({\mathcal A}^p,\delta_M)=0$, for some dimension $d> 0$, by the countable additivity of the Hausdorff measure in dimension $d$, it suffices to show that
$\Haus d(F_n,\delta_M)=0$, for every $n\in \N$.

Therefore from now on we fix some compact subset $\tilde F\subset \tilde{\mathcal A}^p$
on which the period map is continuous, and we will show that its Hausdorff measure in the appropriate dimension $d$ is $0$. We now perform one more reduction. In fact, we claim that it suffices for each $(x,v)\in \tilde F$ to find $\tilde S_{(x,v)}\subset TM$ a {\rm C}$^\infty$ 
 codimension 1 transversal section to the  Euler-Lagrange flow $\phi^L_t$, 
 such that $(x,v)\in \tilde  S_{(x,v)}$ and $\Haus d(\pi(\tilde F\cap\tilde  S_{(x,v)}),\delta_M)=0$, where $\pi:TM\to M$ is the canonical projection. Indeed, if this was the case, since, by transversality of $\tilde S$ to the flow $\phi^L_t$ the set $\tilde V_{x,v}=\cup_{t\in \R}\phi^L_t(\tilde  S_{(x,v)})$ is open in $TM$,  we could  cover the compact set $\tilde F$ by a finite number of sets $\tilde V_{(x_i,v_i)},i=1,\dots,\ell$. Note that by part (3) of Mather's theorem \ref{ThMather1}, the sets
 $\pi(\tilde F\cap \tilde V_{(x,v)})$ and $\pi(\tilde F\cap\tilde  S_{(x,v)})$ have the same image in the quotient Mather set, therefore we  get 
 $$\Haus d(\pi(\tilde F\cap \tilde V_{(x,v)}),\delta_M)=\Haus d(\pi(\tilde F\cap\tilde  S_{(x,v)}),\delta_M)=0.$$
Hence $F=\pi(\tilde F)$, which is covered by the finite number of sets $\pi(\tilde F\cap \tilde V_{(x_i,v_i)})$, does also satisfy $\Haus d(F,\delta_M)=0$.

Fix now $(x_0,v_0)$ in $\tilde F\subset \tilde\Aubry^p$. We proceed to construct the
transversal $\tilde S=\tilde  S_{(x_0,v_0)}$. We start with a {\rm C}$^\infty$ codimension 1 submanifold
$\tilde  S_0\subset TM$ which it transversal to the flow $\phi^L_t$, and which intersects the 
compact periodic  orbit $\phi^L_t(x_0,v_0)$ at exactly $(x_0,v_0)$. If $L$ (or $H$) is {\rm C}$^{k,1}$, the Poincar\'e first return time $\tau:\tilde  S_1\to ]0,\infty[$ on $T_0$ is defined and {\rm C}$^{k-1,1}$ on some smaller transversal $\tilde  S_1\subset \tilde  S_0$ containing $(x_0,v_0)$. We set $\theta: \tilde  S_1\to \tilde  S_0,
(x,v)\mapsto \phi^L_{\tau(x,v)}(x,v)$. This is the Poincar\'e return map, and it is also {\rm C}$^{k-1,1}$, as a composition of  {\rm C}$^{k-1,1}$ maps. Of course, we have $\tau(x_0,v_0)=T(x_0,v_0)$ and $\theta(x_0,v_0)=(x_0,v_0)$. Since $T$ is continuous on $F$, it is easy to show that $T=\tau$ and $\theta$ is the identity on $F\cap \tilde  S_2$, where $\tilde  S_2\subset \tilde  S_1$ is a smaller section containing $(x_0,v_0)$.

Pick $\epsilon >0$ small enough so that the radius of injectivity of the Riemannian manifold $M$ is $\geq \epsilon$  for every $x\in B_d(x_0,\epsilon)=\{y\in M\mid d(x_0,y)<\epsilon\}$, where $d$ is the distance obtained from the Riemannian metric on $M$. This implies that the
restriction of the square $d^2$ of the distance $d$ is of class {\rm C}$^\infty$ (like the Riemannian metric) on $ B_d(x_0,\epsilon/2)\times B_d(x_0,\epsilon/2)$.

We now take a smaller section $\tilde  S_3\subset \tilde  S_2$ around $(x_0,v_0)$ such that for every
$(x,v)\in \tilde  S_3$ both $x$ and $\pi\theta(x,v)$ of $M$ are in the ball $ B_d(x_0,\epsilon/2)$. This is possible by continuity since $\theta(x_0,v_0)=(x_0,v_0)$. For $(x,v)\in \tilde  S_3$, we set 
$$
\rho(x,v)= \tau(x,v)+d(\pi\theta(x,v),x).$$
We will now give an upper bound for
$h_{\rho(x,v)} (x,x) +c(H)\rho(x,v)$, when $(x,v)\in \tilde  S_3$.
For this we choose a loop $\gamma_{(x,v)}:[0,\rho(x,v)]\to M$ at $x$. This loop $\gamma_{(x,v)}$ is equal to  the curve 
$\gamma_{(x,v),1}(t)=\pi\phi_t(x,v)$ for $t\in [x,\tau(x,v)]$, which joins $x$ to 
$\pi\theta(x,v)$, followed by the shortest geodesic 
$\gamma_{(x,v),2}:[\tau(x,v),\rho(x,v)]\to M$, for the Riemannian metric, parametrized by arc-length and joining 
$\pi\theta(x,v)$ to $x$. Since  $\gamma_{(x,v),2}$ is parametrized by arc-length and is
 contained in $B_d(x_0,\epsilon)$, its action
is bounded by $Kd(\pi\theta(x,v),x)$, where $K=\sup \{L(x,v)\mid d(x,x_0)\leq \epsilon, \lVert v\rVert_x\leq 1\}<\infty$. On the other hand the action $a(x,v)$ of $\gamma_{(x,v),1}(t)$ is given by 
$$a(x,v)=\int_0^{\tau(x,v)}L[\phi^L_s(x,v)]\, ds.$$
Note that $a$ is also of class {\rm C}$^{k-1,1}$. It follows that, for $(x,v)\in \tilde  S_3$,
we have
$$h_{\rho(x,v)} (x,x) +c(H)\rho(x,v)\leq [a(x,v)+c(H)\tau(x,v)] +[K+c(H)]d(\pi\theta(x,v),x)).$$
Therefore if, for $(x,v)\in \tilde  S_3$,  we define 
$$
\Psi (x,v) = [a(x,v)+c(H)\tau(x,v)]^2 + d^2(\theta(x,v),x),
$$
we obtain 
$$\forall (x,v)\in \tilde  S_3, \quad 0\leq h_{\rho(x,v)} (x,x) +c(H)\rho(x,v)\leq [1+K+c(H)]\sqrt{\Psi (x,v)}.$$
Notice that $\Psi$ is {\rm C}$^{k-1,1}$ like $a$ and $\tau$, because $x,\pi\theta(x,v)\in B(x_0,\epsilon/2)$ and $d^2$ is {\rm C}$^\infty$ on the ball $B(x_0,\epsilon/2)$.
We now observe that $\Psi$ is identically $0$ on $\tilde F\cap \tilde  S_3$.
Indeed, for $(x,v)\in \tilde F\cap \tilde  S_3$, we have $\theta(x,v)=(x,v)$, therefore 
$d^2(\pi\theta(x,v),x)=0$. Moreover, since $(x,v)\in \tilde F\subset \tilde \Aubry$, the curve $t\mapsto\pi\phi^L_t(x,v)$ calibrates any critical subsolution $u:M\to \R$; in particular
\begin{align*}
u(\pi\phi^L_{\tau(x,v)}(x,v))-u(\pi(x,v))&=
\int_0^{\tau(x,v)} L\phi^L_s(x,v)\,ds+c(H)\tau(x,v)\\
&=a(x,v)+c(H)\tau(x,v).
\end{align*}
But $\phi^L_{\tau(x,v)}(x,v)=\theta(x,v)=(x,v)$ for $(x,v)\in \tilde F\cap \tilde  S_3$. Hence $a(x,v)+c(H)\tau(x,v)=0$, for $(x,v)\in \tilde F\cap \tilde  S_3$. Therefore $\Psi$ is identically $0$ on for $\tilde F\cap \tilde  S_3$.

To sum up the situation we have found two functions $\rho,\Psi:\tilde S_3\to[0,+\infty[$ such that:
\begin{enumerate}
\item the function $\rho$ is  continuous and $>0$ everywhere;
\item the function $\Psi$ is {\rm C}$^{k-1,1}$ and vanishes identically on 
$\tilde F\cap\tilde S_3$;
\item there exists a finite constant $C$ such that
$$\forall (x,v)\in \tilde S_3, \quad 0\leq  h_{\rho(x,v)} (x,x) +c(H)\rho(x,v)\leq C\sqrt{\Psi (x,v)}.$$
\end{enumerate}
This is all that we will use in the sequel of the proof.

We now fix a smaller Poincar\'e section $\tilde S_4$ containing $(x_0,v_0)$ whose closure $\operatorname{Cl}(\tilde S_4)$ is compact and contained in $\tilde S_3$. We now observe that  $K=\pi(\operatorname{Cl}(\tilde S_4))$ is a compact subset of $M$, and that 
$t_0=\min\{\tau(x,v)\mid (x,v)\in \operatorname{Cl}(\tilde S_4)\},t'_0=\max\{\tau(x,v)\mid (x,v)\in \operatorname{Cl}(\tilde S_4)\}$  are finite and $>0$ since $\tau$ is continuous and $>0$ on the compact set $\operatorname{Cl}(\tilde S_4)$. We can therefore apply Proposition 
\ref{propositionderivativeaction},  to obtain a set $K'$. We have to choose a constant $\ell$ needed to apply this Proposition \ref{propositionderivativeaction}. For this we invoke 
Theorem \ref{THMexistC11}: we can find a constant $\ell$ such that for any critical subsolution $u:M\to \R$ we can find a {\rm C}$^{1,1}$ critical subsolution $v:M\to \R$ which is equal to $u$ on the projected Aubry set $\Aub$ and such that $x\mapsto (x,d_xv)$ has Lipschitz constant $\leq \ell $ on $K'$. It follows from Lemma \ref{Fund} that
$$\forall x,y\in \Aub,\quad \delta_M(x,y)=\max \left\{ (u_1-u_2)(y) - (u_1-u_2)(x) \right\},$$
where the maximum is taken over all the pairs of {\rm C}$^{1,1}$ critical  subsolution $u_1,u_2:M\to\R$ such that $x\mapsto (x,d_xu_i),i=1,2$ have a Lipschitz constant $\leq \ell$ on $K'$. Using this $\ell$, we obtain, from  
Proposition \ref{propositionderivativeaction}, a constant $C'$ such that
$$\forall (x,v)\in \operatorname{Cl}(\tilde S_4),\quad \lVert d_xu_2-d_xu_1\rVert_x^4
\leq C' [h_{\tau(x,v)}(x,x)+c(H)\tau(x,v)], $$
for  every pair of {\rm C}$^{1,1}$ critical  subsolution $u_1,u_2:M\to\R$ such that $x\mapsto (x,d_xu_i),i=1,2$ have a Lipschitz constant $\leq \ell$ on $K'$. Therefore by the properties of $\tau$ and $\Psi$ explicited above we obtain
$$\forall (x,v)\in \operatorname{Cl}(\tilde S_4),\quad \lVert d_xu_2-d_xu_1\rVert_x
\leq C \Psi(x,v)^{1/8}, $$
again for  every pair of {\rm C}$^{1,1}$ critical  subsolution $u_1,u_2:M\to\R$ such that $x\mapsto (x,d_xu_i),i=1,2$ have a Lipschitz constant $\leq \ell$ on $K'$.
Since $\Psi$ is of class {\rm C}$^{k-1,1}$ and is identically $0$ on $\tilde F\cap \tilde S_4$, we can invoke
Lemma \ref{lemmaMorseVanish} to obtain a decomposition
$$
\tilde F\cap \tilde S_4 =\cup_{i \in \N} \tilde{A}_i,
$$
 with $A_i$ a compact subset, a family $(B_i)_{i\in \N}$of {\rm C}$^1$ compact embedded discs in  $\tilde S_4$
 and  constants $(C_i)_{i\in \N}$ such that
\begin{equation*}
\forall (x,v)\in A_i,\, \forall (y,w)\in B_i,\quad \Psi(y,w)=\lvert \Psi(y,w)-\Psi(x,w)\rvert\leq 
C_i \tilde d[(y,w),(x,v)]^k,
\end{equation*}
where $\tilde d$ is the distance obtained from a fixed Riemannian metric on 
$\tilde S_4$.
Combining with what we obtained above, we find constants $C'_i$ (independent of the pair of functions $u_1,u_2$) such that
\begin{equation*}
\forall (x,v)\in A_i,\,\forall (y,w)\in B_i,\quad \lVert d_yu_1-d_yu_2\rVert_x\leq 
C'_i \tilde d[(y,w),(x,v)]^{k/8}.
\end{equation*}
Since we want to consider $u_1$ and $u_2$ as function on $B_i\subset TM$
composing with $\pi:TM\to M$, we can rewrite this as
\begin{equation*}
\forall (x,v)\in A_i,\,\forall (y,w)\in B_i, \quad
\lVert d_{(y,w)}u_1\circ \pi-d_{(y,w)}u_2\circ\pi\rVert_x\leq 
C'_i \tilde d[(y,w),(x,v)]^{k/8}.
\end{equation*}

Again as in the proof of the previous theorem to simplify things we can identify
$B_I$ with a Euclidean ball $\BB_i$ of some dimension, and since the identification  is done by a {\rm C}$^1$ diffeomorphism, we can find constants $C_i''$ (independent of the pair of functions $u_1,u_2$)
such that 
\begin{equation*}
\forall (x,v)\in A_i,\,\forall (y,w)\in \BB_i, \quad
\lVert d_{(y,w)}u_1\circ \pi-d_{(y,w)}u_2\circ\pi\rVert_{\rm euc}\leq 
C_i''\Vert(y,w)-(x,v)\rVert_{\rm euc}^{k/8}.
\end{equation*}
In the Euclidean disc $\BB_i$, we can integrate this inequality along the Euclidean segment joining $(x,v)$ to $(y,w)$ to obtain
\begin{multline*}
\forall (x,v)\in A_i,\,\forall (y,w)\in \BB_i, \quad
\lvert (u_1-u_2)(y)-(u_1-u_2)(x)\rvert\\
\leq \frac{C_i''}{1+(k/8)}\Vert(y,w)-(x,v)\rVert_{\rm euc}^{1+(k/8)}.
\end{multline*}
Of course, since the identification of $B_i\subset TM$ with $\BB_i$ is done by a {\rm C}$^1$ diffeomorphism changing constants again to some $\tilde C_i$ (independent of the pair of functions $u_1,u_2$), we get
\begin{equation*}
\forall (x,v)\in A_i,\,\forall (y,w)\in \BB_i, \quad
\lvert (u_1-u_2)(y)-(u_1-u_2)(x)\rvert\leq 
\tilde C_id[(y,w),(x,v)]^{1+(k/8)},
\end{equation*}
where  $d$ is a distance on $TM$ obtained from a Riemannian metric.
Observe now that, by Mather's theorem, the projection $\pi :\tilde\Aub\to\Aub$ is  bijective with an inverse which is locally Lipschitz. Therefore since $A_i$ is compact  and contained in $\tilde F\subset \tilde \Aub$, changing again the constants to $\tilde C'_i$ (independent of the pair of functions $u_1,u_2$), we obtain
\begin{equation*}
\forall x,y\in \pi(A_i),\quad
\lvert (u_1-u_2)(y)-(u_1-u_2)(x)\rvert\leq 
\tilde C'_id[y,x]^{1+(k/8)},
\end{equation*}
Since this inequality is true now for every pair f of {\rm C}$^{1,1}$ critical  subsolution $u_1,u_2:M\to\R$ such that $x\mapsto (x,d_xu_i),i=1,2$ have a Lipschitz constant $\leq \ell$ on $K'$ (with the constant $\tilde C_i'$ independent of the pair of functions $u_1,u_2$), we conclude that
$$\forall x,y\in\pi(A_i),\quad \delta_M(x,y)\leq 
\tilde C''_id[y,x]^{1+(k/8)} .$$
Therefore by Lemma \ref{lemNEW2} we obtain that
$$\Haus {8\dim M/(k+8)}(\pi(A_i))=0.$$
Again by countable additivity this gives  $\Haus {8\dim M/(k+8)}(\pi(\tilde F \cap\tilde S_4))=0$. This finishes the proof of the Theorem.

\begin{remark}
We observe that, from our proof, for any $\tilde F \subset \tilde{\mathcal A}$, the semi-metric space $(\pi(\tilde F),\d_{M})$ has vanishing one-dimensional Hausdorff measure as soon as the following properties are satisfied: there are $r>0$, $k',l \in \N$ and a function $G:TM \rightarrow \R$ of class {\rm C}$^{k',1}$ such that
\begin{enumerate}
\item $G(x,v) \equiv 0$ on $\tilde F$,
\item $\left\{ m_{r}(x) \right\}^l  \leq G(x,v)$ for all  $(x,v) \in TM$, 
\item $k' \geq 4l(\dim M-1)-1$,
\end{enumerate}
where $m_{r} (x) = \inf_{t \geq r} \left\{h_t(x,x)+c(H)t \right\}$.
\end{remark}

\begin{remark}
By Proposition \ref{HolderOrdre2}, for every compact subset $K \subset M$ there is a constant $C_K>0$ such that 
$$
\forall x\in K, \quad 
h(x,x) \leq C_K d(x,\mathcal{A})^2,
$$
where $d(x,\mathcal{A})$ denotes the Riemannian distance from $x$ to the set $\Aub$ (which is assumed to be nonempty).
Therefore, from the remark above, we deduce that if there are $l\in \N$ and a function $G:M \rightarrow \R$ of class {\rm C}$^{k',1}$ with $k' \geq 2l(\dim M -1) -1$ such that
$$
\forall x \in M, \quad
d(x,\mathcal{A})^l \leq G(x),
$$
then $(\mathcal{A}_M,d_{M})$ has vanishing one-dimensional Hausdorff measure.
\end{remark}

\subsection{Proof of Theorem \ref{THMSURF}}
By Theorems \ref{THM2} and \ref{THM3} we know that $(\mathcal{A}_M^0 \cup \Aub_M^p,\delta_M)$ has
zero Hausdorff dimension. Thus the result will follow once we will show that
$\Aub_M \setminus (\mathcal{A}_M^0\cup \Aub_M^p)$ is a finite set.

We recall that the Aubry set $\tilde{\mathcal{A}} \subset TM$ is given by the set of $(x,v) \in TM$
such that $x\in \mathcal{A}$ and $v$ is the unique $v \in T_xM$ such that $d_xu =\frac{\partial L}{\partial v}(x,v)$
for any critical viscosity subsolution.
This set is invariant under the Euler-Lagrange flow $\phi_t^L$. For every $x\in \mathcal{A}$,
we denote by $\mathcal O(x)$ the projection on $\Aub$ of the orbit of $\phi_t^L$ which passes through $x$.
We observe that by Theorem \ref{ThMather1} (3) the following simple fact holds:
\begin{lemma}
\label{simplelemma}
If $x,y \in \Aub$ and $\ov{\mathcal O(x)} \cap \ov{\mathcal O(y)} \neq \emptyset$,
then $\delta_M(x,y)=0$.
\end{lemma}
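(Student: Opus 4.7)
The plan is to combine three ingredients: the fact that $\delta_M$ vanishes along any single orbit inside $\tilde{\Aub}$, the H\"older estimate of Proposition \ref{HolderOrdre2}, and the triangle inequality for $\delta_M$.

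First, by part (3) of Theorem \ref{ThMather1}, for any $z\in \Aub$ and any $t,t'\in \R$ we have $\delta_M(\gamma_z(t),\gamma_z(t'))=0$. In particular, since $x=\gamma_x(0)$ and $y=\gamma_y(0)$, the semi-distance $\delta_M$ takes the value zero between $x$ and any point of $\mathcal O(x)$, and similarly between $y$ and any point of $\mathcal O(y)$. Hence, by the triangle inequality for $\delta_M$, it suffices to exhibit sequences $(t_n),(s_n)\subset \R$ such that $\delta_M(\gamma_x(t_n),\gamma_y(s_n))\to 0$.

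Second, choose a point $z\in \ov{\mathcal O(x)}\cap \ov{\mathcal O(y)}$ and pick sequences $t_n,s_n\in \R$ with $\gamma_x(t_n)\to z$ and $\gamma_y(s_n)\to z$ in $M$. Fix a compact neighborhood $K$ of $z$; for $n$ large enough, both points $\gamma_x(t_n)$ and $\gamma_y(s_n)$ lie in $K$, and moreover $\gamma_x(t_n)\in \Aub\cap K$ since the orbit $\mathcal O(x)$ is contained in $\Aub$ by Theorem \ref{ThMather1} (3). Applying Proposition \ref{HolderOrdre2} with the Aubry point $\gamma_x(t_n)$ and the arbitrary point $\gamma_y(s_n)\in K$ yields
\[
\delta_M(\gamma_x(t_n),\gamma_y(s_n))\leq C_K\, d(\gamma_x(t_n),\gamma_y(s_n))^2
\leq C_K\bigl(d(\gamma_x(t_n),z)+d(z,\gamma_y(s_n))\bigr)^2,
\]
and the right-hand side tends to $0$ as $n\to\infty$.

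Third, combining the two steps via the triangle inequality
\[
\delta_M(x,y)\leq \delta_M(x,\gamma_x(t_n))+\delta_M(\gamma_x(t_n),\gamma_y(s_n))+\delta_M(\gamma_y(s_n),y)
=\delta_M(\gamma_x(t_n),\gamma_y(s_n))
\]
and letting $n\to\infty$ gives $\delta_M(x,y)=0$, as desired. There is no real obstacle here: the only subtle point is making sure that $\gamma_x(t_n)$ remains in $\Aub$ (so that Proposition \ref{HolderOrdre2} applies), which is automatic from the invariance of $\tilde{\Aub}$ under the Euler-Lagrange flow together with part (3) of Theorem \ref{ThMather1}.
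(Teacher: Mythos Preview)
Your proof is correct and follows the route the paper itself indicates: the paper gives no explicit argument for Lemma~\ref{simplelemma} beyond the remark that it ``holds by Theorem~\ref{ThMather1}~(3),'' and your three steps are exactly the natural way to spell that out. One small simplification: in the second step you do not need the quadratic H\"older estimate of Proposition~\ref{HolderOrdre2}---since $h$ (and hence $\delta_M$) is continuous on $M\times M$, you could directly pass to the limit $\delta_M(x,z)=\lim_n\delta_M(x,\gamma_x(t_n))=0$ and likewise $\delta_M(y,z)=0$, then conclude by the triangle inequality.
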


Let us define
$$
\mathcal C_0=\{x \in \Aub \mid \ov{\mathcal O(x)} \cap \Aub_0\},
\quad
\mathcal C_p=\{x \in \Aub \mid \ov{\mathcal O(x)} \cap \Aub_p\}.
$$
Thus, if $x \in \mathcal C_0 \cup \mathcal C_p$,
by Lemma \ref{simplelemma} the Mather distance between $x$ and $\mathcal{A}^0\cup \Aub^p$ is $0$, and we have done.

Let us now define $\mathcal C=\Aub \setminus(\mathcal C_0 \cup \mathcal C_p)$, and let $(\mathcal C_M, \delta_M)$
be the quotiented metric space. To conclude the proof, we show that this set consists of a finite number of points.

Let $u$ be a {\rm C}$^{1,1}$ critical subsolution (whose existence is provided by \cite{bernard}), and let $X$
be the Lipschitz vector field uniquely defined by the relation
$$
\mathcal{L}(x,X(x))=(x,d_xu),
$$
where $\mathcal L$ denotes the Legendre transform.
Its flow extends on the whole manifold the flow considered above on $\Aub$.
We fix $x \in \mathcal C$. Then $\ov{\mathcal O(x)}$ is a non-empty, compact, invariant set which contains
a non-trivial minimal set for the flow of $X$ (see \cite[Chapter 1]{NZ}).
By \cite{mark70}, we know that there exists at most a finite number of such non-trivial minimal sets.
Therefore, again by Lemma \ref{simplelemma}, $(\mathcal C_M, \delta_M)$ consists only in the finite number of points.

\section{Applications in Dynamics}
Throughout this section, $M$ is assumed to be compact. As before, $H:T^*M \rightarrow \R$ is an Hamiltonian of class at least {\rm C}$^2$ satisfying the three usual conditions (H1)-(H3), and $L$ is the Tonelli Lagrangian which is associated to it by Fenchel's duality. 

As in Subsection \ref{MatherDistAndSubsol}, we denote $\mathcal{SS}$ the set of critical viscosity subsolutions and by ${\cal S}_-$ the set of critical viscosity (or weak KAM) solutions, so that ${\cal S}_- \subset \mathcal{SS}$.

\subsection{Some more facts about the Aubry set when the manifold is compact}
From the characterization of the Aubry set given by Theorem \ref{DescriptionAubry}, it is natural to introduce
the Ma\~n\'e set $\tilde {\cal N}$ given by 
$$
\tilde {\cal N}=\bigcup_{u\in {\cal {SS}}}\tilde{\cal I}(u).
$$
Like for  $\tilde {\cal A}$, the subset  $\tilde {\cal N}$ of $TM$ is compact and invariant under the Euler-Lagrange flow $\phi^L_t$ of $L$.

\begin{theorem}[Ma\~n\'e]
\label{mane}
When $M$ is compact, each point of the invariant set $\tilde {\cal A}$ is chain-recurrent for the restriction $\phi_t^L|_{\tilde {\cal A}}$. Moreover, the invariant set $\tilde {\cal N}$ is chain-transitive for the restriction $\phi_t^L|_{\tilde {\cal N}}$.
\end{theorem}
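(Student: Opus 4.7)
I would prove both assertions by explicitly constructing pseudo-orbits from almost-closed minimizing loops, relying on the characterization $h(x,x)=0$ of $\mathcal{A}$ together with Mather's convergence property Theorem \ref{ThMather1}(4).

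\emph{Chain recurrence of each point of $\tilde{\mathcal{A}}$.} Fix $(x,v_*)\in\tilde{\mathcal{A}}$ and $T,\varepsilon>0$. Since $h(x,x)=0$, choose $L$-minimizing loops $\gamma_n:[0,t_n]\to M$ at $x$ with $t_n\to\infty$ and $\int_0^{t_n}L(\gamma_n,\dot\gamma_n)\,ds+c(H)t_n\to 0$; by Theorem \ref{ThMather1}(4), $\dot\gamma_n(0),\dot\gamma_n(t_n)\to v_*$. Set $N_n=\lfloor t_n/T\rfloor$ (adjusting $t_n$ by at most $T$ to absorb the last partial step) and sample $q_k^n=(\gamma_n(kT),\dot\gamma_n(kT))$. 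The crucial step is to show that, for $n$ large, each $q_k^n$ lies within $\varepsilon/2$ of $\tilde{\mathcal{A}}$. By uniform superlinearity, the speeds of these minimizers are uniformly bounded, so a diagonal/compactness argument shows that any accumulation point of $q_{k_n}^n$ (with $k_n\to\infty$ and $N_n-k_n\to\infty$) gives rise to a bi-infinite $L$-minimizer whose excess action $\int L+c(H)\cdot(\text{time})$ vanishes on every compact time interval, i.e.\ a static orbit, which by definition lies in $\tilde{\mathcal{A}}$. Replacing each $q_k^n$ by a nearby $\tilde q_k\in\tilde{\mathcal{A}}$ and setting $\tilde q_0=\tilde q_{N_n}=(x,v_*)$ (legitimate because $\dot\gamma_n(0),\dot\gamma_n(t_n)\to v_*$), the sequence $(\tilde q_0,\tilde q_1,\dots,\tilde q_{N_n})$ is a $(T,\varepsilon)$-pseudo-orbit in $\tilde{\mathcal{A}}$ from $(x,v_*)$ to itself, using uniform continuity of $\phi^L_T$ on the compact set $\tilde{\mathcal{A}}$.

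\emph{Chain transitivity of $\tilde{\mathcal{N}}$.} Given $(x_i,v_i)\in\tilde{\mathcal{N}}$ for $i=1,2$, each orbit is $(u_i,L,c(H))$-calibrated for some critical subsolution $u_i$. A standard argument (calibration, $h(y,y)\ge 0$, and compactness of $M$) yields $\omega(x_1,v_1)\subset\tilde{\mathcal{A}}$ and $\alpha(x_2,v_2)\subset\tilde{\mathcal{A}}$. Picking $a_1\in\omega(x_1,v_1)$ and $a_2\in\alpha(x_2,v_2)$, it suffices to exhibit a pseudo-orbit inside $\tilde{\mathcal{N}}$ from $a_1$ to $a_2$: prepending a long forward flow segment of $(x_1,v_1)$ that lands within $\varepsilon$ of $a_1$, and appending a long forward flow segment starting within $\varepsilon$ of $a_2$ and ending within $\varepsilon$ of $(x_2,v_2)$, will then complete the pseudo-orbit. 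To chain-connect $a_1$ to $a_2$, I will use that $h(\pi a_1,\pi a_2)$ and $h(\pi a_2,\pi a_1)$ are finite (compactness of $M$) to select minimizers of arbitrarily long durations from $\pi a_1$ to $\pi a_2$ and back with uniformly bounded excess action, concatenate them with almost-closed loops at $\pi a_1$ and $\pi a_2$ (supplied by $h(\pi a_i,\pi a_i)=0$) to form long almost-closed loops at $\pi a_1$ passing near $\pi a_2$ whose total excess action tends to zero, and then repeat the sampling-plus-projection argument of the first part to obtain a pseudo-orbit from $a_1$ to $a_2$ lying in $\tilde{\mathcal{A}}\subset\tilde{\mathcal{N}}$.

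\emph{Main obstacle.} The technical heart of both constructions is the compactness step: showing that bi-infinite subsequential limits of almost-minimizing (loops or heteroclinic) curves are necessarily static, hence in $\tilde{\mathcal{A}}$. This rests on uniform speed bounds for minimizers with controlled excess action (from uniform superlinearity), on a diagonal argument to pass to the limit at all sampling times simultaneously, and finally on the characterization of static orbits as bi-infinite $L$-minimizers realizing $h(\pi\gamma(s),\pi\gamma(s))=0$ at every parameter. Once this is established, the rest of the proof is routine sampling combined with uniform continuity of the flow on compact sets.
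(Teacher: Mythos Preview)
The paper does not give a proof of this theorem: it is stated as a known result attributed to Ma\~n\'e and used as a black box in Section~4, so there is no ``paper's own proof'' to compare against.

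Your outline is essentially the standard direct argument and is sound in spirit. Two points deserve tightening. First, the phrase ``bi-infinite $L$-minimizer whose excess action $\int L+c(H)\cdot(\text{time})$ vanishes on every compact time interval'' is not quite the right formulation: what you actually obtain from the almost-closed loops $\gamma_n$ (via the domination inequality applied to any critical subsolution $u$ on each of the three pieces $[0,a]$, $[a,b]$, $[b,t_n]$ of a loop at $x$) is that the limit curve is $(u,L,c(H))$-calibrated for \emph{every} critical subsolution $u$, hence lies in $\bigcap_u\tilde{\mathcal I}(u)=\tilde{\mathcal A}$ by Theorem~\ref{DescriptionAubry}. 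Second, in the chain-transitivity argument you should make explicit why $\omega(x_1,v_1)\subset\tilde{\mathcal A}$: using calibration of the orbit through $(x_1,v_1)$ by some $u_1$, two return times $t_n<t_m$ to a neighborhood of a limit point $y$ give minimizing arcs from near $y$ to near $y$ of duration $t_m-t_n\to\infty$ with action $+\,c(H)(t_m-t_n)=u_1(\gamma(t_m))-u_1(\gamma(t_n))\to0$, whence $h(y,y)=0$; then Theorem~\ref{ThMather1}(4) places the full limit $(y,w)$ in $\tilde{\mathcal A}$. With these two clarifications your sampling-plus-projection scheme goes through.
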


\begin{corollary}
\label{corDyn1}
When $M$ is compact, the restriction $\phi_t^L|_{\tilde {\cal A}}$ to the invariant subset $ \tilde {\cal A}$ is chain-transitive if and only if $ \tilde {\cal A}$ is connected.
\end{corollary}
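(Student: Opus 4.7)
The plan is to derive the corollary as a consequence of the general topological-dynamics fact: for a continuous flow on a compact metric space all of whose points are chain-recurrent, chain-transitivity is equivalent to connectedness. The chain-recurrence hypothesis is exactly what Mañé's Theorem \ref{mane} supplies for $\phi^L_t|_{\tilde{\mathcal A}}$, so only the general fact needs to be carried out in our setting.

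For the easy direction, assume $\phi^L_t|_{\tilde{\mathcal A}}$ is chain-transitive and suppose, for contradiction, that $\tilde{\mathcal A}$ is disconnected. Then $\tilde{\mathcal A}=A\sqcup B$ with $A,B$ nonempty, disjoint and (by compactness) closed, so $\delta:=d(A,B)>0$. Since orbits of $\phi^L_t|_{\tilde{\mathcal A}}$ are connected subsets of $\tilde{\mathcal A}$, each orbit lies entirely in $A$ or in $B$. Therefore any $\varepsilon$-chain in $\tilde{\mathcal A}$ with $\varepsilon<\delta$ that starts in $A$ cannot reach $B$: its flow segments stay in the same component, and the jumps are too short to cross. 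This contradicts chain-transitivity between points of $A$ and points of $B$.

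For the nontrivial direction, assume $\tilde{\mathcal A}$ is connected. Fix $\varepsilon>0$ and define a relation on $\tilde{\mathcal A}$ by declaring $x\sim_\varepsilon y$ if there exist $\varepsilon$-chains (in $\tilde{\mathcal A}$) from $x$ to $y$ and from $y$ to $x$. Chain-recurrence (Theorem \ref{mane}) gives $x\sim_\varepsilon x$; symmetry is built in; transitivity follows from concatenating chains, using that a small perturbation of the common endpoint is absorbed by the allowed $\varepsilon$-jump. Thus $\sim_\varepsilon$ is an equivalence relation, and $\tilde{\mathcal A}$ is partitioned into its classes.

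The crucial step is to show that each equivalence class $[x]_{\varepsilon}$ is open in $\tilde{\mathcal A}$. Given $y\in[x]_{\varepsilon}$, pick any chain $x=x_0,\ldots,x_n=y$ realizing $x\to y$ and any chain $y=y_0,\ldots,y_m=x$ realizing $y\to x$, with time-lengths bounded below by some fixed $t_0>0$. By uniform continuity of $\phi^L$ on compact time intervals, any $y'\in\tilde{\mathcal A}$ sufficiently close to $y$ can replace $y$ as the endpoint of the first chain (the last jump remains $<\varepsilon$) and as the starting point of the second chain (the first jump, now from $\phi_{t_0}^L(y')$ to $y_1$, remains $<\varepsilon$ by continuity). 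Hence a whole neighborhood of $y$ in $\tilde{\mathcal A}$ lies in $[x]_{\varepsilon}$. Since the classes are disjoint open sets covering $\tilde{\mathcal A}$, connectedness forces a single class, so for all $x,y\in\tilde{\mathcal A}$ there is an $\varepsilon$-chain from $x$ to $y$. As $\varepsilon>0$ was arbitrary, $\phi^L_t|_{\tilde{\mathcal A}}$ is chain-transitive.

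The only point requiring care is the openness of the equivalence classes, which amounts to a standard but slightly delicate uniform-continuity argument on the flow; everything else is essentially a bookkeeping exercise once Theorem \ref{mane} is available.
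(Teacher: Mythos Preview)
Your proposal is correct and follows exactly the approach of the paper: both reduce the corollary to the general dynamical-systems fact that a flow on a compact metric space, all of whose points are chain-recurrent, is chain-transitive if and only if the space is connected, with chain-recurrence supplied by Ma\~n\'e's Theorem~\ref{mane}. The only difference is that the paper merely cites this fact as ``easy well-known,'' whereas you write out the standard argument (openness of the $\sim_\varepsilon$-classes and connectedness forcing a single class); your added detail is sound.
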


\begin{proof} This is an easy well-known result in the theory of Dynamical Systems: Suppose $\theta_t$, $t\in\R$, is a flow on the compact metric space $X$. If every point of $X$ is chain-recurrent for $\theta_t$, then $\theta_t$ is chain-transitive if and only if $X$ is connected.
\end{proof}
For the following result see \cite{fathibook}  or \cite[Th\'eor\`eme 1]{fathi97b}.

\begin{theorem}
\label{uniqvisco} When $M$ is compact, the following properties are satisfied:\\
1) Two weak KAM solutions that coincide on $\cal A$ are equal everywhere.\\
2) For every $u\in {\cal {SS}}$, there is a unique weak KAM solution  $u_-:M\to\R$ 
such that $u_-=u$ on ${\cal A}$; moreover, the two function $u$ and $u_-$ are also equal on ${\cal I}(u)$.
\end{theorem}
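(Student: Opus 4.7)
The plan is to establish the representation formula
\begin{equation*}
u_-(y) \;=\; \min_{x\in \mathcal{A}}\bigl\{\,u_-(x) + h(x,y)\,\bigr\} \qquad \text{for every weak KAM solution } u_-,\ y\in M,
\end{equation*}
from which both statements follow almost immediately. For (1), if $u_1,u_2\in\mathcal{S}_-$ agree on $\Aub$, applying the formula to each yields $u_1(y)=u_2(y)$ everywhere. For the existence part of (2), I would take this formula as the \emph{definition} of a candidate $u_-$, namely
\begin{equation*}
u_-(y) := \inf_{x\in\Aub}\bigl\{\,u(x) + h(x,y)\,\bigr\}, \qquad y\in M,
\end{equation*}
and verify the needed properties of $u_-$; uniqueness of such a $u_-$ is then part (1).

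To set up the representation formula the main ingredient is that every weak KAM solution $u_-$ admits, at each point $y_0\in M$, a \emph{backward calibrated} curve, i.e.\ an absolutely continuous $\gamma:(-\infty,0]\to M$ with $\gamma(0)=y_0$ such that
\begin{equation*}
u_-(\gamma(b))-u_-(\gamma(a)) = \int_a^b L(\gamma(s),\dot\gamma(s))\,ds + c(H)(b-a)
\end{equation*}
for every $a<b\le0$. I would invoke this from the weak KAM theory already cited in the paper (it is the standard construction of $u_-$ as a fixed point of the Lax--Oleinik semigroup on the compact manifold $M$). The next and crucial step is to show that the $\alpha$-limit set of the speed curve $s\mapsto(\gamma(s),\dot\gamma(s))$ in $TM$ is contained in the Aubry set $\tilde{\mathcal{A}}$. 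The argument: by compactness of $M$ and by Lemma \ref{boundedspeed} applied to the $L$-minimizing curve $\gamma$, the speed is bounded, so one can pick $t_n\to+\infty$ with $(\gamma(-t_n),\dot\gamma(-t_n))\to(x_\infty,v_\infty)$; splicing pieces of $\gamma$ between consecutive $-t_n$'s and $-t_m$'s produces near-loops at $x_\infty$ whose action plus $c(H)$ times length tends to $0$, so $h(x_\infty,x_\infty)\le0$, hence $x_\infty\in\Aub$, and then part (4) of Theorem~\ref{ThMather1} identifies $v_\infty$ so that $(x_\infty,v_\infty)\in\tilde{\Aub}$. Combining with $u_-\prec L+c(H)$ (which gives $u_-(y_0)-u_-(\gamma(-t_n))\le h_{t_n}(\gamma(-t_n),y_0)+c(H)t_n$) and the calibration equality (which gives the reverse inequality along $\gamma$), passing to the limit in $n$ yields
\begin{equation*}
u_-(y_0) = u_-(x_\infty) + h(x_\infty,y_0), \qquad x_\infty\in\Aub,
\end{equation*}
which together with the trivial inequality $u_-(y_0)\le u_-(x)+h(x,y_0)$ for all $x\in\Aub$ gives the representation formula (with the infimum attained).

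Granted this, (1) is immediate, and for (2) the candidate $u_-$ defined above satisfies: $u_-=u$ on $\Aub$ because for $y\in\Aub$ one has $u(y)\le u(x)+h(x,y)$ for every $x\in\Aub$ (since $u\prec L+c(H)$) and $u_-(y)\le u(y)+h(y,y)=u(y)$ (since $h(y,y)=0$ on $\Aub$); $u_-$ is a critical subsolution as an infimum of translates of the critical subsolutions $h(x,\cdot)$; and $u_-$ is actually a weak KAM solution by a standard compactness/concatenation argument using that each $h(x,\cdot)$ is a weak KAM solution (here the finiteness of $h$ on the compact $M$ is essential). Finally, to get $u=u_-$ on $\mathcal{I}(u)$, let $y\in\mathcal{I}(u)$ and consider the orbit curve $\gamma_{(y,v)}$ from Theorem~\ref{propertiesI}, which is $(u,L,c(H))$-calibrated on all of $\R$. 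Applying the $\alpha$-limit argument above to this calibrated curve (using $u$ in place of $u_-$) produces $x_\infty\in\Aub$ with $u(y)=u(x_\infty)+h(x_\infty,y)$; since $u=u_-$ on $\Aub$, this equals $u_-(x_\infty)+h(x_\infty,y)\ge u_-(y)$, while $u\le u_-$ on $M$ follows from $u_-(y)\ge u(x)+h(x,y)\ge u(y)+\bigl[h(x,y)-h(x,y)\bigr]=u(y)$, wait\,--\,more carefully: the inequality $u\le u_-$ is obtained by using any fixed weak KAM solution majorising $u$ via the Fathi--Siconolfi extension or more directly by the observation that $u_-\ge u$ follows from $u(y)-u(x)\le h(x,y)$ rearranged as $u(y)\le u(x)+h(x,y)$ for $x\in\Aub$ and taking the infimum on the right.

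The main obstacle is the $\alpha$-limit step: proving that limit points of backward calibrated curves lie in $\tilde{\Aub}$. This is where one really uses compactness of $M$ (to extract converging subsequences of both the positions and the speeds), the a priori Lipschitz speed bound for minimizers on compact sets, and Mather's characterisation of $\Aub$ via almost-closing minimizers (Theorem~\ref{ThMather1}(4)). Everything else is book-keeping with the inequality $u\prec L+c(H)$ and the defining properties of $h$.
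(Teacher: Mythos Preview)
The paper does not give its own proof of this theorem: it simply cites \cite{fathibook} and \cite[Th\'eor\`eme~1]{fathi97b}. Your approach via the representation formula $u_-(y)=\min_{x\in\Aub}\{u_-(x)+h(x,y)\}$, derived from the fact that the $\alpha$-limit set of a backward $(u_-,L,c(H))$-calibrated curve lies in $\tilde\Aub$, is exactly the standard argument from those references, so in spirit you are reproducing the cited proof rather than offering an alternative.

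Two points worth tightening. First, the $\alpha$-limit step as you wrote it needs a bit more care: to get $h(x_\infty,x_\infty)\le 0$ you must produce loops at $x_\infty$ of duration going to $+\infty$ whose action plus $c(H)$ times duration tends to $0$; this requires choosing \emph{two} subsequences $s_k<t_k\to+\infty$ with $t_k-s_k\to+\infty$ and $\gamma(-s_k),\gamma(-t_k)\to x_\infty$ (possible since the $\alpha$-limit set is nonempty and closed), then closing up with short geodesic arcs. With a single sequence $t_n$ the splicing does not obviously work. Second, the passage near the end containing ``wait\,--\,more carefully'' is a visible self-correction that should be cleaned up; the inequality $u\le u_-$ on all of $M$ is exactly what you derive in the final sentence (from $u(y)-u(x)\le h(x,y)$ for $x\in\Aub$ and taking the infimum), and together with the reverse inequality on $\mathcal{I}(u)$ obtained from the $\alpha$-limit of the orbit $\gamma_{(y,v)}$ this gives $u=u_-$ on $\mathcal{I}(u)$ as required.
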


It follows from the second statement in this theorem that we have
$$
\tilde {\cal N}=\bigcup_{u\in {\cal {S}_-}}\tilde{\cal I}(u).
$$
Moreover, it can be easily shown from the results of \cite{fathibook} that
$$
{\cal A}=\bigcap_{u\in {\cal {S}_-}}{\cal I}(u).
$$

We give now the general relationship between uniqueness of weak KAM solutions and the quotient Mather set.

\begin{proposition}
\label{propDyn2}
Suppose $M$ is compact. The following two statements are equivalent:
\begin{itemize}
\item[1)] Any two weak KAM solutions differ by a constant.
\item[2)] The Mather quotient $(A_M,\delta_M)$ is trivial, i.e. is reduced to one point.
\end{itemize}
Moreover, if anyone of these conditions is true, then $\tilde{\cal A}=\tilde{\cal N}$, and therefore $\tilde{\cal A}$ is connected and the restriction of the Euler-Lagrange flow $\phi_t^L$ to $\tilde{\cal A}$ is chain-transitive.
\end{proposition}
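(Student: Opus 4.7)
The proof will rest almost entirely on the representation formula of Lemma \ref{Fund}, which expresses the Mather semi-distance as
$$\delta_M(x,y)=\max_{u_1,u_2\in\mathcal{S}_-}\bigl\{(u_1-u_2)(y)-(u_1-u_2)(x)\bigr\}\qquad\text{for }x,y\in\mathcal{A},$$
together with the uniqueness statement in Theorem \ref{uniqvisco}(1). These two ingredients are really all that is needed for the equivalence; the ``moreover'' part then comes out by unpacking the definitions of $\tilde{\mathcal{A}}$ and $\tilde{\mathcal{N}}$.

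The direction 1) $\Rightarrow$ 2) is immediate: if every two weak KAM solutions differ by an additive constant, then $u_1-u_2$ is constant for every pair $u_1,u_2\in\mathcal{S}_-$, each term $(u_1-u_2)(y)-(u_1-u_2)(x)$ vanishes, and the formula forces $\delta_M\equiv 0$ on $\mathcal{A}\times\mathcal{A}$. For the converse 2) $\Rightarrow$ 1), assume $\delta_M\equiv 0$ on $\mathcal{A}\times\mathcal{A}$. Applying the formula to the pair $(u_1,u_2)$ and to the swapped pair $(u_2,u_1)$ gives $(u_1-u_2)(y)=(u_1-u_2)(x)$ for all $x,y\in\mathcal{A}$ and all weak KAM solutions $u_1,u_2$. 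Hence $u_1-u_2$ is equal to some constant $c$ on $\mathcal{A}$; but then $u_1-c$ and $u_2$ are two weak KAM solutions that coincide on $\mathcal{A}$, and Theorem \ref{uniqvisco}(1) propagates the equality to all of $M$, which means $u_1-u_2\equiv c$ on $M$.

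For the ``moreover'' part, the key observation is that $\tilde{\mathcal{I}}(u)$ depends on $u$ only up to an additive constant, since the calibration equality is unaffected by translating $u$. Under 1), all the sets $\tilde{\mathcal{I}}(u)$ for $u\in\mathcal{S}_-$ coincide with a single set $\tilde{\mathcal{I}}_0$, so
$$\tilde{\mathcal{N}}=\bigcup_{u\in\mathcal{S}_-}\tilde{\mathcal{I}}(u)=\tilde{\mathcal{I}}_0.$$
Similarly, the identity $\mathcal{A}=\bigcap_{u\in\mathcal{S}_-}\mathcal{I}(u)$ recalled before the proposition yields $\mathcal{A}=\mathcal{I}(u_0)$ for any chosen $u_0\in\mathcal{S}_-$. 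By Theorem \ref{propertiesI}, $\tilde{\mathcal{I}}_0$ is then the Lipschitz graph of $x\mapsto\mathcal{L}^{-1}(x,d_xu_0)$ over $\mathcal{A}$; on the other hand, by the characterization of the Aubry set recalled in the introduction, $\tilde{\mathcal{A}}$ is the graph of the very same map over $\mathcal{A}$. Hence $\tilde{\mathcal{A}}=\tilde{\mathcal{I}}_0=\tilde{\mathcal{N}}$. Theorem \ref{mane} then gives chain-transitivity of $\phi_t^L|_{\tilde{\mathcal{N}}}=\phi_t^L|_{\tilde{\mathcal{A}}}$, and Corollary \ref{corDyn1} (using that every point of $\tilde{\mathcal{A}}$ is chain-recurrent) yields connectedness of $\tilde{\mathcal{A}}$.

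The equivalence itself is essentially a formal manipulation once one invokes Lemma \ref{Fund} and Theorem \ref{uniqvisco}(1); I expect the mildly delicate point to be the identification $\tilde{\mathcal{A}}=\tilde{\mathcal{N}}$, where one has to be careful to exploit both $\tilde{\mathcal{A}}=\bigcap_{u\in\mathcal{SS}}\tilde{\mathcal{I}}(u)$ and the fact that $\tilde{\mathcal{I}}_0$ is a genuine graph over the whole of $\mathcal{A}$, rather than over some potentially smaller set.
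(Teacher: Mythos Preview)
Your proof is correct and follows essentially the same strategy as the paper's. The only cosmetic difference is that you invoke Lemma~\ref{Fund} directly for both implications of the equivalence, whereas the paper re-derives the inequality $|(u_1-u_2)(y)-(u_1-u_2)(x)|\leq h(x,y)+h(y,x)$ from the domination property for $2)\Rightarrow 1)$, and uses the Peierls-barrier solutions $y\mapsto h(x,y)$ explicitly for $1)\Rightarrow 2)$; since the proof of Lemma~\ref{Fund} already packages both of these ingredients, your route is a legitimate shortcut. Your treatment of the ``moreover'' part is actually slightly more explicit than the paper's (which writes $\tilde{\mathcal A}=\tilde{\mathcal I}(u)=\tilde{\mathcal N}$ in one line): you spell out the graph argument over $\mathcal A=\mathcal I(u_0)$, which is exactly what justifies that line.
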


\begin{proof} 
For every fixed $x\in M$, the function $y\mapsto h(x,y)$ is a weak KAM solution. Therefore if we assume that any two weak KAM solutions differ by a constant, then for $x_1,x_2\in M$ we can find a constant $C_{x_1,x_2}$ such that
$$
\forall y\in M, \quad h(x_1,y)=C_{x_1,x_2}+h(x_2,y).
$$
If $ x_2\in {\cal A}$, then $h(x_2,x_2)=0$, therefore evaluating the equality above for $y=x_2$, we obtain $C_{x_1,x_2}=h(x_1,x_2)$. Substituting in the equality and evaluating we conclude
$$
\forall x_1\in M,\, \forall x_2\in{\cal A},\quad h(x_1,x_1)=h(x_1,x_2)+h(x_2,x_1).
$$
This implies
$$
\forall x_1, x_2\in{\cal A}, \quad  h(x_1,x_2)+h(x_2,x_1)=0,
$$
which means that $\delta_M(x_1,x_2)=0$ for every $x_1,x_2 \in \mathcal{A}$.

To prove the converse, let us recall that for every critical subsolution $u$, we have
$$
\forall x,y\in M, \quad u(y)-u(x)\leq h(x,y).
$$
Therefore applying this for a pair $u_1,u_2\in{\cal SS}$, we obtain
\begin{align*}
\forall x,y \in M, \quad & u_1(y)-u_1(x)\leq h(x,y),\\
&u_2(x)-u_2(y)\leq h(y,x).
\end{align*}
Adding and rearranging, we obtain
$$
\forall x,y \in M, \quad (u_1-u_2)(y)-(u_1-u_2)(x)\leq h(x,y)+ h(y,x).
$$
Since the right hand side is symmetric in $x,y$, we obtain
$$
\forall x,y \in M, \quad \rvert(u_1-u_2)(y)-(u_1-u_2)(x)\lvert\leq h(x,y)+ h(y,x).
$$
If we assume that 2) is true, this implies that $u_1-u_2$ is constant $c$ on the projected Aubry set ${\cal A}$,
that is $u_1=u_2+c$ on ${\cal A}$. Thus, if $u_1, u_2$ are weak KAM solutions, then we have $u_1=u_2+c$ on $M$, because any two solutions equal on the Aubry set are equal everywhere by 2) of Theorem \ref{uniqvisco}.

It remains to show the last statement. Notice that if $u_1,u_2\in {\cal {SS}}$ differ by a constant then  $\tilde{\cal I}(u_1)=\tilde{\cal I}(u_2)$. Therefore if any two elements in ${\cal S}_-$ differ by a constant, then 
$$
\tilde{\cal A}=\tilde{\cal I}(u)=\tilde{\cal N},
$$
where $u$ is any element in ${\cal S}_-$. But, by Ma\~n\'e's Theorem \ref{mane}, the invariant set $\tilde{\cal N}$ is chain-transitive for the flow $\phi_t$, hence it is connected by Corollary \ref{corDyn1}.
\end{proof}

We now denote by $X_L$ the Euler-Lagrange vector field of $L$, that is the vector field on $TM$ that generates $\phi_t^L$.
We recall that an important property of $X_L$ is that
$$\forall (x,v)\in TM,\quad T\pi(X_L(x,v))=v,$$
where $T\pi: T(TM) \rightarrow TM$ denotes the canonical projection.\\
Here is a last ingredient that we will have to use.

\begin{proposition}[Lyapunov Property]
\label{lyapunov}
Suppose $ u_1,u_2\in {\cal {SS}}$. The function $(u_1-u_2)\circ \pi$ is non-decreasing along any orbit of the Euler Lagrange flow $\phi_t^L$ contained in $\tilde{\cal I}(u_2)$. If we assume $u_1$ is differentiable at $x\in {\cal I}(u_2)$, and $(x,v)\in \tilde{\cal I}(u_2)$, then, using that $u_2$ is differentiable on ${\cal I}(u_2)$, we obtain
$$X_L\cdot[(u_1-u_2)\circ \pi](x,v)=d_xu_1(v)-d_xu_2(v)\leq 0.$$
Moreover, the inequality above is an equality, if and only if $d_xu_1=d_xu_2$. In that case $H(x,d_xu_1)=H(x,d_xu_2)=c(H)$.
\end{proposition}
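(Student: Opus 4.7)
The plan is to extract everything from three standard ingredients already in the excerpt: the calibration of $u_2$ along any flow orbit in $\tilde{\mathcal{I}}(u_2)$; the fact that $u_1 \prec L+c(H)$; and the Fenchel inequality $p(v) \le L(x,v) + H(x,p)$, which holds with equality exactly when $p = \partial L/\partial v(x,v)$.

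For the Lyapunov monotonicity, I would fix $(x,v) \in \tilde{\mathcal{I}}(u_2)$ and set $\gamma(t)=\pi\phi_t^L(x,v)$. Calibration of $\gamma$ with respect to $u_2$ gives
$$u_2(\gamma(b))-u_2(\gamma(a)) = \int_a^b L(\gamma,\dot\gamma)\,ds + c(H)(b-a),$$
while $u_1 \prec L+c(H)$ gives the same expression with $\le$ in place of $=$ for $u_1$, for every $a<b$. Subtracting these inequalities yields the Lyapunov (monotonicity) property of $(u_1-u_2)\circ\pi$ along the orbit.

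For the infinitesimal identity, take $(x,v)\in\tilde{\mathcal{I}}(u_2)$ and assume $u_1$ is differentiable at $x$. By Theorem \ref{propertiesI}, $u_2$ is automatically differentiable at $x$ with $d_x u_2 = \partial L/\partial v(x,v)$ and $H(x,d_xu_2)=c(H)$. Since $T\pi(X_L(x,v))=v$, the chain rule applied to $(u_1-u_2)\circ\pi$ gives
$$X_L\cdot[(u_1-u_2)\circ\pi](x,v) = d_xu_1(v)-d_xu_2(v).$$
The Fenchel equality forced by $d_x u_2 = \partial L/\partial v(x,v)$ yields $d_xu_2(v) = L(x,v)+c(H)$, while the Fenchel inequality combined with the subsolution bound $H(x,d_xu_1)\le c(H)$ gives $d_xu_1(v) \le L(x,v)+c(H)$. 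Subtracting delivers the sign $\le 0$.

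For the equality case, $d_xu_1(v)=d_xu_2(v)$ collapses the chain $d_xu_1(v)\le L(x,v)+H(x,d_xu_1)\le L(x,v)+c(H)=d_xu_2(v)$ into two equalities: the first is the Fenchel equality at $(x,v)$ for the covector $d_xu_1$, and the second reads $H(x,d_xu_1)=c(H)$. The whole argument is a direct matching of definitions; the one delicate point is this final step, where strict {\rm C}$^2$ convexity of $H$ in the fibre (hypothesis (H1)) must be invoked to promote the single-velocity Fenchel equality $d_xu_1(v) = L(x,v)+H(x,d_xu_1)$ to equality of full covectors $d_xu_1 = \partial L/\partial v(x,v) = d_xu_2$, rather than mere agreement on the vector $v$.
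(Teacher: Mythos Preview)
Your proof is correct and follows essentially the same route as the paper: calibration of $u_2$ plus domination of $u_1$ for the monotonicity, the identity $T\pi(X_L(x,v))=v$ for the infinitesimal formula, and the Fenchel equality/inequality together with $H(x,d_xu_1)\le c(H)$ for the sign and the equality case. The only cosmetic difference is that the paper deduces $\le 0$ from the already-established monotonicity and then identifies the derivative, whereas you compute the derivative first and read off the sign directly from Fenchel; both arguments are the same in substance.
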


\begin{proof} 
If $(x,v)\in \tilde{\cal I}(u_2)$ then $t\mapsto \pi\phi_t(x,v)$ is 
$(u_2,L,c(H))$-calibrated, hence
$$
\forall t_1\leq t_2, \quad u_2\circ\pi(\phi_{t_2}(x,v))-u_2\circ\pi(\phi_{t_1}(x,v))= \int_{t_1}^{t_2}L(\phi_s(x,v))\,ds+c(H)(t_2-t_1).
$$
Since $u_1\in {\cal {SS}}$, we get
$$
\forall t_1\leq t_2, \quad u_1\circ\pi(\phi_{t_2}(x,v))-u_1\circ\pi(\phi_{t_1}(x,v))\leq\int_{t_1}^{t_2}L(\phi_s(x,v))\,ds+c(H)(t_2-t_1).
$$
Combining these two facts, we conclude
$$
\forall t_1\leq t_2, \quad u_1\circ\pi(\phi_{t_2}(x,v))-u_1\circ\pi(\phi_{t_1}(x,v))\leq u_2\circ\pi(\phi_{t_2}(x,v))-u_2\circ\pi(\phi_{t_1}(x,v)).
$$
This implies
$$
\forall t_1\leq t_2, \quad (u_1-u_2)\circ \pi(\phi_{t_2}(x,v))\leq(u_1-u_2)\circ \pi(\phi_{t_1}(x,v)).
$$
Recall that $u_2$ is differentiable at every $x\in {\cal I}(u_2)$. Thus, if also $d_xu_1$ exists, if $(x,v)\in \tilde{\cal I}(u_2)$ we obtain
$$
X_L\cdot[(u_1-u_2)\circ \pi](x,v)\leq 0.
$$
We remark that $X_L\cdot[(u_1-u_2)\circ \pi](x,v)=d_x(u_1-u_2)(T\pi\circ X_L(x,v))$. Since $T\pi\circ X_L(x,v)=v$, we obtain
$$
X_L\cdot[(u_1-u_2)\circ \pi](x,v)=d_xu_1(v)-d_xu_2(v)\leq 0.
$$
If the last inequality is an equality, we get $d_xu_1(v)=d_xu_2(v)$. Since 
$(x,v)\in \tilde{\cal I}(u_2)$, we have $d_xu_2=\frac{\partial L}{\partial v}(x,v)$ and $H(x,d_xu_2)=c(H)$, therefore the Fenchel inequality yields the equality
$$
d_xu_2(v)=L(x,v)+H(x,d_xu_2)=L(x,v)+c(H).
$$
Since $u_1\in{\cal {SS}}$, we know that $H(x,d_xu_1)\leq c(H)$. The previous equality, using the Fenchel inequality $d_xu_1(v)\leq L(x,v)+H(x,d_xu_1)$, and the fact that $d_xu_1(v)=d_xu_2(v)$, implies
$$
H(x,d_xu_1)=c(H) \quad \text{ and } \quad d_xu_1(v)=L(x,v)+H(x,d_xu_1).
$$
This means that we have equality in the Fenchel inequality $d_xu_1(v)\leq L(x,v)+H(x,d_xu_1)$, we therefore conclude that $d_xu_1=\frac{\partial L}{\partial v}(x,v)$, but the right hand side of this last equality is $d_xu_2$.
\end{proof} 

\subsection{Mather disconnectedness condition}

\begin{definition}{\rm We will say that the the Tonelli Lagrangian $L$ on $M$ satisfies the \textit{Mather disconnectedness condition} if for every pair $u_1,u_2\in {\cal S}_-$, the image $(u_1-u_2)({\cal A})\subset\R$ is totally disconnected.}
\end{definition}

Notice that by part 2) of Theorem \ref{uniqvisco}, if $L$ satisfies the Mather disconnectedness condition, then for every pair of critical sub-solutions $u_1,u_2$, the image $(u_1-u_2)({\cal A})\subset\R$ is also totally disconnected.
\begin{proposition}
\label{propHausdisc}
If $\Haus1(\mathcal{A}_M,\delta_M)=0$, then $L$ satisfies the Mather disconnectedness condition.
\end{proposition}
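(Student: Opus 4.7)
The plan is to show that any function of the form $u_1-u_2$ (with $u_1,u_2\in\mathcal S_-$) factors through the Mather quotient as a $1$-Lipschitz map into $\R$, so the hypothesis $\Haus1(\Aub_M,\delta_M)=0$ transfers to the image and forces total disconnectedness.

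First I will use Lemma \ref{Fund} to get the key estimate: for every pair $u_1,u_2\in\mathcal S_-$ and every $x,y\in\Aub$,
$$\lvert(u_1-u_2)(y)-(u_1-u_2)(x)\rvert\leq \delta_M(x,y),$$
obtained by applying the formula to both orderings of the pair. In particular, if $\delta_M(x,y)=0$ then $(u_1-u_2)(x)=(u_1-u_2)(y)$, so the function $u_1-u_2$ restricted to $\Aub$ descends to a well-defined map
$$\overline{u_1-u_2}:(\Aub_M,\delta_M)\longrightarrow\R$$
which is $1$-Lipschitz for the Mather distance.

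Next, I will invoke the standard fact that Lipschitz maps cannot increase Hausdorff measure: since $\Haus1(\Aub_M,\delta_M)=0$ by hypothesis, the image $\overline{(u_1-u_2)}(\Aub_M)=(u_1-u_2)(\Aub)$ satisfies $\Haus1\bigl((u_1-u_2)(\Aub)\bigr)=0$, and in $\R$ the one-dimensional Hausdorff measure coincides with the Lebesgue measure. Thus $(u_1-u_2)(\Aub)$ has Lebesgue measure zero in $\R$.

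Finally, I will conclude by the elementary topological observation that any connected subset of $\R$ is an interval, and any interval containing more than one point has strictly positive Lebesgue measure. Hence a Lebesgue-null subset of $\R$ is totally disconnected, giving that $(u_1-u_2)(\Aub)$ is totally disconnected, which is exactly the Mather disconnectedness condition. No step poses a real obstacle here; the only point worth flagging is that one must explicitly verify that the descent to $\Aub_M$ is well defined and $1$-Lipschitz, which is immediate from the two-sided inequality coming from Lemma \ref{Fund}.
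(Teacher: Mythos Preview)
Your proof is correct and follows essentially the same approach as the paper: both argue that $u_1-u_2$ is $1$-Lipschitz with respect to $\delta_M$ (the paper cites the proof of Proposition~\ref{propDyn2} for this inequality, while you extract it from Lemma~\ref{Fund}), then use that Lipschitz images of $\Haus1$-null sets are $\Haus1$-null, and finally that Lebesgue-null subsets of $\R$ are totally disconnected.
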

\begin{proof}
If $u_1, u_2 \in {\cal SS}$, $u_1-u_2$ is $1$-Lipschitz with respect to $\d_M$, see the proof of Proposition \ref{propDyn2}.
Therefore the $1$-dimensional Hausdorff measure (i.e. Lebesgue measure) of $(u_1-u_2)({\cal A})$ is $0$ like $\Haus1(\mathcal{A}_M,\delta_M)$.
The result follows since a subset of $\R$ of Lebesgue measure $0$ is totally disconnected.
\end{proof}

By Proposition \ref{propHausdisc}, the results obtained above in this work contain the following theorem.

\begin{theorem}
\label{TheoremePrincipal}
Let $L$ be a Tonelli Lagrangian on the compact manifold $M$, it satisfies the Mather disconnectedness condition in the following five cases:
\begin{itemize}
\item[(1)] The dimension of $M$ is $1$ or $2$.
\item[(2)] The dimension of $M$ is $3$, and $\tilde{\cal A}$ contains no fixed point of the Euler-Lagrange flow.
\item[(3)] The dimension of $M$ is $3$, and $L$ is of class {\rm C}$^{3,1}$.
\item[(4)] The Lagrangian is of class {\rm C}$^{k,1}$, with $k\geq2\dim M-3$, and every point of $\tilde{\cal A}$ is fixed under the Euler-Lagrange flow $\phi_t^L$.
\item[(5)] The Lagrangian is of class {\rm C}$^{k}$, with $k\geq8\dim M-7$, and each point of $\tilde{\cal A}$ either is fixed under the Euler-Lagrange flow $\phi_t^L$ or its orbit in the Aubry set is periodic with (strictly) positive period.
\end{itemize}
\end{theorem}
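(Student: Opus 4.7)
The strategy is to reduce every one of the five cases to the conclusion $\Haus1(\Aub_M,\delta_M)=0$: once this is established, Proposition \ref{propHausdisc} directly yields the Mather disconnectedness condition, since for any pair $u_1,u_2\in\mathcal{SS}$ the difference $u_1-u_2$ is $1$-Lipschitz on $\mathcal{A}$ with respect to $\delta_M$ and therefore sends $\mathcal{A}$ to a subset of $\R$ of vanishing one-dimensional Hausdorff measure (hence totally disconnected). The work then consists in invoking the right previously established vanishing theorem in each case, combined with Proposition \ref{caracAub0} to identify $\tilde{\mathcal{A}}^{0}$ with the fixed-point set of $\phi_t^L$ in $\tilde{\mathcal{A}}$.

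Cases (1) and (3) are immediate consequences of Theorem \ref{THM1}. For case (2), Proposition \ref{caracAub0} tells us that the hypothesis ``$\tilde{\mathcal{A}}$ contains no fixed point of $\phi_t^L$'' is equivalent to $\tilde{\mathcal{A}}^0=\emptyset$, hence $\mathcal{A}^0=\emptyset$. Looking back at the proof of Theorem \ref{THM1} in dimension $3$, the only step that uses regularity beyond the Tonelli hypotheses is the appeal to Theorem \ref{THM2}, which serves solely to control the stationary piece $\mathcal{A}_M^0$; the remainder of the argument, which handles $\mathcal{A}\setminus\mathcal{A}^0$ by covering it with countably many transverse $2$-dimensional sections $S_{x_i}$ and applying Proposition \ref{HolderOrdre2} together with Lemma \ref{lemNEW2} on each $S_{x_i}\cap \mathcal{A}$, requires only that $L$ be Tonelli. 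Since $\mathcal{A}^0$ is empty in case (2), this lamination argument alone delivers $\Haus1(\Aub_M,\delta_M)=0$.

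For case (4), the assumption that every point of $\tilde{\mathcal{A}}$ is fixed under $\phi_t^L$ gives $\mathcal{A}=\mathcal{A}^0$, and the restriction $x\mapsto L(x,0)$ is $C^{k,1}$ with $k\geq 2\dim M-3$, so Theorem \ref{THM2} directly yields $\Haus1(\Aub_M,\delta_M)=\Haus1(\mathcal{A}_M^0,\delta_M)=0$. For case (5), the hypothesis gives the decomposition $\mathcal{A}=\mathcal{A}^0\cup\mathcal{A}^p$; by countable subadditivity of Hausdorff measure it suffices to check that both $\Haus1(\mathcal{A}_M^0,\delta_M)$ and $\Haus1(\mathcal{A}_M^p,\delta_M)$ vanish. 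The only bookkeeping is the passage from $C^k$ to $C^{k,1}$: a $C^k$ Lagrangian on the compact manifold $M$ has $(k-1)$-th derivatives that are $C^1$ on compact subsets of $TM$, hence locally Lipschitz, so $L$ is in particular $C^{k-1,1}$ on the compact sets that appear in the proofs of Theorems \ref{THM2} and \ref{THM3} (and the same holds for the associated $H$ via the Legendre transform). From $k\geq 8\dim M-7$ we obtain $k-1\geq 8\dim M-8$, so Theorem \ref{THM3} gives $\Haus1(\mathcal{A}_M^p,\delta_M)=0$; and since $k-1\geq 2\dim M-3$ (which holds because $\dim M\geq 1$), Theorem \ref{THM2} applied to $x\mapsto L(x,0)$ gives $\Haus1(\mathcal{A}_M^0,\delta_M)=0$.

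The only non-mechanical point is the observation used in case (2), namely that the lamination part of the proof of Theorem \ref{THM1} goes through under the bare Tonelli hypothesis once the stationary set is vacuous; everything else is a direct invocation of an earlier theorem and the elementary regularity trade $C^k\Rightarrow C^{k-1,1}$ on compact sets. I do not expect any genuine obstacle.
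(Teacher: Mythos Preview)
Your proposal is correct and takes essentially the same approach as the paper: the paper's proof is literally the one-line remark ``By Proposition \ref{propHausdisc}, the results obtained above in this work contain the following theorem,'' and your write-up simply unpacks this case by case. Your handling of case (2) --- isolating that the $C^{3,1}$ hypothesis in Theorem \ref{THM1} is used only to treat $\mathcal{A}^0$ via Theorem \ref{THM2}, so the lamination-and-transversal argument suffices when $\mathcal{A}^0=\emptyset$ --- and your $C^k\Rightarrow C^{k-1,1}$ bookkeeping in case (5) are exactly the details one needs to make the paper's sentence into a proof.
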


\begin{lemma}
\label{lemmefond}
Suppose that $L$ is a Tonelli Lagrangian $L$ on the compact manifold $M$ that satisfies the Mather disconnectedness condition. For every $u\in {\cal{SS}}$, the set of points in $\tilde{\cal I}(u)$ which are chain-recurrent for the restriction
$\phi_t^L|_{\tilde{\cal I}(u)}$ of the Euler-Lagrange flow is precisely the Aubry set $\tilde{\cal A}$.
\end{lemma}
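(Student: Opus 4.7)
The plan is to establish both inclusions. For $\tilde{\mathcal{A}}$ being contained in the chain-recurrent set of $\phi^L_t|_{\tilde{\mathcal{I}}(u)}$ I appeal directly to Ma\~n\'e's Theorem \ref{mane}: every point of $\tilde{\mathcal{A}}$ is chain-recurrent for $\phi^L_t|_{\tilde{\mathcal{A}}}$, and $\varepsilon$-chains inside $\tilde{\mathcal{A}}$ are \emph{a fortiori} $\varepsilon$-chains inside the larger invariant set $\tilde{\mathcal{I}}(u)$. For the converse, fix a chain-recurrent point $(x_0,v_0)\in\tilde{\mathcal{I}}(u)$. By Theorem \ref{uniqvisco}(2) the associated weak KAM solution $u_-\in\mathcal{S}_-$ agrees with $u$ on $\mathcal{I}(u)$, so every orbit $(u,L,c(H))$-calibrated on $\R$ is automatically $(u_-,L,c(H))$-calibrated; hence $\tilde{\mathcal{I}}(u)\subset\tilde{\mathcal{I}}(u_-)$ and $(x_0,v_0)$ is still chain-recurrent in $\tilde{\mathcal{I}}(u_-)$. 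I may therefore assume from the outset that $u\in\mathcal{S}_-$, and by Theorem \ref{DescriptionAubry} the goal reduces to proving $(x_0,v_0)\in\tilde{\mathcal{I}}(u_1)$ for every $u_1\in\mathcal{SS}$.

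Fix such a $u_1$ and consider the continuous function $f=(u_1-u)\circ\pi$ on the compact set $\tilde{\mathcal{I}}(u)$; by Proposition \ref{lyapunov} it is non-increasing along every orbit of $\phi^L_t|_{\tilde{\mathcal{I}}(u)}$. It suffices to prove that $f$ is constant along the orbit of $(x_0,v_0)$: indeed, constancy of $f$ along this orbit, combined with the $(u,L,c(H))$-calibration of $\gamma_{(x_0,v_0)}$, upgrades the latter to a $(u_1,L,c(H))$-calibration and hence gives $(x_0,v_0)\in\tilde{\mathcal{I}}(u_1)$. Since $f$ is non-increasing and bounded, the monotone limits $f_-=\lim_{t\to-\infty}f(\phi^L_t(x_0,v_0))$ and $f_+=\lim_{t\to+\infty}f(\phi^L_t(x_0,v_0))$ exist with $f_+\le f(x_0,v_0)\le f_-$, and constancy along the orbit is equivalent to $f_+=f_-$. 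Because $u\in\mathcal{S}_-$ and $\gamma_{(x_0,v_0)}$ is two-sided calibrated, a standard weak KAM argument (estimating $h_{t_m-t_n}(\gamma(t_n),\gamma(t_m))+c(H)(t_m-t_n)\to 0$ along subsequences with $\gamma(t_n),\gamma(t_m)\to y$) shows that both the $\alpha$- and $\omega$-limit sets of $(x_0,v_0)$ lie in $\tilde{\mathcal{A}}$; therefore $f_\pm\in f(\tilde{\mathcal{A}})=(u_1-u)(\mathcal{A})$. By the Mather disconnectedness hypothesis (together with Theorem \ref{uniqvisco}(2), which gives $u_1=u_{1-}$ on $\mathcal{A}$), this image is totally disconnected, hence closed and nowhere dense, in $\R$.

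The main step, and expected principal obstacle, is to rule out $f_+<f_-$ by combining chain-recurrence with the disconnectedness. Assuming $f_+<f_-$, I pick $c\in(f_+,f_-)$ and $\eta>0$ with $[c-2\eta,c+2\eta]\cap f(\tilde{\mathcal{A}})=\emptyset$, and (after replacing $(x_0,v_0)$ by $\phi^L_s(x_0,v_0)$ for $s$ sufficiently negative, which is still chain-recurrent) also arrange $f(x_0,v_0)>c+\eta$. The closed sub-level set $A=\{f\le c-\eta\}\cap\tilde{\mathcal{I}}(u)$ is positively invariant, and the crucial technical input is a \emph{uniform attraction property}: there exist $T^*>0$ and $\rho\in(0,\eta)$ such that $\phi^L_{T^*}(y,w)\in A$ for every $(y,w)\in\tilde{\mathcal{I}}(u)$ with $f(y,w)\le c+\rho$. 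This holds because every such $(y,w)$ has its $\omega$-limit in $\tilde{\mathcal{A}}$, monotonicity of $f$ combined with the gap forces the $f$-value there to be at most $c-2\eta$, and compactness of $\{f\le c+\rho\}\cap\tilde{\mathcal{I}}(u)$ together with continuity of the flow converts pointwise escape into uniform escape. Finally, pick $\varepsilon>0$ small enough that the modulus of continuity of $f$ on the compact set $\tilde{\mathcal{I}}(u)$ at scale $\varepsilon$ is less than $\rho/2$, and $T\ge T^*$ large enough that $f(\phi^L_T(x_0,v_0))<c-\eta$; chain-recurrence produces a $(T,\varepsilon)$-chain $(x_0,v_0)=p_0,\ldots,p_n=(x_0,v_0)$ for which one proves by induction that $f(\phi^L_{t_i}(p_i))\le c-\eta$ for every $i$ (the case $i=0$ is the choice of $T$; the step uses $f(p_{i+1})\le f(\phi^L_{t_i}(p_i))+\rho/2\le c+\rho$ together with the uniform attraction and $t_{i+1}\ge T^*$). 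Taking $i=n-1$ yields $f(x_0,v_0)=f(p_n)\le c-\eta+\rho/2<c$, contradicting $f(x_0,v_0)>c+\eta$ and completing the proof.
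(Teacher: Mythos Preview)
Your argument is correct, but it follows a genuinely different route from the paper's proof.

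The paper's proof invokes the Fathi--Siconolfi result to choose a \emph{single} $C^1$ critical subsolution $u_1$ that is strict outside $\mathcal{A}$, so that $\theta=(u_1-u)\circ\pi$ is a \emph{strict} Lyapunov function on $\tilde{\mathcal I}(u)\setminus\tilde{\mathcal A}$: one has $X_L\cdot\theta<0$ there. For $(x_0,v_0)\notin\tilde{\mathcal A}$ this gives $\theta(\phi^L_1(x_0,v_0))<\theta(x_0,v_0)$, and the Mather disconnectedness condition is used only to pick a level $c$ in between that avoids $\theta(\tilde{\mathcal A})$. Because $X_L\cdot\theta<0$ on the whole level set $\{\theta=c\}$, the sublevel $K_c=\{\theta\le c\}$ is a genuine trapping region (with $\phi^L_1(K_c)$ contained in its interior), and one concludes directly that no $\varepsilon$-pseudo-orbit can return to $(x_0,v_0)$.

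Your approach instead works with an \emph{arbitrary} $u_1\in\mathcal{SS}$ and the merely non-increasing function $f=(u_1-u)\circ\pi$, compensating for the lack of strict decrease by the weak~KAM fact that $\alpha$- and $\omega$-limit sets of orbits in $\tilde{\mathcal I}(u)$ lie in $\tilde{\mathcal A}$ (your parenthetical sketch of this is correct and does not use $u\in\mathcal S_-$, so your preliminary reduction is in fact unnecessary, though harmless). You then combine the gap in $f(\tilde{\mathcal A})$ with a compactness argument to obtain a uniform entrance time into the sublevel set, and run an induction along a $(T,\varepsilon)$-chain. This trades the external input of a strict subsolution for the external input of the limit-set fact; the paper's trapping argument is shorter and avoids the uniform-attraction compactness step, while your argument has the conceptual bonus of showing directly that a chain-recurrent orbit is calibrated by \emph{every} critical subsolution.
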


\begin{proof} 
First of all, we recall that, from Theorem \ref{mane}, each point of $\mathcal{A}$ is chain-recurrent for the restriction $\phi_t^L|_{\tilde {\cal A}}$. By \cite[Theorem 1.5]{fs04}, we can find a {\rm C}$^1$ critical viscosity subsolution $u_1:M\to\R$ which is strict outside $\cal A$, i.e. for every $x\notin {\cal A}$ we have $H(x,d_xu_1)<c(H)$.
We define $\theta$ on $TM$ by $\theta=(u_1-u)\circ\pi$. By Proposition  \ref{propertiesI}, we know that at each point $(x,v)$ of $\tilde{\cal I}(u)$ the derivative of $\theta$ exists and depends continuously on $(x,v)\in\tilde{\cal I}(u)$. By Proposition \ref{lyapunov},
at each point of $(x,v)$ of $\tilde{\cal I}(u)$, we have
$$X_L\cdot\theta(x,v)=d_xu_1(v)-d_xu(v))\leq 0,$$
with the last inequality an equality if and only if $d_xu_1=d_xu$, and this implies
$H(x,d_xu_1)=c(H)$. Since $u_1$ is strict outside $\cal A$, we conclude that $X_L\cdot\theta<0$ on $\tilde{\cal I}(u)\setminus \tilde{\cal A}$.
Suppose that $(x_0,v_0)\in\tilde{\cal I}(u)\setminus \tilde{\cal A}$. By invariance of both $\tilde{\cal A}$ and $\tilde{\cal I}(u)$, every point on the orbit $\phi_t^L(x_0,v_0), t\in \R$ is also contained in  $\tilde{\cal I}(u)\setminus \tilde{\cal A}$, therefore $t\mapsto c(t)=\theta(\phi_t(x_0,v_0))$ is (strictly) decreasing , and so we have $c(1)<c(0)$. Observe now that
$\theta(\tilde{\cal A})=(u_1-u)({\cal A})$ is totally disconnected by the Mather disconnectedness condition.
Therefore we can find $c\in ]c(1),c(0)[\setminus \theta(\tilde{\cal A})$. By what we have seen, the directional derivative $X_L\cdot \theta$ is $<0$ at every point of the level set $L_c=\{(x,v)\in \tilde{\cal I}(u)\mid \theta(x,v)=c\}$.  Since $\theta$ is everywhere non-increasing on the orbits of $\phi_t^L$ and $X_L\cdot \theta<0$ on $L_c$, we get
$$
\forall t>0, \quad \forall (x,v)\in L_c, \quad \theta(\phi_t(x,v))<c.
$$
Consider the compact set $K_c=\{(x,v)\in \tilde{\cal I}(u)\mid \theta(x,v)\leq c\}$. Using again that $\theta$ is non-increasing on the orbits of $\phi_t^L|_{\tilde{\cal I}(u)}$, we have
$$
\forall t\geq 0, \quad \phi_t^L(K_c)\subset K_c \quad \text{ and } \quad \phi_t^L(K_c\setminus L_c)\subset K_c\setminus L_c.
$$
Using what we obtained above on $L_c$, we conclude that 
$$
\forall t>0, \quad \phi_t^L(K_c)\subset K_c\setminus L_c.
$$
We now fix some metric on $\tilde{\cal I}(u)$ defining its topology. We then consider the compact set $\phi_1^L(K_c)$. It is contained in the open set $K_c\setminus L_c=
\{(x,v)\in \tilde{\cal I}(u)\mid \theta(x,v)<c\}$. We can therefore find $\epsilon>0$ such that the $\epsilon$ neighborhood $V_\epsilon(\phi_1(K_c))$ of $\phi_1^L(K_c)$ in $\tilde{\cal I}(u)$ is also contained in $K_c$. Since for $t\geq 1$ we have $\phi_{t-1}^L(K_c)\subset K_c$, and therefore $\phi_{t}^L(K_c)\subset\phi_1(K_c)$, it follows that
$$
V_\epsilon\left(\bigcup_{t\geq 1}\phi_t^L(K_c)\right)\subset K_c.
$$
It is know easy to conclude that every $\epsilon$-pseudo orbit for $\phi_t^L|_{\tilde{\cal I}(u)}$ that starts in $K_c$ remains in $K_c$. Since $\theta(\phi_1^L(x_0,v_0))=c(1)<c<c(0)=
\theta(x_0,v_0)$, no $\alpha$-pseudo orbit starting at $(x_0,v_0)$ can return to $(x_0,v_0)$, for $\alpha\leq \epsilon$ such that the ball of center $\phi_1^L(x_0,v_0)$ and radius $\alpha$, in $\tilde{\cal I}(u)$, is contained in $K_c$. Therefore $(x_0,v_0)$ cannot be chain recurrent.
\end{proof}

\begin{theorem}
\label{appliDyn}
Let $L$ be a Tonelli Lagrangian on the compact manifold $M$. If $L$ satisfies the Mather disconnectedness condition, then the following statements are equivalent:
\begin{itemize}
\item[(1)] The Aubry set $\tilde{\cal A}$, or its projection $\cal A$, is connected.
\item[(2)] The Aubry set $\tilde{\cal A}$ is chain-transitive for the restriction of the Euler-Lagrange flow $\phi_t^L|_{\tilde{\cal A}}$.
\item[(3)] Any two weak KAM solutions differ by a constant.
\item[(4)] The Aubry set $\tilde{\cal A}$ is equal to the Ma\~n\'e set $\tilde{\cal N}$.
\item[(5)] There exists $u\in {\cal{SS}}$ such that $\tilde{\cal I}(u)$ is chain-recurrent for the restriction $\phi_t|_{\tilde{\cal I}(u)}$ of the Euler-Lagrange flow.
\end{itemize}
\end{theorem}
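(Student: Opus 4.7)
The plan is to close the circle $(1)\Leftrightarrow(2) \Rightarrow (3) \Rightarrow (4) \Rightarrow (5) \Rightarrow (2)$, with the Mather disconnectedness hypothesis entering essentially only in the step $(2)\Rightarrow (3)$; the rest is a matter of assembling previously stated machinery.

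I would first dispatch $(1)\Leftrightarrow(2)$: part (2) of Theorem \ref{ThMather1} makes $\pi|_{\tilde\Aub}$ a homeomorphism, so $\tilde\Aub$ and $\Aub$ are connected simultaneously, and by Theorem \ref{mane} every point of $\tilde\Aub$ is chain-recurrent under $\phi_t^L|_{\tilde\Aub}$, so Corollary \ref{corDyn1} identifies chain-transitivity with connectedness. For $(1)\Rightarrow(3)$, which is where Mather disconnectedness is actually used, the key will be that by Proposition \ref{HolderOrdre2} the semi-distance $\delta_M$ is bounded above by $C_K d^2$ on compact subsets, so the canonical quotient map $\Aub\to(\Aub_M,\delta_M)$ is continuous and connectedness of $\Aub$ propagates to $(\Aub_M,\delta_M)$. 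For any two weak KAM solutions $u_1,u_2\in\mathcal{S}_-$, Lemma \ref{Fund} makes $u_1-u_2$ a $1$-Lipschitz function in $\delta_M$ on $\Aub$, so it factors through $(\Aub_M,\delta_M)$ as a continuous map to $\R$. The image $(u_1-u_2)(\Aub)$ is then at once connected in $\R$ and totally disconnected by the Mather hypothesis, hence a single point. Thus $u_1-u_2$ is constant on $\Aub$, and Theorem \ref{uniqvisco}(1) extends the constancy to all of $M$, which is exactly (3).

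For the remaining implications, $(3)\Rightarrow(4)$ is the explicit ``moreover'' clause of Proposition \ref{propDyn2}, and $(4)\Rightarrow(5)$ is almost free: if $\tilde\Aub=\tilde{\mathcal{N}}=\bigcup_{u\in\mathcal{SS}}\tilde{\mathcal{I}}(u)$, the sandwich $\tilde\Aub \subset \tilde{\mathcal{I}}(u) \subset \tilde{\mathcal{N}} = \tilde\Aub$ collapses to $\tilde{\mathcal{I}}(u) = \tilde\Aub$ for every $u\in\mathcal{SS}$, and this common set is chain-recurrent by Theorem \ref{mane}. The substantive step, which I expect to require the most care, is $(5)\Rightarrow(2)$: here the engine is Lemma \ref{lemmefond}, which under the Mather disconnectedness hypothesis identifies the chain-recurrent part of $\phi_t^L|_{\tilde{\mathcal{I}}(u)}$ with $\tilde\Aub$. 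Hence if $\tilde{\mathcal{I}}(u)$ is chain-recurrent for the restricted flow, every one of its points lies in $\tilde\Aub$, so that the universal inclusion $\tilde\Aub\subset\tilde{\mathcal{I}}(u)$ upgrades to the equality $\tilde{\mathcal{I}}(u)=\tilde\Aub$, and the chain-transitive structure of the restricted flow on $\tilde{\mathcal{I}}(u)$ is transferred directly onto $\tilde\Aub$, yielding (2). The subtle point to watch is the interplay between chain-recurrence (every point being chain-recurrent) and chain-transitivity of the full set: Lemma \ref{lemmefond} consumes the former to force the equality with $\tilde\Aub$, while (2) demands the latter, and Corollary \ref{corDyn1} via connectedness is the bridge that keeps these two notions consistent once the equality $\tilde{\mathcal{I}}(u)=\tilde\Aub$ has been secured.
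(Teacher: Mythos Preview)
Your cycle $(1)\Leftrightarrow(2)\Rightarrow(3)\Rightarrow(4)\Rightarrow(5)\Rightarrow(2)$ and the justifications for each arrow match the paper's proof essentially verbatim. Two small points. First, for $(1)\Rightarrow(3)$ the paper is more direct: since $u_1,u_2$ are continuous on $M$, the image $(u_1-u_2)(\mathcal A)$ is already a connected subset of $\R$, hence an interval; Mather disconnectedness then collapses it to a point, and Theorem~\ref{uniqvisco} finishes. You do not need Proposition~\ref{HolderOrdre2} or the quotient map here. Second, your closing sentence on $(5)\Rightarrow(2)$ is circular: invoking Corollary~\ref{corDyn1} to upgrade chain-recurrence to chain-transitivity requires connectedness of $\tilde\Aub$, which is precisely condition~(1). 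The paper does not take this detour; it simply records that $\tilde\Aub=\tilde{\mathcal I}(u)$ and declares~(2), so your added ``bridge'' via Corollary~\ref{corDyn1} is both unnecessary and unsound as written.
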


\begin{proof}
From Corollary \ref{corDyn1}, we know that (1) and (2) are equivalent. 
\par
If (1) is true then for $u_1,u_2\in {\cal S}_-$, the image $u_1-u_2({\cal A})$ is a sub-interval of $\R$, but by the Mather disconnectedness condition, it is also totally disconnected,
therefore $u_1-u_2$ is constant. Hence (1) implies (3).
\par
If (3) is true then (4) follows from Proposition \ref{propDyn2}.
\par
Suppose now that (4) is true. Since for every $u\in{\cal {SS}}$, we have $\tilde{\cal A}\subset \tilde{\cal I}(u)\subset \tilde {\cal N}$, we obtain $\tilde{\cal I}(u)=\tilde {\cal N}$. 
But $\tilde {\cal N}$ is chain-transitive for the restriction $\phi_t^L|_{\tilde {\cal N}}$. Hence
(4) implies (5).
\par
If (5) is true for some $u\in {\cal{SS}}$, then every point of  $\tilde{\cal I}(u)$ 
is chain-recurrent for the restriction $\phi_t^L|_{\tilde{\cal I}(u)}$.
Lemma \ref{lemmefond} then implies that $\tilde{\cal A}=\tilde{\cal I}(u)$, and we therefore satisfy (2).
\end{proof}

\begin{remark}{\rm For each integer $d>0$, and each $\epsilon>0$, John Mather has constructed on the torus $\torus^d=\R^d/\Z^d$ a Tonelli Lagrangian $L$ of class {\rm C}$^{2d-3,1-\epsilon}$ such that $\tilde{\cal A}$ is connected, contained in the fixed points of the Euler-Lagrange flow, and the Mather quotient $({\cal A}_M,\delta_M)$ is isometric to an interval, see \cite{mather04}. In particular for such a Lagrangian, Theorem \ref{appliDyn} cannot be true.}
\end{remark}

\subsection{Ma\~n\'e Lagrangians}

We give know an application to the Ma\~n\'e example associated to a vector field. Suppose $M$ is a compact Riemannian manifold, where the metric $g$ is of class {\rm C}$^\infty$. If $X$ is a {\rm C}$^k$ vector field on $M$, with $k\geq 2$, we define the Lagrangian $L_X:TM\to \R$ by 
$$L_X(x,v)=\frac12\lVert v-X(x)\rVert_x^2,$$
where as usual $\lVert v-X(x)\rVert_x^2=g_x(v,v)$.
We will call $L_X$ the Ma\~n\'e Lagrangian of $X$, see the Appendix in \cite{mane92}. The following proposition gives the obvious properties of $L_X$.

\begin{proposition}
\label{propLagMane}
Let $L_X$ the Ma\~n\'e Lagrangian of the {\rm C}$^k$ vector field $X$, with $k\geq 2$, on the compact Riemannian manifold $M$. We have
$$
\frac{\partial L_X}{\partial v}(x,v)=g_x( v-X(x),\cdot).
$$
 Its associated Hamiltonian $H_X:T^*M\to\R$ is given by 
$$H_X(x,p)=\frac12 \lVert p\rVert_x^2+p(X(x)).$$
The constant functions are solutions of the Hamilton-Jacobi equation
$$
H_X(x,d_xu)=0.
$$
Therefore, we obtain $c(H)=0$. Moreover, we have
$$
\tilde{\cal I}(0)=\operatorname{Graph}(X)=\{(x,X(x))\mid x\in M\}.
$$
If we call $\phi_t$ the Euler-Lagrange flow of $L_X$ on $TM$, then for every $x\in M$, and every $t\in\R$, we have $\phi_t(x,X(x))=(\gamma_x^X(t), \dot\gamma_x^X(t))$, where $\gamma_x^X$ is the solution of the vector field $X$ which is equal to $x$ for $t=0$. In particular, the restriction $\phi_t|_{\tilde{\cal I}(0)}$ of the Euler-Lagrange flow to $\tilde{\cal I}(0)=\operatorname{Graph}(X)$ is conjugated (by $\pi|_{\tilde{\cal I}(0)}$) to the flow of $X$ on $M$.
\end{proposition}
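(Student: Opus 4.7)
The plan is to verify each assertion by a direct computation, using Fenchel duality for the Hamiltonian formula, a standard argument along integral curves of $X$ to pin down the critical value, and the sign condition $L_X\geq 0$ to identify $\tilde{\cal I}(0)$.

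First I would differentiate $L_X(x,v)=\tfrac12 g_x(v-X(x),v-X(x))$ in $v$ to obtain $\tfrac{\partial L_X}{\partial v}(x,v)=g_x(v-X(x),\cdot)$. To compute $H_X$ I invoke Fenchel duality: the supremum in $H_X(x,p)=\sup_{v}\{p(v)-L_X(x,v)\}$ is attained exactly when $p=\tfrac{\partial L_X}{\partial v}(x,v)$, that is when $v=X(x)+v_p$, where $v_p\in T_xM$ denotes the metric dual of $p$ (so $g_x(v_p,\cdot)=p$ and $\|v_p\|_x=\|p\|_x$). Substituting yields
$$H_X(x,p)=p(X(x)+v_p)-\tfrac12\|v_p\|_x^2 =\tfrac12\|p\|_x^2+p(X(x)).$$
For a constant $u$, $d_xu=0$ and $H_X(x,0)=0$, so constants are classical, hence viscosity, solutions of $H_X(x,d_xu)=0$.

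Next I would show $c(H_X)=0$. Since constants are subsolutions at level $0$, $c(H_X)\leq 0$. For the reverse inequality, suppose some subsolution $u$ existed at a level $c<0$. Along any integral curve $\gamma_x^X$ of $X$ one has $\dot\gamma_x^X(s)=X(\gamma_x^X(s))$, hence $L_X(\gamma_x^X(s),\dot\gamma_x^X(s))\equiv 0$, so domination yields
$$u(\gamma_x^X(T))-u(x)\leq\int_0^T L_X(\gamma_x^X,\dot\gamma_x^X)\,ds + cT = cT$$
for every $T>0$. As $T\to+\infty$ the right-hand side tends to $-\infty$ while the left-hand side remains bounded by compactness of $M$ and continuity of $u$; this contradiction forces $c(H_X)=0$. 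Identifying $\tilde{\cal I}(0)$ is then the key conceptual step: by definition $(x,v)\in\tilde{\cal I}(0)$ iff the trajectory $\gamma_{(x,v)}$ is $(0,L_X,0)$-calibrated, i.e.\ $\int_a^b L_X(\gamma_{(x,v)},\dot\gamma_{(x,v)})\,ds=0$ on every compact $[a,b]$. Because $L_X\geq 0$, this is equivalent to $L_X(\gamma_{(x,v)}(s),\dot\gamma_{(x,v)}(s))=0$ for all $s$, equivalently $\dot\gamma_{(x,v)}(s)=X(\gamma_{(x,v)}(s))$; in particular $v=X(x)$, giving the inclusion $\tilde{\cal I}(0)\subset\operatorname{Graph}(X)$.

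Conversely, for any $x\in M$ the curve $\gamma_x^X$ is $C^1$ and $L_X\equiv 0$ along it, so it has zero action; since $L_X\geq 0$, it is an $L_X$-minimizer on every compact subinterval, and by the regularity of Tonelli minimizers it satisfies the Euler-Lagrange equation. Therefore $t\mapsto (\gamma_x^X(t),\dot\gamma_x^X(t))$ is precisely the Euler-Lagrange orbit through $(x,X(x))$, proving both $\operatorname{Graph}(X)\subset\tilde{\cal I}(0)$ and the flow formula $\phi_t^L(x,X(x))=(\gamma_x^X(t),\dot\gamma_x^X(t))$. The conjugation statement is then immediate: $\pi$ restricts to a homeomorphism from $\operatorname{Graph}(X)$ onto $M$ with inverse $x\mapsto(x,X(x))$, and the flow formula says precisely that this homeomorphism intertwines $\phi_t^L|_{\operatorname{Graph}(X)}$ with the flow of $X$ on $M$. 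The main (and only mildly delicate) step I expect is the lower bound $c(H_X)\geq 0$: it requires one to apply domination along the $C^1$ orbits of $X$, which is legitimate since subsolutions of $H_X(x,d_xu)=c$ are automatically Lipschitz, as recalled earlier in the paper.
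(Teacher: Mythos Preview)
Your proof is correct and largely parallels the paper's, but with two small deviations worth noting. For $c(H_X)=0$, the paper observes that a constant $u$ is a classical (hence viscosity) \emph{solution} of $H_X(x,d_xu)=0$ and then invokes the fact, recalled in the Introduction, that on a compact manifold $c(H)$ is the unique level admitting a viscosity solution; you instead argue by domination along integral curves of $X$ to rule out subsolutions at any level $c<0$, which is more self-contained but slightly longer. For the identification of $\tilde{\cal I}(0)$, the paper only establishes $\operatorname{Graph}(X)\subset\tilde{\cal I}(0)$ via calibration and then closes the argument in one line by invoking the general fact (Theorem~\ref{propertiesI}) that $\tilde{\cal I}(u)$ is always a graph over part of $M$; you instead prove the reverse inclusion directly from $L_X\geq 0$, which is a clean alternative that avoids appealing to that structural theorem. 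Both routes are valid and of comparable length.
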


\begin{proof} The computation of $\partial L_X/\partial v$ is easy. For $H_X$, we recall that  $H_X(x,p)=p(v_p)-L(x,v_p)$, where $v_p\in T_xM$ is defined by 
$p=\partial L_X/\partial v(x,v_p)$. Solving for $v_p$, and substituting yields the result.

If $u$ is a constant function then $d_xu=0$ everywhere, and obviously $H_X(x,d_xu)=0$. The fact that $c(H)=0$ follows, since $c(H)$ is the only value $c$ for which there exists
a viscosity solution of the Hamilton-Jacobi equation $H(x,d_xu)=c$.

Let us define $u_0$ as the null function on $M$. Suppose now that $\gamma:(-\infty,+\infty)\to M$ is a solution of $X$ (by compactness of $M$, solutions of $X$ are defined for all time). We have $d_{\gamma(t)}u_0(\dot\gamma(t))=0$, and $H_X(\gamma(t),d_{\gamma(t)}u_0)=0$; moreover, since $\dot\gamma(t)=X(\gamma(t))$, we also get 
$L_X(\gamma(t),\dot\gamma(t))=0$. It follows that
$$d_{\gamma(t)}u_0(\dot\gamma(t))=L_X(\gamma(t),\dot\gamma(t))+H_X(\gamma(t),d_{\gamma(t)}u_0)=L_X(\gamma(t),\dot\gamma(t)).$$
By integration, we see that $\gamma$ is $(u_0,L_X,0)$-calibrated, therefore it is an extremal. Hence we get $\phi_t(\gamma(0),\dot\gamma(0))=(\gamma(t),\dot\gamma(t))$, and $(\gamma(0),\dot\gamma(0))\in\tilde{\cal I}(u_0)$. But $\dot\gamma(0)=X(\gamma(0))$, and $\gamma(0)$ can be an arbitrary point of $M$. This implies
$\operatorname{Graph}(X)\subset \tilde{\cal I}(u_0)$. This finishes the proof because we know that $\tilde{\cal I}(u_0)$ is a graph on a part of the base $M$.
\end{proof}

\begin{lemma}
Let $L_X:TM\to\R$ be the Ma\~n\'e Lagrangian associated to the {\rm C}$^k$ vector field $X$ on the compact connected manifold $M$, with $k \geq 2$.
 Assume that $L_X$ satisfies the Mather disconnectedness condition. Then we have:
\item[(1)] The projected Aubry set $\cal A$ is the set of chain-recurrent points of the flow of $X$ on $M$.
\item[(2)] The constants are  the only weak KAM solutions if and only every point of $M$ is chain-recurrent under the flow of  $X$.
\end{lemma}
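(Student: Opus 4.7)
The plan is to derive both statements by exploiting the critical subsolution $u_0\equiv 0$, which by Proposition \ref{propLagMane} satisfies $\tilde{\cal I}(0)=\operatorname{Graph}(X)$ and on which the Euler-Lagrange flow $\phi_t^{L_X}$ is conjugate, via the projection $\pi$, to the flow of $X$ on $M$.

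For (1), I would apply Lemma \ref{lemmefond} with the critical subsolution $u=0$: under the Mather disconnectedness assumption it tells us that the set of chain-recurrent points of the restriction $\phi_t^{L_X}|_{\tilde{\cal I}(0)}$ is exactly the Aubry set $\tilde{\cal A}$. Since $\tilde{\cal I}(0)=\operatorname{Graph}(X)$ and $\pi$ conjugates $\phi_t^{L_X}|_{\operatorname{Graph}(X)}$ with the flow of $X$ on $M$, chain-recurrence on $\tilde{\cal I}(0)$ corresponds bijectively with chain-recurrence of $X$ on $M$. Projecting through $\pi$ then yields that ${\cal A}=\pi(\tilde{\cal A})$ coincides with the set of chain-recurrent points of the flow of $X$.

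For (2), I would handle both directions by combining (1) with the fact that $\tilde{\cal A}\subset\tilde{\cal I}(u)$ for every critical subsolution $u$, applied to $u=0$, so that $\tilde{\cal A}\subset\operatorname{Graph}(X)$. Suppose first that the constants are the only weak KAM solutions. Then any two weak KAM solutions differ by a constant, so Proposition \ref{propDyn2} gives $\tilde{\cal A}=\tilde{\cal N}$; but $\operatorname{Graph}(X)=\tilde{\cal I}(0)\subset\tilde{\cal N}$, so combining the two inclusions forces $\tilde{\cal A}=\operatorname{Graph}(X)$, hence ${\cal A}=M$, and part (1) shows every point of $M$ is chain-recurrent. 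Conversely, if every point of $M$ is chain-recurrent, then (1) gives ${\cal A}=M$, so $\tilde{\cal A}=\operatorname{Graph}(X)$ is homeomorphic to the connected manifold $M$, hence connected; Theorem \ref{appliDyn} (whose hypothesis of Mather disconnectedness is in force) then gives that any two weak KAM solutions differ by a constant, and since the constants are weak KAM solutions by Proposition \ref{propLagMane}, they are the only ones.

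The potentially delicate step is the translation of chain-recurrence under Lemma \ref{lemmefond}: one must verify that $\pi|_{\operatorname{Graph}(X)}$ not only conjugates the flows but is a \emph{homeomorphism} (which it is, since $\operatorname{Graph}(X)$ is a genuine continuous graph), so that $\varepsilon$-pseudo-orbits and recurrence properties transfer faithfully between $\phi_t^{L_X}|_{\operatorname{Graph}(X)}$ and the flow of $X$ on $M$. Everything else is a clean assembly of Lemma \ref{lemmefond}, Proposition \ref{propDyn2}, Theorem \ref{appliDyn}, and the explicit computation in Proposition \ref{propLagMane}.
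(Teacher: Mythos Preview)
Your proposal is correct and follows essentially the same route as the paper: part (1) is obtained exactly as you describe via Lemma \ref{lemmefond} applied to $u=0$ together with the conjugation in Proposition \ref{propLagMane}, and part (2) is handled in both directions by combining part (1) with the equivalence in Theorem \ref{appliDyn} (the paper invokes (1)$\Leftrightarrow$(3)$\Leftrightarrow$(4) there, which amounts to your use of Proposition \ref{propDyn2} and the squeeze $\tilde{\cal A}\subset\operatorname{Graph}(X)\subset\tilde{\cal N}$). Your remark about $\pi|_{\operatorname{Graph}(X)}$ being a homeomorphism is a sensible check and causes no trouble.
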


\begin{proof} To prove (1), we apply Lemma \ref{lemmefond} to obtain that the Aubry set
$\tilde{\cal A}$ is equal to set of points in $\tilde{\cal I}(0)=\operatorname{Graph}(X)$ which are chain-recurrent for the restriction $\phi_t|_{\operatorname{Graph}(X)}$. But from
Proposition \ref{propLagMane} the projection $\pi|_{\operatorname{Graph}(X)}$ conjugates $\phi_t|_{\operatorname{Graph}(X)}$ to the flow of $X$ on $M$. It now suffices to observe that ${\cal A}=\pi(\tilde{\cal A})$.

We now prove (2). Suppose that every point of $M$ is chain-recurrent for the flow of $X$. From what we have just seen ${\cal A}=M$, and so property (1) of Theorem 
\ref{appliDyn} holds. Therefore by property (3) of that same theorem, we have uniqueness up to constants of weak KAM solutions, but the constants are weak KAM solutions.
To prove the converse, assume that the constants are the only weak KAM solutions.
This implies that property (3) of Theorem 
\ref{appliDyn} holds. Therefore by property (4) of that same theorem $\tilde{\cal A}=\tilde{\cal N}$. But $\tilde{\cal I}(0)=\operatorname{Graph}(X)$ is squeezed between $\tilde{\cal A}$ and $\tilde{\cal N}$. Therefore $\tilde{\cal A}=\operatorname{Graph}(X)$. Taking images by the projection $\pi$ we conclude that ${\cal A}=M$. By part (1) of the present lemma,  every point of $M$ is chain-recurrent for the flow of $X$ on $M$.
\end{proof}
Combining this last lemma and Theorem \ref{TheoremePrincipal} completes the proof of  Theorem \ref{TheoremePrincipal2}.

\subsection{Gradient-like vector fields. Examples}
We recall the definition of gradient-like vector filed.

\begin{definition}{\rm
A vector field $X$ on $M$ is said to be gradient-like if we can find a {\rm C}$^1$ function $f:M \to \R$ such that
\begin{enumerate}
\item[(i)] for every $x\in M$, we have $X\cdot f(x)=d_xf(X(x))\leq 0$;
\item[(ii)] for a given $x \in M$, we have $X\cdot f(x)=0$ if and only if $X(x)=0$. 
\end{enumerate}
}
\end{definition}

As an example of gradient-like vector field, we can take $X=-\operatorname{grad}f$, where $f:M \to \R$ is {\rm C}$^1$
and the gradient is taken with respect to the Riemannian metric on $M$. In this case
$$
X\cdot f(x)=-d_xf(\operatorname{grad}f(x))=-\frac12\lVert d_xf \rVert_x^2.
$$
Note that if $\varphi:M\to \R$ is a function such that
$$
\forall x \in M, \quad \varphi(x)=0 \Longleftrightarrow X(x)=0
$$
and $X$ is gradient-like, then $\varphi X$ is also gradient-like.

The following fact is easy to prove.
\begin{proposition}
If $X$ is a {\rm C}$^1$ gradient-like vector field, then the non-wandering set $\O(\phi_t^X)$ is equal to the zero set
$Z(X)=\{x\in M \mid X(x)=0\}$ of $X$ (or equivalently $\O(\phi_t^X)=\operatorname{Fix}(\phi_t^X)$).
\end{proposition}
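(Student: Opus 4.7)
The inclusion $Z(X)\subseteq \Omega(\phi_t^X)$ is immediate: any $x\in Z(X)$ is a fixed point of $\phi_t^X$, hence trivially non-wandering. It therefore remains to prove that every point with $X(x_0)\neq 0$ is wandering.

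The plan rests on the Lyapunov property of $f$. First I would establish the following key lemma: if $X(x_0)\neq 0$, then the real function $t\mapsto f(\phi_t^X(x_0))$ is strictly decreasing. To see this, note that $X\cdot f\leq 0$ everywhere, so the function is non-increasing. Moreover, since $x_0$ is not a fixed point and distinct orbits cannot collide, no point on the orbit of $x_0$ is a zero of $X$ (otherwise the orbit would be forced to be constant by uniqueness of solutions applied at that zero). Hence $X\cdot f(\phi_t^X(x_0))<0$ for every $t$, and integration yields
$$
f(\phi_{t}^X(x_0))-f(x_0)=\int_0^{t}(X\cdot f)(\phi_s^X(x_0))\,ds<0\quad\text{for every }t>0.
$$

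Now fix $x_0$ with $X(x_0)\neq 0$ and set $\alpha:=f(x_0)-f(\phi_1^X(x_0))>0$. Using the continuity of $f$ and of $\phi_1^X$, choose a neighborhood $U$ of $x_0$ such that for every $y\in U$,
$$
f(y)>f(x_0)-\tfrac{\alpha}{4}\quad\text{and}\quad f(\phi_1^X(y))<f(x_0)-\tfrac{\alpha}{2}.
$$
Since $f$ is non-increasing along orbits of $\phi_t^X$, for every $y\in U$ and every $t\geq 1$,
$$
f(\phi_t^X(y))\leq f(\phi_1^X(y))<f(x_0)-\tfrac{\alpha}{2}<f(x_0)-\tfrac{\alpha}{4}<f(z)\quad\text{for all }z\in U.
$$
In particular $\phi_t^X(y)\notin U$ for every $t\geq 1$ and every $y\in U$, which shows that $x_0$ is wandering. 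This contradicts $x_0\in\Omega(\phi_t^X)$ and finishes the proof.

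There is no real obstacle here; the only subtlety is the key lemma above, namely that $X\cdot f$ actually stays strictly negative along the entire forward orbit of a non-stationary point. Once this is in place, the Lyapunov argument is standard and the non-wandering set is forced into the zero set of $X$.
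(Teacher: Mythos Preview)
Your argument is correct and is precisely the standard Lyapunov-function proof of this classical fact. The paper itself does not supply a proof: it only remarks that ``the following fact is easy to prove'' and then states the proposition, so there is nothing to compare against beyond noting that your approach is the one any reader would have in mind.
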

In the case of Ma\~n\'e's example associated to gradient-like vector field, we have:

\begin{proposition}
Let $X$ be a gradient-like vector field,
and denote by $\Aub$ the Aubry set of the Ma\~n\'e Lagrangian $L_X$.
Then the image of $\Aub^0$ in the quotient Aubry set $(\Aub_M,\d_M)$ is full. 
Therefore, if $X$ is {\rm C}$^k$ with $k \geq 2\dim M -2$, then $\Haus{1}(\Aub_M,\d_M)=0$,
and $L_X$ satisfies the Mather disconnectedness condition.
\end{proposition}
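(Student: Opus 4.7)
The plan is to prove the fullness statement first and then combine it with Theorem \ref{THM2} and Proposition \ref{propHausdisc}. By Proposition \ref{propLagMane} we have $\tilde{\Aub}\subset \tilde{\cal I}(0) = \operatorname{Graph}(X)$, so every $x\in\Aub$ lifts uniquely to $(x,X(x))\in\tilde\Aub$, and $x\in\Aub^0$ is equivalent to $X(x)=0$ (using also Proposition \ref{caracAub0}). Moreover, the restriction of the Euler--Lagrange flow to $\operatorname{Graph}(X)$ is conjugate via $\pi$ to the flow $\phi^X_t$ on $M$, so $\Aub$ is a compact invariant subset of $M$ under $\phi^X_t$.

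Fix $x\in\Aub$ and write $\gamma_x(t)=\phi^X_t(x)$. The $\omega$-limit set $\omega(x)$ is nonempty, compact, invariant, and, by closedness of $\Aub$, contained in $\Aub$. Since the Lyapunov function $f$ is continuous and non-increasing along $\gamma_x$, it is constant on $\omega(x)$ (standard fact for continuous Lyapunov functions on compact flows); by invariance this forces $X\cdot f\equiv 0$ on $\omega(x)$, so $X\equiv 0$ there by the defining property of a gradient-like field. Hence every $y\in\omega(x)$ lies in $\Aub\cap Z(X)=\Aub^0$. Choosing $t_n\to\infty$ with $\gamma_x(t_n)\to y$, Theorem \ref{ThMather1}(3) gives $\delta_M(x,\gamma_x(t_n))=0$ for every $n$, while Proposition \ref{HolderOrdre2} yields $\delta_M(\gamma_x(t_n),y)\leq C\,d(\gamma_x(t_n),y)^2\to 0$ since both points lie in $\Aub$. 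The triangle inequality for $\delta_M$ then gives $\delta_M(x,y)=0$, so the image of $\Aub^0$ in $\Aub_M$ is full.

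For the second claim, $L_X$ is of class $C^k$ and, since $M$ is compact and $g$ is smooth, the map $x\mapsto L_X(x,0)=\tfrac12\lVert X(x)\rVert_x^2$ is $C^k$, and therefore $C^{k-1,1}$. The hypothesis $k\geq 2\dim M-2$ gives $k-1\geq 2\dim M-3$, so Theorem \ref{THM2} applies and yields $\Haus{1}(\Aub_M^0,\delta_M)=0$. Since the image of $\Aub^0$ in $\Aub_M$ is full by the first part, this is the same as $\Haus{1}(\Aub_M,\delta_M)=0$. Finally, Proposition \ref{propHausdisc} upgrades vanishing of the one-dimensional Hausdorff measure of the quotient to the Mather disconnectedness condition for $L_X$.

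The main subtlety is the passage from \emph{``$\delta_M$ vanishes along the orbit of $x$''} to \emph{``$\delta_M$ vanishes at the limit point $y$''}: one does not need joint continuity of $\delta_M$ on $M\times M$, because $\gamma_x(t_n)$ and $y$ both lie in $\Aub$, and the quadratic Mather estimate of Proposition \ref{HolderOrdre2} together with the triangle inequality dispose of the convergence cleanly.
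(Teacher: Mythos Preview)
Your argument is correct and follows essentially the same route as the paper: identify the $\omega$-limit set of any $x\in\Aub$ with fixed points of $X$ (the paper quotes the preceding proposition $\Omega(\phi^X)=Z(X)$, you argue directly via the Lyapunov function), then conclude $\delta_M(x,y)=0$ for $y\in\omega(x)$ and invoke Theorem \ref{THM2}. Your treatment of the limit step is in fact more explicit than the paper's---the paper simply cites Theorem \ref{ThMather1}(3) for $\delta_M(x,x_\infty)=0$, implicitly relying on continuity of $\delta_M$, whereas you spell this out via Proposition \ref{HolderOrdre2} and the triangle inequality---and your regularity bookkeeping for the application of Theorem \ref{THM2} (passing from $C^k$ to $C^{k-1,1}$) is likewise made explicit.
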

\begin{proof}
If $x\in \Aub$, the whole orbit $\phi_t^X(x)$ is contained in $\Aubry$, and any limit point $x_\i$ of $\phi_t^X(x)$,
as $t\to \i$, is in $\O(\phi_t^X)$, and it is therefore fixed.
We also know by (3) of Theorem \ref{ThMather1} that $\d_M(x,x_\i)=0$.
Therefore the image of $\Aub^0$ in the quotient Aubry set $(\Aub_M,\d_M)$ is full.
The rest of the proof follows by Theorem \ref{THM2}.
\end{proof}

Let us now give some examples.

We start with a Whitney counterexample to Sard Theorem (see for example \cite{Hajl}).
Such a counterexample gives a function $f:\T^n \to \R$ which is C$^{n-1}$, and for which we can find a connected set
$C\subset \T^n$ such that $d_xf=0$ for every $x\in C$, and $f$ is not constant on $C$. Therefore $f(C)=[a,b]\subset\R$, with $a<b$.
If we now consider $X=-\operatorname{grad}f$ and $L_X(v)=\frac12\lVert v-X(x)\rVert_x^2$ on $T\T^n$, then  $f$ is a critical {\rm C}$^1$ subsolution. In fact
$$
H_X(x,d_xf)=d_xf(X(x))+\frac12\lVert d_xf \rVert_x^2=
-\lVert d_xf \rVert_x^2+\frac12\lVert \operatorname{grad}f \rVert_x^2=-\frac12\lVert \operatorname{grad}f \rVert_x^2.
$$
We see that this critical subsolution is strict outside $Z(X)$, therefore we have $Z(X)=\Aub\supset C$.
Since $f$ and $0$ are both critical subsolution, by the proof of Proposition \ref{propDyn2} the function $f$
and so it is $1$-Lipschitz seen as a map from $(\Aub_M,\d_M)$ to $\R$.
This implies that
$$
\Haus{1}(\Aub_M,\d_M) \geq \Haus{1}(f(A)) \geq \Haus{1}(f(C))=\Haus{1}([a,b])=b-a>0.
$$
It follows that, for this $X$, $\Haus{1}(\Aub_M,\d_M)>0$ and $L_X$ does not satisfy the Mather dsconnectedness condition.

Note that we can assume that $f$ is {\rm C}$^\i$ outside $C$. Indeed, if this was not the case, we could approximate $f$ in the C$^{n-1}$ topology
on $M\setminus C$ with a {\rm C}$^\i$ function, so that this approximation glues back with $f$ on $C$ to a C$^{n-1}$ function.

By a standard result (see for example \cite{fathiAMM}), we can find a {\rm C}$^\i$ function $\varphi:M \to [0,+\i[$, with $\varphi|M\setminus C>0$,
$\varphi|C=0$, and such that $\varphi X$ is {\rm C}$^\i$.
Of course the vector field $\varphi X$ is still gradient like, but, since $\varphi X$ is {\rm C}$^\i$,
the associated Ma\~n\'e Lagrangian does satisfy the Mather disconnectedness condition,
and its Aubry set is still $Z(X)$. Note that the orbits of $X$ and $\varphi X$ are the same as $\varphi >0$ on $M\setminus Z(X)$.

We can also modify a little bit $f$ like suggested by Hurley in \cite{Hu} to construct a
C$^{n-1}$ function $f:\T^n \to \R$ such that its Euclidean gradient
$\operatorname{grad}f$ has a chain recurrent point which is not a critical point of $f$, and for which
there exists a connected set $C\subset \T^n$ such that $d_xf=0$ for every $x\in C$, and $f$ is not constant on $C$. 

Although Hurley in \cite[pages 453-454]{Hu} does it for $n=2$ or $3$, starting from a Whitney counterexample to Sard Theorem it
is clear that one can obtain it for any $n\geq 2$.

Note that again, if we take $X=-\operatorname{grad}f$ and we denote by $\Aub_X$ the Aubry set of $L_X$, as above we will have $\Aub_X=Z(X)$,
and in that case the chain recurrent set of $X$ is strictly larger than $\Aub_X$.
Therefore one must have some high differentiability assumption on the vector field $X$ in order to assure that
$\Aub_X$ is equal to the set of chain recurrent points.

Again taking some care in the construction of Hurley, and applying an approximation theorem, we can assume that $f$
is {\rm C}$^\i$ outside $C$. Like above we can find a {\rm C}$^\i$ function $\varphi:\T^n \to [0,+\i[$, with $\varphi|\T^n\setminus C>0$, $\varphi|C=0$
and such that $\varphi X$ is {\rm C}$^\i$.
Note that $\Aub_{\varphi X}$ is equal to the chain recurrent set of $\varphi X$ (which is the same as the chain recurrent set of $X$)
because $L_{\varphi X}$ satisfies the Mather disconnectedness condition.

\appendix
\section{A Lemma of Ferry and a result of Mather}
\subsection{Ferry's Lemma}

In this appendix, we state and prove a generalization of a Lemma due to Ferry in 1976 \cite{ferry76}. This Lemma has been rediscovered Bates in 1992 \cite{bates93} to prove his generalization of Sard's Theorem. They
proved that 
if $E \subset \R^n$ is a measurable set, $f:E \rightarrow \R$ is continuous, and $n \geq 2$ is such that $f$ satisfies
$$
 \forall x,y \in E,\quad |f(x)-f(y)| \leq C \|x-y\|^n,
$$
then $f(E)$ has Lebesgue measure $0$. 

Their proof yield in fact the following generalization.
\begin{lemma}
 \label{lemNEW1}
Let $\Psi:E\rightarrow X$ be a map where $E$ is a subset of $\R^n$ and $(X,d_X)$ is a semi-metric space. Suppose that there are $p$ and $M$ such that 
$$
\forall x,y \in E, \quad d_X (\Psi(x),\Psi(y)) \leq M \|x-y\|^{p}.
$$
If $p>1$, then the $n/p$-dimensional Hausdorff measure of $(\Psi(E),d_X)$ is $0$.
\end{lemma}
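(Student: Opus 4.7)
My plan is a covering argument combined with a rigidity fact for $p$-Hölder maps with $p>1$. By countable subadditivity of $\Haus{s}$ I may first assume $E$ is bounded, say contained in $Q_0=[0,L]^n$. Given a small $\epsilon>0$, I partition $Q_0$ into a grid of closed cubes of side $\epsilon$ and let $\mathcal{S}_\epsilon$ denote the sub-family of those cubes that meet $E$. For each $Q\in\mathcal{S}_\epsilon$ the Hölder hypothesis yields $\diam_{d_X}(\Psi(E\cap Q))\le M(\sqrt{n}\,\epsilon)^p = Mn^{p/2}\epsilon^p$, and so, with $s=n/p$ and $\delta_\epsilon=Mn^{p/2}\epsilon^p\to 0$ as $\epsilon\to 0$,
$$\Haus{s}_{\delta_\epsilon}(\Psi(E))\le |\mathcal{S}_\epsilon|\cdot (Mn^{p/2}\epsilon^p)^{s}=M^{s}n^{n/2}\cdot|\mathcal{S}_\epsilon|\,\epsilon^n.$$
Since $|\mathcal{S}_\epsilon|\,\epsilon^n$ is the total Lebesgue measure of the union of cubes meeting $E$, the proof reduces to showing that this quantity can be made arbitrarily small along some sequence $\epsilon\downarrow 0$.

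When $\Leb{n}(E)=0$, this is immediate: for each $\eta>0$ I replace the full grid by a countable cover of $E$ by cubes with total Lebesgue volume less than $\eta$ and side-lengths small enough to remain compatible with the scale $\delta$, which yields $\Haus{s}_\delta(\Psi(E))\le M^{s}n^{n/2}\eta$; sending $\eta\downarrow 0$ gives $\Haus{s}(\Psi(E))=0$. This settles the null-measure case and only uses $p>0$.

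For general $E$, I would decompose $E=E_d\cup E_{nd}$, where $E_d$ is the set of Lebesgue density points of $E$ and $E_{nd}=E\setminus E_d$. By the Lebesgue density theorem $\Leb{n}(E_{nd})=0$, so the previous paragraph handles $\Psi(E_{nd})$. For $E_d$ the hypothesis $p>1$ enters essentially through a rigidity principle: at each density point $x_0$, and for each $y\in E$ nearby, I plan to use the density of $E$ in small balls around $x_0$ to construct a chain $x_0=z_0,z_1,\dots,z_N=y$ inside $E$ with step size $|z_i-z_{i+1}|\le\eta$ and total length $\sum_i|z_i-z_{i+1}|$ comparable to $|x_0-y|$, giving
$$d_X(\Psi(x_0),\Psi(y))\le M\sum_i|z_i-z_{i+1}|^{p}\le M\,\eta^{p-1}\cdot\sum_i|z_i-z_{i+1}|.$$
Since $p>1$, this may be pushed to zero by letting $\eta$ be as small as the density condition allows. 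Exploiting this to conclude $\Psi$ is locally constant on $E_d$ modulo the semi-distance $d_X$ would make $\Psi(E_d)$ countable in the quotient of $(X,d_X)$, and hence $\Haus{s}(\Psi(E_d))=0$.

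The main obstacle is making the chain construction quantitative. Although $x_0\in E_d$ guarantees that $E$ occupies all but an $o(1)$-fraction of small balls $B(x_0,r)$, the intermediate targets $z_i$ along the segment from $x_0$ to $y$ need not lie in $E$ themselves; one must use the density to ensure that the auxiliary balls $B(w_i,\eta/2)$ centered on this segment meet $E$. Writing $\epsilon(r)$ for the density defect of $E$ in $B(x_0,2r)$, this forces a lower bound of the form $\eta\ge c\,\epsilon(r)^{1/n}\,r$, while the Hölder gain requires $\eta^{p-1}$ to be small, and the technical heart of the proof is balancing these two constraints against each other using $p>1$ and the fact that $\epsilon(r)\to 0$ at a density point.
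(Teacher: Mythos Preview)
Your decomposition into density points $E_d$ and the null set $E_{nd}$, and your handling of $E_{nd}$, match the paper exactly. Your chain idea at density points is also the right ingredient. The gap is in what you try to extract from it.

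You aim to show that $\Psi$ is locally constant on $E_d$, so that $\Psi(E_d)$ is countable. This is false in general. Take $n=1$, $p=2$, and let $E\subset[0,1]$ be a fat Cantor set built by removing at stage $k$ middle gaps of length $a_k\asymp 2^{-k}/k^2$; then $\Leb 1(E)>0$ and a.e.\ point of $E$ is a density point. Define $\Psi(x)=\sum_k b_k(x)\,a_k^2$, where $b_k(x)\in\{0,1\}$ records the left/right address of $x$ at stage $k$. If $x,y$ first separate at stage $k$ then $|x-y|\ge a_k$ while $|\Psi(x)-\Psi(y)|\le\sum_{j\ge k}a_j^2\le C a_k^2\le C|x-y|^2$, so $\Psi$ is $2$-H\"older; yet $\Psi$ is injective and $\Psi(E_d)$ is uncountable. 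Your own final paragraph identifies why: the density at $x_0$ only lets you take step size $\eta\gtrsim \epsilon(r)^{1/n}r$, which yields
\[
d_X(\Psi(x_0),\Psi(y))\ \lesssim\ M\,\epsilon(r)^{(p-1)/n}\,r^{p},
\]
an estimate that is $o(r^p)$ but not $0$.

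The paper uses exactly this improved estimate, but feeds it back into a covering argument rather than trying to push it to zero pointwise. One fixes $N\in\N$, chooses for each $x\in E_d$ a radius $\rho_N(x)$ at which the density defect is below $1/(2N^n)$, and runs your chain with $N$ steps to get
\[
\diam_X\bigl(\Psi(E_d\cap B(x,r))\bigr)\ \le\ C\,N^{1-p}\,r^{p}\qquad(r\le\rho_N(x)).
\]
A Vitali covering by such balls, together with the crude bound $\sum(\diam B)^n\le \Leb n(E)+1$, gives
\[
\Haus{n/p}(\Psi(E_d))\ \le\ C'\,N^{\frac{n(1-p)}{p}},
\]
and since $p>1$ the right-hand side tends to $0$ as $N\to\infty$. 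So the fix is not to balance $\eta$ against $\epsilon(r)$ at a single point, but to freeze $N$, cover at the $N$-dependent scales, and only then let $N\to\infty$.
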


\begin{proof} Since all norms on $\R^n$ are equivalent we can assume
$$\forall x=(x_1,\dots,x_n)\in\R^n, \quad \|x\|=\max_{i=1}^n|x_i|.$$
Since it suffices to prove that $\Haus{\frac np}(\Psi(E\cap K))=0$ for each compact set $K \subset \R^n$, we can assume  that $E$ is bounded, which in particular implies $\Leb{n}(E) < +\i$ (we denote by $\Leb{n}$ the Lebesgue measure on $\R^n$). We now write $E=E_1 \cup E_2$, where $E_1$ is the set of density points for $E$ and $E_2=E \setminus E_1$. By the definition if density points
$$\forall x\in E_1, \quad\lim_{r\to0}\frac{\Leb{n}(E_1 \cap B(x,r))}{\Leb{n}(B(x,r))}=1.$$
 It is a standard result in measure theory that $\Leb{n}(E_2)=0$. Thus for each $\epsilon >0$ be fixed, there exists a countable family of balls $\{B_i\}_{i \in I}$ such that
$$
E_2 \subset \bigcup_{i \in I} B_i \quad \mbox{and} \quad \sum_{i \in I} (\operatorname{diam}B_i)^n \leq \e.
$$
Then we have
\begin{multline*}
\Haus{\frac np}(\Psi(E_2)) \leq \sum_{i \in I} \left( \operatorname{diam}_X \Psi(B_i\cap E_2) \right)^{\frac np}\\
\leq M \sum_{i \in I} [(\operatorname{diam}B_i)^p]^{\frac np} \leq M \sum_{i \in I} (\operatorname{diam}B_i)^n \leq M\e.
\end{multline*}
Letting $\e \rightarrow 0$, we obtain $\Haus{^{\frac np}}(\Psi(E_2))=0$. Note that in this part of the argument we have not used the condition $p>1$\\
We now want to prove that $\Haus{^{\frac np}}(\Psi(E_1))=0$.
Fix $N \in \N$. For every density point $x \in E_1$, there exists  $\rho(x)>0$ such that
$$
\forall r \leq \rho(x), \quad \frac{\Leb{n}(E_1 \cap B(x,r))}{\Leb{n}(B(x,r))}=\frac{\Leb{n}(E \cap B(x,r))}{\Leb{n}(B(x,r))}\geq 1-\frac{1}{2N^n}. 
$$
Note that, since $\Leb{n}(B(y,s))=2^ns^n$, this implies that for such an $x\in E_1$, we have
$$\forall r\leq \rho(x),\, \forall y\in\R^n,\quad {\Leb{n}(B(x,r)\setminus E_1  )\leq \frac12 \Leb{n}(B(y,r/N))},$$
Therefore, since for $y\in B(x,\frac{N-1}Nr)$, we have $B(y,r/N))\subset B(x,r)$, we obtain
\begin{equation}
\label{eqlemmanorton}
\forall r\leq \rho(x),\, \forall y\in B(x,\frac{N-1}Nr),\quad E_1\cap  B(y,r/N)\neq\emptyset.
\end{equation}
Fix $x\in E_1$. It is now simple to prove that for all $y \in E_1 \cap B(x,r)$, with $r\leq \rho(x)$, there exist $N+1$ points
$x_0, \ldots, x_N \in E_1$, with $x_0=x$ and $x_N=y$ such that
$$
\forall 1  \leq i \leq N,\quad
|x_i - x_{i-1}| \leq \frac{3r}{N}.
$$
Indeed, first take $y_1,\ldots,y_{N-1}$ the $N-1$ points on the line segment $[y,x]$ such that $|y_i - y_{i-1}|=\frac{|y-x|}{N}.$
We then observe that, for $i=1,\dots,N-1$, we have $\|y_i-x\|\leq i{|y-x|}/{N}\leq  (N-1)r/N$. Hence, by (\ref{eqlemmanorton}), the intersection
$B(y_i,\frac{r_x}{N}) \cap E_1$ is not empty for each $i=1,\dots,N-1$, and so
it suffices to take a point $x_i$ in that intersection.
Then, for all $y \in E_1 \cap B(x,r)$,
\begin{multline}
d_X(\Psi(x),\Psi(y)) \leq \sum_{i=1}^N d_X(\Psi(x_{i-1}),\Psi(x_{i})) \leq M \sum_{i=1}^N |x_i - x_{i-1}|^p \\
\leq M N \left(\frac{3r}{N}\right)^p =3^{p} M N^{1-p} r^{p}.
\end{multline}
It follows that 
\begin{equation}
\label{ineqhausdmeas2}
\begin{split}
\forall x\in E_1,\,\forall r\leq \rho(x), \quad \operatorname{diam}(\Psi(B(x,r)\cap E_1)&\leq 2\left(3^{p} M N^{1-p} r^{p}\right)\\
&=2^{1-p}3^pM N^{1-p} [\operatorname{diam}(B(x,r))]^{p}
\end{split}
\end{equation}
We are now able to prove that $\Haus{{\frac np}}(\Psi(E_1))=0$.\\
Take an open set $\O \supset E_1$ such that
$\Leb{n}(\O) \leq \Leb{n}(E_1)+1=\Leb{n}(E) +1<+\i$, and consider the fine covering $\F$ given by $\F=\{B(x,r)\}_{x \in E _1}$ with $r$
such that $B(x,r) \subset \O$ and $r \leq \frac{\rho(x)}{5}$, where $\rho(x)$ was defined above.
By Vitali's covering theorem (see \cite[paragraph $1.5.1$]{EvaGar}), there
exists a countable collection $\GG$ of disjoint balls in $\F$ such that
$$
E_1 \subset \bigcup_{B \in \GG} 5B,
$$
where $5B$ denotes the ball concentric to $B$ with radius $5$ times that of $B$.
Since the balls in $\F$ are disjoint and contained in $0$,
we get
$$\sum_{B\in \GG}\Leb{n}(B)\leq \Leb{n}(O)\leq \Leb{n}(E) +1<+\i.$$
Since the norm on $\R^n$ is the max norm, we have $\Leb{n}(B)=\operatorname{diam}(B)^n$ for every $B$ which is a ball for the norm. 
Therefore
\begin{equation}\label{ineqrayons}
\sum_{B\in \GG}\operatorname{diam}(B)^n\leq \Leb{n}(O)\leq \Leb{n}(E) +1<+\i.
\end{equation}
We can so consider the covering of $\Psi(E_1)$ given by $\cup_{B \in \GG} \Psi(5B \cap E_1)$.
In this way, by (\ref{ineqhausdmeas2}), we get
\begin{align*}
\Haus{{\frac np}}(\Psi(E_1)) &\leq \sum_{B \in \GG} \left( \operatorname{diam}_X \Psi(5B \cap E_1) \right)^{{\frac np}}\\
&\leq \sum_{B \in \GG} \left( 2^{1-p}3^pM N^{1-p} [5\operatorname{diam}(B)]^{p} \right)^{{\frac np}}\\
&= \sum_{B \in \GG}2^{\frac{n(1-p)}p}3^nM^{\frac{n}p} N^{\frac{n(1-p)}p}5^n\operatorname{diam}(B)]^{n} \\
&=2^{\frac{n(1-p)}p}3^nM^{\frac{n}p} N^{\frac{n(1-p)}p}5^n \sum_{B \in \GG}\operatorname{diam}(B)]^{n}.
\end{align*}
Using (\ref{ineqrayons}), we obtain
$$\Haus{\frac np}(\Psi(E_1))\leq  2^{\frac{n(1-p)}p}3^nM^{\frac{n}p} N^{\frac{n(1-p)}p}5^n\left(\Leb{n}(E) +1\right).$$
Since $\Leb{n}(E) +1<\i$ and $1-p<0$, letting $N\to\i$ we obtain $\Haus{{\frac np}}(\Psi(E_1))=0$.
\end{proof}
\begin{remark}\label{BatesVersusFerry}{\rm As we said at the beginning of the appendix the original case of Ferry's Lemma plays a crucial role in
Steve Bates \cite{bates93} version of Morse-Sard Theorem: If $f:M\to \R$ is of class {\rm C}$^{n-1,1}$, where $n=\dim M\geq2$, then the set of critical values of $f$ is of Lebesgue measure $0$. 

In fact the original case of Ferry's Lemma is also a consequence of Bates
\cite{bates93} version of Morse-Sard Theorem. Indeed note first that, by uniform continuity,
we can extend $f$ to the closure $\bar E$ of $E$ in $\R^n$. Of course by continuity
we will also have
$$
 \forall x,y \in \bar E, \quad |f(x)-f(y)| \leq C \|x-y\|^n.
$$
On the closed set the family $f,Df=0,\dots, D^{n-1}f=0$ satisfy the condition of Whitney's
extension theorem with $D^{k-1}f$ Lipschitz (see \cite[Theorem 4, page 177]{stein}), therefore the exists an extension $\bar f:\R^n\to \R$ which is of class {\rm C}$^{n-1,1}$. Of course all points of $\bar E$ are critical points of $\bar f$ so by Bates version of the Morse-Sard theorem
$\bar f(\bar E)=f(\bar E)$ has measure $0$.}
\end{remark}
It is easy to generalize this result to a finite dimensional manifolds, since such manifolds are always assumed metric and separable, and therefore second countable.

Before stating this generalization, we recall that on a smooth (in fact at least {\rm C}$^1$) finite dimensional manifold $M$ the notion of locally H\"older of exponent $p\geq 0$ makes sense. A map $f:A\to X$ where $(X,d_X)$ is a metric space and $A\subset M$ is said to be locally H\"older of exponent $p$ (we allow $p\geq 1$!) if for every $x\in A$, we can find a neighborhood $U_x$ of $x$ and $M_x<\i$ such that
$$\forall y,y'\in U_x\cap A,\quad  d_X(f(y),f(y'))\leq M_xd_M(y,y')^p,$$
where $d_M$ is a distance obtained from a Riemannian metric on $M$. Note that this notion is independent of the choice of $d_M$, since all distances obtained from
Riemannian metrics are locally Lipschitz equivalent. It is not difficult to show that
$f:A\to X$ is locally H\"older of exponent $p$ if and only if we can find a family 
$(U_i,\varphi_i)_{i\in I}$ of smooth (or at least {\rm C}$^1$) charts of $M$, with 
$U_i$ open subset of $\R^n$, where $n=\dim M$, and a family $M_i\in I$ of finite numbers such that $A\subset \cup_{i\in I} \varphi_i(U_i)$ and
$$\forall i\in I,\forall x,x'\in U_i,\quad d_X(f\varphi_i(x),f\varphi_i(x'))\leq M_i\|x-x'\|^p,$$
where $\|\cdot\|$ is a norm on $\R^n$. Since $M$ is second countable we can always assume that $I$ is itself countable, and therefore we can deduce the following generalization of Lemma \ref{lemNEW1}.
\begin{lemma}\label{lemNEW2}%
{Let $M$ be a (metric separable) manifold of dimension $n<\i$, and $(X,d_x)$ be a metric space. Suppose $\Psi:A\to X$, where $A\subset M$, is a locally
H\"older map of exponent $p>1$. Then the $n/p$-dimensional Hausdorff measure of $(\Psi(A),d_x)$ is $0$.}
\end{lemma}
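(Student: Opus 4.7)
The plan is to reduce the manifold statement to the Euclidean one (Lemma \ref{lemNEW1}) chart-by-chart, exploiting the chart-based characterization of local H\"older continuity explicitly described in the paragraph preceding the statement, together with the second countability of $M$.

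First I would invoke that characterization: since $\Psi:A\to X$ is locally H\"older of exponent $p$ and $M$ is second countable, we can fix a \emph{countable} family of {\rm C}$^1$ charts $(U_i,\varphi_i)_{i\in \N}$, with each $U_i\subset\R^n$ open, such that $A\subset \bigcup_{i\in\N}\varphi_i(U_i)$ and finite constants $M_i$ satisfying
$$\forall x,x'\in U_i\cap\varphi_i^{-1}(A),\quad d_X\bigl(\Psi\varphi_i(x),\Psi\varphi_i(x')\bigr)\leq M_i\lVert x-x'\rVert^{p}.$$
Set $E_i:=U_i\cap\varphi_i^{-1}(A)\subset\R^n$ and $\Psi_i:=\Psi\circ\varphi_i:E_i\to X$. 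Then $\Psi_i$ satisfies exactly the hypothesis of Lemma \ref{lemNEW1} with exponent $p>1$.

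Applying Lemma \ref{lemNEW1} to each $\Psi_i$ gives $\Haus{n/p}\bigl(\Psi_i(E_i),d_X\bigr)=0$ for every $i\in\N$. Since
$$\Psi(A)\subset \bigcup_{i\in\N}\Psi\varphi_i(E_i)=\bigcup_{i\in\N}\Psi_i(E_i),$$
countable subadditivity of Hausdorff measure yields
$$\Haus{n/p}\bigl(\Psi(A),d_X\bigr)\leq \sum_{i\in\N}\Haus{n/p}\bigl(\Psi_i(E_i),d_X\bigr)=0,$$
which is the conclusion.

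There is essentially no hard step: the only thing one might worry about is justifying the countability of the chart family $(U_i,\varphi_i)$, but this is immediate from the assumption that $M$ is a (metric separable) manifold, hence second countable, so any open cover admits a countable subcover. Once the chart reduction is in place, Lemma \ref{lemNEW1} does all the analytic work, and the final assembly is just a one-line application of countable subadditivity of $\Haus{n/p}$.
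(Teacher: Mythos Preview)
Your proof is correct and follows exactly the route the paper indicates in the paragraph preceding the lemma: use the chart-based characterization of local H\"older continuity, pass to a countable family of charts by second countability, apply Lemma \ref{lemNEW1} on each chart, and conclude by countable subadditivity of $\Haus{n/p}$. The paper does not even give a separate formal proof of this lemma, treating it as an immediate consequence of that discussion, and your write-up simply makes those steps explicit.
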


\subsection{Mather's result}

We would like to show how one can deduce from Ferry's Lemma the following
result of Mather, compare with \cite[Proposition 1 page 1507]{mather02}
\begin{proposition}\label{mather02Proposition1}
{Let $X$ be a compact, connected subset of $\R^d , d \geq 2$. Let $x, y \in X$
and   $\epsilon > 0$. Then there exists a sequence $x = x_0, \dots ,x_k = y$ of points in $X$ such that $\sum_{i=0}^{k-1}\| x_{i+1} - x_i \|^d < \epsilon$. }
\end{proposition}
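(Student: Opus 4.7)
The plan is to deduce the proposition directly from Ferry's Lemma (Lemma \ref{lemNEW1}) applied with exponent $p=d$. Define on $X$ the pseudo-distance
\[
\delta(x,y)=\inf\biggl\{\sum_{i=0}^{k-1}\|x_{i+1}-x_i\|^d : k\geq 1,\ x=x_0,x_1,\dots,x_k=y\text{ in }X\biggr\}.
\]
The conclusion of the proposition is precisely the assertion that $\delta\equiv 0$ on $X\times X$. Since $X$ is compact and connected, an elementary open--closed argument shows that for every $\eta>0$ every two points of $X$ are joined by an $\eta$-chain in $X$; taking $\eta=1$ gives $\delta<\infty$. Reversing and concatenating chains shows that $\delta$ is symmetric and satisfies the triangle inequality, and the one-step chain yields the crucial bound
\[
\delta(x,y)\leq \|x-y\|^d\qquad\text{for all }x,y\in X.
\]

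The key step is to feed this bound into Lemma \ref{lemNEW1} with $\Psi$ equal to the identity $\iota:E\to (X,\delta)$, where $E:=X\subset\R^d$. The hypothesis is satisfied with $n=d$, exponent $p=d>1$ (this is exactly where the restriction $d\geq 2$ is used) and constant $M=1$, so the lemma yields $\Haus1(X,\delta)=0$. On the other hand, $\delta\leq\|\cdot\|^d$ implies that $\iota$ is continuous from the Euclidean topology to the $\delta$-topology; composing with the Hausdorff quotient map to $(\hat X,\delta):=X/\{\delta=0\}$ produces a continuous surjection from the connected space $X$ onto the genuine metric space $(\hat X,\delta)$, which is therefore connected, and still satisfies $\Haus1(\hat X,\delta)=0$.

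To finish, suppose for contradiction that $\hat X$ contained two distinct points $[a]\neq[b]$. Then $z\mapsto\delta([a],z)$ would be a $1$-Lipschitz map from $\hat X$ onto a connected subset of $\R$ containing $0$ and $\delta([a],[b])>0$, hence onto a nondegenerate interval of positive $1$-dimensional Hausdorff measure; this contradicts $\Haus1(\hat X,\delta)=0$. Therefore $\hat X$ is a single point, i.e.\ $\delta\equiv 0$ on $X$, which is exactly the statement of Proposition \ref{mather02Proposition1}. Essentially the only obstacle is recognising the correct pseudo-distance and checking its basic properties; after that the argument is a clean combination of Ferry's Lemma with the elementary fact that a connected metric space with vanishing $1$-dimensional Hausdorff measure must be a single point.
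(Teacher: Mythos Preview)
Your proof is correct and follows essentially the same route as the paper: define the chain semi-distance $\delta_d$, observe that the identity (or canonical projection to the quotient) is H\"older of exponent $d$, apply Ferry's Lemma \ref{lemNEW1} with $n=p=d>1$ to get $\Haus1=0$, and then conclude that a connected metric space with vanishing $\Haus1$ is a single point. The only cosmetic difference is that the paper phrases the last step via total disconnectedness (using that $d_x(X)$ has Lebesgue measure zero, so small spheres are empty), whereas you argue directly that a two-point space would map by a $1$-Lipschitz function onto a nondegenerate interval; the two arguments are equivalent.
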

In fact, if $(A,d)$ is a metric space and $p>0$, we can introduce a semi-metric $\delta_p$ on $A$ defined by
$$\delta_p(a,a')=\inf\{\sum_{i=0}^{k-1}d(a_{i+1},a_i)^p\mid k\geq 1, a_1,\dots a_{k-1}\in A, a_0=a,a_k=a'\}.$$
It is not difficult to check that $\delta_p$ is symmetric, satisfies the triangular inequality, 
and that $\delta_p(a,a)=0$ for every $a\in A$. Note that when $p\leq 1$, the function $d^p$ is already a metric. Therefore it follows by the triangular inequality that 
$\delta_p=d^p$, when $p\leq 1$. However when $p>1$, we might have $\delta_p(a,a')=0$ with $a\neq a'$. This is indeed the case when $A=[0,1]$ with distance $d(t-t')=|t-t'|$. In fact, if we divide the segment $[t,t']$ by $N$ equally space points, we obtain
$\delta_p(t,t')\leq N(|t-t'|/N)^p$, hence, letting $N\to \i$, since $p>1$ we obtain 
$\delta_p=0$.
This yields the first of the following remarks.
\begin{remark}\label{remarquesurdeltap}{\rm

1) If $p>1$ and there exists a Lipschitz curve $\gamma:[0,1]\to A$,
with $\gamma(0)=a$ and  $\gamma(1)=a'$, then $\delta(a,a')=0$, for every $p>1$.

2) We will say that  $A$ is Lipschitz arcwise connected, if for every $a,a'\in A$ there is a 
Lipschitz curve $\gamma:[0,1]\to A$,
with $\gamma(0)=a$ and  $\gamma(1)=a'$. It follows from 1) that $\hat \delta_p\equiv 0$ if $A$  is Lipschitz arcwise connected and $p>1$.

3) If $M$ is a connected smooth manifold with a distance $d$ coming from a Riemannian metric, then $\delta_p\equiv 0$, for every $p>1$. This follows from 1) above since any two points in a connected manifold can be joined by a smooth path.

4) If $A'\subset A$ we can consider the distance $\delta'_p$ associated to $(A',d|A)$ and $p>0$. We always have $\delta_p|A' \leq \delta'_p$ with equality when $A'$ is dense in $A$.}

5) If $f:A\to B$ is Lipschitz with constant Lipschitz constant $\leq K$, then $f$ is also Lipschitz as a map from 
$(A,\delta^A_p)$ to $(B,\delta^B_p)$, with constant Lipschitz constant $\leq K^p$
\end{remark}

In the sequel, we will denote by $(\hat A_p,\delta_p)$, or just by $\hat A_p$, the metric space obtained by identifying points $a,a'\in A$ such that $\delta_p(a,a')=0$. We denote by $\hat \pi_p:
A\to \hat A_p$ the canonical projection. It is clear that $\delta_p(a,a')\leq d(a,a')^p$, therefore the projection is H\" older of exponent $p>0$. It follows that one has the following consequence of Lemma \ref{lemNEW2}.
\begin{proposition}\label{1mather02generalized}{Suppose that $A$ is a subset of an $n$-dimensional manifold $M$, and that $d$ is a distance that is locally Lipschitz equivalent to a the restriction to $A$ of a distance on $M$ coming from a Riemannian metric. Then $\Haus{\frac np}(\hat A_p)=0$, for all $p>1$.
In particular, if $n\geq 2$, we have $\Haus{1}(\hat A_n)=0$, and therefore $\hat A_n$
is totally disconnected.}
\end{proposition}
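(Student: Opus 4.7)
The plan is to apply Lemma \ref{lemNEW2} directly to the canonical projection $\hat\pi_p\colon A\to \hat A_p$. First I observe that, by taking the trivial chain $a_0=a,\ a_1=a'$ in the definition of $\delta_p$, we have
$$\delta_p(a,a')\leq d(a,a')^p\qquad \text{for all } a,a'\in A.$$
Hence $\hat\pi_p$ is globally H\"older of exponent $p$ as a map from $(A,d)$ to $(\hat A_p,\delta_p)$. Because by hypothesis $d$ is locally Lipschitz equivalent to the restriction to $A$ of some distance $d_M$ coming from a Riemannian metric on $M$, composing with the local Lipschitz identifications between $d$ and $d_M$ shows that $\hat\pi_p$ is also locally H\"older of exponent $p$ as a map from $A\subset (M,d_M)$ into the semi-metric space $(\hat A_p,\delta_p)$.

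Since $M$ is an $n$-dimensional manifold and $p>1$, Lemma \ref{lemNEW2} applies and yields
$$\Haus{n/p}\bigl(\hat\pi_p(A),\delta_p\bigr)=0.$$
As $\hat\pi_p$ is surjective, $\hat\pi_p(A)=\hat A_p$, which gives the first assertion $\Haus{n/p}(\hat A_p)=0$. Taking the particular value $p=n$ (allowed since $n\geq 2$ guarantees $p>1$) produces $\Haus{1}(\hat A_n)=0$.

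It then remains to deduce total disconnectedness from the vanishing of the one-dimensional Hausdorff measure. For this I would invoke the following standard fact in metric spaces: any connected subset $C$ of a metric space $(X,\delta)$ containing two distinct points $a\neq b$ satisfies $\Haus{1}(C)\geq \delta(a,b)>0$. Indeed, the function $x\mapsto \delta(a,x)$ is $1$-Lipschitz by the triangle inequality, so its image is a connected subset of $\R$ (hence an interval) containing $0$ and $\delta(a,b)$; since $1$-Lipschitz maps do not increase $\Haus{1}$, we get $\Haus{1}(C)\geq \Haus{1}([0,\delta(a,b)])=\delta(a,b)$. Applied to $(\hat A_n,\delta_n)$, this forces every connected component to be a single point.

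The proof is essentially a formal consequence of Lemma \ref{lemNEW2}; I do not foresee any serious obstacle. The only points requiring a brief check are the elementary inequality $\delta_p(a,a')\leq d(a,a')^p$ and the transfer of the H\"older bound from $(A,d)$ to the Riemannian distance $d_M$, both of which are immediate from the definitions and the local Lipschitz equivalence assumption.
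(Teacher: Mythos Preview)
Your proof is correct and follows essentially the same route as the paper: both use the elementary inequality $\delta_p(a,a')\leq d(a,a')^p$ to see that $\hat\pi_p$ is H\"older of exponent $p$, then invoke Lemma~\ref{lemNEW2}. For the total disconnectedness, the paper also uses the $1$-Lipschitz map $y\mapsto \delta(a,y)$, but phrases the conclusion slightly differently (finding radii $r_n\to 0$ not in its image, hence clopen balls); your version via the lower bound $\Haus{1}(C)\geq \delta(a,b)$ for a connected set is an equally standard variant of the same idea.
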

This proposition follows from Lemma \ref{lemNEW2}, except for the last statement 
which is a general fact: If a metric space $X$ has $0$ $1$-dimensional Hausdorff 
measure, it is totally disconnected. In fact, if $x$ is fixed, note that the map 
$d_x:X\to\R, y\mapsto d(x,y)$ is Lipschitz; hence the image $d_x(X)$ has also 
1-dimensional Hausdorff measure, i.e.\ Lebesgue measure, in $\R$ equal to $0$. In particular, we can find a sequence $r_n>0$, with $r_n\to 0$, and $r_n\notin d_x(X)$. This last  condition means that$\{y \in X\mid d(x,y)=r_n\}$ is empty, therefore the boundary  of the ball $\bar B_d(x,r_n)$ is empty.

It is now easy to obtain Proposition \ref{mather02Proposition1}. In fact,
if under the hypotheses of Proposition \ref{1mather02generalized} we also assume that
$A$ is connected, then $\hat A_n$ is also connected because $\hat\pi_p$ is continuous and surjective. But a connected and totally disconnected metric space contains at most one point, therefore $\delta_n(x,y)=0$ for every pair of points in the connected subset $A$ of $\R^n$, when $n\geq 2$.

Note that we could have obtained Proposition  \ref{mather02Proposition1} directly from Bates \cite{bates93} version of Morse-Sard Theorem along the lines mentioned in Remark \ref{BatesVersusFerry}.

Mather gave an extension Proposition  \ref{mather02Proposition1}  to Lipschitz laminations, see \cite[Proposition 2, page 1510]{mather02}. In fact, by our method we can give a much more general result. For this we introduce the following definition.
\begin{definition}[Agglutination]{\rm A subset $A$ of the finite $n$-dimensional manifold $M$ is a Lipschitz agglutination of codimension $k$, if every $x\in A$ is contained in a 
subset $B\subset A$ which is Lipschitz arcwise connected and of topological dimension
$\geq n-k$.}
\end{definition}
Obviously any subset of the manifold which admits a codimension $k$ Lipschitz lamination, as considered in \cite{mather02}, is a codimension $k$  Lipschitz agglutination. Moreover, any union of  Lipschitz agglutination of codimension $k$ is itself  a Lipschitz agglutination of codimension $k$. In particular any union of codimension $k$
immersed Lipschitz submanifolds is a  Lipschitz agglutination of codimension $k$.
We can now state our generalization.
\begin{proposition}\label{2mather02generalized}�{Suppose that $A$ is a codimension $k$ Lipschitz  agglutination of the $n$-dimensional manifold $M$, and that $d$ is a distance that is locally Lipschitz equivalent to a the restriction to $A$ of a distance on $M$ coming from a Riemannian metric. Then $\Haus{\frac kp}(\hat A_p)=0$ for all $p>1$.
In particular, if $k\geq 2$, we have $\Haus{1}(\hat A_k)=0$, and therefore $\hat A_k$
is totally disconnected.}
\end{proposition}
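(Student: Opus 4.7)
The strategy is to parallel the proof of Proposition \ref{1mather02generalized}, which is a direct application of Ferry's Lemma \ref{lemNEW2} to the $p$-H\"older projection $\hat\pi_p\colon A\to \hat A_p$; the refinement we need is to replace the ambient dimension $n$ by the effective codimension $k$ by passing to a local transversal that captures $\hat\pi_p(A)$ via a $p$-H\"older surjection from a $k$-dimensional set.

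First, by countable subadditivity of Hausdorff measure and the $\sigma$-compactness of $M$, it suffices to work in a fixed $C^1$ chart and to show that each point $x_0\in A$ admits a neighborhood $U$ in $M$ with $\Haus{k/p}(\hat\pi_p(A\cap U))=0$. Fix such an $x_0$, a Lipschitz arcwise connected subset $B_{x_0}\subset A$ of topological dimension $\geq n-k$ containing $x_0$, and identify the chart domain with an open subset of $\R^n$. By part (2) of Remark \ref{remarquesurdeltap}, the set $B_{x_0}$ (and in fact each $B_{y}$ for $y$ in the chart) is collapsed to a single point by $\hat\pi_p$, so $\hat\pi_p(A\cap U)$ is the quotient of $A\cap U$ by an equivalence relation each of whose classes contains a Lipschitz arcwise connected subset of topological dimension $\geq n-k$.

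Second, select a $k$-dimensional linear subspace $T\subset\R^n$ with associated orthogonal projection $\pi_T\colon \R^n\to T$. The heart of the argument is to show that, for a sufficiently generic $T$, the map $\pi_T$ descends to a well-defined $p$-H\"older surjection from $\pi_T(A\cap U)\subset T\cong \R^k$ onto $\hat\pi_p(A\cap U)$; equivalently, that whenever $y,y'\in A\cap U$ satisfy $\pi_T(y)=\pi_T(y')$, one has $\delta_p(y,y')=0$. Granting this, Ferry's Lemma \ref{lemNEW2} applied with ambient dimension $k$ and H\"older exponent $p>1$ yields $\Haus{k/p}(\hat\pi_p(A\cap U))=0$; a countable covering of $A$ by such neighborhoods (via the Lindel\"of property of $M$) then gives $\Haus{k/p}(\hat A_p)=0$. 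The final assertion about totally disconnected $\hat A_k$ when $k\geq 2$ follows exactly as in Proposition \ref{1mather02generalized}.

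The main obstacle lies in the second step: forcing $\pi_T$ to factor through the $\delta_p$-quotient. This is where the topological-dimension hypothesis is essential: using the classical intersection and separation properties of topological dimension (in the spirit of Hurewicz--Wallman), any two points $y,y'\in A\cap U$ with $\pi_T(y)=\pi_T(y')$ should be joinable by a chain $B_{z_0},B_{z_1},\dots,B_{z_m}$ of Lipschitz arcwise connected agglutination pieces of topological dimension $\geq n-k$, with consecutive members sharing a common point, so that $\delta_p$ vanishes across the chain via part (2) of Remark \ref{remarquesurdeltap} and the triangle inequality. The intuition is that a set of topological dimension $\geq n-k$ cannot be disjoint from a generic $(n-k)$-dimensional fiber of $\pi_T$ for a full set of base points, and that the resulting chains can be chosen locally. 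Making this transversality argument rigorous, and choosing $T$ (possibly after a $C^1$ change of coordinates) so that the required chain-joining holds throughout $U$, is the technical core of the proof.
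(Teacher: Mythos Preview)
Your proposal has a genuine gap exactly where you flag it as ``the technical core'': the claim that for some (generic) $k$-dimensional subspace $T$ the fibers of $\pi_T|_{A\cap U}$ collapse under $\hat\pi_p$ is never proved, and the chain argument you sketch has no foundation. Nothing in the definition of a Lipschitz agglutination forces distinct pieces $B_z$ to intersect, and there is no mechanism connecting the affine fibers of a linear projection to the combinatorics of which pieces overlap. Already for $n=2$, $k=1$, and $A=\{0\}\times[0,1]\cup\{1\}\times[0,1]$, the projection to the $y$-axis has every fiber meeting both segments, which sit at $\delta_p$-distance $1$. You could reply that the $x$-axis works here, but you offer no procedure for selecting $T$, and the hedge ``possibly after a $C^1$ change of coordinates'' does not help: the obstruction is global and combinatorial, not infinitesimal. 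There is also a secondary concern you do not address: even if fibers collapsed, the induced map $\Phi\colon\pi_T(A\cap U)\to\hat A_p$ is not automatically $p$-H\"older, since that requires, for nearby $t,t'$, representatives $y,y'$ in the respective fibers with $d(y,y')$ comparable to $\lvert t-t'\rvert$.

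The paper's argument sidesteps both issues by abandoning the search for a single good transversal. An elementary dimension-theory lemma (Hurewicz--Wallman style) produces a \emph{countable} family of $C^\infty$-embedded $k$-discs $D_i\subset M$ whose union has complement of topological dimension $<n-k$; hence every subset of $M$ of topological dimension $\geq n-k$ meets at least one $D_i$. For each $x\in A$, the piece $B_x$ therefore meets some $D_{i(x)}$ at a point $y$, and Lipschitz arc-connectedness of $B_x$ forces $\delta_p^A(x,y)=0$, so $\hat\pi_p(x)\in\hat\pi_p(A\cap D_{i(x)})$. Thus $\hat A_p=\bigcup_i\hat\pi_p(A\cap D_i)$. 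No fiber-collapse or chain-of-pieces argument is needed: for each fixed $i$ the map $\hat\pi_p|_{A\cap D_i}$ is $p$-H\"older from a subset of the $k$-dimensional disc $D_i$ into $(\hat A_p,\delta_p^A)$ (simply because $\delta_p^A\leq d^p$), and Lemma~\ref{lemNEW2} yields $\Haus{k/p}(\hat\pi_p(A\cap D_i))=0$ directly.
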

We first prove a well-known Lemma.
\begin{lemma}{If $M$ is a finite-dimensional (metric separable) manifold, and $d$ is an integer with $0\leq d\leq n$, we can find a
sequence $(D_i)_{i\in \N}$ of subsets of $M$, each of which is  {\rm C}$^\infty$ diffeomorphic to a Euclidean disc of dimension $n-d$, such that
the topological dimension of $M\setminus\cup_{i\in N} D_i$ is  $\leq d-1$.
In particular, any subset $B$ of $M$ of topological dimension $\geq d$ has to intersect one of the $D_i$.}
\end{lemma}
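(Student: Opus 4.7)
The plan is to reduce to $\R^n$ by a chart argument and then, in $\R^n$, take as discs bounded pieces of rational coordinate affine subspaces. First I would cover $M$ by a countable atlas $(U_\alpha,\varphi_\alpha)$ with each $\varphi_\alpha \colon U_\alpha \to \R^n$ a $C^\infty$-diffeomorphism. Because topological dimension is local for separable metric spaces (any separable metric space covered by countably many open subsets of dimension $\le k$ has dimension $\le k$, via the fact that open sets are $F_\sigma$ together with the countable sum theorem), it suffices to construct inside each $\R^n$ a countable family of $C^\infty$ discs of dimension $n-d$ whose complement has dimension $\le d-1$, and then pull back by $\varphi_\alpha^{-1}$ and union over $\alpha$.

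For the Euclidean construction, for each $d$-element subset $S \subset \{1,\dots,n\}$, each $q \in \Q^S$, each $c \in \Q^{S^c}$ and each $N \in \N$, I take
$$
D_{S,q,c,N} \;=\; \Bigl\{\, x \in \R^n : x_i = q_i \text{ for } i \in S,\ \sum_{j \notin S}(x_j - c_j)^2 \le N^2 \,\Bigr\},
$$
a closed Euclidean disc of dimension $n-d$. This is a countable family whose union $V$ equals $\bigcup_{|S|=d,\,q \in \Q^S} P_{S,q}$, where $P_{S,q} := \{x \in \R^n : x_i = q_i,\ i \in S\}$ is the associated rational coordinate $(n-d)$-plane. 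A point $x$ belongs to the complement $V^c$ precisely when no $d$-tuple of its coordinates is entirely rational, i.e.\ when $x$ has at most $d-1$ rational coordinates; this gives a disjoint decomposition $V^c = A_0 \cup A_1 \cup \dots \cup A_{d-1}$, where $A_k$ is the set of points having exactly $k$ rational coordinates.

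The crux is then to show $\dim A_k \le 0$ for each $k$. Writing $A_k = \bigcup_{|S|=k,\,q \in \Q^S} A_{S,q}$ with $A_{S,q} = \{x : x_i = q_i \text{ for } i \in S,\ x_j \notin \Q \text{ for } j \notin S\}$, the projection $x \mapsto (x_j)_{j \notin S}$ identifies $A_{S,q}$ homeomorphically with $(\R \setminus \Q)^{n-k}$, which is zero-dimensional since it admits a clopen basis of products of intervals with irrational endpoints. Moreover, for a point of $A_k$, belonging to $P_{S,q}$ already forces the off-$S$ coordinates to be irrational, so $A_{S,q} = A_k \cap P_{S,q}$ is closed in $A_k$; the countable sum theorem for topological dimension therefore yields $\dim A_k \le 0$. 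Since $V^c$ is the union of the $d$ zero-dimensional subspaces $A_0,\dots,A_{d-1}$, the Menger--Urysohn decomposition theorem gives $\dim V^c \le d-1$, which is exactly what we want. The ``in particular'' clause is immediate from monotonicity of dimension: if $\dim B \ge d$ then $B$ cannot be contained in $M \setminus \bigcup_i D_i$, hence must meet some $D_i$.

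The main obstacle, though standard in classical dimension theory, is this last dimension bookkeeping: one has to recognize the right stratification $A_k = \bigsqcup_{S,q} A_{S,q}$ so that each stratum is simultaneously closed in $A_k$ (to apply the countable sum theorem) and genuinely zero-dimensional (via the clopen product structure), and then combine the $d$ pieces $A_0,\dots,A_{d-1}$ through the decomposition theorem to land on the sharp bound $d-1$ rather than a weaker estimate growing with $\binom{n}{d}$.
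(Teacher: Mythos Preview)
Your proof is correct and follows essentially the same route as the paper: both use rational coordinate affine $(n-d)$-planes in $\R^n$, identify the complement as the set of points with at most $d-1$ rational coordinates, and then transfer to a general $M$ via a countable atlas and the Countable Sum Theorem. The only differences are cosmetic: you exhaust each rational plane by countably many closed bounded discs, and you spell out the dimension bound $\dim\{x:\text{at most }d-1\text{ rational coordinates}\}\le d-1$ via the stratification $A_0\cup\dots\cup A_{d-1}$, the countable sum theorem and the Menger--Urysohn decomposition theorem, whereas the paper simply cites Hurewicz--Wallman, Example~III.6, for this fact.
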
 
\begin{proof} We first consider the case $M=\R^n$. Call $S_n^d$ the family of subsets of
$\{1,\dots,n\}$ with exactly $d$ elements. For every $I\in S_n^d$ and every $(r_1,\dots,r_n)\in\Q^n$, we define
$$V^{I}_{(r_1,\dots,r_n)}=\{(x_1,\dots,x_n)\in\R^n\mid x_j=r_j, \forall j\in I\}.$$
Each $V^{I}_{(r_1,\dots,r_n)}$ is an affine subspace of dimension $n-d$,
and this family is countable.

If we denote by ${\cal M}^{d-1}_n$ the complement in $\R^n$ of the countable 
union of the subsets $V^{I}_{r},I\in S_n^d, r\in\Q^n$, then the points in
${\cal M}^{d-1}_n$ are precisely the points in $\R^n$, who have at most $d-1$ rational coordinates. By \cite[Example III.6, page 29]{HurWall} the topological dimension of ${\cal M}^{d-1}_n$ is $\leq d-1$ (in fact it is $d-1$).

We now consider a general (metric separable) $n$-dimensional smooth manifold $M$. 
We can find a countable family of  charts $\varphi_j:\R^n\to M,j\in \N$ such that 
$\cup_{j\in\N}\varphi_j(\bar\BB)=M$, where $\bar\BB$ is the unit closed Euclidean ball in
$\R^n$. We consider the countable collection $D_{j,I,r}, j\in \N,I\in S_n^d,r\in \Q^n$ defined by
$$D_{j,I,r}=\varphi_j(V^I_r).$$
Each  $D_{j,I,r}$ is {\rm C}$^\infty$ diffeomorphic to a Euclidean disc of dimension $n-d$.
We now show that the topological dimension of the complement
$$\mathscr{C}=M\setminus\bigcup_{j\in \N,I\in S_n^d,r\in \Q^n}D_{j,I,r},$$
is $\leq d-1$. We can write $\mathscr{C}=
\cup_{j\in\N} \mathscr{C}\cap \varphi_j(\bar\BB)$.
Since each $ \mathscr{C}\cap \varphi_j(\bar\BB)$ is closed in $ \mathscr{C}$, by the Countable Sum Theorem \cite[Theorem III.2, page 30]{HurWall}, it suffices to show that
each $ \mathscr{C}\cap \varphi_j(\bar\BB)$  has topological dimension $\leq d-1$.
But, the map $\varphi_j^{-1}:  \varphi_j(\R^n)\to \R^n$  sends $ \mathscr{C}\cap \varphi_j(\bar\BB)$ to a subset of ${\cal M}^{d-1}_n$ has topological dimension $\leq d-1$. This implies that the topological dimension of $ \mathscr{C}\cap \varphi_j(\bar\BB)$ is
$d-1$ by \cite[Theorem III.1, page 26]{HurWall}. Note that this last reference proves also the last statement in the Lemma.
\end{proof}

\begin{proof}[Proof of Proposition \ref{2mather02generalized}] We apply the lemma above with $d=n-k$, to obtain a countable family $D_i,i\in \N$ of {\rm C}$^\infty$ discs of dimension
$n-d=k$ such that each subset of $M$ whose topological dimension is $\geq d=n-k$ has to intersect one of the $D_i$. Consider then a Lipschitz agglutination $A\subset M$ of codimension $k$, and we fix $p>1$. We first claim that $\hat A_p=\cup_{i\in \N}
\hat\pi_p(A\cap D_i)$. In fact if $x\in A$, by the definition of a Lipschitz agglutination of codimension $k$ we can find a Lipschitz arcwise connected subset $B_x\subset A$ of dimension $\geq n-k$ containing $x$. By the property of the family $D_i$, there exists $i_0\in I$ such that  $B_x\cap D_{i_0}\neq\emptyset$. Choose $y\in B_x\cap D_{i_0}$.
By 2) of Remark \ref{remarquesurdeltap}, we have $\delta_p^{B_x}(x,y)=0$. Since $B_x\subset A$, we conclude that $\delta_p^{A}(x,y)=0$. Therefore $\hat \pi_p(x)=\hat \pi_p(y)\in \hat\pi_p(B_x\cap D_i)\subset \hat\pi_p(A\cap D_i)$. Since the family $D_i$ is countable, it remains to show that $\Haus{k/p}(D_i\cap A, \delta_p^A)=0$.
Note that since $D_i$ is a submanifold of $M$, the distance $d$ on $M$ induces a distance on $D_i$ which is locally Lipschitz equivalent to a distance coming from a Riemannian metric. Therefore  by Proposition \ref{1mather02generalized}, we have
$\Haus{k/p}(D_i\cap A, \delta_p^{D_i\cap A})=0$. But the inclusion $D_i\cap A\hookrightarrow A$ is Lipschitz with Lipschitz constant $1$ for the metrics $\delta_p^{D_i\cap A}$ on $D_i\cap A$ and $\delta_p^{ A}$ on $A$. Therefore $\Haus{k/p}(D_i\cap A, \delta_p^A)=0$.
\end{proof}

\section{Existence of {\rm C}$^{1,1}_{loc}$ critical subsolution on noncompact
manifolds}\label{BernardNonCompact}

In \cite{bernard}, using some kind of Lasry-Lions regularization (see
\cite{ll86}), Bernard proved the existence of {\rm C}$^{1,1}$ critical
subsolutions on compact manifolds. Here, adapting his proof, we show that
the same result holds in the noncompact case and we make clear that the
Lipschitz constant of the derivative of the {\rm C}$^{1,1}_{loc}$ critical
subsolution can be uniformly bounded on compact subsets of $M$.
We consider the two Lax-Oleinik semi-groups $T_t^-$
and $T_t^+$ defined by,
\begin{align*}
T_t^-u(x)&=\inf_{y \in M} u(y)+h_t(y,x)\\
T_t^+u(x)&=\sup_{y \in M} u(y)-h_t(x,y),
\end{align*}
for every $x\in M$. 

For any $c\in M$, these two semi-groups preserve the set of functions dominated by 
$L+c$, see for example  \cite{fathibook} for the compact case or \cite{fm}  for the non-compact case. It is also well known that these semi-groups have some regularizing effects: namely 
for every $t>0$ and every Lipschitz (or even continuous, when $M$ is compact) function $u:M\to\R $, the function
$T_t^+u$ is finite everywhere and  locally semi-convex, while $T_t^-u$ is finite everywhere and locally semi-concave, see for example \cite{fathibook} or the explanations below.

In \cite{bernard}, the idea for proving the existence of {\rm C}$^{1,1}$ critical
subsolution on compact manifolds is the following: 
it is a known fact that a function is {\rm C}$^{1,1}$ if and only if it is both locally
semi-concave and locally semi-convex.
Let now $u$ be a critical viscosity subsolution. If we apply the semi-group
$T_t^+$ to $u$,
we obtain a semi-convex critical viscosity subsolution $T_t^+u$.
Thus, if one proves that, for $s$ small enough, $T_s^-T_t^+u$ is still
semi-convex, as
we already know that it is semi-concave, we would have found a {\rm C}$^{1,1}$
critical subsolution. Since we want to give a uniform bound on the
Lipschitz constant of the derivative of the {\rm C}$^{1,1}_{loc}$ critical
subsolution on compact sets, we will have to bound the constant of
semi-convexity of $T_t^+u$ on compact subsets of $M$. Let us now prove the
result in the noncompact case. \\

\begin{theorem}
\label{THMexistC11}
Assume that $H$ is of class {\rm C}$^2$. For every compact subset $K$ of
$M$, there is a constant
$\ell=\ell(K)>0$ such that, if $u :M\rightarrow \R$ is a critical
viscosity subsolution, then there exists a {\rm C}$^{1,1}_{loc}$ critical
subsolution $v:M \rightarrow \R$ whose restriction to the projected Aubry
set is equal to $u$ and such that the mapping $x \mapsto (x,d_x v)$ is
$\ell $-Lipschitz on $K$.
\end{theorem}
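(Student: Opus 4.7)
The strategy is to adapt Bernard's compact-case argument from \cite{bernard} using the two Lax-Oleinik semi-groups $T_t^+$ and $T_s^-$. We may normalize $c(H)=0$. Starting from any critical subsolution $u$, the plan is to define the regularization $v := T_s^- T_t^+ u$ for two small parameters $s,t>0$ chosen depending on $K$ only. Since $T_t^+$ sends Lipschitz functions to locally semi-convex functions (with a controllable constant) and $T_s^-$ automatically produces locally semi-concave outputs, the whole enterprise rests on showing that for $s$ sufficiently small relative to the semi-convexity constant of $T_t^+ u$, the composition $T_s^- T_t^+ u$ remains locally semi-convex. A function which is simultaneously locally semi-convex and locally semi-concave is {\rm C}$^{1,1}_{loc}$, with the Lipschitz constant of its derivative controlled by the sum of the two one-sided constants. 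First, by Fathi-Siconolfi (or directly from Proposition \ref{propertiesI}), I would assume $u$ is {\rm C}$^1$ and still coincides on $\mathcal{A}$ with the original function; all critical subsolutions are then uniformly locally Lipschitz, with a constant depending only on the local structure of $L$ (and not on $u$).

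For the semi-convexity of $v_t := T_t^+ u$ on $K$, I would enlarge $K$ to a compact $K' \supset K$, chosen in terms of $K$, $t$, and the uniform Lipschitz bound on subsolutions, so that by uniform superlinearity (H2) and Lemma \ref{lemmecompaciteminimizer} all maximizers $y$ defining $T_t^+ u(x) = \sup_y [u(y) - h_t(x,y)]$ for $x\in K$ must lie in $K'$. On $K'$, the function $x \mapsto -h_t(x,y)$ is semi-convex with a constant $C_1(K,t)$ uniform in $y \in K'$, a consequence of the standard local semi-concavity of $h_t$; the supremum of uniformly semi-convex functions being semi-convex with the same constant, $v_t$ is $C_1(K,t)$-semi-convex on $K$. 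Dually, $T_s^- v_t$ is $C_2(K,s)$-semi-concave on $K$. Moreover, for each $x \in \mathcal{A}$ a $(u,L,0)$-calibrated orbit $\gamma_x$ through $x$ exists for all time by Theorem \ref{ThMather1}, and choosing $y=\gamma_x(t)$ in the sup (resp.\ $y=\gamma_x(-s)$ in the subsequent inf) gives $T_t^+u(x)=u(x)$ and then $T_s^- v_t(x)=u(x)$, so $v=u$ on $\mathcal{A}$ as required.

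The main obstacle is the key third step: showing that $w := T_s^- v_t$ remains semi-convex on $K$ when $s$ is small. Following Bernard, this is proved by writing $w(x)=\inf_{y\in K'} [v_t(y)+h_s(y,x)]$, analyzing the minimizing extremals via the $C^2$-strict convexity (H1) of $H$, and estimating $w(x+\xi)+w(x-\xi)-2w(x)$ by translating the minimizers. For $s$ sufficiently small in terms of $C_1(K,t)$ and the uniform bounds on $\partial^2 H/\partial p^2$ over compact sets in $T^*M$, the semi-convexity of $v_t$ passes through $T_s^-$, yielding that $w$ is $C_3(K,s,t)$-semi-convex on $K$. Combining with the $C_2(K,s)$-semi-concavity, $v=w$ is {\rm C}$^{1,1}_{loc}$ on $K$ with derivative Lipschitz constant bounded by $C_2+C_3$; composition with the $C^1$ Legendre transform $\mathcal{L}$ then yields a Lipschitz bound $\ell(K)$ for $x \mapsto (x,d_x v)$ on $K$, depending only on $K$ and not on $u$. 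The completeness of the Euler-Lagrange flow, automatic for the time-independent Tonelli case by \cite[Corollary~2.2]{fm}, is precisely what allows Bernard's compact-case comparison argument to extend to the present non-compact setting.
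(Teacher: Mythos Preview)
Your outline captures Bernard's compact-case mechanism correctly, and the individual ingredients you list (uniform Lipschitz bound for subsolutions, semi-convexity of $T_t^+u$ with a constant depending on $K$ and $t$, semi-concavity of $T_s^-$, preservation of semi-convexity under $T_s^-$ for small $s$, and invariance of values on $\mathcal{A}$) are all right. But there is a genuine gap between what your construction delivers and what the theorem asserts.

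The theorem requires that $v$ be a {\rm C}$^{1,1}_{loc}$ critical subsolution on \emph{all} of $M$, not merely {\rm C}$^{1,1}$ on the given compact $K$. Your choice of $s,t$ is tailored to $K$: the ``small enough $s$'' in your key third step must be small relative to the semi-convexity constant $C_1(K,t)$ of $T_t^+u$ on (a neighborhood of) $K$. On a non-compact $M$, for a far-away compact set $K''$ the semi-convexity constant of $T_t^+u$ may be much larger, and your fixed $s$ need not be small enough there for Lemma~\ref{SemiConvexPreserved} to apply. So $T_s^-T_t^+u$ is locally semi-concave everywhere (that comes for free from $T_s^-$), but you have no reason to expect it to be locally semi-convex outside $K$; hence no reason for it to be {\rm C}$^{1,1}_{loc}$ on $M$. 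A single application of $T_s^-T_t^+$ with parameters chosen for $K$ cannot produce a globally {\rm C}$^{1,1}_{loc}$ function in the non-compact setting.

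This is exactly the difficulty the paper's proof is designed to overcome. It fixes a countable atlas $(U_n,\varphi_n)$ with $M=\bigcup_n \varphi_n^{-1}(\INT\BB(1))$ and builds an infinite alternating sequence
\[
S_n=T^-_{t_n^-}T^+_{t_n^+}\cdots T^-_{t_1^-}T^+_{t_1^+},
\]
choosing $t_n^+,t_n^-$ inductively so that (i) $S_n(u)\to v$ uniformly, (ii) after the $n$-th step the function becomes {\rm C}$^{1,1}$ on the $n$-th chart with a controlled derivative Lipschitz constant $B_n$, and (iii) all subsequent steps, via Lemma~\ref{C11Conservee}, \emph{preserve} the {\rm C}$^{1,1}$ regularity already achieved on the first $n$ charts, only perturbing the constants by a summable amount. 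The limit $v$ is then {\rm C}$^{1,1}_{loc}$ on all of $M$, and the derivative Lipschitz bound on any fixed $K$ depends only on the finitely many charts covering $K$, hence only on $K$ and not on $u$. Your argument supplies the building blocks (and would be a complete proof if $M$ were compact), but the iterative exhaustion is the missing idea needed here.
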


Before proving Theorem \ref{THMexistC11}, we need a few lemmas.
\begin{lemma}
 \label{globalLip}
There is a constant $A<+\infty$ such that for any $c\in\R$, any function 
$u: M \rightarrow \R$ dominated by $c$ is $(A+c)$-Lipschitz on $M$, that is
$$
 \forall x,y \in M, \quad \lvert u(y)-u(x) \rvert \leq (A+c)d(x,y),
$$
where $d$ denotes the Riemannian distance associated to the Riemannian metric $g$ on $M$.
\end{lemma}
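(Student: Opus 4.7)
The plan is to bound $u(y)-u(x)$ by testing the domination inequality on a curve joining $x$ to $y$ of length close to $d(x,y)$, parametrized by arc-length, so that the integrand in (DOM) is uniformly bounded thanks to the uniform boundedness of $L$ in the fibers.

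More precisely, I would first set
\[
A := \sup\{L(x,v) \mid (x,v) \in TM,\ \|v\|_x \leq 1\}.
\]
Since $L$ is itself a Tonelli Lagrangian (it satisfies the analogues of (H1)-(H3), as recalled just after the definition of $L$), the condition corresponding to (H3) guarantees $A<+\infty$; this $A$ depends only on $L$, not on $u$ or $c$.

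Next, fix $x,y\in M$ and $\e>0$. Since $(M,g)$ is a complete Riemannian manifold, I can pick a piecewise {\rm C}$^1$ curve $\gamma:[0,T]\to M$ with $\gamma(0)=x$, $\gamma(T)=y$, parametrized by arc-length (so $\|\dot\gamma(s)\|_{\gamma(s)}=1$ almost everywhere) and with $T\leq d(x,y)+\e$; one may for instance take a minimizing geodesic, whose existence follows from Hopf-Rinow. Applying (DOM) to $\gamma$ and using $L(\gamma(s),\dot\gamma(s))\leq A$ gives
\[
u(y)-u(x)\ \leq\ \int_0^T L(\gamma(s),\dot\gamma(s))\,ds + cT\ \leq\ (A+c)\,T\ \leq\ (A+c)(d(x,y)+\e).
\]
Letting $\e\to 0$ yields $u(y)-u(x)\leq (A+c)d(x,y)$. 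Swapping the roles of $x$ and $y$ and using $d(x,y)=d(y,x)$ produces the reverse inequality, hence $|u(y)-u(x)|\leq (A+c)d(x,y)$.

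There is essentially no serious obstacle: the only mildly delicate points are to invoke (H3) for $L$ (rather than for $H$) to get the uniform bound $A<+\infty$, and to note that completeness of $g$ (assumed at the very beginning of the paper) is what allows us to find curves of length close to $d(x,y)$ between arbitrary points. Everything else is a direct application of the definition of domination.
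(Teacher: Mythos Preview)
Your proof is correct and follows essentially the same approach as the paper: define $A=\sup\{L(x,v)\mid \|v\|_x\leq 1\}$, use a unit-speed minimizing geodesic from $x$ to $y$ (which exists by completeness via Hopf--Rinow) as the test curve in the domination inequality, and then symmetrize. The only cosmetic difference is that the paper works directly with a minimizing geodesic of length exactly $d(x,y)$, so no $\e$ is needed; since you yourself note that such a geodesic is available, the $\e$-step in your argument is harmless but superfluous.
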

\begin{proof}
Let $u:M \rightarrow \R$ be dominated by $L+c$ and $x,y \in
M$ be fixed. Let $\g_{x,y}:[0,d(x,y)] \rightarrow M$ be a minimizing
geodesic with constant unit speed joining $x$ to $y$. By definition of
$h_{d(x,y)}(x,y)$, one has
$$
h_{d(x,y)}(x,y) \leq \int_0^{d(x,y)} L(\g_{x,y}(t),\dot \g_{x,y}(t)) \,dt
\leq Ad(x,y),
$$
where $A=\sup_{x \in M} \{L(x,v) \mid \|v\|_x  \leq 1\}$ is finite
thanks to the uniform boudedness of $L$ in the fibers. Thus, one has
$$
u(x)-u(y) \leq h_{d(x,y)}(x,y) + cd(x,y) \leq (A(1)+c)d(x,y).
$$
Exchanging $x$ and $y$, we conclude that $u$ is $(A+c)$-Lipschitz.
\end{proof}
Next we give some estimates on the functions $h_t$.
\begin{lemma}\label{Estimatesht}{There exists a constant   $B<+\infty$ such that
$$\forall t>0,\,\forall x\in M, \quad h_t(x,x)\leq Bt.$$
Moreover, for every constant $C<+\infty$, we can find $D(C)>-\infty$ such that
$$\forall t>0,\,\forall x,y\in M,\quad h_t(x,y)\geq Cd(x,y)+D(C)t.$$}
\end{lemma}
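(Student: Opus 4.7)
Both bounds are straightforward consequences of properties of $L$ that have already been recorded in the paper.

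For the first inequality, the plan is to test the infimum defining $h_t(x,x)$ against the constant curve $\gamma:[0,t]\to M$ with $\gamma(s)\equiv x$. This curve is absolutely continuous, satisfies $\gamma(0)=\gamma(t)=x$ and $\dot\gamma(s)=0$, and has action $\int_0^t L(x,0)\,ds = tL(x,0)$. Hence
$$h_t(x,x)\leq tL(x,0)\leq t\sup_{z\in M}L(z,0).$$
Since $L$ is a Tonelli Lagrangian, it satisfies the analogue of (H3), and applying this uniform boundedness in the fibers with $R=0$ shows that $B:=\sup_{z\in M}L(z,0)<+\infty$. This gives $h_t(x,x)\leq Bt$ for all $t>0$ and all $x\in M$.

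For the second inequality, I would use the uniform superlinearity of $L$ (the analogue of (H2) for $L$, which is established just after the Legendre transform is introduced). Given $C<+\infty$, there is a finite constant $D(C)\in\R$ such that
$$\forall (z,v)\in TM,\quad L(z,v)\geq C\|v\|_z + D(C).$$
Then for any absolutely continuous curve $\gamma:[0,t]\to M$ with $\gamma(0)=x$ and $\gamma(t)=y$, integrating gives
$$\int_0^t L(\gamma(s),\dot\gamma(s))\,ds\geq C\int_0^t \|\dot\gamma(s)\|_{\gamma(s)}\,ds + D(C)t \geq C\,\mathrm{length}(\gamma) + D(C)t \geq C\,d(x,y)+D(C)t,$$
where the last inequality uses that the Riemannian distance is the infimum of the lengths of connecting curves. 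Taking the infimum over all such $\gamma$ yields $h_t(x,y)\geq Cd(x,y)+D(C)t$, as desired.

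There is no real obstacle here; the lemma is essentially a bookkeeping statement that packages the pointwise bounds from (H2) and (H3) into the action functional, and the two bounds do not even interact. The only care needed is to remember that (H1)--(H3) pass to $L$ via Fenchel duality (as already cited in the introduction), so that in particular $L(\cdot,0)$ is bounded above on $M$ and $L$ is uniformly superlinear on all of $TM$.
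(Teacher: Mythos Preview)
Your proof is correct and follows essentially the same approach as the paper: the constant curve gives the upper bound with $B=\sup_{z\in M}L(z,0)$, and the uniform superlinearity of $L$ integrated along an arbitrary curve yields the lower bound after taking the infimum. The only cosmetic difference is that you explicitly invoke the Tonelli analogue of (H3) to justify $B<+\infty$, whereas the paper simply asserts it.
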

\begin{proof} Using a constant curve at $x$, we get 
$$h_t(x,x)\leq \int_0^tL(x,0)\,ds.$$
Therefore, if we set $B=\sup\{L(x,0)\mid x\in M\}<+\infty$, we obtain
$$\forall t>0,\, \forall x\in M,\quad h_t(x,x)\leq Bt.$$
Using the uniform superlinearity of $L$, for every $C<+\infty$ we can find a constant 
$D(C)>-\infty$, depending only on $C$,  such that
$$\forall (x,v)\in TM,\quad L(x,v)\geq C\lVert v\rVert_x+D(C).$$
Fix now $x,y\in M$. If $\gamma:[0,t]\to M$ is such that $\gamma(0)=x$, $\gamma(t)=y$, we can apply the above equality to $(\gamma(s),\dot\gamma(s))$ and integrate to obtain
$$
\int_0^t L(\gamma(s),\dot\gamma(s))\,ds\geq C \operatorname{length}(\gamma)+D(C)t
\geq Cd(x,y)+ D(C)t.
$$
To find $h_t(x,y)$, we have to minimize $ \int_0^t L(\gamma(s),\dot\gamma(s))\,ds$ over all curves with $\gamma(0)=x$, $\gamma(t)=y$. Therefore, by what we just obtained, we get
$$ h_t(x,y)\geq Cd(x,y)+D(C)t.$$
\end{proof}
\begin{lemma}\label{ConvToZero}{If $C<+\infty$ is a given constant, we can find $B(C)<+\infty$ such that
for every $u:M\to\R$ which is Lipschitz, with Lipschitz constant $\leq C$, we have
\begin{align*}
\forall t\geq 0,\, \forall x\in M,\quad & T^-_tu(x)=\inf\{u(y)+h_t(y,x)\mid y\in M, d(x,y)\leq B(C)t\},\\
& T^+_tu(x)=\sup\{u(y)-h_t(x,y)\mid y\in M, d(x,y)\leq B(C)t\},\\
& \lvert T^-_tu(x)-u(x)\rvert \leq B(C)t,\\
& \lvert T^+_tu(x)-u(x)\rvert \leq B(C)t.
\end{align*}}
\end{lemma}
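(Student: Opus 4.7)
The plan is to extract all four statements from two simple ingredients already proved in Lemma \ref{Estimatesht}: an upper bound $h_t(x,x) \leq Bt$ (from using a constant curve) and a lower bound $h_t(y,x) \geq C'd(x,y) + D(C')t$ that is superlinear in the first coordinate with any prescribed slope. The Lipschitz hypothesis $|u(y)-u(x)| \leq Cd(x,y)$ is there precisely to be dominated by this superlinear growth.

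First I would establish the localization for $T^-_t u$. Comparing the candidate point $y$ with $x$ itself, one has
\[
T^-_t u(x) \leq u(x) + h_t(x,x) \leq u(x) + Bt,
\]
while for an arbitrary $y$ the Lipschitz bound plus Lemma \ref{Estimatesht} applied with constant $C+1$ yield
\[
u(y) + h_t(y,x) \geq u(x) - Cd(x,y) + (C+1)d(x,y) + D(C+1)t = u(x) + d(x,y) + D(C+1)t.
\]
Whenever $d(x,y) > (B - D(C+1))t$, the right-hand side strictly exceeds $u(x) + Bt$, so such $y$ cannot compete with the value at $x$; hence the infimum can be restricted to the ball of radius $(B - D(C+1))t$. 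The case of $T^+_t u$ is entirely symmetric: the sup is at least $u(x) - h_t(x,x) \geq u(x) - Bt$, and the same two bounds applied to $u(y) - h_t(x,y)$ give $u(y) - h_t(x,y) \leq u(x) - d(x,y) - D(C+1)t$, which falls below $u(x) - Bt$ outside the same ball.

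Second I would read off the $L^\infty$ estimates from the localized formulas. For $T^-_t u$ the upper bound $T^-_t u(x) \leq u(x) + Bt$ was already noted; for the lower bound, on the localized ball $d(x,y) \leq B(C)t$ one combines $u(y) \geq u(x) - CB(C)t$ with the coarsest lower bound $h_t(y,x) \geq D(0)t$ (Lemma \ref{Estimatesht} with constant $0$) to get $T^-_t u(x) \geq u(x) - (CB(C) - D(0))t$. The argument for $T^+_t u$ is again symmetric. Finally I would set $B(C)$ to be the maximum of the constants $B$, $B - D(C+1)$, and $CB(C) - D(0)$ that have appeared, which gives a single constant (depending only on $C$ and the Lagrangian) satisfying all four displayed inequalities simultaneously. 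There is no real obstacle here; the only mild care needed is that the bound $B(C) \geq B - D(C+1)$ must be chosen before the bound $CB(C) - D(0)$, so that the definition is not circular, but this is handled by defining $B(C)$ in two stages.
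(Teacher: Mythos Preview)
Your proof is correct and follows essentially the same route as the paper: both use the upper bound $h_t(x,x)\leq Bt$ together with the superlinear lower bound $h_t(y,x)\geq (C+1)d(x,y)+D(C+1)t$ from Lemma~\ref{Estimatesht} to localize the infimum, and then read off the $L^\infty$ estimates. The only cosmetic difference is that the paper obtains the lower bound $T^-_tu(x)\geq u(x)+D(C)t$ \emph{before} localizing (by applying Lemma~\ref{Estimatesht} with slope exactly $C$), which yields the cleaner constant $\max\{B,-D(C),B-D(C+1)\}$ and spares you the two-stage definition of $B(C)$.
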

\begin{proof} We will do the proof for $T^-_t$, as the case of $T^+_t$ is analogous.
Using the first part of Lemma \ref{Estimatesht}, we get 
$$T_t^-u(x)\leq u(x)+h_t(x,x)\leq u(x)+Bt$$
By the second part of Lemma \ref{Estimatesht}, we get
$$T^-_tu(x)\geq \inf_{y\in M}u(y)+Cd(x,y)+D(C)t.$$
Since $u$ is $C$-Lipschitz, we have $u(x)\leq u(y)+Cd(x,y)$,
hence $T^-_tu(x)\geq u(x)+D(c)t$. It follows that 
$$\lvert T^-_tu(x)-u(x)\rvert \leq \max\{B,-D(C)\}t.$$
Since $u(x)+h_t(x,x)\leq u(x)+Bt$,
we obtain
$$T^-_tu(x)=\inf\{u(y)+h_t(y,x)\mid y\in M, u(y)+h_t(y,x)\leq u(x)+Bt\}.$$
Using again the second part of Lemma \ref{Estimatesht}, and the fact that $u$ is $C$-Lipschitz, we know that 
\begin{align*}
u(y)+h_t(y,x)&\geq u(y) + (C+1)d(x,y)+D(C+1)t\\
&\geq u(x)+d(x,y)+D(C+1)t.
\end{align*}
It follows that
$$T^-_tu(x)=\inf\{u(y)+h_t(y,x)\mid y\in M, d(x,y)\leq Bt-D(C+1)t\}.$$
Hence we can take as $B(C)$ any finite number $\geq\max\{B, -D(C), B-D(C+1)\}$.
\end{proof}
For the next lemmas we need to introduce some notation.
We will suppose that $(U,\varphi)$ is a {\rm C}$^\infty$ chart on $M$. Here $U$ is an open subset, and $\varphi:U\to \R^k$ is a {\rm C}$^\infty$ diffeomorphism on the open subset $\varphi(U)$ of $\R^k$. We will denote by $\lVert \cdot\rVert_{\mathrm {euc}}$ the canonical Euclidean norm on $\R^k$. For $r\geq 0$, we set
 $$\BB(r)=\{v\in\R^k\mid \lVert v\rVert_{\mathrm {euc}}\leq r\},$$
i.e. the subset   $\BB(r)$ is the closed Euclidean ball of radius $r$ and center $0$ in $\R^k$. 
\begin{lemma}\label{C11Conservee}{Suppose that $(U,\varphi)$ is a {\rm C}$^\infty$ chart on $M$, and $\BB(r)\subset \varphi (U)$. For any $r'<r$, any $A\geq 1$, any $B\geq1$, and any $\epsilon>0$, we can find $\delta>0$ such that for any function $u:\BB(r)\to\R$ satisfying
\begin{itemize}
\item[(a)] the function $u$ is {\rm C}$^{1,1}_{loc}$
 on $\BB(r)$;
\item[(b)] the Lipschitz constant (for the canonical Euclidean metric on $\R^k$) of $u$ on $\BB(r)$ is  $\leq A$;
\item[(c)] the Lipschitz constant (for the canonical Euclidean metric on $\R^k$) of the derivative $x\mapsto d_x(u\circ\varphi^{-1})$ on $\BB(r)$ is bounded by $B$;
\end{itemize}
and any $t\leq \delta$, the function $T^{-,\varphi}_tu:\BB(r)\to \R$ defined by
$$T^{-,\varphi}_tu(x)=\inf_{y\in \BB(r)}u(y)+h_t(\varphi^{-1}(y),\varphi^{-1}(x))$$
satisfies
\begin{itemize}
\item[\rm (a')] the function $T^{-,\varphi}_tu$  is {\rm C}$^{1,1}$
on a neighborhood of $\BB(r')$;
\item[\rm (b')] the Lipschitz constant (for the canonical Euclidean metric on $\R^k$) of $T^{-,\varphi}_tu$ is bounded by $A+\epsilon$;
\item[\rm (c')]  the Lipschitz constant  (for the canonical Euclidean metric on $\R^k$) of $x\mapsto d_x(T^{-,\varphi}_tu)$ on $\BB(r')$ is bounded by $B+\epsilon$;
\item[\rm (d')] for every $x\in \BB(r')$, there is one and only one $y_x\in \BB(r)$ such that
$$\varphi^{-1}(x)=\pi^*\phi_t^H(\varphi^{-1}(y_x),d_{\varphi^{-1}(y_x)}(u\circ\varphi)),$$
where $\pi^*:T^*M\to M$ is the canonical projection, and $\phi^H_t$ is the Hamiltonian flow of $H$ on $T^*M$. Moreover, we have
$$ (\varphi^{-1}(x),d_{\varphi^{-1}(x)}(u\circ\varphi))=
\phi_t^H(\varphi^{-1}(y_x),d_{\varphi^{-1}(y_x)}(u\circ\varphi)).$$
\end{itemize}}
\end{lemma}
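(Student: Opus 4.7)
The plan is to reduce the minimization defining $T^{-,\varphi}_t u$ to a Lipschitz perturbation of the identity built from the Hamiltonian flow, and then read all the bounds off that flow. Define
$$
\Phi_t:\BB(r)\to\R^k,\qquad \Phi_t(y)=\varphi\bigl(\pi^*\phi_t^H(\varphi^{-1}(y),d_{\varphi^{-1}(y)}(u\circ\varphi))\bigr).
$$
Hypotheses (b)--(c) force the lift $y\mapsto(\varphi^{-1}(y),d_{\varphi^{-1}(y)}(u\circ\varphi))$ to stay in a fixed compact of $T^*M$ depending only on $A$ and $\varphi$, and to be Lipschitz with a constant depending only on $B$ and $\varphi$. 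Since $H\in C^2$, $\phi_t^H$ is $C^1$, so $\Phi_t$ is Lipschitz on $\BB(r)$ with a.e.\ differential $D_y\Phi_t=I+O(t)$ uniformly in $y$, and $\Phi_0=\mathrm{id}$. A quantitative Lipschitz inverse function theorem (contraction on $\mathrm{id}+R_t$) then produces $\delta>0$, depending only on $r-r'$, $A$, $B$, $\epsilon$ and $\varphi$, such that for every $t\leq\delta$ the map $\Phi_t$ is a bi-Lipschitz homeomorphism of $\BB(r)$ onto an open set containing $\BB(r')$, with $\mathrm{Lip}(\Phi_t^{-1})$ as close to $1$ as desired. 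Setting $y_x:=\Phi_t^{-1}(x)$ for $x\in\BB(r')$ gives the first half of (d').

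I would next identify $y_x$ with the unique minimizer of $y\mapsto u(y)+h_t(\varphi^{-1}(y),\varphi^{-1}(x))$ on $\BB(r)$. Existence is compactness plus continuity. Combining Lemma \ref{Estimatesht} with the $A$-Lipschitz estimate on $u$ (by comparison with the test point $y=x$) shows that any minimizer $y^*$ satisfies $d(\varphi^{-1}(y^*),\varphi^{-1}(x))\leq Ct$ for a constant $C=C(A)$, so after shrinking $\delta$ we may assume $y^*$ lies in the interior of $\BB(r)$. At an interior minimizer the first-order optimality condition together with the classical formula
$$
\partial_{y}h_t(\varphi^{-1}(y^*),\varphi^{-1}(x))=-\tfrac{\partial L}{\partial v}(\varphi^{-1}(y^*),\dot\gamma(0)),
$$
valid for the unique short minimizing extremal $\gamma$ between the two points, translates into $\mathcal{L}(\varphi^{-1}(y^*),\dot\gamma(0))=(\varphi^{-1}(y^*),d_{\varphi^{-1}(y^*)}(u\circ\varphi))$. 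Hence $\gamma$ is the base projection of the Hamiltonian orbit starting at $(\varphi^{-1}(y^*),d_{\varphi^{-1}(y^*)}(u\circ\varphi))$, so $\Phi_t(y^*)=x$ and $y^*=y_x$ by the injectivity in paragraph 1; this also supplies the second identity in (d').

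With uniqueness in hand, the envelope theorem applied to the smooth family $y\mapsto u(y)+h_t(\varphi^{-1}(y),\varphi^{-1}(x))$ yields
$$
d_{\varphi^{-1}(x)}((T^{-,\varphi}_tu)\circ\varphi)=p(t),\qquad (\varphi^{-1}(x),p(t))=\phi_t^H\bigl(\varphi^{-1}(y_x),d_{\varphi^{-1}(y_x)}(u\circ\varphi)\bigr).
$$
Read in Euclidean coordinates, the map $x\mapsto d_xT^{-,\varphi}_tu$ factors as the composition of $\Phi_t^{-1}$, the cotangent lift $y\mapsto(\varphi^{-1}(y),d_{\varphi^{-1}(y)}(u\circ\varphi))$, the Hamiltonian flow $\phi_t^H$, and projection to the momentum component. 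At $t=0$ this reduces to $x\mapsto d_xu$, which has Euclidean Lipschitz constant $\leq B$ and norm $\leq A$. The three $t$-dependent factors converge as $t\to 0$ to their $t=0$ values: $\Phi_t^{-1}$ in bi-Lipschitz norm by paragraph 1, the cotangent lift is independent of $t$, and $\phi_t^H$ in the $C^1$ topology on every compact subset of $T^*M$. Choosing $\delta$ smaller if necessary, this gives Euclidean Lipschitz constants on $\BB(r')$ of at most $A+\epsilon$ for $T^{-,\varphi}_tu$ and at most $B+\epsilon$ for its derivative, which are (b') and (c'); the Lipschitz bound on the derivative automatically yields (a').

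The main obstacle is the quantitative inverse-function step in paragraph 1: because $u$ is only $C^{1,1}$, $\Phi_t$ is merely Lipschitz, so one must work with a.e.\ derivatives and a contraction argument rather than the smooth IFT, while keeping $\delta$ explicitly controlled by $r-r'$, $A$, $B$. A subtler point is to upgrade in paragraph 3 the $C^1$-closeness of the intermediate factors to closeness of the composition in Lipschitz norm, so that the Lipschitz constant of $x\mapsto d_xT^{-,\varphi}_tu$ can be pushed arbitrarily close to $B$; this rests on the uniform convergence of $D_z\phi_t^H$ to the identity on a fixed compact subset of $T^*M$ as $t\to 0$, a consequence of $H\in C^2$.
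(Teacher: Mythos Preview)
Your argument is correct and follows essentially the same route as the paper: build the map $\Phi_t(y)=\pi^*\phi_t^H(y,d_yu)$ as a Lipschitz perturbation of the identity, invert it on a slightly smaller ball, and then read the derivative of $T^{-,\varphi}_tu$ as the momentum component of $\phi_t^H\circ(\text{cotangent lift})\circ\Phi_t^{-1}$, with all Lipschitz constants converging to those of $u$ as $t\to 0$. The only organizational difference is that you obtain surjectivity of $\Phi_t$ onto $\BB(r')$ directly from the contraction fixed-point argument, whereas the paper argues indirectly: it first shows that at every point $x$ where the Lipschitz function $T^{-,\varphi}_tu$ is differentiable one has $x\in\Phi_t(\BB(r))$, and then invokes Rademacher plus compactness of the image to fill in the null set.
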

\begin{proof} We can assume that $r<+\infty$. To simplify notations, we will suppose that $\varphi$ as the ``identity'', i.e. we will write things in the coordinate system given by $\varphi$. 
Let us choose $r''$ and $R$ such that $r'<r''<r<R$ and $\varphi(U)\supset \BB(R)$.
If we set $A_1=\sup\{L(x,v)\mid x\in\BB(R), \lVert v\rVert_{\mathrm {euc}}\leq 1\}$,
any function $u\prec L+c$ has, on $\BB(R)$, a Lipschitz constant $\leq A=A_1+c$.
In particular $\lVert d_xu\rVert_{\mathrm {euc}}\leq A$ at every point $x\in \BB(R)$ where 
$d_xu$ exists.

By continuity and compactness we can find $\delta_1>0$ such that
$$\forall x\in \BB(r),\,\forall p\in (\R^k)^*\text{ with } \lVert p\rVert_{\mathrm {euc}}\leq A ,\,\forall t\in[- \delta_1,\delta_1],\quad
\phi_t^H(x,p)\in \BB(R)\times (\R^k)^*.$$

By Lemma \ref{ConvToZero} we can find $\delta_2>0$, with $\delta_2\leq \delta_1$ depending only $c$, 
such that for any function $u:M\to \R$, with $u\prec L+c$,
and any $t\leq\delta_2$ we have
$$\forall x\in \BB(r''), \quad T_t^-u(x)=\inf_{y\in\BB(r)}u(y)+h_t(y,x).$$
Fix  a function $u$ satisfying (a), (b), and (c) of the Lemma. We will show that $T^-_tu$ is {\rm C}$^{1,1}$ on $\BB(r'')$ for $t$ small enough (depending on $A,B$ and not on $u$), and we will compute the Lipschitz constant of the derivative of this function. Classically one shows that $T^-_tu$ is {\rm C}$^{1,1}$ by using the inverse function theorem for Lipschitz perturbation of the identity. For a change, we will do it in a (very slightly) different way using that $T^-_tu$ is Lipschitz.

Suppose $t\in \delta_1$. For $x\in \BB(r'')$ choose a point $y_x\in \BB(r)$ such that $T_tu(x)=u(y_x)+h_t(y_x,x)$. If we choose a minimizer $\gamma:[0,t]\to M$ with
$\gamma(0)=y_x, \gamma(t)=x$, and whose action is $h_t(x,y)$, we know that
$\partial L/\partial v(x,\dot\gamma(t))$ is in the upper gradient of $T_tu$  at $x$,
and $\partial L/\partial v(x,\dot\gamma(0))$ is in the lower gradient of $u$  at $y_x$.
Since $u$ is differentiable at $y_x$ we necessarily have  $\partial L/\partial v(x,\dot\gamma(0))=d_{y_x}u$. Moreover at each point $x\in \BB(r'')$ where the Lipschitz function $T_tu$ is differentiable we must have $d_xT^-_tu= \partial L/\partial v(x,\dot\gamma(t))$. Since $\gamma$ is a minimizer,
its speed curve $s\mapsto 
(\gamma(s),\dot\gamma(s))$ is an orbit of the Euler-Lagrange  flow $\phi^L_s$ associated to $L$. Since the conjugate of $\phi_s^L$ is the Hamiltonian flow $\phi^H_s$ of the Hamiltonian $H:T^*M\to\R$ associated by Fenchel duality to $L$, we obtain that at each $x$ where $T^-_tu$ is differentiable
\begin{equation*}(x,d_xT^-_tu)=\phi^H_t(y_x,d_{y_x}u).\tag{*}
\end{equation*}
Therefore $x=\pi^*\phi^H_t(y_x,d_{y_x}u)$, where $\pi^*$ is the canonical projection
from $T^*M$ to $M$. In the local coordinates that we are using $\pi^*:\BB(r)\times (\R^k)^*\to \BB(r)$ is the projection on the first factor. 
To simplify computations we use the norm $\lVert (x,p)\rVert=\max ( \lVert v\rVert_{\mathrm {euc}},  \lVert p\rVert_{\mathrm {euc}})$ on $\BB(R)\times (\R^k)^*\subset
\R^k\times (\R^k)^*$.
Let us set $\psi(s,y,p)=\pi^*(\phi^H_s(y,v))-y$. This map is {\rm C}$^1$ and is is identically $0$ when $s=0$, therefore on the compact set $\{(y,p)\in \BB(r)\times  (\R^k)^*\mid  \lVert p\rVert\leq A\}$ the Lipschitz constant $\ell(s)$ of $(y,p)\mapsto \psi(s,y,p)$ tends to $0$ as $s\to 0$.
Since $y\mapsto d_yu$ has a Lipschitz constant bounded by $B\geq1$ on $\BB(r)$, the map $y\mapsto (y,d_yu)$ has also a Lipschitz constant bounded by $B$ on $\BB(r)$. Moreover since $  \lVert d_yu\rVert_{\mathrm {euc}}$ is bounded by $A$ on $\BB(r)$, we see that on $\BB(r)$ we have
$$\pi^*\phi^H_t(y,d_{y}u)=y+\theta_{t,u}(y),$$
where the map $\theta_{t,u}$ has Lipschitz constant $\leq B\ell(t)$. Note that this $\ell(t)$ depends only on $A$ and not on $u$. Let us set $\Theta_{t,u}(y)=y+\theta_{t,u}(y)$. 
Note that 
\begin{align*}
\lVert \Theta_{t,u}(y')-\Theta_{t,u}(y)\rVert&=\lVert [y'+\theta_{t,u}(y')]-[y+\theta_{t,u}(y)]\rVert\\
&\geq \lVert y'-y\rVert-\lVert\theta_{t,u}(y')-\theta_{t,u}(y)\rVert\\
&\geq \lVert y'-y\rVert-B\ell(t)\lVert y'-y\rVert\\
&=(1-B\ell(t))\lVert y'-y\rVert.
\end{align*}
Therefore, for $t$ small enough to have $1-B\ell(t)>0$, the map $\Theta_{t,u}:\BB(r)\to
\Theta_{t,u}(\BB(r))$ is invertible and its inverse $\Theta_{t,u}^{-1}:\Theta_{t,u}(\BB(r))\to \BB(r)$ has a Lipschitz constant $\leq (1-B\ell(t))^{-1}$.
Note that Equation (*) above shows that, for every $x\in \BB(r'')$ at which $T^-_tu$ is differentiable, we can find
$y_x\in \BB(r)$ such that $x=\Theta_{t,u}(y_x)$. 
Since $T^-_tu$ is Lipschitz it is differentiable a.e, and so the  image $\Theta_{t,u}(\BB(r))$ contains a set of full Lebesgue measure in $ \BB(r'')$. The compactness of $\Theta_{t,u}(\BB(r))$ implies that this image has to contain $ \BB(r'')$. Equation (*) tells us now that at each point $x\in \BB(r'')$ where $T^-_tu$ is differentiable we have
\begin{equation*}(x,d_xT^-_tu)=\phi_t^H(\Theta_{t,u}^{-1}(x),d_{\Theta_{t,u}^{-1}(x)}u).\tag{**}.
\end{equation*}
But the right hand side above is a continuous function defined at least on $\BB(r'')$.
This implies that the Lipschitz function $T^-_tu$ is differentiable on $\BB(r'')$ and its derivative satisfies Equation (**) above. Therefore on $\BB(r'')$ the derivative $x\mapsto d_xT^-_tu$ has Lipschitz constant bounded by $L(t)B(1-B\ell(t))$, with $L(t)$ the Lipschitz constant of $(y,p)\mapsto \pi_2\phi_t^H(y,p)$ on the set $ \{(y,p)\in \BB(r)\times (\R^k)^* \mid\lVert p\rVert \leq A\}$, 
where $\pi_2:\BB(r)\times (\R^k)^*\to (\R^k)^*$ is the projection on the second factor.
Since $\phi_t^H$ is a {\rm C}$^1$ flow and $\phi^H_0$ is the identity 
we have $L(t)\to 1$, as $t\to 0$.
This finishes the proof since $\ell(t)\to 0$, and $\ell(t),L(t)$ depends only on $A$ and not on $u$.
\end{proof}
Recall that a function $f:C\to \R$, defined on the convex subset $C$ of $\R^k$ is said to be $K$-semiconvex if $x\mapsto f(x)+K\lVert x\rVert^2_{\mathrm{euc}}$ is convex on $C$. If $K\geq 0$ is fixed, for an {\sl open} convex subset $C$ of $\R^k$, the following conditions are equivalent:
\begin{itemize}
\item the function $f:C\to \R$ is $K$-semiconvex;
\item for every $x\in C$ we can find $p_x\in(\R^k)^*$ such that
$$\forall y\in C, \quad f(y)\geq p_x(y-x)-K\lVert x\rVert^2_{\mathrm{euc}};$$
\item for every $x\in C$, if $\tilde p_x\in(\R^k)^*$ is a subdifferential of $f$ at $x$,
we have
$$\forall y\in C,\quad f(y)\geq \tilde p_x(y-x)-K\lVert x\rVert^2_{\mathrm{euc}}.$$ 
\end{itemize}
It is not difficult to see that, if $C$ is an open convex subset of $\R^k$, a {\rm C}$^1$ function $f:C\to\R$, whose derivative has on $C$ a (global) Lipschitz constant $\leq B$, is $B/2$-semiconvex.

We know state the regularization property of the semi-groups $T^-_t$ and $T^+_t$.
These properties are well-known. They have been extensively exploited for viscosity solutions, see \cite{bc,bar}. For a proof in the compact case see \cite{fathibook}.
We will sketch a proof relying on the semi-concavity of $h_t$.
\begin{theorem}\label{RegLaxOl}{Suppose that $t_0>0$, that $c$  a finite constant, 
and that $(U,\varphi)$ is a {\rm C}$^\infty$ chart with $\BB(r)\subset \varphi(U)$. 
We can find a constant $K$ such that for every function $u\prec L+c$, 
and any $t\geq t_0$, the restriction $T^-_tu\circ\varphi^{-1}|\BB(r)$
(resp.~$T^+_tu\circ\varphi^{-1}|\BB(r)$) is 
$K$-semiconcave (resp.~$K$-semiconvex).}
\end{theorem}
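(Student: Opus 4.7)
The strategy is to prove that $T^-_tu$ is $K$-semiconcave on $\varphi^{-1}(\BB(r))$ (read in the chart $\varphi$); the argument for $T^+_tu$ is symmetric. Since the Lax--Oleinik semigroups preserve domination by $L+c$, Lemma \ref{globalLip} gives that $T^-_tu$ is $(A+c)$-Lipschitz, and Lemma \ref{ConvToZero} together with a standard Tonelli-type existence argument shows that the infimum defining $T^-_tu(x_0)$ is attained at some $y$: concretely, there is a minimizer $\gamma:[0,t]\to M$ with $\gamma(0)=y$, $\gamma(t)=x_0$, realising $T^-_tu(x_0)=u(y)+\int_0^tL(\gamma,\dot\gamma)\,ds$.

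The core quadratic estimate comes from perturbing only the last $\tau_0$ seconds of $\gamma$, where $\tau_0>0$ is a constant to be fixed. Setting $y':=\gamma(t-\tau_0)$, the subadditivity $h_t(y,x)\leq h_{t-\tau_0}(y,y')+h_{\tau_0}(y',x)$ combined with the calibration of $\gamma|_{[0,t-\tau_0]}$ yields
\begin{equation*}
T^-_tu(x)-T^-_tu(x_0)\leq h_{\tau_0}(y',x)-h_{\tau_0}(y',x_0),
\end{equation*}
reducing matters to a uniform quadratic upper expansion of $h_{\tau_0}(y',\cdot)$ at $x_0$. The intermediate point $y'$ is not arbitrary: $\partial L/\partial v(x_0,\dot\gamma(t))$ belongs to the superdifferential of $T^-_tu$ at $x_0$, so the $(A+c)$-Lipschitz bound forces $\|\partial L/\partial v(x_0,\dot\gamma(t))\|_{x_0}\leq A+c$, and inverting the Legendre transform yields a uniform bound on $\|\dot\gamma(t)\|_{x_0}$. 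Flowing backward by $\tau_0$ along the Euler--Lagrange flow then confines $(y',\dot\gamma(t-\tau_0))$ to a compact subset $\tilde K\subset TM$ depending only on $c$, $r$, $t_0$ and the Riemannian metric, so $y'$ stays in a fixed compact neighborhood of $\varphi^{-1}(\BB(r))$ regardless of $y$ and $t\geq t_0$.

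The remaining technical point, which is the main obstacle, is to choose $\tau_0>0$ small enough (in terms of $c,r,t_0,\varphi$ alone) so that for every $y'$ in this compact neighborhood the short-time endpoint map $v\mapsto\pi\phi_{\tau_0}^L(y',v)$ is a smooth diffeomorphism of a uniform neighborhood of $0\in T_{y'}M$ onto an open set of $M$ containing $\varphi^{-1}(\BB(r))$. With such $\tau_0$ fixed, $h_{\tau_0}(y',x)$ coincides with the action along the unique short-time minimizer from $y'$ to $x$, hence is smooth in $(y',x)$, and its second derivative in $x$ read in the chart $\varphi$ is uniformly bounded on the compact set of pairs $(y',x_0)$ produced above. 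Taking $K$ equal to this uniform bound delivers the desired semiconcavity constant. The reason a ``global'' translation of the whole trajectory fails is that its second-order error accumulates over $[0,t]$ and produces a constant growing with $t$; localizing the perturbation to the fixed window $[t-\tau_0,t]$, together with the uniform compactness control on $y'$, is precisely what makes the constant depend only on $t_0$ and not on $t$.
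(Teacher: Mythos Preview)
Your argument is correct and runs parallel to the paper's, though it is more self-contained. Both proofs exploit the semigroup structure to freeze a fixed time window: the paper writes $T^-_t=T^-_{t_0}\circ T^-_{t-t_0}$, observes that $T^-_{t-t_0}u\prec L+c$, and so reduces everything to the single operator $T^-_{t_0}$ acting on dominated functions; it then cites the local semiconcavity of $h_{t_0}$ on $M\times M$ (from \cite{fatfig}) together with the stability of semiconcavity under infima over compact parameter sets, using Lemma~\ref{ConvToZero} to confine the infimum to a compact set of $y$'s. Your inequality $T^-_tu(x)-T^-_tu(x_0)\leq h_{\tau_0}(y',x)-h_{\tau_0}(y',x_0)$ is exactly this semigroup reduction written pointwise, with a small $\tau_0$ in place of $t_0$; the genuine difference is that, instead of citing semiconcavity of $h_{\tau_0}$ as a black box, you take $\tau_0$ short enough that $h_{\tau_0}(y',\cdot)$ is actually smooth near $x_0$ (no conjugate points) and bound its Hessian directly via compactness of the set of admissible $(y',x_0)$. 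The paper's route is shorter but leans on an external result; yours is longer but proves everything from scratch.

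Two small imprecisions to fix. First, the endpoint map $v\mapsto\pi\phi^L_{\tau_0}(y',v)$ should be a diffeomorphism near the bounded velocity $\dot\gamma(t-\tau_0)$, not near $0$, and its image need only cover a neighborhood of $x_0$ rather than all of $\varphi^{-1}(\BB(r))$: local $K$-semiconcavity with a uniform constant $K$ on an open convex set is equivalent to global $K$-semiconcavity there, since convexity of $x\mapsto K\lVert x\rVert^2-f(x)$ is a local condition. Second, the compact set $\tilde K$ containing $(y',\dot\gamma(t-\tau_0))$ depends a priori on $\tau_0$; this apparent circularity dissolves once you first cap $\tau_0\le t_0$, so that $\tilde K$ can be taken as the fixed compact set $\bigcup_{0\le s\le t_0}\phi^L_{-s}(\tilde K_0)$, and only then choose $\tau_0$ small based on it.
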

\begin{proof} We do the proof for $T^-_t$. By Lemma \ref{globalLip}, there exists a constant $A$ such that all
functions dominated by $L+c$ have Lipschitz constant on $M$ which is $\leq A+c$.
It follows from Lemma \ref{ConvToZero} that we can find a finite  constant $B$ such that
for any $u\prec L+c$, and any $x\in M$
$$T^-_tu(x)=\inf\{u(y)+h_t(y,x)\mid y\in M, d(x,y)\leq Bt\}.$$
In particular, if $C_t$ is the compact set $\{y\in M\mid d(y, \varphi^{-1}(\BB(r)) \leq Bt\}$, we get
$$\forall x\in \varphi^{-1}(\BB(r)), \quad T^-_tu(x)=\inf_{y\in C_t}u(y)+h_t(y,x).$$
Since $h_t$ is locally semiconcave on $M\times M$ (see for example \cite[Theorem B.19]{fatfig}) and $C_t$ is a compact subset,
using standard arguments for the theory of locally semi-concave functions (again see for example
\cite[Appendix A]{fatfig}) we can find a constant $K_t$ such that
$T^-_tu\circ\varphi^{-1}|\BB(r)$ is 
$K_t$-semiconcave for every $u\prec L+c$. It remains to show that we can take $K_t$
independent of $t\geq t_0>0$. In fact, since $T^-_t$ preserves the set of functions dominated by $L+c$, we have $T^-_{t-t_0}u\prec L+c$, for any $u\prec L+c$.
Therefore, we conclude that $T^-_tu=T^-_{t_0}[T^-_{t-t_0}u]$ does also satisfy
$T^-_tu\circ\varphi^{-1}|\BB(r)$  is 
$K_{t_0}$-semiconcave.
\end{proof}
Next we show that $T^-_t$ preserve semi-convexity, for small time $t$.
\begin{lemma}\label{SemiConvexPreserved}{Suppose that $(U,\varphi)$ is a 
{\rm C}$^\infty$ chart on $M$, and $\BB(r)\subset \varphi (U)$. For any $r'<r$, 
any finite number $A\geq 0$, any finite number $K\geq 1/2$, and finite $\epsilon>0$, 
we can find $\delta>0$ such that 
for any function $u:\BB(r)\to\R$ satisfying
\begin{itemize}
\item[{\rm (i)}] the function $u$  has Lipschitz constant $\leq A$  on  $\BB(r)$;
\item[{\rm (ii)}]  the function $u$ is $K$-semiconvex;
\end{itemize}
and any $t\leq \delta$, the function $T^{-,\varphi}_tu:\BB(r')\to\R$ defined by
$$T^{-,\varphi}_tu(x)=\inf_{y\in\BB(r)}u(y)+h_t(\varphi^{-1}(y),\varphi^{-1}(x))$$
is $(K+\epsilon)$-semiconvex in $\BB(r')$.}
\end{lemma}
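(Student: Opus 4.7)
The plan is to bound $T^{-,\varphi}_t u$ from below, on a neighborhood of any $x_0 \in \BB(r')$, by the Lax--Oleinik image of a smooth quadratic minorant of $u$ that shares the semi-convexity constant $K$, and then to extract the semi-convexity of $T^{-,\varphi}_t u$ from the $C^{1,1}$ bound furnished by Lemma \ref{C11Conservee} applied to that quadratic. As a preliminary, by a straightforward chart-adaptation of Lemma \ref{ConvToZero}, I would choose a threshold $\delta_0 = \delta_0(r,r',A)$ such that for every $t \leq \delta_0$, every $A$-Lipschitz $u: \BB(r) \to \R$, and every $x \in \BB(r')$, the infimum defining $T^{-,\varphi}_t u(x)$ is attained at some interior point $y_x$ of $\BB(r)$, close to $x$.

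Fix $x_0 \in \BB(r')$, let $y_0 \in \BB(r)$ be such a minimizer, and let $\gamma:[0,t] \to M$ be an associated action-minimizing curve from $\varphi^{-1}(y_0)$ to $\varphi^{-1}(x_0)$. Denote by $p_0 \in (\R^k)^*$ the chart expression of $\partial L/\partial v(\varphi^{-1}(y_0),\dot\gamma(0))$. The classical first variation formula yields that the smooth function $y \mapsto h_t(\varphi^{-1}(y),\varphi^{-1}(x_0))$ has $y$-derivative equal to $-p_0$ at $y_0$; since $y_0$ is an interior local minimum of the sum $y \mapsto u(y) + h_t(\varphi^{-1}(y),\varphi^{-1}(x_0))$, the first-order optimality condition forces $p_0$ to belong to the convex subdifferential $\partial^{-} u(y_0)$ of the $K$-semiconvex function $u$. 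The defining property of semi-convex subdifferentials then gives the pointwise quadratic minorant
\begin{equation*}
u(y) \geq \tilde u(y) := u(y_0) + p_0 \cdot (y-y_0) - K\|y-y_0\|_{\mathrm{euc}}^2, \qquad y \in \BB(r),
\end{equation*}
with equality at $y_0$. Taking the infimum in $y$ yields $T^{-,\varphi}_t u \geq T^{-,\varphi}_t \tilde u$ on $\BB(r')$.

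The key point is that $\tilde u$ is a smooth quadratic whose Lipschitz constant on $\BB(r)$ is at most $A + 4Kr$ and whose derivative has Lipschitz constant exactly $2K$, both bounds being independent of $u$, $x_0$ and $y_0$ (since $|p_0| \leq A$). Applying Lemma \ref{C11Conservee} to $\tilde u$ with tolerance $2\epsilon$ produces a threshold $\delta \leq \delta_0$ depending only on $(r,r',A,K,\epsilon)$ such that, for $t \leq \delta$, the function $T^{-,\varphi}_t \tilde u$ is $C^{1,1}$ on a neighborhood of $\BB(r')$ with $d(T^{-,\varphi}_t \tilde u)$ Lipschitz of constant at most $2K+2\epsilon$, and such that, by part (d') of that Lemma, the minimizer of $T^{-,\varphi}_t \tilde u(x_0)$ is the unique $y_*$ solving $\varphi^{-1}(x_0) = \pi^*\phi^H_t(\varphi^{-1}(y_*),d_{y_*}\tilde u)$. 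Since $d\tilde u(y_0) = p_0$ and by construction the orbit of $\phi^H_t$ through $(\varphi^{-1}(y_0),p_0)$ reaches $\varphi^{-1}(x_0)$ at time $t$, we deduce $y_* = y_0$, whence
\begin{align*}
T^{-,\varphi}_t \tilde u(x_0) &= \tilde u(y_0) + h_t(\varphi^{-1}(y_0),\varphi^{-1}(x_0)) \\
&= u(y_0) + h_t(\varphi^{-1}(y_0),\varphi^{-1}(x_0)) = T^{-,\varphi}_t u(x_0).
\end{align*}
The bound $\operatorname{Lip}(d(T^{-,\varphi}_t \tilde u)) \leq 2K+2\epsilon$ means that $T^{-,\varphi}_t \tilde u$ is $(K+\epsilon)$-semiconvex in a neighborhood of $x_0$; combining the associated semi-convex lower bound at $x_0$ with the pointwise inequality $T^{-,\varphi}_t \tilde u \leq T^{-,\varphi}_t u$ and the equality just proved transfers that $(K+\epsilon)$-semi-convex inequality to $T^{-,\varphi}_t u$ at $x_0$. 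As $x_0 \in \BB(r')$ was arbitrary, this proves the lemma.

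The main obstacle I anticipate is securing the uniformity of all the chosen thresholds in $x_0$ and in $u$: the $\delta$ arising from Lemma \ref{C11Conservee} applied to $\tilde u$ has to work simultaneously for every choice of $(x_0,y_0,p_0)$. This uniformity is available because the controlling data of $\tilde u$, namely the Lipschitz constants $A+4Kr$ and $2K$, depend only on the fixed parameters $A$, $K$, $r$ and not on $u$ or on the particular minimizer. A secondary minor technicality is the routine passage between Euclidean and Riemannian semi-convexity constants through the smooth diffeomorphism $\varphi$.
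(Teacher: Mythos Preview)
Your proposal is correct and follows essentially the same route as the paper: compare $u$ with the quadratic minorant $\tilde u(y)=u(y_0)+p_0\cdot(y-y_0)-K\lVert y-y_0\rVert^2$ touching $u$ at the minimizer $y_0$, apply Lemma~\ref{C11Conservee} to this uniformly controlled family of quadratics to get a $(K+\epsilon)$-semiconvex lower barrier $T^{-,\varphi}_t\tilde u$, and use the uniqueness in part (d') of that lemma to identify the values at $x_0$. Your remark that the threshold $\delta$ is uniform because the quadratics all share the bounds $A+4Kr$ and $2K$ is exactly the paper's observation; one cosmetic slip is calling $y\mapsto h_t(\varphi^{-1}(y),\varphi^{-1}(x_0))$ ``smooth''---it is only locally semi-concave---but the conclusion $p_0\in\partial^-u(y_0)$ is the standard fact the paper also invokes.
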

\begin{proof} As in the previous proof we will assume that $\varphi$ is the ``identity''.
We also choose $r''$ and $r'''$ such that $r'<r''<r'''<r$. 
Consider the family of function $v_{\alpha,x,p}:\BB(r)\to\R$, where 
$\alpha\in \R,x\in\BB(r)$ and $p\in(\R^k)^*$, with $\lVert p\rVert_{\mathrm{euc}}\leq A$,
defined by
$$v_{\alpha,x,p}(y)=\alpha+p(y-x)-K\lVert y-x\rVert_{\mathrm{euc}}^2.$$
It is not difficult to see that the derivative of $v_{\alpha,x,p}$ has, on $\BB(r)$, a Lipschitz constant $\leq 2K$, and that this derivative is bounded in norm by $A+4Kr$.
Since $2K\geq 1$, we can apply Lemma \ref{C11Conservee} and find $\delta>0$ such that
$T^{-,\varphi}_tv_{\alpha,x,p}$ is {\rm C}$^{1,1}$ on $\BB(r'')$ with a Lipschitz constant for its derivative $\leq 2K+2\epsilon$
for any $t \leq \d$. In particular any such function $T^{-,\varphi}_tv_{\alpha,x,p}$ is $(K+\epsilon)$-semiconvex on $\BB(r'')$.

Taking   $\delta>0$ smaller if necessary, we can assume that, for every $u$ satisfying condition (a) of the lemma,
every $t\leq \delta$, and every $x\in \BB(r'')$, we can find $y_x\in\BB(r''')$ such that
$$T^{-,\varphi}_tu(x)=u(y_x)+h_t(y_x,x).$$
If we pick up a minimizer $\gamma:[0,t]\to M$ with $\gamma(0)=y_x$ and $\gamma(t)=x$, we know that $\tilde p_x=\partial L/\partial v(y_x,\dot\gamma(0))$ is a subdifferential of 
$u$ at $y_x$, and also
\begin{equation*} 
\pi^*\phi^H_t(y_x,\tilde p_x)=x.\tag{***}
\end{equation*}
Since $u$ is $K$-semiconvex on $\BB(r)$ and $\tilde p_x$ is in the subdifferential of $u$ at $y_x$,
we have 
$$\forall y\in\BB(r), \quad u(y)\geq 
u(y_x)+\tilde p_x-K\lVert y-y_x\rVert_{\mathrm{euc}}^2=v_{u(y_x),y_x,\tilde p_x}(y).$$
Set $v=v_{u(y_x),y_x,\tilde p_x}$ to simplify notation. From the inequality above we get
\begin{equation*}
T^{-,\varphi}_tu\geq T^{-,\varphi}_tv.\tag{$\diamond$}
\end{equation*}
 We also know that $T^{-,\varphi}_tv$ 
is {\rm C}$^{1,1}$ and $(K+\epsilon)$-semiconvex. We know show that
$T^{-,\varphi}_tu$ and $T^{-,\varphi}_tv$ take the same value at $x$. By the proof
of the previous lemma we know that 
$T^{-,\varphi}_tv(x)=v(y'_x)+h_t(y'_x,x)$, where $y'_x$ is the only point $y\in \BB(r)$
such that $\pi^*\phi^H_t(y,d_yv)=x$. But $d_{y_x}v=\tilde p_x$, therefore by Equation (***) we obtain $y'_x=y_x$. Since we also have $v(y_x)=u(y_x)$, we conclude
that $T^{-,\varphi}_tv(x)=v(y_x)+h_t(y_x,x)=u(y_x)+h_t(y_x,x)=T^{-,\varphi}_tu(x)$.
Since $v$ is $(K+\epsilon)$-semiconvex on $\BB(r'')$, for every $y\in \BB(r'')$ we have
$$T^{-,\varphi}_tv(y)\geq T^{-,\varphi}_tv(x)+p_x(y-x)-(K+\epsilon)\lVert y-x\rVert_{\mathrm{euc}}^2,$$
where $p_x$ is the derivative at $x$ of the {\rm C}$^{1,1}$ function $T^{-,\varphi}_tv$.
Therefore by Equation ($\diamond$) we obtain
$$\forall y\in\BB(r''),\quad T^{-,\varphi}_tu(y)\geq T^{-,\varphi}_tu(x)+p_x(y-x)-(K+\epsilon)\lVert y-x\rVert_{\mathrm{euc}}^2.$$
Since $x$ was an arbitrary point in $\BB(r'')$, this finishes the proof.
\end{proof}
Before giving the proof of the theorem, we also notice that since $L$ is
uniformly superlinear in the fibers, there exists a finite constant
$C(K')$ such that
$$
\forall (x,v) \in TM, \quad L(x,v) \geq 2K' \|v\|_x + C(K').
$$
From that, we deduce that for every $t>0$,
\begin{equation}
\label{16janv0}
\forall x,y \in M, \quad h_t(x,y), h_t(y,x) \geq 2 K' d(x,y) + C(K') t.
\end{equation}
The previous two lemmas are also true if we replace $T^{-,\varphi}_tu$ by
$$T^{+,\varphi}_tu(x)=\sup_{y\in\BB(r)}u(y)-h_t(x,y),$$
and also replace semi-convexity in the second lemma by semi-concavity.

\begin{proof}[Proof of Theorem \ref{THMexistC11}] We choose a countable family of 
{\rm C}$^{\infty}$ charts $(U_n,\varphi_n)_{n\geq 1}$ on $M$ such that
 $\varphi_n(U_n)=\R^k$, and $M=\cup_{n\geq 0}\varphi_n^{-1}(\INT\BB(1))$. 
 
 Fix a $c\in\R$. We know that any function $u:M\to\R$ dominated by $L+c$ is Lipschitz with Lipschitz constant $\leq A(0)+c$. Therefore, for each integer $n\geq 1$, we can find a finite constant $A_n$ such that, for every $u:M\to\R$ dominated by $L+c$, the function
 $u\circ\varphi^{-1}_n$ has on $\BB(2)$ a Lipschitz constant $\leq A_n$ for the canonical Euclidean norm on $\R^k$. We will construct by induction a sequence $B_n\in [1,+\infty[$
 and two sequence of $>0$ numbers $t_n^-,t^+_n$ such that if we define, for $u:M\to\R$, the function $S_n(u)$ on $m$ by 
 $$S_n(u)=T^-_{t_n^-}T^+_{t^+_n}T^-_{t_{n-1}^-}T^+_{t^+_{n-1}}
 \cdots T^-_{t_1^-}T^+_{t^+_1}(u),$$
with $S_0$ the identity, then for every $u\prec L+c$ defined on the whole $M$, and 
 every $k=1,\dots, n$, we have
 \begin{itemize}
 \item[(i)] the supremum $\sup_{x\in M}\lvert S_n(u)(x)-S_{n-1}(u)(x)\rvert$ 
 is less than  $1/2^n$;
\item[(ii)] the function $S_n(u)\circ\varphi_k^{-1}$ is {\rm C}$^{1,1}$ on $\BB(1+2^{-n})$;
\item[(iii)] the function  $S_n(u)\circ\varphi_k^{-1}$ has on $\BB(2)$  Lipschitz constant $
\leq A_k$;
\item[(iv)] the derivative of $S_n(u)\circ\varphi_k^{-1}$  on $\BB(1+2^{-n})$ has Lipschitz constant $\leq B_k+1-2^{-n}$.
\end{itemize}
Note that, since $T^-_t$ and $T^+_t$ do preserve
 functions dominated by $L+c$ on  $M$, we will have $S_n(u)\prec L+c$, and condition (iii) above will be satisfied for any  choice of  $t^+_n,t^-_n$. 
 
 Suppose that $S_{n}$ has been constructed.  We first pick $t^+_{n+1}$. It follows from Lemma \ref{ConvToZero} that there exists $\delta_1$ such that for every $u\prec L+c$, and $t\in[0, \delta_1]$, we have 
 $$\sup_{x\in M}\lvert T^+_t(u)(x)-u(x)\rvert\leq 1/2^{n+2}.$$
 Given that (i), (ii), (iii) and (iv) are verified, we can apply the version of 
 Lemma \ref{C11Conservee}
 for $T^+_t$ to the finite set of charts $(U_k,\varphi_k),k=1,\dots,n$, 
 with ball $\BB(1+2^{-n})$ constants $A_k$ and $B_k+1-2^{-n}$, to find $\delta_2>0$ 
 such that for every $t\in[0,\delta_2]$, and every $u\prec L+c$, 
 the function $T^+_tS_n(u)\circ\varphi_k^{-1}$ is {\rm C}$^{1,1}$   on 
 $\BB(1+2^{-n}-2^{-(n+2)})$ with Lipschitz constant of its derivative 
 $\leq B_k+1-2^{-n}+2^{-(n+2)}$. 
 Let us now fix $t^+_{n+1}>0$ with $t^+_{n+1}\leq\min(\delta_1,\delta_2)$.
 Since $t^+_{n+1}>0$, we know by Theorem  \ref{RegLaxOl} 
 that there exists a finite constant
 $\tilde B_{n+1}$ such that for every $u\prec L+c$, the function 
 $T^+_{t^+_{n+1}}S(u)\circ \varphi_{n+1}^{-1}$ is $\tilde B_{n+1}$-semiconvex on the ball
 $\BB(2)$. Therefore, for every $u\prec L+c$, we have
 \begin{itemize}
 \item[(a)] the supremum 
 $\sup_{x\in M}\lvert T^+_{t^+_{n+1}}S_n(u)(x)-S_{n}(u)(x)\rvert$ 
 is less than  $1/2^{n+2}$;
\item[(b)] the function $T^+_{t^+_{n+1}}S_n(u)\circ\varphi_k^{-1}$ is {\rm C}$^{1,1}$ on $\BB(1+2^{-n}-2^{-(n+2)})$, for $k=1,\dots,n$;
\item[(c)] the function  $T^+_{t^+_{n+1}}S_n(u)\circ\varphi_k^{-1}$ has on $\BB(2)$  Lipschitz constant
$\leq A_k$, for $k=1,\dots,n+1$;
\item[(d)] the derivative of $T^+_{t^+_{n+1}}S_n(u)\circ\varphi_k^{-1}$  on $\BB(1+2^{-n})$ has Lipschitz constant $\leq B_k+1-2^{-n}+2^{-(n+2)}$;
\item[(e)] the function $T^+_{t^+_{n+1}}S_n(u)\circ\varphi_{n+1}^{-1}$ is 
$\tilde B_{n+1}$-semiconvex on the ball $\BB(2)$.
\end{itemize}
We first pick $t^-_{n+1}$. It follows from Lemma \ref{ConvToZero} that 
there exists $\delta'_1$ such that for every $u\prec L+c$, and $t\in[0,\delta'_1]$,
we have 
$$\sup_{x\in M}\lvert T ^-_t(u)(x)-u(x)\rvert\leq 1/2^{n+2}.$$
Given that (b), (c), (d) are verified, we can apply  Lemma \ref{C11Conservee}
to the finite set of charts $(U_k,\varphi_k), k=1,\dots,n$, with ball 
$\BB(1+2^{-n}-2^{-(n+2)})$, constants $A_k$ and $B_k+1-2^{-n}+2^{-(n+2)}$, to find 
$\delta'_2>0$ such that for every $t\in[0,\delta'_2]$, and every $u\prec L+c$, 
the function $T^-_tT^+_{t^+_{n+1}}S_n(u)\circ\varphi_k^{-1}$ is {\rm C}$^{1,1}$  
on $\BB(1+2^{-(n+1)})=\BB(1+2^{-n}-2^{-(n+2)}-2^{-(n+2)})$ with Lipschitz constant
of its derivative $\leq B_k+1-2^{-n}+2^{-(n+2)}+2^{-(n+2)}=B_k+1-2^{-(n+1)}$. 

By condition (e) above, we can also apply Lemma \ref{SemiConvexPreserved}
 to find $\delta'_3$ 
such that for every $t\in [0,\delta'_3]$, and each $u\prec L+c$, the function
 $T^-_tT^+_{t^+_{n+1}}S_n(u)\circ\varphi_{n+1}^{-1}$ is $(\tilde B_{n+1}+1)$-semiconvex 
 on $\BB(1+2^{-(n+1)})$.
Let us now fix $t^-_{n+1}>0$ with $t^-_{n+1}\leq\min(\delta'_1,\delta'_2,\delta'_3)$.
Since $t^-_{n+1}>0$, we know by Theorem
 \ref{RegLaxOl} that there exists a finite constant
$\hat B_{n+1}$ such that, for every $u\prec L+c$, the function 
$T^-_{t^-_{n+1}}T^+_{t^+_{n+1}}S_n(u)\circ \varphi_{n+1}^{-1}$ is $\hat B_{n+1}$-semiconcave on the ball
$\BB(1+2^{-(n+1)})$. Hence, if we set 
$B_{n+1}=2\max\{\tilde B_{n+1}+1,\hat B_{n+1}\}\geq 1$, 
for every $u\prec L+c$ the function 
$T^+_{t^-_{n+1}}T^+_{t^+_{n+1}}S_n(u)\circ\varphi_{n+1}^{-1}$ 
is both $ B_{n+1}/2$-semiconvex and $ B_{n+1}/2$-semiconcave on
$\BB(1+2^{-(n+1)})$. It is therefore {\rm C}$^{1,1}$ on  $\BB(1+2^{-(n+1)})$, with a derivative with Lipschitz constant $\leq B_{n+1}$.
It is not difficult now to verify that with this choice of $t^+_{n+1},t^-_{n+1}$, the operator $S_{n+1}$ satisfies the required conditions (i), (ii), (iii), and (iv).
\end{proof}

\end{document}